\documentclass[11pt,oneside,a4paper,mathscr]{amsart}

\usepackage{amscd,amsmath,amssymb,euscript}
\usepackage[frame,cmtip,curve,arrow,matrix,line,graph]{xy}
\usepackage{tikz-cd}
\usepackage{hyperref}
\usepackage{stmaryrd}
\SetSymbolFont{stmry}{bold}{U}{stmry}{m}{n}
\usepackage{bm}
\usepackage{here}
\usepackage{enumitem}
\hypersetup{hidelinks}
\title[The Dolbeault geometric Langlands beyond the elliptic locus]{The Dolbeault geometric Langlands correspondence for type \texorpdfstring{$A$}{A} groups
beyond the elliptic locus}
\author{Yukinobu Toda}

\newtheorem{thm}{Theorem}[section]
\newtheorem*{mainthm}{Main Theorem}
\newtheorem{cor}[thm]{Corollary}
\newtheorem{prop}[thm]{Proposition}

\newtheorem{conj}[thm]{Conjecture}
\newtheorem{lemma}[thm]{Lemma}

\theoremstyle{definition}
\newtheorem{defn}[thm]{Definition}

\newtheorem{thm*}[thm]{Theorem$^*$}
\newtheorem{remark}[thm]{Remark}

\newtheorem{example}[thm]{Example}

\newtheorem{step}{Step}
\newtheorem{sstep}{Step}
\newtheorem{ssstep}{Step}

\newcommand{\comment}[1]{}

\renewcommand{\leq}{\leqslant}
\renewcommand{\geq}{\geqslant}

\newcommand{\tr}{\operatorname{\mathrm{tr}}}

\newcommand{\Y}{\mathcal{Y}}
\newcommand{\cS}{\mathcal{S}}
\newcommand{\zZ}{\mathcal{Z}}

\newcommand{\LL}{\operatorname{L}}
\newcommand{\Pic}{\operatorname{Pic}}

\newcommand{\QCoh}{\operatorname{QCoh}}

\newcommand{\X}{\mathcal{X}}

\newcommand{\cH}{\mathcal{H}}

\newcommand{\cL}{\mathcal{L}}
\newcommand{\cB}{\mathcal{B}}
\newcommand{\cC}{\mathcal{C}}

\newcommand{\cP}{\mathcal{P}}
\newcommand{\cE}{\mathcal{E}}
\newcommand{\cF}{\mathcal{F}}
\newcommand{\cO}{\mathcal{O}}
\newcommand{\cD}{\mathcal{D}}
\newcommand{\ev}{\mathrm{ev}}
\newcommand{\bgm}{B\mathbb{G}_m}

\newcommand{\diasquare}{\ar@{}[rd]|\square}
\newcommand{\cHecke}{\mathcal{H}ecke}
\newcommand{\cHom}{\mathcal{H}om}
\newcommand{\rank}{\operatorname{rank}}

\newcommand{\rB}{\mathrm{B}}

\newcommand{\Coh}{\operatorname{Coh}}
\newcommand{\Aut}{\operatorname{Aut}}

\newcommand{\id}{\operatorname{id}}
\newcommand{\Ext}{\operatorname{Ext}}
\newcommand{\Ind}{\operatorname{Ind}}
\newcommand{\IndCoh}{\operatorname{IndCoh}}
\newcommand{\IndL}{\operatorname{IndL}}
\newcommand{\Hom}{\operatorname{Hom}}
\newcommand{\Map}{\operatorname{Map}}
\newcommand{\Spec}{\operatorname{Spec}}

\newcommand{\GL}{\operatorname{GL}}
\newcommand{\SL}{\operatorname{SL}}
\newcommand{\PGL}{\operatorname{PGL}}
\newcommand{\rS}{\mathrm{S}}
\newcommand{\rP}{\mathrm{P}}

\newcommand{\inclusion}{\ar@<-0.3ex>@{^{(}->}[r]}
\newcommand{\iinclusion}{\ar@<-0.3ex>@{^{(}->}[rr]}
\newcommand{\linclusion}{\ar@<0.3ex>@{_{(}->}[l]}
\newcommand{\dinclusion}{\ar@<-0.3ex>@{^{(}->}[d]}
\newcommand{\uinclusion}{\ar@<-0.3ex>@{^{(}->}[u]}

\newcommand{\wt}{\mathrm{wt}}
\newcommand{\Hig}{\mathrm{Higgs}}
\newcommand{\vd}{\varepsilon\dag}
\newcommand{\vdss}{\varepsilon\dag\text{-ss}}
\newcommand{\Bun}{\mathrm{Bun}}

\renewcommand{\labelenumi}{(\theenumi)}

\makeatletter
\newcommand{\colim@}[2]{%
  \vtop{\m@th\ialign{##\cr
    \hfil$#1\operator@font colim$\hfil\cr
    \noalign{\nointerlineskip\kern1.5\ex@}#2\cr
    \noalign{\nointerlineskip\kern-\ex@}\cr}}%
}
\newcommand{\colim}{%
  \mathop{\mathpalette\colim@{}}\nmlimits@
}
\makeatother

\usetikzlibrary{positioning,fit,calc}
\tikzstyle{block}=[draw=black, width=1cm, minimum height=2cm, align=center] 
\tikzstyle{block2}=[draw=black, text width=2cm, minimum height=1cm, align=center] 
\tikzstyle{block3}=[draw=black, text width=2cm, minimum height=1cm, align=center] 

\makeatletter

\@addtoreset{equation}{section}	
\makeatother

\begin{document}

\begin{abstract}
In this paper, we prove a Dolbeault geometric Langlands equivalence for $\GL_r$
and for the Langlands dual pair $\SL_r/\PGL_r$ over an open locus of the
Hitchin base which strictly contains the elliptic locus. This open locus contains the points corresponding to spectral curves with at worst 
type $A$ singularities, without any restriction on the number of irreducible components. 

The Dolbeault geometric Langlands equivalence considered here is the one
formulated in our previous work with Tudor P\u{a}durariu, which links categorical Donaldson--Thomas theory with the geometric Langlands correspondence. It relates coherent
sheaves on moduli stacks of semistable Higgs bundles to the limit category
associated with the full moduli stack of Higgs bundles. The use of limit
categories is essential beyond the elliptic locus, where the full Higgs moduli
stack is no longer quasi-compact and contains infinitely many
Harder--Narasimhan strata.

The key step is to prove the Whittaker normalization conjecture over the locus
of spectral curves with type $A$ singularities, following and extending the
strategy developed in the author's proof of the $\GL_2$ case over the reduced
spectral curve locus. As a consequence, we also obtain the Dolbeault geometric
Langlands conjecture for $\SL_2/\PGL_2$ over the reduced spectral curve locus.

\end{abstract}

\maketitle

 \setcounter{tocdepth}{2}
\tableofcontents

\section{Introduction}

\subsection{Main result}
Let $C$ be a smooth projective curve of genus $g\geq 2$, and let $G$ be a reductive group with Langlands
dual ${}^{L}G$.
Let $\Hig_G$ be the derived moduli stack of $G$-Higgs bundles on $C$, 
equipped with the Hitchin fibration
\begin{align*}
    h\colon \Hig_G \to \rB_G. 
\end{align*}
The main result of this paper is the following theorem, which proves the
\textit{Dolbeault geometric Langlands conjecture} for $\Hig_G$ and
$\Hig_{{}^{L}G}$ for groups of type $A$ beyond the elliptic locus.

\begin{mainthm}
For $G=\GL_r$, $\SL_r$ or $\PGL_r$ with $r\geq 2$, there is an open subset $\rB_G^{\circ} \subset \rB_G$, which strictly 
contains the elliptic locus, such that 
the Dolbeault geometric Langlands conjecture as formulated in~\cite{PTlim}
holds over $\rB_G^{\circ}$. 
\end{mainthm}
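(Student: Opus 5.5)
The plan is to follow the strategy of~\cite{PTlim} and of the author's earlier $\GL_2$ result. The conjecture there asks for an equivalence
\begin{align*}
\Coh(\Hig_{{}^{L}G}^{\mathrm{ss}}|_{\rB^{\circ}})_{w} \simeq \mathbb{T}(\Hig_G|_{\rB^{\circ}})_{-w},
\end{align*}
where the right-hand side is the limit category built from the (non quasi-compact) full Higgs stack over $\rB^\circ$. We would construct this equivalence by a Fourier--Mukai type functor with kernel extending the Poincar\'e sheaf on the smooth/elliptic part. First define $\rB_G^{\circ}$: for $\GL_r$, take the locus of spectral curves that are reduced with at worst $A_n$-type planar singularities $y^2=x^{n+1}$, allowing arbitrarily many irreducible components. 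For $\SL_r$ and $\PGL_r$, restrict to the corresponding trace-zero locus. This is open, since the singularity types degenerating to $A_n$ singularities are again $A_m$ or smooth. It also strictly contains the elliptic locus, because it includes reducible spectral curves.

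Over $\rB^\circ$, spectral correspondence identifies Higgs bundles with torsion-free rank one sheaves on the spectral curves. Since the singularities are planar, the relative compactified Jacobian is Gorenstein. The Arinkin kernel (a maximal Cohen--Macaulay sheaf on the product of the moduli of torsion-free sheaves and the open Jacobian-type locus) extends as a Cohen--Macaulay Poincar\'e sheaf $\mathcal{P}$ over the whole family of planar curves, as in Arinkin and Melo--Rapagnetta--Viviani. Using $\mathcal{P}$, we then construct the functor
\begin{align*}
\Phi_{\mathcal{P}}\colon \Coh(\Hig^{\mathrm{ss}}_{{}^LG})_w \to \lim \Coh(\Hig_G^{\le \mu})_{-w}
\end{align*}
compatibly with the Harder--Narasimhan stratification, checking that the image of each object lies in the quasi-BPS/limit subcategory. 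This compatibility is verified via the weight conditions of~\cite{PTlim} on fixed loci of the $\mathbb{G}_m$-actions attached to HN strata.

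Full faithfulness would follow from the Arinkin-type vanishing $\mathcal{P}\circ\mathcal{P}^\vee\simeq \cO_\Delta$, valid for planar singularities. The core issue is essential surjectivity, i.e.\ generation of the limit category. Following the $\GL_2$ argument, we reduce this to the \emph{Whittaker normalization conjecture}. This says that the structure sheaf of the Hitchin (Kostant) section on the ${}^LG$ side corresponds to the canonical generating object, a Whittaker-type object on the $G$ side, given by $\mathcal{P}$ restricted to the section, up to a prescribed line bundle twist. Once this holds, compatibility with Hecke (tensoring by tautological bundles) and the fact that Hecke translates of the Whittaker object generate the limit category give essential surjectivity. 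The $\SL_r/\PGL_r$ case then follows from $\GL_r$ by taking fibers of the determinant map and $\Pic^0(C)$-quotients. This step requires tracking the finite group $\Gamma=\Pic(C)[r]$-actions and the associated gerbes, matching weights $w$ with characters of $\Gamma$.

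The main obstacle is the Whittaker normalization over the $A$-type locus. Here the HN strata are infinite, and sheaves on reducible spectral curves can be destabilized along subcurves, so the transform of the section must be computed on every HN stratum. I would try to prove it by induction on the number of irreducible components. The inductive tool is the semiorthogonal/limit description of~\cite{PTlim}, which expresses each stratum via extensions of sheaves supported on subcurves. At each stage, the local computation would reduce to the $A_n$ singularity $y^2=x^{n+1}$, where the two-branch (odd $n$) and cusp-like (even $n$) cases can be handled by explicit local descriptions of compactified Jacobians, as fine compactified Jacobians of $A_n$ curves are well understood. The $\SL_2/\PGL_2$ statement over the reduced locus follows since every reduced rank-two spectral curve is a double cover with at worst $A$-type singularities.
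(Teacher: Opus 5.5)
Your overall architecture (the Arinkin--Li Cohen--Macaulay kernel, reduction to Whittaker normalization plus Hecke generation, then descent to $\SL_r/\PGL_r$) matches the paper. The gap is the decisive step: it is misstated and left unproved, and it is essentially the entire new content. The reduction itself and the Hecke generation over the type $A$ locus are quoted from \cite[Theorem~4.29, Corollary~4.28]{TodaGL2}.

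In the paper the Hitchin section lives on the automorphic side, $s\colon\cB\to\cH(\chi_0)$. The statement to prove is $\Phi(\cO_{\cH(w)^{\mathrm{ss}}})\cong s_!\cO_{\cB}$, where $s_!=j_!\overline{s}_!$ is the left adjoint of $s^*$; this adjoint exists only on the limit category. Restricting $\cP$ to the section, as in your formulation, is the tautology $(\id\times s)^*\cP\cong\cO$ of Lemma~\ref{lem:ids}. It only produces a comparison map $s_!\cO_{\cB}\to\Phi(\cO_{\cH(w)^{\mathrm{ss}}})$. The whole problem is to show this map is an isomorphism: that $\Phi(\cO_{\cH(w)^{\mathrm{ss}}})$ satisfies the $\mathbb{G}_m$-weight windows, strictly on the left at Harder--Narasimhan centres, and agrees with $(\overline{s}_*\cO_{\cB})_{w'}[-p_a]$ on the semistable locus. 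Your induction on components with local $A_n$ compactified-Jacobian computations gives no mechanism relating Arinkin's Cohen--Macaulay extension to these weight bounds.

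The paper supplies that mechanism in four steps:
\begin{itemize}
\item An explicit left resolution of $\cP_E$ by copies of $\mathcal{V}_E=\cP_{\cL}\otimes\bigotimes_{p\in N}\mathcal{V}_p^{\otimes n_p}$, where $\mathcal{V}_p$ is the generalized eigenspace bundle at $p$. It has rank two because the fibre of $\cC_b\to C$ at a type $A$ point is $\Spec k[t]/t^2$; this is a condition on the projection, not only on the analytic singularity type as in your definition. The resolution comes from exact sequences $0\to\cP_E\to\mathcal{V}_p\otimes\cP_{E'}\to\cP_E\to 0$, built from Hecke deformations towards smooth spectral curves, the local double cover, MCM extension across codimension two, the nodal case, and Lemma~\ref{lem:nodeB}.
\item A weight estimate placing $\mathcal{V}_E$ in the window whenever $E$ is semistable, with the lower end attained only when slopes coincide (Lemma~\ref{lem:lower}).
\item An \'etale base change allowing a generic perturbation $h_T^{\varepsilon\dag}$ of the polarization, so the spectral side becomes a $\mathbb{G}_m$-gerbe over a fine compactified Jacobian. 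Local cohomology vanishing along the perturbed strata shows $\Phi(\cO_{\cH(w)^{\mathrm{ss}}})$ is unchanged by this replacement (Theorem~\ref{thm:PhiO}).
\item On the semistable locus, Theorem~\ref{thm:MRV} together with the conservativity Lemma~\ref{lem:consv}.
\end{itemize}

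Several further steps also fail as stated.
\begin{itemize}
\item Full faithfulness does not follow from an identity $\cP\circ\cP^\vee\simeq\cO_\Delta$. The source $\cH(w)^{\mathrm{ss}}$ has strictly semistable points, so it is not a fine compactified Jacobian. The target $\cH(\chi)$ is not quasi-compact, so $\Phi^R\Phi$ is not a convolution over a proper base. In \cite[Theorem~4.29]{TodaGL2} the whole equivalence, full faithfulness included, is deduced from Whittaker normalization and Hecke generation. MRV duality enters only after passing to generic perturbed stabilities on both sides.
\item For $r\geq 3$ your locus does not contain the elliptic locus. Integral spectral curves can have non-type-$A$ singularities, e.g.\ $y^r=b_r$ with $b_r$ having a triple zero and a simple zero elsewhere. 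So the presence of reducible curves does not give the required inclusion. The paper takes $\rB_G^{\circ}=\rB_G^{\mathrm{ell}}\cup\rB_G^A$ and uses Arinkin's theorem on the elliptic part.
\item For $\SL_r/\PGL_r$ the paper avoids $\Pic(C)[r]$ altogether. It descends $\cP$ along the $\Pic(C)$-action by multiplicativity of the Deligne pairing, and proves $(\id\times p)_!\cP_{\circ}\cong(i\times\id)_*\cP_{\rS/\rP}$ via $q_!\cP_C\cong i_*\cO_{\mathrm{pt}}$. It then checks that $p^*,p_!,i_*,i^*$ preserve limit categories and concludes by conservativity of $i_*$ and $p^*$. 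Your $\Gamma$-equivariant route would need the same limit-category compatibilities, which you do not address.
\end{itemize}
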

Here we give two remarks on the above result:
\begin{itemize}[leftmargin=1.5em]
\item For $G=\GL_r$, the open subset $\mathrm{B}^{\circ}_G$ contains the locus corresponding to spectral curves with at worst type $A$ singularities, without any restriction on the number of irreducible components.
\item Over the locus $\rB_G^{\circ}$, the stack $\Hig_G$ is singular and each component is not quasi-compact. The framework of~\cite{PTlim}, using \textit{limit categories}, is essential both for the formulation and for the proof.
\end{itemize}

Below, we recall some background on the Dolbeault geometric Langlands conjecture and give a more precise statement of the main theorem.

\subsection{The Dolbeault geometric Langlands conjecture}
The Dolbeault geometric Langlands conjecture is a conjectural equivalence 
of dg-categories associated with moduli stacks of Higgs bundles, which 
may be regarded as a classical limit of the geometric Langlands correspondence~\cite{BD0, GLC1, GLC2, GLC3, GLC4, GLC5}. 
It is based on the following observation of Donagi--Pantev~\cite{DoPa}:
there is an isomorphism $\rB_G\cong\rB_{{}^{L}G}$, and hence there is a diagram 
\begin{align*}
    \xymatrix{
\Hig_{{}^{L}G} \ar[rd]_-{{}^{L}h} & & \Hig_G \ar[ld]^-{h} \\
& \rB_G. &
    }
\end{align*}
It is proved in~\cite{DoPa} that, over a dense open subset of $\rB_G$, the above 
maps are dual abelian fibrations. In particular, the generic fibers 
are derived equivalent by classical Fourier--Mukai theory~\cite{Mu1}.

It is expected in~\cite{DoPa} that the above equivalence extends over 
the full Hitchin base. However, the stack $\Hig_G$ is too large: indeed, each 
component is not quasi-compact unless $G$ is a torus, 
and 
standard dg-categories associated with $\Hig_G$ have poor finiteness properties. For instance,
$\IndCoh(\Hig_G)$ is not compactly generated. So far, for $G=\GL_r$, the study of such a derived equivalence has been restricted to the elliptic locus in $\rB_G$, where the above issue is avoided; see the next subsection.

In our paper with Tudor P\u{a}durariu~\cite{PTlim}, we introduced a dg-category, called a
\emph{limit category},
\begin{align}\label{intro:limit}
    \IndL_{\mathcal{N}}(\Hig_G(\chi))_w \subset \IndCoh_{\mathcal{N}}(\Hig_G(\chi))_w
\end{align}
for $\chi \in \pi_1(G)$ and $w\in Z_G^{\vee}$, where $Z_G$ is the center of $G$ and
$\mathcal{N}$ denotes the nilpotent singular support condition~\cite{AG}.
The stack $\Hig_G(\chi)$ is a connected component of $\Hig_G$ corresponding to $\chi$, and $(-)_w$ refers to the subcategory of central weight $w$. 
The limit category is expected to be an effective version of the classical limit of D-modules on $\Bun_G$. The dg-category~\eqref{intro:limit} is 
better behaved for the non-quasi-compact stack $\Hig_G(\chi)$: for example, it is compactly generated, and it also behaves as if it were a category of \textit{algebro-geometric $A$-branes}. 
As we will recall in Definition~\ref{def:Lcat}, its locally compact objects are
characterized by weight conditions with respect to maps
\[
    \nu \colon \bgm \to \Hig_G(\chi).
\]
For a perfect complex $\cE$, the above weight condition is given by 
(see Lemma~\ref{lem:perfect})
\begin{align*}
    \wt(\nu^* \cE) \subset \left[ \frac{1}{2}c_1(\nu^* \mathbb{L}_{\Hig_G}^{<0}), \frac{1}{2}c_1(\nu^* \mathbb{L}_{\Hig_G}^{>0}) \right] +c_1(\nu^* \delta_{w}).
\end{align*}
These conditions are motivated by the construction of noncommutative
resolutions of GIT quotients by Spenko--Van den Bergh~\cite{SvdB},
magic windows by Halpern-Leistner--Sam~\cite{hls}, and quasi-BPS
categories in categorical Donaldson--Thomas theory~\cite{T, PT0, PT1, PTquiver, PThiggs, PThiggs2}. The above weight condition is also natural 
as a dual notion of semistability of Higgs bundles, see Proposition~\ref{prop:natural}.

We then proposed a version of the \emph{Dolbeault geometric Langlands conjecture}~\cite{DoPa}
with nilpotent singular support, which may be regarded as
a classical limit of the 
geometric Langlands correspondence~\cite{BD0, AG, GLC1, GLC2, GLC3, GLC4, GLC5}, as follows:
\begin{conj}\emph{(\cite[Conjecture~1.1]{PTlim}, Conjecture~\ref{conj:dl})}\label{conj:intro}
    There is a $\rB_G(=\rB_{{}^{L}G})$-linear equivalence
    \begin{align}\label{intro:IndL}
        \IndCoh_{\mathcal{N}}(\Hig_{{}^{L}G}(w)^{\mathrm{ss}})_{-\chi} \stackrel{\sim}{\to}
        \IndL_{\mathcal{N}}(\Hig_G(\chi))_w.
    \end{align}
\end{conj}
On the left-hand side, we consider \textit{semistable} Higgs moduli stacks, which are quasi-compact, while on the right-hand side we consider the \textit{limit category} over the full Higgs moduli stacks, which are not quasi-compact. 
The formulation of Conjecture~\ref{conj:intro} is also motivated by the categorification of Donaldson--Thomas theory and BPS invariants, in particular \textit{$\chi$-independence phenomena}; see~\cite[Section~1]{PTlim}.

\subsection{Elliptic locus and beyond}\label{subsec:introell}
When $G=\GL_r$, each point $b\in \rB_{\GL_r}$ corresponds to a 
\textit{spectral curve}
\begin{align}\label{intro:scurve}
    \cC_b \subset S:=\mathrm{Tot}_C(\Omega_C),
\end{align}
and the fiber $h^{-1}(b)$ is identified with the moduli stack of torsion-free sheaves on $\cC_b$ with fundamental one-cycle $[\cC_b]$. 
The \textit{elliptic locus} 
\begin{align*}
    \rB_{\GL_r}^{\mathrm{ell}} \subset \rB_{\GL_r}
\end{align*}
corresponds to those spectral curves~\eqref{intro:scurve} which are integral, i.e.\ irreducible and reduced. 
Over $\rB_{\GL_r}^{\mathrm{ell}}$, the stack $\Hig_{\GL_r}$ coincides with its semistable and stable loci; in particular, each connected component is quasi-compact:
\begin{align}\label{intro:Hell}
    \Hig_{\GL_r}\times_{\rB_{\GL_r}}\rB_{\GL_r}^{\mathrm{ell}}=
     \Hig_{\GL_r}^{\mathrm{ss}}\times_{\rB_{\GL_r}}\rB_{\GL_r}^{\mathrm{ell}}=
      \Hig_{\GL_r}^{\mathrm{st}}\times_{\rB_{\GL_r}}\rB_{\GL_r}^{\mathrm{ell}}.
\end{align}
Conjecture~\ref{conj:intro} over the elliptic locus then reduces to 
a derived equivalence for compactified Jacobians of irreducible 
planar curves, proved by Arinkin~\cite{Ardual}. In particular, the 
limit category is not needed over $\rB_{\GL_r}^{\mathrm{ell}}$, since 
we have 
\begin{align*}
    \IndL_{\mathcal{N}}(\Hig_{\GL_r}\times _{\rB_{\GL_r}}\rB_{\GL_r}^{\mathrm{ell}})
    =
    \IndCoh_{\mathcal{N}}(\Hig_{\GL_r}\times _{\rB_{\GL_r}}\rB_{\GL_r}^{\mathrm{ell}}).
\end{align*}
The $\SL_r/\PGL_r$ case over the elliptic locus was studied in~\cite[Theorem~4.7]{GS}.

However, the equality~\eqref{intro:Hell} fails outside $\rB_{\GL_r}^{\mathrm{ell}}$. 
More precisely, for any open subset $\rB_{\GL_r}^{\circ}\subset \rB_{\GL_r}$ which is not contained in 
$\rB_{\GL_r}^{\mathrm{ell}}$, we have strict inclusions
 \begin{align*}
       \Hig_{\GL_r}\times_{\rB_{\GL_r}}\rB_{\GL_r}^{\circ} \supsetneq
     \Hig_{\GL_r}^{\mathrm{ss}}\times_{\rB_{\GL_r}}\rB_{\GL_r}^{\circ} \supsetneq
      \Hig_{\GL_r}^{\mathrm{st}}\times_{\rB_{\GL_r}}\rB_{\GL_r}^{\circ}.
 \end{align*}
The limit category plays an essential role over loci outside $\rB_{\GL_r}^{\mathrm{ell}}$. 
Indeed, a generic point of $\rB_{\GL_r} \setminus \rB_{\GL_r}^{\mathrm{ell}}$ 
corresponds to a reduced spectral curve with smooth irreducible components intersecting transversely; there are infinitely many Harder--Narasimhan strata on the moduli stack of coherent sheaves on such a spectral curve. 

We now summarize the differences in the behavior of the stack $\Hig_{\GL_r}$ over 
$\rB_{\GL_r}^{\mathrm{ell}}$ and outside it, highlighting the substantial difficulties of Higgs moduli stacks outside the elliptic locus. 
\begin{itemize}[leftmargin=1.5em]
    \item Over the elliptic locus $\rB_{\GL_r}^{\mathrm{ell}} \subset \rB_{\GL_r}$,
    up to a trivial derived structure, the stack $\Hig_{\GL_r}$ is a $\mathbb{G}_m$-gerbe over a smooth symplectic variety. Over this locus, every Higgs bundle with integral spectral curve is automatically stable. 
    \item Over \textit{any} open subset $\rB_{\GL_r}^{\circ} \subset \rB_{\GL_r}$ not contained in the elliptic locus, the stack $\Hig_{\GL_r}$ is a singular derived stack whose connected components are not quasi-compact. 
    Each connected component of $\Hig_{\GL_r}^{\mathrm{ss}}$ is quasi-compact, but $\Hig_{\GL_r}^{\mathrm{ss}}$ contains strictly semistable Higgs bundles, and it, respectively its good moduli space, is a singular stack, respectively a singular symplectic variety. 
\end{itemize}

\subsection[Spectral curves with type A singularities]{Spectral curves with type \texorpdfstring{$A$}{A} singularities}
For $G=\GL_r$ and $b\in \rB_{\GL_r}$, let $\cC_b$ be the corresponding spectral 
curve. We say that a singular point $p\in \cC_b$, with $c=\pi(p) \in C$, 
is a \textit{type $A$} singularity if 
$(\widehat{\cC_b})_{p} \to \widehat{C}_c$
is isomorphic to 
\begin{align*}
    \Spec k[[x, y]]/(y^2-x^m) \to \Spec k[[x]]
\end{align*}
for some $m\in \mathbb{Z}_{\geq 2}$. 
Here, for a variety $X$ and a point $x\in X$, we denote by
$\widehat{X}_x:=\Spec \widehat{\cO}_{X, x}$ the formal completion of $X$ at $x$.
Note that $m=2$ corresponds to the nodal singularity. 

It is well-known that any small deformation of a type $A$ singularity is again of type $A$ (\cite[Theorem~1.2.48, Corollary~1.2.49]{GreuelLossenShustin}), and therefore there is a derived open subscheme 
\begin{align}\label{intro:BA}
\rB_{\GL_r}^A \subset \rB_{\GL_r}
\end{align}
corresponding to spectral curves
with at worst type $A$ singularities. 
For $G=\SL_r$ or $\PGL_r$, the Hitchin base
$\rB_{\SL_r}=\rB_{\PGL_r}$ is the traceless part of $\rB_{\GL_r}$, and the open subsets
$\rB_G^{\mathrm{ell}}, \rB_G^A \subset \rB_G$ are defined to be their intersections with $\rB_{\GL_r}^{\mathrm{ell}}, \rB_{\GL_r}^A$. 
A more precise statement of the main theorem is as follows: 
\begin{thm}\emph{(Theorem~\ref{thm:GL}, Theorem~\ref{thm:SL}, Theorem~\ref{thm:PGL})}\label{intro:mainthm}
    For $G=\GL_r$, $\SL_r$ or $\PGL_r$, the Dolbeault geometric Langlands conjecture holds over the open subset 
    \begin{align*}
        \rB_G^{\circ}=\rB_{G}^{\mathrm{ell}} \cup \rB_G^A \subset \rB_G.
    \end{align*}
\end{thm}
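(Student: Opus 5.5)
The plan follows the strategy of the author's $\GL_2$ proof over the reduced spectral curve locus. It is a reduction to the Whittaker normalization conjecture of \cite{PTlim}, combined with Arinkin's Fourier--Mukai equivalence over the elliptic part. Recall from \cite{PTlim} the structure of the argument. A candidate functor \eqref{intro:IndL} is built from a Fourier--Mukai kernel $\mathcal{P}$ on $\Hig_{{}^{L}G}(w)^{\mathrm{ss}}\times_{\rB_G}\Hig_G(\chi)$. Over a $\rB_G$-open set where the Whittaker normalization holds, this kernel is shown to land in the limit category and to induce an equivalence. The normalization says the structure sheaf (or Whittaker object) of the Kostant/Hitchin section is sent to the appropriate generator. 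So for $G=\GL_r$ the main task is to verify the Whittaker normalization over $\rB^A_{\GL_r}$. Over $\rB^{\mathrm{ell}}_{\GL_r}$ the conjecture is already Arinkin's theorem \cite{Ardual}. Since both sides are $\rB_G$-linear and satisfy Zariski descent along the base, the two open pieces are then glued.

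First, set up the kernel. Over $\rB^A$ each spectral curve is reduced and planar, with locally irreducible components meeting only through $A_{2k-1}$-type branches ($y^2=x^{2k}$ has two branches, $y^2=x^{2k+1}$ one). I would define the Poincaré sheaf on (semistable torsion-free sheaves) $\times$ (all torsion-free sheaves) by Arinkin's formula, extending it from the locus where one factor is a line bundle. Maximal Cohen--Macaulayness and flatness follow from a local computation at type $A$ points, where the completed local ring has at most two branches.

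Second, and this is the main obstacle, prove the kernel is compatible with the weight conditions defining $\IndL_{\mathcal{N}}$. For every $\nu\colon \bgm\to\Hig_G(\chi)$, the $\mathbb{G}_m$-weights of the restriction of $\mathcal{P}$ must lie in the window given by Lemma~\ref{lem:perfect}. Points fixed by such a $\nu$ correspond to direct sums $\bigoplus F_i$ of sheaves supported on subcurves $\cC_i$ with $\sum[\cC_i]=[\cC_b]$. The weight of $\nu^*\mathcal{P}_E$ is computed as $\chi$-type pairings of $E|_{\cC_i}$ with $F_i$. It must then be compared with $c_1(\nu^*\mathbb{L}^{\gtrless 0})$, which is governed by the intersection numbers $\cC_i\cdot\cC_j$ on $S$. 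Semistability of $E$ gives slope inequalities for its restrictions to $\cC_i$ modulo torsion. Type $A$ singularities enter through the local intersection multiplicity at two-branch points, where $F_i$ and $F_j$ glue through a $k[[x]]/x^k$-type module. Here I would reduce to a local statement on $k[[x,y]]/(y^2-x^{2k})$ and compute the weights explicitly, in the spirit of the nodal computation in the $\GL_2$ case. The dual half of the argument, that semistability and the limit weight conditions are matched (Proposition~\ref{prop:natural}), supplies the inequalities needed. Once the kernel lands in the limit category, fully faithfulness is checked fiberwise on generators, and essential surjectivity follows from Whittaker normalization by the generation argument of \cite{PTlim}.

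For $\SL_r/\PGL_r$ I would deduce the result from $\GL_r$. $\Hig_{\SL_r}$ is the fiber of the determinant map $\Hig_{\GL_r}\to \Hig_{\GL_1}$ over the trivial point. $\Hig_{\PGL_r}(\chi)$ is the quotient of $\Hig_{\GL_r}(d)$, $d\equiv\chi$ mod $r$, by the action of $\Hig_{\GL_1}=T^*\mathrm{Pic}$. The $\GL_r$ equivalence is equivariant, exchanging tensoring by $\Hig_{\GL_1}$ with translation on the dual side via the $\GL_1$ Fourier--Mukai transform. So taking fibers on one side corresponds to taking invariants on the other, restricted to the traceless part of the base, exactly as in \cite[Theorem~4.7]{GS} over the elliptic locus. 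The remaining check is that the limit-category weight conditions are preserved by restriction and descent, which holds because the relevant cotangent complexes differ only by the trivial $\GL_1$ part.
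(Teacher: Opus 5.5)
Your reduction is set up correctly. By Theorem~\ref{thm:WhitGL} (from \cite{TodaGL2}, not \cite{PTlim}), and since the conductor condition is automatic on type $A$ curves, the $\GL_r$ case over $\rB^A$ reduces to the Whittaker normalization $\Phi(\cO_{\cH(w)^{\mathrm{ss}}})\cong s_{!}\cO_{\cB}$. But your plan never proves this isomorphism. After calling it ``the main task'', you only aim at a pointwise weight estimate for the kernel, and then use Whittaker normalization as an input for essential surjectivity.

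A pointwise statement $\cP_E\in\widetilde{\LL}(\cH)_{w'}$ for semistable $E$ (Corollary~\ref{cor:PE}) is far from enough. First, you need $\Phi(\cO_{\cH(w)^{\mathrm{ss}}})\in j_{!}\LL(\cH(\chi_0)^{\mathrm{ss}})_{w'}$, i.e.\ strict left inequalities at centers of HN strata. This can fail pointwise for strictly semistable $E$: the lower bound can be attained when (\ref{ineq:rII}) is an equality (Lemma~\ref{lem:lower}). Moreover $\cH(w)^{\mathrm{ss}}$ is an Artin stack, so pointwise information does not integrate directly.

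The paper handles both issues as follows. It passes to an \'etale neighborhood $T\to\cB$ that carries enough polarizations $h_T^{\varepsilon\dag}$ to make semistable equal stable. It proves local cohomology vanishings (Lemma~\ref{lem:nweight}, Proposition~\ref{prop:vanishing}, Lemma~\ref{lem:vanish2dag}), so that $\Phi_T(\cO)$ can be computed on the $h_T^{\varepsilon\dag}$-stable locus, which is a $\mathbb{G}_m$-gerbe over a scheme. It then inducts over HN strata (Theorem~\ref{thm:PhiO}). Second, the isomorphism with $\overline{s}_{!}\cO_{\cB}$ on the semistable locus needs the Fourier--Mukai equivalence for fine compactified Jacobians (Theorem~\ref{thm:MRV}) together with the conservativity Lemma~\ref{lem:consv}, which rests on Lemma~\ref{lemma:nonempty}. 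Nothing in your proposal replaces these steps.

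Even the pointwise estimate does not go through as you describe it. Computing $\wt(\nu^*\cP_E)$ by Deligne pairings is valid only when $\cP_E$ is a line bundle, i.e.\ when $E$ is locally free (Lemma~\ref{lem:PLGm}). When $E$ fails to be locally free at a singular point, $\cP_E$ is a maximal Cohen--Macaulay sheaf that is not perfect, so Lemma~\ref{lem:perfect} does not apply. There is also no local model on $k[[x,y]]/(y^2-x^m)$ computing its weights, since they are global on $\cH_b$.

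The missing idea is an explicit left resolution of $\cP_E$ by sums of $\mathcal{V}_E=\cP_{\cL}\otimes\bigotimes_{p\in N}\mathcal{V}_p^{\otimes n_p}$ (Proposition~\ref{prop:resol}). It comes from exact sequences $0\to\cP_E\to\mathcal{V}_p\otimes\cP_{E'}\to\cP_E\to 0$, which are proved using:
\begin{itemize}
\item a Hecke deformation to generic smooth spectral curves;
\item the local double cover $\cD_p\to D$;
\item Cohen--Macaulay extension across codimension two;
\item reduction to the nodal case via Lemma~\ref{lem:nodeB}.
\end{itemize}
Combined with Lemma~\ref{lem:perfect2}, it is the weights $\sum_i l_i\nu_i+\sum_{p}(a_p\nu_{\alpha(p)}+b_p\nu_{\beta(p)})$ of $\mathcal{V}_E$ that the semistability inequalities control (Proposition~\ref{prop:VE}). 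Also, flatness and the Cohen--Macaulay property of the kernel are theorems of Arinkin and Li, not a local computation.

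Your $\SL_r/\PGL_r$ step matches the paper's: descend the kernel using multiplicativity of the Deligne pairing, check that $p^*$, $p_!$, $i_*$, $i^*$ respect limit categories, and conclude by conservativity. The paper first reduces to the trace-zero base, so that only $\Pic(C)$ acts.
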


Here, for an open subset $\rB_G^{\circ} \subset \rB_G$, we say that the
\textit{Dolbeault geometric Langlands conjecture holds over $\rB_G^{\circ}$}
if there is a $\rB_G^{\circ}$-linear equivalence 
\begin{align}\label{intro:IndL2}
        \IndCoh_{\mathcal{N}}(\Hig_{{}^{L}G}(w)^{\mathrm{ss}}\times_{\rB_G}\rB_G^{\circ})_{-\chi}
        \stackrel{\sim}{\to}
        \IndL_{\mathcal{N}}(\Hig_G(\chi)\times_{\rB_G}\rB_G^{\circ})_w.
\end{align}
Here we explain why it is natural to consider the locus (\ref{intro:BA}):

\begin{itemize}[leftmargin=1.5em]
\item For $G=\GL_2$, we have $\rB_{\GL_2}^A=\rB_{\GL_2}^{\mathrm{red}}$, 
where $\rB_{\GL_r}^{\mathrm{red}}$ corresponds to the locus of reduced spectral curves. Thus $\rB_{\GL_2}^A$ gives 
    a natural class of spectral curves for $\GL_r$ which extends the class of 
    reduced spectral curves considered in~\cite{TodaGL2}.

    \item The locus $\rB_{\GL_r}^A$ contains the generic points of the boundary 
    components corresponding to reducible reduced spectral curves. More precisely, 
    for each partition $r=r_1+\cdots+r_k$, the locus $\rB_{\GL_r}^A$ contains the image 
    of the generic point of the addition map
    \begin{align*}
        \rB_{\GL_{r_1}}\times \cdots \times \rB_{\GL_{r_k}} \to \rB_{\GL_r},
    \end{align*}
    defined by adding spectral curves as divisors on $S$. The image of this 
    generic point corresponds to a spectral curve with $k$ smooth irreducible 
    components intersecting transversely. 

      \item Type $A$ singularities are precisely those for which the 
    zero-dimensional subschemes of $\cC_b$ defined by the conductor ideals are 
    curvilinear. This property is currently used in an essential way to prove 
    compact generation of the limit category under iterated Hecke operators 
    in~\cite{TodaGL2}, and it is also essential in the present paper for 
    constructing explicit resolutions of Arinkin sheaves. 
\end{itemize}

By the first remark above, as a corollary, we obtain 
the following result, which gives a version of the main result of~\cite[Theorem~1.2]{TodaGL2} for $\SL_2/\PGL_2$:
\begin{cor}\label{cor:SL2}
For $G=\SL_2$ or $\PGL_2$, the Dolbeault geometric Langlands conjecture holds over the locus $\rB_G^{\mathrm{red}} \subset \rB_G$.
\end{cor}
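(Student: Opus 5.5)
This follows from Theorem~\ref{intro:mainthm} for $G=\SL_2$ and $G=\PGL_2$, once we show that $\rB_G^{\mathrm{red}}\subset\rB_G^{\mathrm{ell}}\cup\rB_G^A$. Then the $\rB_G^{\circ}$-linear equivalence \eqref{intro:IndL2} restricts to the open subset $\rB_G^{\mathrm{red}}$. So we need two things: the inclusion of loci, and compatibility of both sides of \eqref{intro:IndL2} with restriction to opens of the Hitchin base.

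\textbf{Inclusion of loci.} For $G=\SL_2$ or $\PGL_2$, the loci $\rB_G^{\mathrm{red}}$, $\rB_G^{\mathrm{ell}}$ and $\rB_G^A$ are by definition the intersections of the corresponding $\GL_2$ loci with the traceless part. It therefore suffices to show $\rB_{\GL_2}^{\mathrm{red}}\subset\rB_{\GL_2}^A$, which is the first remark after Theorem~\ref{intro:mainthm}.

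\begin{itemize}
\item A spectral curve $\cC_b$ for $\GL_2$ is locally cut out in $S$ by $y^2+a_1(x)y+a_2(x)=0$, where $x$ is a local coordinate on $C$ and $y$ a fiber coordinate.
\item Completing the square, after the change of fiber coordinate $y\mapsto y-a_1/2$ (which is compatible with the projection to $C$), the equation becomes $y^2=f(x)$ with $f=a_1^2/4-a_2$.
\item At a singular point $p$ over $c=\pi(p)$ we may translate so that $p$ lies at $y=0$. Singularity at $p$ means $f$ vanishes to order $m\geq 2$ at $c$.
\item Reducedness of $\cC_b$ means $f$ is not identically zero, so $m<\infty$.
\item Write $f=x^m u$ with $u$ a unit in $k[[x]]$. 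Replacing $x$ by $x u^{1/m}$ puts the equation in the form $y^2=x^m$. This is a type $A$ singularity compatibly with the map to $\widehat{C}_c$, exactly as in the definition.
\end{itemize}

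\textbf{Restriction to opens.} Both sides of \eqref{intro:IndL2} are formed by base change along $\rB_G^{\circ}\subset\rB_G$, so a $\rB_G^{\circ}$-linear equivalence induces an equivalence after tensoring with $\QCoh(\rB_G^{\mathrm{red}})$ over $\QCoh(\rB_G^{\circ})$. On the left, $\IndCoh_{\mathcal{N}}$ of the quasi-compact semistable stack is compatible with open restriction. On the right, we need that $\IndL_{\mathcal{N}}$ of the base change to an open $U$ equals the base change of the limit category to $U$. This holds because the weight conditions in Definition~\ref{def:Lcat} are tested on maps $\nu\colon\bgm\to\Hig_G(\chi)$, and these factor through the fibers of $h$, so the conditions are local on $\rB_G$.

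The only step with content is this last compatibility of the limit category with open restriction. If it is not already recorded in the paper, I would deduce it from the fiberwise nature of the weight conditions together with compact generation, which makes the limit category $\QCoh(\rB_G)$-linear. I expect it to be routine. Alternatively, one can simply apply Theorem~\ref{intro:mainthm} with $\rB_G^{\circ}$ replaced by the open subset $\rB_G^{\mathrm{red}}\subset\rB_G^{\mathrm{ell}}\cup\rB_G^A$, assuming the proof of the theorem works verbatim for any open subset of that locus.
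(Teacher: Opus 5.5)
Your proposal is correct and is essentially the paper's argument: the corollary follows from Theorem~\ref{intro:mainthm} using $\rB_{\GL_2}^A=\rB_{\GL_2}^{\mathrm{red}}$, which the paper shows by the same completing-the-square computation you give (Example~\ref{exam:Atype}(1)). Your restriction-to-opens step is unnecessary: $\rB_G^{\mathrm{ell}}\cup\rB_G^A\subset\rB_G^{\mathrm{red}}$ holds by definition, since integral curves are reduced and $\rB_{\GL_r}^A$ is defined inside $\rB_{\GL_r}^{\mathrm{red}}$, so for $r=2$ one has $\rB_G^{\circ}=\rB_G^{\mathrm{red}}$ exactly and Theorem~\ref{intro:mainthm} applies verbatim.
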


\begin{figure}[t]
\centering
\begin{tikzpicture}[
  x=1cm,y=1cm,
  scale=1.08,
  transform shape,
  font=\small,
  fiber/.style={gray!45, thin},
  base/.style={gray!65, line width=0.45pt},
  curve/.style={black, line width=1.05pt, line cap=round, line join=round},
  sing/.style={circle, fill=black, inner sep=1.2pt},
  lab/.style={font=\scriptsize, inner sep=1.2pt},
  title/.style={font=\small\bfseries, align=center},
  subtitle/.style={font=\scriptsize, align=center}
]

\begin{scope}[shift={(0.25,0)}]
  \node[title] at (2.45,3.95) {elliptic locus};
  \node[subtitle] at (2.45,3.62)
    {$b\in\mathcal B^{\mathrm{ell}},\quad \mathcal C_b\ \mathrm{integral}$};

  \draw[base]
    (0.15,0.25) .. controls (0.85,-0.08) and (4.05,-0.08) .. (4.75,0.25)
    .. controls (4.05,0.58) and (0.85,0.58) .. (0.15,0.25);
  \node[lab] at (4.98,0.25) {$C$};
  \node[lab] at (0.32,3.15) {$\mathrm{Tot}(\Omega_C)$};

  \foreach \x/\h in {0.55/2.25,1.15/2.55,1.75/2.35,2.35/2.65,2.95/2.40,3.55/2.55,4.15/2.20}
    \draw[fiber] (\x,0.25) -- (\x,\h);

  \draw[curve]
    (0.62,1.55)
    .. controls (0.72,2.52) and (1.62,2.95) .. (2.55,2.66)
    .. controls (3.50,2.36) and (4.38,2.55) .. (4.48,1.83)
    .. controls (4.58,1.05) and (3.50,0.92) .. (2.56,1.24)
    .. controls (1.52,1.60) and (0.56,0.92) .. (0.62,1.55);

  \node[lab] at (2.58,2.93) {$\mathcal C_b$};
  \node[lab, align=center] at (2.35,-0.45) {one irreducible component};
\end{scope}

\draw[gray!35] (5.70,-0.55) -- (5.70,4.10);

\begin{scope}[shift={(6.20,0)}]
  \node[title] at (2.45,3.95) {reduced locus};
  \node[subtitle] at (2.45,3.62)
    {$b\in\mathcal B^{\mathrm{red}},\quad
      \mathcal C_b=\mathcal C_1\cup\mathcal C_2\cup\mathcal C_3$};

  \draw[base]
    (0.15,0.25) .. controls (0.85,-0.08) and (4.05,-0.08) .. (4.75,0.25)
    .. controls (4.05,0.58) and (0.85,0.58) .. (0.15,0.25);
  \node[lab] at (4.98,0.25) {$C$};
  \node[lab] at (0.32,3.15) {$\mathrm{Tot}(\Omega_C)$};

  \foreach \x/\h in {0.55/2.25,1.15/2.55,1.75/2.35,2.35/2.65,2.95/2.40,3.55/2.55,4.15/2.20}
    \draw[fiber] (\x,0.25) -- (\x,\h);

  \coordinate (p12) at (1.45,2.03);
  \coordinate (p23) at (3.25,1.20);

  \draw[curve]
    (0.35,2.55)
    .. controls (0.80,2.42) and (1.16,2.24) .. (p12)
    .. controls (2.00,1.65) and (2.80,1.80) .. (3.55,2.10)
    .. controls (4.02,2.28) and (4.30,2.30) .. (4.55,2.20);

  \draw[curve]
    (0.35,1.35)
    .. controls (0.90,1.48) and (1.16,1.76) .. (p12)
    .. controls (2.05,2.48) and (2.45,1.65) .. (p23)
    .. controls (3.72,0.72) and (4.15,0.68) .. (4.55,0.82);

  \draw[curve]
    (0.35,0.72)
    .. controls (1.15,0.83) and (2.05,0.92) .. (p23)
    .. controls (3.72,1.44) and (4.12,1.56) .. (4.55,1.48);

  \node[sing] at (p12) {};
  \node[lab] at (1.24,2.27) {$A_1$};

  \node[sing] at (p23) {};
  \node[lab] at (3.08,0.95) {$A_1$};

  \node[lab] at (4.28,2.40) {$\mathcal C_1$};
  \node[lab] at (4.28,0.56) {$\mathcal C_2$};
  \node[lab] at (4.28,1.68) {$\mathcal C_3$};

  \node[lab, align=center] at (2.35,-0.45)
    {smooth reduced components meeting transversely};
\end{scope}

\end{tikzpicture}
\caption{A schematic comparison between the elliptic locus and the reduced locus.
On the elliptic locus, the spectral curve $\mathcal C_b$ is integral. On the reduced locus,
the spectral curve may be reducible; the right-hand picture illustrates a reduced spectral
curve whose smooth irreducible components meet transversely.}
\label{fig:elliptic-reduced-loci}
\end{figure}

\subsection{The key ingredients of this paper}
The main technical contribution of this paper is to prove the
\textit{Whittaker normalization}, formulated as follows.

Let $G=\GL_r$, and let
\[
    \cB \subset (\rB_{\GL_r}^{\mathrm{red}})^{\mathrm{cl}}
\]
be an open subset, where $(-)^{\mathrm{cl}}$ denotes the classical truncation.
We set
\begin{align*}
    \cH(\chi):=\Hig_{\GL_r}(\chi)\times_{\rB_{\GL_r}} \cB.
\end{align*}
There is a Cohen--Macaulay extension of the Poincar\'e line bundle, constructed by Arinkin~\cite{Ardual}
over the elliptic locus, and further extended by Li~\cite{MLi} to the locus
$(\rB_{\GL_r}^{\mathrm{red}})^{\mathrm{cl}}$,
\begin{align}\label{intro:Arsheaf}
    \cP \in \Coh(\cH(w)\times_{\cB}\cH(\chi)).
\end{align}
This object induces the Fourier--Mukai functor
\begin{align}\label{intro:Phi}
    \Phi \colon \Coh(\cH(w)^{\mathrm{ss}})_{-\chi'} \to
    \Coh(\cH(\chi))_{w'}.
\end{align}
Here $\chi'$ and $w'$ are given by
\begin{align*}
    \chi' := \chi+(r^2-r)(g-1), \qquad
    w' := w+(r^2-r)(g-1).
\end{align*}
Here we note that these shifts of weights are harmless for the formulation of the Dolbeault geometric Langlands conjecture, by the periodicity of the relevant dg-categories, see Remark~\ref{rmk:thm1}.

On the other hand, let $s$ be the Hitchin section
\begin{align*}
    s\colon \cB \to \cH(\chi_0), \ \chi_0:=(r-r^2)(g-1)
\end{align*}
given by the structure sheaf of the universal spectral curve.
The theory of limit categories allows us to construct the
\textit{left adjoint} of the functor $s^*$,
\begin{align*}
    s_{!} \colon \Coh(\cB) \to \LL_{\mathcal{N}}(\cH(\chi_0))_{w'}.
\end{align*}
The following conjecture was formulated in~\cite[Conjecture~4.4]{TodaGL2}.

\begin{conj}\emph{(\cite[Conjecture~4.4]{TodaGL2}, Conjecture~\ref{conj:Whit})}\label{intro:conj:whit}
There is an isomorphism
\begin{align}\label{intro:whit:isom}
    \Phi(\cO_{\cH(w)^{\mathrm{ss}}}) \cong s_{!}\cO_{\cB}.
\end{align}
\end{conj}

Such an isomorphism, if it exists, is a key property which connects the
Cohen--Macaulay extension of the Poincar\'e sheaf by Arinkin with the
weight condition in the definition of the limit category. This is the most
difficult part in proving the Dolbeault geometric Langlands conjecture
formulated in~\cite{PTlim}; a priori, a relation of 
Cohen--Macaulay extension and the bounds on $\mathbb{G}_m$-weights is not clear and still mysterious.

Moreover, it is proved in~\cite[Theorem~4.29]{TodaGL2} that
Conjecture~\ref{intro:conj:whit}, together with the generation of the limit
category by applying iterated Hecke operators to $s_!\cO_{\cB}$, yields the Dolbeault geometric
Langlands equivalence over $\cB$. 
Over the type $A$-locus $\cB^A:=(\rB_{\GL_r}^A)^{\mathrm{cl}}$, the above compact 
generation statement is also proved in~\cite[Corollary~4.28]{TodaGL2}. 
The Whittaker normalization in
Conjecture~\ref{intro:conj:whit} is the remaining problem, and we prove it
in this paper.

\begin{thm}\emph{(Theorem~\ref{thm:whit})}\label{thm:Whit:intro}
The Whittaker normalization Conjecture~\ref{intro:conj:whit} is true over
$\cB=\cB^A$.
\end{thm}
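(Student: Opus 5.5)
The plan is to follow the strategy of~\cite{TodaGL2} for $\GL_2$ over the reduced locus. The key point is to characterize $s_!\cO_{\cB}$ intrinsically and then verify that $\Phi(\cO_{\cH(w)^{\mathrm{ss}}})$ has these properties. By adjunction, $s_!\cO_{\cB}$ is determined by two facts: it is an object of $\LL_{\mathcal{N}}(\cH(\chi_0))_{w'}$, and it corepresents $s^*$ on this category. So I would first construct a natural morphism
\begin{align*}
s_!\cO_{\cB}\to \Phi(\cO_{\cH(w)^{\mathrm{ss}}}).
\end{align*}
By adjunction, this amounts to giving a map $\cO_{\cB}\to s^*\Phi(\cO_{\cH(w)^{\mathrm{ss}}})$. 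By base change along the Hitchin section, $s^*\Phi(\cO)$ is the pushforward to $\cB$ of the restriction of $\cP$ to $\cH(w)^{\mathrm{ss}}\times_{\cB}\cB$. At the structure sheaf of the spectral curve, Arinkin's sheaf (as extended by Li) restricts to the trivial bundle. The canonical section $1$ therefore gives the morphism.

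I would then prove that this morphism is an isomorphism, in two stages.
\begin{itemize}[leftmargin=1.5em]
\item \emph{Weight condition.} Show that $\Phi(\cO_{\cH(w)^{\mathrm{ss}}})$ lies in the limit category $\LL_{\mathcal{N}}(\cH(\chi_0))_{w'}$. By Definition~\ref{def:Lcat}, this means checking the weight bounds for every $\nu\colon \bgm\to\cH(\chi_0)$. Such a $\nu$ corresponds to a splitting $F=\bigoplus F_i$ of a sheaf on $\cC_b$, supported on subcurves $\cC_i$. Using the multiplicativity of the Poincar\'e sheaf on the locus where one factor is a line bundle, $\nu^*\cP$ restricted to $\cH(w)^{\mathrm{ss}}$ decomposes. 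Its $\mathbb{G}_m$-weights can then be read off from the degrees of the restrictions of the semistable sheaves $E$ to the subcurves $\cC_i$. The required bound $\tfrac12 c_1(\nu^*\mathbb{L}^{<0})\le \wt\le \tfrac12 c_1(\nu^*\mathbb{L}^{>0})$ becomes exactly the semistability inequality for $E$ relative to the intersection numbers $\cC_i\cdot\cC_j$. This is the duality indicated by Proposition~\ref{prop:natural}.
\item \emph{Corepresentability.} Show that $\Hom(\Phi(\cO),-)\cong \Hom(\cO_{\cB},s^*(-))$ on the limit category. Since, by~\cite[Corollary~4.28]{TodaGL2}, the limit category over $\cB^A$ is generated by iterated Hecke operators applied to $s_!\cO_{\cB}$, it suffices to check this on such objects. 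Hecke operators correspond under $\Phi$ to tensoring by universal sheaves, so the check reduces to computing $\Phi$ of these tensor products and comparing.
\end{itemize}

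The main obstacle is the weight computation in the first stage at points where $\nu$ passes through singular points of type $A_{m}$ with $m>2$ that are not nodes. There $\cP$ is not locally free, so $\nu^*\cP$ must be computed through a resolution. My plan is as follows.
\begin{itemize}[leftmargin=1.5em]
\item Use the curvilinear conductor ideal to build an explicit finite locally free resolution of the Arinkin sheaf near such points. Locally, $\cP$ is a pushforward from a Hilbert scheme of points on the smooth surface $S$, via Arinkin's construction, and the conductor subscheme is a curvilinear scheme $\Spec k[x]/x^{\lfloor m/2\rfloor}$ on each branch.
\item Bound the weights term by term in this resolution.
\item Formally localize along $\widehat{C}_c$ to reduce to the local model $y^2=x^m$, where the computation is an explicit one.
\end{itemize}
If a global resolution is hard to obtain, I would fall back on working \'etale locally on $\cB$ and using the $\cB$-linearity of both sides to glue.
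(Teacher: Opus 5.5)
\paragraph{The gap: your corepresentability stage is circular.}
Your comparison map (Lemma~\ref{lem:ids} plus adjunction) matches the paper. So does your plan to bound the weights of $\cP_E$ through a resolution controlled by the curvilinear conductor (Propositions~\ref{prop:resol} and~\ref{prop:VE}). Note that the paper builds this resolution globally on $\cH_b$, by deforming into the nodal locus, not by formal localization on the curve.

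Testing against generators $X=T(s_!\cO_{\cB})$ does not help:
\begin{itemize}
\item Hecke-compatibility of $\Phi$ lets you rewrite $T\,\Phi(\cO_{\cH(w)^{\mathrm{ss}}})$, not $T(s_!\cO_{\cB})$, unless you already know the isomorphism you are proving.
\item Moving $T$ across gives $\Hom(T^{L}\Phi(\cO_{\cH(w)^{\mathrm{ss}}}),s_!\cO_{\cB})$, with $T^{L}$ the left adjoint of $T$. This is a Hom \emph{into} a $!$-pushforward, which no adjunction computes.
\item Equivalently, for $X=\Phi(B)$, Lemma~\ref{lem:ids} gives $\Hom(\cO_{\cB},s^*\Phi(B))\cong\Hom(\cO_{\cH(w)^{\mathrm{ss}}},B)$. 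So corepresentability there is exactly full faithfulness of $\Phi$ on maps out of $\cO_{\cH(w)^{\mathrm{ss}}}$. That is what Theorem~\ref{thm:WhitGL} derives \emph{from} the Whittaker normalization.
\end{itemize}

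The missing idea is to move the comparison to the quasi-compact semistable locus:
\begin{itemize}
\item Prove $\Phi(\cO_{\cH(w)^{\mathrm{ss}}})\in j_!\LL(\cH(\chi_0)^{\mathrm{ss}})_{w'}$ (Theorem~\ref{thm:PhiO}). By full faithfulness of $j_!$, it then suffices to check $\overline{s}_!\cO_{\cB}\to\Phi(\cO_{\cH(w)^{\mathrm{ss}}})|_{\cH(\chi_0)^{\mathrm{ss}}}$, where $\overline{s}_!\cO_{\cB}=(\overline{s}_*\cO_{\cB})_{w'}[-p_a]$ is explicit.
\item Pass to an \'etale neighborhood $T\to\cB$ carrying component-separating divisors. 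Perturb the polarization to a generic $h_T^{\vd}$, so that $\cH_T(w)^{\vdss}$ is a $\mathbb{G}_m$-gerbe over a fine compactified Jacobian.
\item Show, via negative $\lambda$-weights along the $h_T^{\vd}$-Harder--Narasimhan strata (Lemmas~\ref{lem:nweight} and~\ref{lem:vanish2dag}), that $\Phi_T(\cO)$ only sees this locus.
\item Compute $\Phi_T^{\vd}(\cO)\cong(\overline{s}_T^{\vd})_!\cO_T$ from the explicit inverse kernel in Theorem~\ref{thm:MRV}.
\item Conclude with the conservativity Lemma~\ref{lem:consv}.
\end{itemize}

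\paragraph{Your first stage also falls short of what this reduction needs.}
Since $\cH(\chi_0)$ is not quasi-compact, $\LL_{\mathcal{N}}(\cH(\chi_0))_{w'}$ is not cut out by the weight bounds of Definition~\ref{def:Lcat}. Checking them for every $\nu$ only gives membership in $\widetilde{\LL}$. Compactness, and in particular membership in the image of $j_!$, requires the left inequality to be strict at centers of Harder--Narasimhan strata (Remarks~\ref{rmk:jshrink} and~\ref{rmk:Ltilde2}).

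This strictness fails pointwise on $\cH(w)^{\mathrm{ss}}$. By the computation in Proposition~\ref{prop:natural}, suppose a semistable line bundle $\cL$ has an equal-slope quotient $\cL|_{C_I}$. Then $\cP_{\cL}$ attains the lower bound at every two-step HN center $M_1\oplus M_2$ with $\mathrm{Supp}(M_1)=C_I$ (cf.\ Lemma~\ref{lem:lower}).

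The paper obtains strictness as follows (Theorem~\ref{thm:PhiO}, Step~2):
\begin{itemize}
\item It chooses the perturbation $h_T^{\vd}$ according to the HN type $\mu$, so that $h_T^{\vd}$-stable sheaves never attain the bound.
\item It uses the same local cohomology vanishing to discard all other sheaves.
\end{itemize}
This perturbation-plus-vanishing mechanism is the tool your proposal lacks in both stages.
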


The argument proving Theorem~\ref{thm:Whit:intro} follows the idea of~\cite[Section~5]{TodaGL2},
but the required weight estimates are more complicated for general
$G=\GL_r$. The main contribution of this paper is to settle this problem.
An outline of the proof is as follows
(which is summarized in Figure~\ref{fig:flow}): 

\begin{enumerate}[leftmargin=1.5em]
\renewcommand{\labelenumi}{(\arabic{enumi})}
\item We first prove that, for any semistable Higgs bundle $E$ whose
spectral curve $\cC_b$ has at worst type $A$ singularities, the associated
object $\cP_E \in \Coh(\cH_b)$ 
for $\cH_b=\cH\times_{\cB}\{b\}$
admits a left resolution by an explicit vector bundle $\mathcal{V}_E$
(Proposition~\ref{prop:resol}). We then estimate the weights of
$\mathcal{V}_E$ with respect to maps $\bgm \to \cH$
(Proposition~\ref{prop:VE}). Combining this weight estimate with the
left resolution by $\mathcal{V}_E$, we conclude that $\cP_E$ lies in the
limit category locally on $\cH$ (Corollary~\ref{cor:PE}).

\item Using the result of (1), we show that
(Theorem~\ref{thm:PhiO})
\begin{align*}
    \Phi(\cO_{\cH(w)^{\mathrm{ss}}}) \in
    j_{!} \LL(\cH(\chi_0)^{\mathrm{ss}})_{w'}.
\end{align*}
A key idea is to perturb the stability condition after an \'etale base
change of $\cB$, thereby reducing the problem to the case where
$\cH(w)^{\mathrm{ss}}$ is ``almost'' a scheme proper over $\cB$. The
above property then follows by globalizing the pointwise weight
estimates obtained in (1).

\item By the result of (2), we are reduced to showing the isomorphism
\eqref{intro:whit:isom} after restriction to the semistable locus
$\cH(\chi_0)^{\mathrm{ss}}$. The point is that the object
$s_{!}\cO_{\cB}$ is much easier to understand over
$\cH(\chi_0)^{\mathrm{ss}}$: it is ``almost'' the usual $*$-pushforward of
the Hitchin section, up to shift. We then again use the perturbation of
stability after base change, together with the derived equivalences of
fine compactified Jacobians in~\cite{MRVF2}, to conclude
\eqref{intro:whit:isom}.
\end{enumerate}

Finally, we deduce the Dolbeault geometric Langlands equivalence for
$G=\SL_r$ or $\PGL_r$, by descending the functor~\eqref{intro:Phi}
(with the appropriate notion of degree and weight components, modified in the setting of $\SL_r/\PGL_r$)

\begin{align*}
    \Phi_{^{L}G/G} \colon
    \Coh(\cH_{^{L}G}(w)^{\mathrm{ss}})_{-\chi'}
    \to \Coh(\cH_G(\chi))_{w'}
\end{align*}
using the descent of the Arinkin sheaf~\eqref{intro:Arsheaf}. We then
show that, as a consequence of the Dolbeault geometric Langlands
equivalence over $\cB^A$ for $G=\GL_r$, this functor induces an
equivalence
\begin{align*}
    \Phi_{^{L}G/G} \colon
    \Coh_{\mathcal{N}}(\cH_{^{L}G}(w)^{\mathrm{ss}})_{-\chi'}
    \stackrel{\sim}{\to}
    \LL_{\mathcal{N}}(\cH_G(\chi))_{w'}.
\end{align*}

\begin{figure}[!htbp]
\centering
\newdimen\MainW
\MainW=.90\linewidth

\begin{tikzpicture}[
  node distance=7mm,
  scale=0.96,
  every node/.style={transform shape}
]

\tikzset{
  box/.style={
    rounded corners,
    draw,
    align=left,
    inner sep=5pt,
    text width=\MainW,
    minimum height=8mm
  }
}

\node[box] (step1) {
\textbf{Step 1. Resolution of the Arinkin sheaf}
(Proposition~\ref{prop:resol}).

For a rank-one torsion-free sheaf
\(E\in \Coh^{\heartsuit}(\cC_b)\), construct a left resolution on \(\cH_b\):
\[
    \cdots
    \longrightarrow \mathcal{V}_E^{\oplus k_1}
    \longrightarrow \mathcal{V}_E^{\oplus k_0}
    \longrightarrow \cP_E
    \longrightarrow 0,
\]
where \(\mathcal{V}_E\) is an explicitly constructed vector bundle.
};

\node[box, below=of step1] (step2) {
\textbf{Step 2. Weight estimates}
(Proposition~\ref{prop:VE} and Corollary~\ref{cor:PE}).

Compute the \(\mathbb{G}_m\)-weights of \(\mathcal{V}_E\) for maps
\(\nu\colon \bgm\to \cH\). Combining the estimate with the resolution from
Step~1 gives
\[
    \cP_E \in \widetilde{\LL}(\cH)_{w'}.
\]
};

\node[box, below=of step2] (step3) {
\textbf{Step 3. Globalization}
(Theorem~\ref{thm:PhiO}).

Approximate \(\cH(w)^{\mathrm{ss}}\) by schemes proper over \(\cB\).
Globalizing the pointwise estimate from Step~2 yields
\[
    \Phi(\cO_{\cH(w)^{\mathrm{ss}}})
    \in
    j_{!}\LL(\cH(\chi_0)^{\mathrm{ss}})_{w'}.
\]
};

\node[box, below=of step3] (step4) {
\textbf{Step 4. Whittaker normalization}
(Theorem~\ref{thm:Whit:intro}).

It remains to prove, after restriction to the semistable locus, that
\[
    \Phi(\cO_{\cH(w)^{\mathrm{ss}}})|_{\cH(\chi_0)^{\mathrm{ss}}}
    \cong
    s_{!}\cO_{\cB}|_{\cH(\chi_0)^{\mathrm{ss}}}.
\]
This is shown by reducing to schemes and applying the derived equivalence of
compactified Jacobians from~\cite{MRVF2}.
};

\draw[->] (step1) -- (step2);
\draw[->] (step2) -- (step3);
\draw[->] (step3) -- (step4);

\end{tikzpicture}

\caption{Flowchart of the proof of Theorem~\ref{thm:Whit:intro}.}
\label{fig:flow}
\end{figure}

\subsection{Acknowledgements}
The author would like to thank Tudor P{\u a}durariu and Tasuki Kinjo
for useful discussions, and Yongbin Ruan and Yaoxiong Wen for the discussions and comments during
the Mini-Workshop on Topological Geometric Langlands held on
April 25--26, 2026.

The author is supported by the World Premier International Research
Center Initiative (WPI Initiative), MEXT, Japan, the Inamori Research
Institute for Science, and JSPS KAKENHI Grant Number JP24H00180.

 \subsection{Notation and Conventions}\label{subsec:notation0}
    \subsubsection*{Conventions on stacks}
We work over an algebraically closed field $k$ of characteristic $0$.
Unless stated otherwise, all (derived) stacks are locally of finite presentation and quasi-separated over $k$.
For a stack $\X$ over $k$, we write $\X(k)$ for the set of $k$-valued points of $\X$.
We denote by $\X^{\mathrm{cl}}$ its classical truncation. 
A stack is called \textit{QCA} if it is quasi-compact with affine geometric stabilizer groups.

For a morphism of derived stacks $f\colon \X \to \Y$, let $\mathbb{L}_{f}$ and $\mathbb{T}_{f}$ denote the $f$-relative cotangent and tangent complexes. The morphism $f$ is called \textit{quasi-smooth} if $\mathbb{L}_{f}$ is perfect of cohomological amplitude $[-1, 1]$.
For a connected derived stack $\mathcal{X}$ whose cotangent complex $\mathbb{L}_{\mathcal{X}}$ is perfect, 
its virtual dimension $\operatorname{vdim} \mathcal{X}$ is defined to be the rank of $\mathbb{L}_{\mathcal{X}}$, and 
its dimension $\dim \mathcal{X}$ means the dimension of $\mathcal{X}^{\mathrm{cl}}$. 
If $\mathcal{X}$ is quasi-smooth, we have $\mathrm{vdim}\mathcal{X} \leq \dim \mathcal{X}$, 
and equality holds if and only if $\mathcal{X}$ is classical, see~\cite[Proposition~1]{KhanClassicality}. 

All the fiber products are derived fiber products, and we use the notation $\square$ for the 
Cartesian square.

\subsubsection*{Notations for (ind or quasi)coherent sheaves}\label{subsec:notation}
For a dg-category $\cC$, denote by $\cC^{\mathrm{cp}} \subset \cC$ the subcategory of compact objects. The dg-category $\cC$ is called \textit{compactly generated} if $\cC=\Ind(\cC^{\mathrm{cp}})$, where $\Ind(-)$ is the ind-completion. 
A dg-functor $F \colon \cC_1 \to \cC_2$ is called \textit{continuous} 
if it commutes with taking direct sums. 

We use the notation of~\cite{MR3701352} for (quasi)coherent sheaves and (ind)coherent sheaves. 
For a derived stack $\X$, we denote by 
$\Coh(\X)$ the dg-category of coherent sheaves,
by $\QCoh(\X)$ the dg-category of quasi-coherent sheaves,
and by $\IndCoh(\X)$ the dg-category of ind-coherent sheaves. 
The heart of a standard t-structure on $\Coh(\X)$ is denoted by 
$\Coh^{\heartsuit}(\X)$. 

For a morphism $f\colon \X \to \Y$, we use the standard $*$-pull-back and $*$-push-forward
\begin{align*}
    f^* \colon \QCoh(\Y) \to \QCoh(\X), \ f_{*} \colon \QCoh(\X) \to \QCoh(\Y).
\end{align*}
If $f$ is quasi-smooth (resp.\ proper), the above functors restrict to 
\begin{align}\label{intro:funct1}
    f^* \colon \Coh(\Y) \to \Coh(\X), \ f_{*} \colon \Coh(\X) \to \Coh(\Y).
\end{align}
In these cases, we also use the functors 
\begin{align*}
        f^* \colon \IndCoh(\Y) \to \IndCoh(\X), \ f_{*} \colon \IndCoh(\X) \to \IndCoh(\Y)
\end{align*}
which were denoted by $f^{\mathrm{IndCoh}*}$, $f_{*}^{\mathrm{IndCoh}}$ in~\cite{MR3701352}. 

If $f$ is quasi-smooth and proper, the functor $f_{*}$ in (\ref{intro:funct1}) admits a right adjoint $f^!$ (see~\cite[Proposition~7.3.8]{MR3136100})
\begin{align*}
    f^!(-)=f^*(-)\otimes \det \mathbb{L}_f [\dim f]\colon \Coh(\mathcal{Y}) \to \Coh(\mathcal{X}), 
    \end{align*}
and 
$f^*$ in (\ref{intro:funct1}) admits a left adjoint $f_{!}$
\begin{align*}
    f_{!}(-)=f_{*}(- \otimes \det \mathbb{L}_f[\dim f]) \colon \Coh(\mathcal{X}) \to \Coh(\mathcal{Y}).
\end{align*}
For $A\in \Coh(\Y)$, we often use $A|_{\X}$ for the $*$-pull-back $f^*A$.
Sometimes for $A\in \Coh^{\heartsuit}(\Y)$, we may also write $A|_{\X}$ for the \textit{classical} pull-back $\cH^0(f^*A)\in \Coh^{\heartsuit}(\X)$; we will remark so in this case. 

For a closed substack $\zZ \subset \X$, we denote by $\QCoh_{\zZ}(\X)$ 
the subcategory of $\QCoh(\X)$ consisting of objects $A$ such that 
$A|_{\mathcal{X}\setminus \mathcal{Z}}=0$. 
We have the \textit{local cohomology functor}
\begin{align}\label{loc:coh}
    \Gamma_{\mathcal{Z}}(-) \colon \QCoh(\X) \to \QCoh_{\mathcal{Z}}(\X)
\end{align}
which gives a right adjoint of the inclusion $\QCoh_{\mathcal{Z}}(\X) \subset \QCoh(\X)$. 

For derived stacks $\mathcal{X}$ and $\mathcal{Y}$ such that either one of them is QCA, there is an equivalence, 
see~\cite[Corollary~4.3.4]{MR3037900}:
\begin{align}\notag
    \QCoh(\mathcal{X})\otimes \QCoh(\mathcal{Y}) \stackrel{\sim}{\to}\QCoh(\mathcal{X}\times \mathcal{Y}).
\end{align}
We use the notation $\Coh(\X) \otimes \Coh(\Y)$ for the subcategory of the left 
hand side, which corresponds to $\Coh(\X\times \Y)$ 
under the above equivalence.

\subsubsection*{\texorpdfstring{Notations for $\mathbb{G}_m$-weights}{Notation on Gm-weights}}\label{subsec:Gmwt}
For a $\mathbb{G}_m$-gerbe $\X \to X$, there is an orthogonal weight 
decomposition 
\begin{align*}
    \Coh(\X)=\bigoplus_{w\in \mathbb{Z}}\Coh(\X)_w,
\end{align*}
where $\Coh(\X)_w$ is the subcategory of $\mathbb{G}_m$-weights $w$. 
For the Brauer class $\beta$ corresponding to the $\mathbb{G}_m$-gerbe $\X \to X$, 
we can identify $\Coh(\X)_w$ with the dg-category of $\beta^w$-twisted coherent sheaves 
$\Coh(X, \beta^w)$, see~\cite[Section~2.1.3]{MR2309155}.

For an object $A \in \Coh(\bgm)$,
let $A=\oplus_{w\in \mathbb{Z}} A_w$ 
be its weight decomposition. We write $A^{>0}=\oplus_{w>0} A_w$. 
We denote by $\wt(A)\subset \mathbb{Z}$ the set of $w\in \mathbb{Z}$ such that $A_w\neq 0$, 
and $\wt^{\mathrm{max}}(A)\in \mathbb{Z}$ is the maximal element in $\wt(A)$. 
We will use the following map 
\begin{align*}
c_1 \colon K(\bgm)_{\mathbb{Q}} \to H^2(\bgm, \mathbb{Q})=\mathbb{Q}, 
\end{align*}
which is given by $c_1(A)=\sum_w w \cdot \dim A_w$, where $\dim A_w$ denotes the Euler characteristic of the weight-$w$ complex

\subsubsection*{$\Theta$-stack and $\Theta$-stratum}
We use the terminology of $\Theta$-stratifications from~\cite{Halpinstab}. 
Let $\Theta=\mathbb{A}^1/\mathbb{G}_m$ where $\mathbb{G}_m$ acts on $\mathbb{A}^1$ by weight one. 
We have the natural maps 
\begin{align*}
    \bgm \rightleftarrows\Theta \leftarrow \mathrm{pt}.
\end{align*}
For a derived stack $\X$, the above maps induce the maps 
\begin{align*}
    \mathrm{Map}(\bgm, \X) \leftrightarrows \Map(\Theta, \X) \to \X.
\end{align*}
An open and closed substack $\cS \subset \Map(\Theta, \X)$ is called \textit{$\Theta$-stratum} if the induced map 
$\cS \to \X$ is a closed immersion. 
For a $\Theta$-stratum $\cS$, there is an associated 
open and closed substack $\mathcal{Z}\subset \Map(\bgm, \X)$, called 
\textit{center} of $\cS$ such that the above diagram restricts to 
\begin{align*}
    \mathcal{Z} \leftrightarrows \mathcal{S} \hookrightarrow \X. 
\end{align*}

A stratification 
\begin{align*}
    \X=\X' \sqcup \cS_1 \sqcup \cdots \sqcup \cS_N
\end{align*}
is called \textit{$\Theta$-stratification} if each 
\begin{align*}
    \cS_i \subset \X' \sqcup \cS_1 \sqcup \cdots \sqcup \cS_i
\end{align*}
is a $\Theta$-stratum (in particular $\X'\subset \X$ is open). We refer to~\cite{Halpinstab} for more details.

\section{Preliminaries}
In this section, we recall Higgs bundles, their moduli stacks, limit categories, 
and the formulation of Dolbeault geometric Langlands correspondence in~\cite{PTlim}. 
We also recall Arinkin's construction of Cohen--Macaulay extension of the Poincar\'e line bundle, 
and the associated Fourier--Mukai functor. We also refer to~\cite[Section~2]{TodaGL2} for the content 
of this section.

\subsection{Moduli stacks of Higgs bundles}\label{subsec:higgs}
Let $C$ be a smooth projective curve over $k$ of genus $g\geq 2$.
Let $G$ be a reductive group with Lie algebra $\mathfrak{g}$, and $^{L}G$ the Langlands dual group
of $G$. 
A \textit{$G$-Higgs bundle} consists of 
\begin{align*}
    (\mathcal{E}, \theta), \ \mathcal{E} \to C, \ \theta \in \Gamma((\mathcal{E}\times^G \mathfrak{g})\otimes \Omega_C)
\end{align*}
where $\mathcal{E} \to C$ is a $G$-bundle and $G$ acts on $\mathfrak g$ via the adjoint representation.
We denote by 
\begin{align*}
    \Hig_{G}=\coprod_{\chi\in \pi_1(G)} \Hig_{G}(\chi)
\end{align*}
the derived moduli stack of $G$-Higgs bundles, 
and each $\Hig_{G}(\chi)$ is the connected component 
corresponding to $\chi \in \pi_1(G)$. 
It is a zero-shifted symplectic stack~\cite{PTVV}; indeed we have 
\begin{align*}
    \Hig_G=\Omega_{\Bun_G}
\end{align*}
where $\Bun_G$ is the moduli stack of $G$-bundles, and $\Omega_{\Bun_G}$ denotes the cotangent stack of $\Bun_G$.

In the case of $G=\GL_r$, giving a $\GL_r$-Higgs bundle is 
equivalent to giving 
\begin{align}\label{def:higgs}
    (F, \theta), \ \theta \colon F\to F\otimes \Omega_C
\end{align}
where $F \to C$ is a rank $r$ vector bundle. 
In this case, $\pi_1(\GL_r)=\mathbb{Z}$ and $\Hig_{\GL_r}(\chi)$ 
corresponds to (\ref{def:higgs}) such that $\deg F=\chi$. 
We denote by 
\begin{align}\label{univ:F}
    (\cF, \vartheta), \ \cF \to C \times \Hig_{\GL_r}
\end{align}
the universal Higgs bundle. 

We have the (derived) Hitchin map 
\begin{align*}
   h \colon \Hig_{\GL_r} \to \mathrm{B}_{\GL_r}=\bigoplus_{i=1}^r \Gamma(\Omega_C^i)=\mathrm{B}_{\GL_r}^{\mathrm{cl}}\times k[-1].
\end{align*}
It sends $(F, \theta)$ to $(\tr(\wedge^i \theta))_{1\leq i\leq r}$.
Here 
$k[-1]:=\Spec k[\epsilon]$ with $\deg \epsilon=-1$, and the above 
$k[-1]$-factor comes from $H^1(\Omega_C)=k$. 
We have the following structure result of Hitchin map, 
which is mentioned in~\cite[Theorem~2.2.4]{BD0}, also see~\cite[Theorem~2.1]{TodaGL2}: 
\begin{thm}\emph{(\cite{Hitchin, Faltings1993, Ginzburg2001})}\label{thm:higgs:basic}
We have the Cartesian square
\begin{align}\label{dia:HigB}
    \xymatrix{
\Hig_{\GL_r}^{\mathrm{cl}} \inclusion \ar[d] \diasquare & \Hig_{\GL_r} \ar[d] \\
\rB_{\GL_r}^{\mathrm{cl}} \inclusion & \rB_{\GL_r}.
    }
\end{align}
Moreover the left vertical arrow is flat, surjective and an lci morphism. We have
\[
\dim \Hig_{\GL_r}^{\mathrm{cl}}=(2g-2)r^2+1,\qquad
\dim \rB_{\GL_r}^{\mathrm{cl}}=(g-1)r^2+1 .
\]
\end{thm}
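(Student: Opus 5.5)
The plan is to first establish the derived Cartesian square \eqref{dia:HigB} and then read off the properties of the classical Hitchin map from a dimension count. Realize $\Hig_{\GL_r}$ as the derived zero locus (total space of the shifted cotangent) over $\Bun_{\GL_r}$. Concretely, $\Hig_{\GL_r}=\Omega_{\Bun_{\GL_r}}$ is the derived stack of pairs $(F,\theta)$ with tangent complex at $(F,\theta)$ given by $R\Gamma$ of the two-term complex $\mathcal{E}nd(F)\xrightarrow{[\theta,-]}\mathcal{E}nd(F)\otimes\Omega_C$, shifted by one. Its obstruction space is $H^1$ of this complex, which by Serre duality is dual to $H^0$ of the same complex, i.e.\ to the endomorphisms commuting with $\theta$. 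The trace pairing identifies the cokernel of $H^1(\mathcal{E}nd F)\to H^1(\mathcal{E}nd F\otimes\Omega_C)$ with a quotient containing the scalar part $H^1(\Omega_C)=k$ coming from $\tr$. The derived Hitchin map records $\tr(\wedge^i\theta)$ in the derived section space $\bigoplus_i R\Gamma(\Omega_C^i)$. Its $H^1$ parts vanish for $i\geq 2$ (since $\deg\Omega_C^i>2g-2$), leaving only $H^1(\Omega_C)=k$, which gives $\rB_{\GL_r}=\rB^{\mathrm{cl}}_{\GL_r}\times k[-1]$. The key claim is that the $k[-1]$ direction absorbs exactly the scalar obstruction. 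That is, the map $\Hig_{\GL_r}\to\rB_{\GL_r}$ has relative obstruction theory with no $\tr$-part, so the classical truncation equals the base change along $\rB^{\mathrm{cl}}\hookrightarrow\rB$. I would verify this by checking that the relative cotangent complex $\mathbb{L}_h$ restricted to $\Hig^{\mathrm{cl}}$ is concentrated in degrees $[-1,0]$ with $h^{-1}$ equal to the trace-free obstructions. I would then show that these vanish generically on fibers, which follows from the flatness argument below.

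Next, I would prove flatness via the dimension estimate. Faltings and Ginzburg show that every fiber $h^{-1}(b)^{\mathrm{cl}}$ has dimension $\leq (g-1)r^2$; equivalently, the global nilpotent cone is Lagrangian, and one uses the $\mathbb{G}_m$-scaling on $\theta$ to reduce to the fiber over $0$. Since the base $\rB^{\mathrm{cl}}$ is smooth of dimension $\sum_i h^0(\Omega_C^i)=g+\sum_{i\geq 2}(2i-1)(g-1)=(g-1)r^2+1$, the fiber estimate gives $\dim\Hig^{\mathrm{cl}}\leq (2g-2)r^2+1$. On the other hand, $\Hig^{\mathrm{cl}}$ is cut out by the derived structure, and its virtual dimension after removing $k[-1]$ is $2\dim\Bun_{\GL_r}+1=(2g-2)r^2+2-1$. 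Every component therefore has at least the expected dimension, so equality holds. This makes $\Hig^{\mathrm{cl}}$ lci of the expected dimension, and by the classicality criterion of Khan cited in the conventions it coincides with the derived fiber product. Miracle flatness (lci, or Cohen--Macaulay, source over a smooth base with equidimensional fibers) then gives that the left vertical arrow is flat. Since it is flat and both sides are lci, the morphism is lci. Surjectivity follows from the Hitchin section, or from the BNR correspondence, which supplies the pushforward of a line bundle on any spectral curve $\cC_b$.

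The main obstacle is the fiber-dimension bound $\dim h^{-1}(b)\leq (g-1)r^2$ over every $b$, including very degenerate spectral curves. Here I would simply invoke Faltings/Ginzburg (the Lagrangian property of the nilpotent cone, combined with upper semicontinuity and the contracting $\mathbb{G}_m$-action), rather than reprove it.
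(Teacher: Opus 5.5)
Your argument is correct, and it is the standard proof behind the references the paper cites in place of a proof of its own (Hitchin, Faltings, Ginzburg; cf.\ Beilinson--Drinfeld, Thm.~2.2.4). The trace is surjective on obstructions, so $\Hig_{\GL_r}\times_{\rB_{\GL_r}}\rB_{\GL_r}^{\mathrm{cl}}$ is quasi-smooth of virtual dimension $(2g-2)r^2+1$; the Lagrangian nilpotent cone with the contracting $\mathbb{G}_m$-action gives the matching upper bound; and Khan's criterion and miracle flatness finish.

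Delete the side claim in your first paragraph that the trace-free obstructions vanish generically on the fibers. It is false: for $r=2$, take the component of $h^{-1}(0)$ where $F\cong L_1\oplus L_2$ with $\deg L_1-\deg L_2>2g-2$. At every point there, $\mathrm{End}(F,\theta)$ contains the non-scalar endomorphisms $H^0(L_1L_2^{-1})\neq 0$. It is also not needed, since classicality of the fiber product comes purely from the dimension count in your second paragraph.
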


Let $S$ be the total space of $\Omega_C$
\begin{align*}
    S:=\mathrm{Tot}_C(\Omega_C) \stackrel{\pi}{\to} C.
\end{align*}
Each point $b=(b_i)_{1\leq i\leq r}\in \rB_{\GL_r}^{\mathrm{cl}}$ corresponds to the 
spectral curve (by setting $b_0=1$)
\begin{align}\label{eqn:Cb}
    \cC_b=\left\{\sum_{i=0}^r (-1)^{i} b_{i} \cdot y^{r-i}=0\right\} \subset S
\end{align}
where $y$ is the tautological section of $\pi^* \Omega_C$. 
Its arithmetic genus is given by 
\begin{align}\label{def:pa}
    p_a=(g-1)r^2+1.
\end{align}
By the spectral construction~\cite{BeNaRa}, 
giving a $\GL_r$-Higgs bundle is equivalent to giving a 
compactly supported one-dimensional sheaf on $S$, whose support is given 
by the spectral curve. 
For $b\in \rB_{\GL_r}$, 
the Hitchin fiber 
$h^{-1}(b)$ is identified with the moduli stack of torsion-free 
sheaves on $\cC_b$
with fundamental one-cycle $[\cC_b]$. 

There is a natural action of $\gamma \in H^0(\Omega_C)$ on $\rB_{\GL_r}^{\mathrm{cl}}$, given by the variable change $y\mapsto y+\gamma$ 
in the equation (\ref{eqn:Cb}), and take the coefficients. Explicitly, 
\begin{align}\label{act:gamma}
    (b_i)_{1\leq i\leq r} \mapsto 
     (b_i^{\gamma})_{1\leq i\leq r}, \ 
b_i^\gamma
=
\sum_{j=0}^i
(-1)^{i+j}
\binom{r-j}{i-j}
b_j \gamma^{i-j}.
\end{align}
The above action of $H^0(\Omega_C)$ on $\rB_{\GL_r}^{\mathrm{cl}}$ is a free action. 

A $\GL_r$-Higgs bundle (\ref{def:higgs}) is called \textit{regular} at $p\in C$ if the map 
\begin{align*}\theta|_{p} \colon F|_{p} \to F|_{p} \otimes \Omega_C|_{p} \cong F|_{p}
\end{align*}
is a regular 
matrix. It is \textit{regular} (resp.~\textit{generically regular}) if $\theta|_{p}$ is regular for any 
$p\in C$ (resp.\ for generic $p\in C$). 
We denote by 
\begin{align*}
    \Hig_{\GL_r}(\chi)^{\mathrm{reg}} 
    \subset 
    \Hig_{\GL_r}(\chi)^{\mathrm{greg}} \subset \Hig_{\GL_r}(\chi)
\end{align*}
the open substack of regular (resp.\ generically regular) Higgs bundles. 
Under the spectral correspondence, regular Higgs bundles correspond to 
line bundles on $\cC_b$, and generically regular Higgs bundles 
correspond to rank one torsion-free sheaves on $\cC_b$. 

We denote by $\pi$ the projection 
\begin{align}\label{project:pi}
    \pi \colon S=\mathrm{Tot}_C(\Omega_C) \to C.
\end{align}
For a rank-one sheaf $E\in \Coh^{\heartsuit}(\cC_b)$, we define its degree on $\cC_b$ 
to be 
\begin{align}\label{deg:Cb}
    \deg_{\cC_b}(E):=\chi(E)-\chi(\mathcal{O}_{\cC_b}).
\end{align}
Then we have 
\begin{align}\label{formula:deg}
\deg_{\cC_b}(E)=\deg \pi_{*}E+(r^2-r)(g-1).
\end{align}
Later we will use the following lemma: 
\begin{lemma}\label{lem:add}
Let $\cC_b=C_1 \cup C_2$ where $C_i$ are unions of irreducible components 
without common components (which is automatic if $\cC_b$ is reduced). For a line bundle $\cL$ on $\cC_b$, we have 
\begin{align*}
    \deg_{\cC_b}(\mathcal{L})=\deg_{C_1}(\cL|_{C_1})+\deg_{C_2}(\cL|_{C_2}).
\end{align*}
\end{lemma}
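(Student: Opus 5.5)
The plan is to reduce the statement to additivity of Euler characteristics, via the normalization-type exact sequence for the decomposition $\cC_b=C_1\cup C_2$. Let $Z:=C_1\cap C_2$ denote the scheme-theoretic intersection, defined by the ideal $I_{C_1}+I_{C_2}$ in $\cO_{\cC_b}$. Since $C_1$ and $C_2$ have no common components, $Z$ is zero-dimensional. Because $\cC_b\subset S$ is a divisor in a smooth surface, we can write $C_1=\{f_1=0\}$ and $C_2=\{f_2=0\}$ locally, with $\cC_b=\{f_1f_2=0\}$. In this setting $I_{C_1}\cap I_{C_2}=(f_1f_2)$, because $f_1$ and $f_2$ have no common factor in the UFD $\cO_{S,p}$.

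With this in hand, we obtain the standard short exact sequence
\[
0\to \cO_{\cC_b}\to \cO_{C_1}\oplus \cO_{C_2}\to \cO_Z\to 0 .
\]
Injectivity on the left is exactly the identity $(f_1)\cap(f_2)=(f_1f_2)$. Exactness in the middle and surjectivity on the right follow from the Chinese-remainder-type sequence $0\to R/(I\cap J)\to R/I\oplus R/J\to R/(I+J)\to 0$, applied with $R=\cO_{S}$, $I=(f_1)$ and $J=(f_2)$.

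Next, tensor this sequence with the line bundle $\cL$. Since $\cL$ is locally free, the sequence stays exact, giving
\[
0\to \cL\to \cL|_{C_1}\oplus \cL|_{C_2}\to \cL|_Z\to 0 .
\]
Taking Euler characteristics yields $\chi(\cL)=\chi(\cL|_{C_1})+\chi(\cL|_{C_2})-\chi(\cL|_Z)$. Because $Z$ is zero-dimensional and $\cL|_Z\cong\cO_Z$ locally, we have $\chi(\cL|_Z)=\chi(\cO_Z)=\operatorname{length}(Z)$, and this length is independent of $\cL$. The same formula applied to $\cL=\cO_{\cC_b}$ gives $\chi(\cO_{\cC_b})=\chi(\cO_{C_1})+\chi(\cO_{C_2})-\chi(\cO_Z)$.

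Subtracting the two identities, the $\chi(\cO_Z)$ terms cancel, and using the definition \eqref{deg:Cb} (applied to each $C_i$) we get
\[
\deg_{\cC_b}(\cL)=\bigl(\chi(\cL|_{C_1})-\chi(\cO_{C_1})\bigr)+\bigl(\chi(\cL|_{C_2})-\chi(\cO_{C_2})\bigr)=\deg_{C_1}(\cL|_{C_1})+\deg_{C_2}(\cL|_{C_2}),
\]
as claimed. The only step that needs care is the left exactness of the first sequence, that is, $I_{C_1}\cap I_{C_2}=I_{\cC_b}$. This is where the hypothesis of no common components (together with the planarity of $\cC_b$ inside $S$) enters, and I expect it to be the only real point. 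The remark that the hypothesis is automatic in the reduced case is clear, since a reduced curve has each component with multiplicity one.
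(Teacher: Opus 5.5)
Your proposal is correct and is essentially the paper's argument. The paper uses the sequence $0\to\cL\to f_*f^*\cL\to Q\to 0$ for $f\colon C_1\sqcup C_2\to\cC_b$, with $Q$ supported on the finite set $C_1\cap C_2$ and $\chi(Q)$ independent of $\cL$, and then compares with the case $\cL=\cO_{\cC_b}$. You make explicit what the paper leaves implicit: injectivity on the left via $(f_1)\cap(f_2)=(f_1f_2)$ in the UFD $\cO_{S,p}$, and the identification $Q\cong\cL|_Z\cong\cO_Z$.
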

\begin{proof}
    Let $f \colon C_1 \sqcup C_2 \to C$. The lemma follows from 
    the exact sequence 
    \begin{align*}
        0\to \cL \to f_{*}f^* \cL \to Q \to 0
    \end{align*}
    where $Q$ is supported on $C_1 \cap C_2$,
    and it is zero-dimensional as $C_1$ and $C_2$ do not have common 
    irreducible components. Then the lemma follows 
    noting that $\chi(Q)$ is 
    independent of $\cL$. 
\end{proof}
\subsection{Moduli stacks of semistable Higgs bundles}
A $\GL_r$-Higgs bundle $(F, \theta)$ is called \textit{(semi)stable} if 
for any non-zero proper sub-Higgs bundle $(F', \theta') \subset (F, \theta)$ 
we have 
\begin{align}\label{ineq:deg}
    \frac{\deg F'}{\rank F'} <(\leq) \frac{\deg F}{\rank F}. 
\end{align}
We denote by 
\begin{align*}
\Hig_{\GL_r}(\chi)^{\mathrm{st}} \subset 
    \Hig_{\GL_r}(\chi)^{\mathrm{ss}} \subset \Hig_{\GL_r}(\chi)
\end{align*}
the open substack of (semi)stable Higgs bundles. These substacks of (semi)stable Higgs bundles are 
Artin stacks of finite type, in particular they are 
quasi-compact. 

\begin{remark}\label{rmk:ss:spec}
By the spectral construction, 
the (semi)stable Higgs bundles correspond to compactly supported 
pure one-dimensional coherent sheaves on $S$
which are Gieseker (semi)stable~\cite{Hu} 
with respect 
to $\pi^* h$ where $h$ is an ample divisor on $C$. 
We will often use the following characterizations: 
a pure one-dimensional sheaf $E$ on $S$ corresponds to (semi)stable 
Higgs bundle if and only if for any surjection $E\twoheadrightarrow E'$
for a pure one-dimensional sheaf $E'$, we have
\begin{align}\label{ineq:surj}
    \frac{\deg \pi_{*}E}{\operatorname{rank}\pi_{*}E}<(\leq) 
     \frac{\deg \pi_{*}E'}{\operatorname{rank}\pi_{*}E'}. 
\end{align}

Below we often say that $E\in \Coh^{\heartsuit}(\cC_b)$ is (semi)stable if it corresponds to (semi)stable Higgs bundles; equivalently if it satisfies the above inequality (\ref{ineq:surj}) for any surjection $E\twoheadrightarrow E'$ as above. 
\end{remark}

\subsection{Open locus of Hitchin base}
There are derived open subschemes 
\begin{align*}
    \mathrm{B}_{\GL_r}^{\mathrm{sm}} \subsetneq \mathrm{B}_{\GL_r}^{\mathrm{ell}} \subsetneq \mathrm{B}_{\GL_r}^{\mathrm{red}} \subsetneq \mathrm{B}_{\GL_r}
\end{align*}
where $\mathrm{B}_{\GL_r}^{\mathrm{red}}$ (resp.\ $\mathrm{B}_{\GL_r}^{\mathrm{ell}}, \mathrm{B}_{\GL_r}^{\mathrm{sm}}$) corresponds to reduced (resp.\ integral, smooth) spectral curves. 
The stack $\Hig_{\GL_r}(\chi)$ is quasi-compact over $\mathrm{B}_{\GL_r}^{\mathrm{ell}}$, but over 
$\mathrm{B}_{\GL_r}^{\mathrm{red}}$ it is not quasi-compact. We will note the following: 
\begin{lemma}\label{lem:dense}\emph{(\cite[Remark~2.5]{TodaGL2}, \cite[Fact~2.4]{MRVF2})}
    For $b\in \rB_{\GL_r}^{\mathrm{red}}$, the Hitchin fiber 
    $h^{-1}(b)$ contains dense regular locus. 
    In particular, the complement of $h^{-1}(b) \cap \Hig_{\GL_r}^{\mathrm{reg}}$ in $h^{-1}(b)$ is at least of codimension one. 
\end{lemma}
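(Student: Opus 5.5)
The plan is to work fiberwise through the spectral correspondence, where $h^{-1}(b)$ is the moduli stack of rank-one torsion-free sheaves on the reduced curve $\cC_b$ with fundamental cycle $[\cC_b]$. Let $\cC_b = C_1\cup\cdots\cup C_k$ be the decomposition into irreducible components, and let $\nu\colon \widetilde{\cC}_b\to \cC_b$ be the normalization. Since $\cC_b$ is reduced, it is Gorenstein (a planar curve) and generically smooth. Inside $h^{-1}(b)$, the regular locus is exactly the open substack of line bundles on $\cC_b$, which is the Picard stack $\mathcal{P}ic(\cC_b)$. The claim is that this open substack is dense, that is, every rank-one torsion-free sheaf deforms to a line bundle. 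The codimension statement then follows immediately, because a proper closed substack of a stack in which an open substack is dense has codimension at least one in each irreducible component.

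To show density, I would reduce to a local question at the finitely many singular points $p_1,\dots,p_n$ of $\cC_b$. A torsion-free rank-one sheaf $E$ is a line bundle away from the $p_i$. The stack of such sheaves maps, étale-locally or smoothly, to the product of the local moduli of rank-one torsion-free modules over the completed local rings $\widehat{\cO}_{\cC_b,p_i}$, that is, the local compactified Jacobians or affine Springer-type fibers. This map is formally smooth: gluing along a trivialization on the punctured formal neighborhoods presents $h^{-1}(b)$ locally as a product of the line bundle directions and the local factors. So density of the locally free locus reduces to the statement that, for a reduced planar curve singularity, the locally free module is dense in the local moduli of torsion-free rank-one modules. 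Equivalently, every such module deforms to a free module.

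The local input is the key step. Planar (Gorenstein) reduced singularities have smoothable torsion-free rank-one modules; this is the result of Rego and of Altman--Iarrobino--Kleiman, extended to reducible curves. I would cite the versions in \cite[Fact~2.4]{MRVF2}, which state for reduced planar curves that the compactified Jacobian contains the generalized Jacobian as a dense open, with all irreducible components of dimension equal to the arithmetic genus. A dimension argument then finishes the proof. Theorem~\ref{thm:higgs:basic} gives that the Hitchin map is flat with fibers of dimension $\dim\Hig^{\mathrm{cl}}-\dim\rB^{\mathrm{cl}}=(g-1)r^2=p_a-1$, with the $\mathbb{G}_m$-automorphisms accounting for the shift. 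The Picard stack also has dimension $p_a-1$ by \eqref{def:pa}. Every irreducible component of $h^{-1}(b)$ has dimension $p_a-1$ by flatness and lci, so it suffices to show that the non-locally-free locus has dimension $<p_a-1$. Stratify it by the local endomorphism ring $\mathrm{End}(E_{p})$, a proper overring of $\cO_{p}$. The stratum for a fixed overring $\cO'$ has dimension at most $p_a(\cO')-1+\dim(\text{local datum})$. The local datum dimension is bounded by $\delta$-invariant drops, and the Gorenstein inequality $\dim < \delta(\cO)-\delta(\cO')$ gives strict inequality.

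The main obstacle is exactly this strict dimension estimate for the non-free strata at reducible singular points. I would not reprove it but import it from the cited sources, as the statement already indicates.
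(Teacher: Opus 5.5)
Your proposal is correct and follows the paper's route. The paper gives no separate proof: via \cite[Remark~2.5]{TodaGL2} it identifies $h^{-1}(b)$ with the stack of rank-one torsion-free sheaves on the reduced planar curve $\cC_b$, notes that the regular locus is the open substack of line bundles, and invokes the density statement \cite[Fact~2.4]{MRVF2}, from which the codimension claim is immediate. Your last paragraph (the stratification by overrings and the dimension count) is unnecessary once that fact is cited, and as written it asserts rather than derives the key strict inequality, so present it as a remark on where the cited result comes from, not as part of the proof.
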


We also denote by 
\begin{align*}
\rB_{\GL_r}^A \subset \rB_{\GL_r}^{\mathrm{red}}
\end{align*}
the open subset corresponding to spectral curves $\cC_b$ with at worst type $A$ singularities; for any singular point $p\in \cC_b$ with $c=\pi(p) \in C$, 
we have an isomorphism 
\begin{align}\label{loc:model}
(\widehat{(\cC_b)}_{p} \to \widehat{C}_c) \cong 
    (\Spec k[[x, y]]/(y^2-x^m) \to \Spec k[[x]])
\end{align}
for some $m\in \mathbb{Z}_{\geq 2}$.

\begin{example}\label{exam:Atype}
(1) For $G=\GL_2$, we have $\rB_{\GL_2}^A=\rB_{\GL_2}^{\mathrm{red}}$. 
Indeed, an equation of the spectral curve for $G=\GL_2$ is 
\begin{align*}
    y^2 -b_1 y +b_2=0
\end{align*}
for $b_i \in H^0(C, \Omega_C^i)$. It is reduced if and only if 
$b_1^2 -4b_2 \neq 0$ in $H^0(C, \Omega_C^2)$.
Therefore, after the change of variables $y\mapsto y-b_1/2$, the 
above equation is formally locally of the form $y^2-x^m=0$ for some $m$. 

(2)  
For each decomposition $r=r_1+\cdots+r_k$, we have the addition map
\begin{align*}
    \rB_{\GL_{r_1}} \times \cdots \times \rB_{\GL_{r_k}}
    \to \rB_{\GL_r}
\end{align*}
given by adding spectral curves as effective divisors. 
Then $\rB_{\GL_r}^{A}$ contains the image of the generic point of the above map. 
Indeed, for a general choice of spectral curves 
$\cC_1, \ldots, \cC_k$, where $\cC_i$ corresponds to 
$b_i \in \rB_{\GL_{r_i}}$, the curves $\cC_i$ are smooth, and 
$\cC_i$ and $\cC_j$ intersect transversely at points which are not ramification points 
of the projections $\cC_i \to C$ and $\cC_j \to C$. 
Then the $\GL_r$-spectral curve $\cC_1+\cdots+\cC_k$ corresponds to a point in 
$\rB_{\GL_r}^{A}$. 

(3) In particular, for a generic choice of 
$\alpha_i \in H^0(C, \Omega_C)$ with $1 \leq i\leq r$, the spectral curve 
\begin{align*}
    \prod_{i=1}^r (y-\alpha_i)=0
\end{align*}
corresponds to a point in $\rB_{\GL_r}^{A}$; namely,
\begin{align*}
    b=(b_i)_{1\leq i\leq r} \in \rB_{\GL_r}^{A}, \qquad
    b_i=\sum_{1\leq k_1<\cdots<k_i\leq r}
    \alpha_{k_1} \cdots \alpha_{k_i}.
\end{align*}
\end{example}

\subsection{Limit categories}
Here we recall limit categories defined in~\cite{PTlim}, whose construction is inspired by and based on 
the works~\cite{SvdB, hls}. 
Let $\mathfrak{M}$ be a quasi-smooth derived stack which is of finite presentation over $k$, and 
its cotangent complex is self-dual
\begin{align}\label{L:cotangent}
    \mathbb{L}_{\mathfrak{M}} \simeq \mathbb{T}_{\mathfrak{M}}.
\end{align}
For $x \in \mathfrak{M}(k)$, 
we set 
\begin{align*}
T_x:=
\cH^{0}(\mathbb{T}_{\mathfrak{M}}|_{x}), \ 
\mathfrak{g}_x:=
\cH^{-1}(\mathbb{T}_{\mathfrak{M}}|_{x}).
\end{align*}
Note that $\mathfrak{g}_x$ is the Lie algebra of 
$G_x:=\mathrm{Aut}(x)$. 

Given a map 
\begin{align}\notag\nu \colon \bgm\to \mathfrak{M}, \ 
\mathrm{pt} \mapsto x
\end{align}
we have the corresponding cocharacter 
$\mathbb{G}_m \to G_x$. 
We let $\mathbb{G}_m$ act on $\mathfrak{g}_x, T_x$
through the above cocharacter, and regard them as elements of $K(\bgm)$. 
The map $\nu$ may not be quasi-smooth, but it extends to a quasi-smooth map
(unique up to 2-isomorphisms)
\begin{align}\label{nu:reg}
\nu^{\mathrm{reg}} \colon \mathfrak{g}_x^{\vee}[-1]/\mathbb{G}_m \to \mathfrak{M}
\end{align}
called \textit{regularization}, see~\cite[Construction~3.2.2]{HalpK32}, 
\cite[Construction-Definition~3.1]{PTlim}.
It satisfies that the map 
\begin{align*}
    \mathbb{L}_{\mathfrak{M}}|_{x} \to 
    \mathbb{L}_{\mathfrak{g}_x^{\vee}[-1]/\mathbb{G}_m}|_{0}=\mathfrak{g}_x[1]
\end{align*}
induces an isomorphism on $\cH^{-1}(-)$. 
We also denote by $\iota$ the natural 
morphism 
\begin{align*}
    \iota \colon \mathfrak{g}_x^{\vee}[-1]/\mathbb{G}_m \to 
    \bgm.
\end{align*}

Let $\delta \in \mathrm{Pic}(\mathfrak{M})_{\mathbb{R}}$. 
Recall the notations about $\mathbb{G}_m$-weights in Subsection~\ref{subsec:Gmwt}.
\begin{defn}\label{def:Lcat}(\cite[Definition~3.3]{PTlim})
When $\mathfrak{M}$ is QCA, the subcategory 
\begin{align}\label{def:L}
    \mathrm{L}(\mathfrak{M})_{\delta} \subset \Coh(\mathfrak{M})
\end{align}
is defined to consist of objects $\cE$ 
such that, for any map
$\nu \colon \bgm \to \mathfrak{M}$ 
with $\nu(0)=x$ and regularization (\ref{nu:reg}),
we have 
\begin{align}\label{wt:nu}
\wt(\iota_{\ast}\nu^{\mathrm{reg}\ast}(\cE))
\subset \left[\frac{1}{2} c_1 (T_x^{<0}),
\frac{1}{2} c_1 (T_x^{>0}) \right]+
\frac{1}{2}c_1(\mathfrak{g}_x)+c_1(\nu^*\delta).
\end{align}
\end{defn}

For perfect complexes, we have the following lemma
\begin{lemma}\emph{(\cite[Lemma~6.1]{TodaGL2})}\label{lem:perfect}
In the setting of Definition~\ref{def:Lcat}, suppose that
$\cE \in \Coh(\mathfrak{M})$ is perfect. Then for a map
$\nu \colon \bgm \to \mathfrak{M}$ with image $x\in \mathfrak{M}(k)$, 
$\mathrm{wt}(\nu^*\cE)$ is contained in a bounded interval $I \subset \mathbb{R}$ 
if and only if 
$\mathrm{wt}(\iota_{*}\nu^{\mathrm{reg}*}\cE)$ is contained in 
$I+[c_1(\mathfrak{g}_x^{<0}), c_1(\mathfrak{g}_x^{>0})]$. 

In particular, $\iota_{*}\nu^{\mathrm{reg}*}\cE$ satisfies condition~(\ref{wt:cond2}) if and only if 
$\nu^* \cE$ satisfies the following condition:
\begin{align}\label{wt:cond2}
    \wt(\nu^* \cE) \subset \left[\frac{1}{2} c_1 (\nu^{\ast}\mathbb{L}_{\mathfrak{M}}^{<0}), 
    \frac{1}{2} c_1 (\nu^{\ast}\mathbb{L}_{\mathfrak{M}}^{>0})
    \right]+c_1(\nu^{\ast}\delta).
\end{align}
\end{lemma}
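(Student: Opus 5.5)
The plan is to compare $\nu^*\cE$ and $\iota_*\nu^{\mathrm{reg}*}\cE$ on $\bgm$ directly. Set $V:=\mathfrak{g}_x^{\vee}[-1]$, so $\nu^{\mathrm{reg}}\colon V/\mathbb{G}_m\to\mathfrak{M}$ with $\nu=\nu^{\mathrm{reg}}\circ 0$, where $0\colon \bgm\to V/\mathbb{G}_m$ is the zero section. Since $\cE$ is perfect, $\nu^{\mathrm{reg}*}\cE$ is a perfect complex on the derived stack $V/\mathbb{G}_m=\Spec \mathrm{Sym}(\mathfrak{g}_x[1])/\mathbb{G}_m$. Here the ring $\mathrm{Sym}(\mathfrak{g}_x[1])=\Lambda^\bullet(\mathfrak{g}_x)[\bullet]$ is a finite exterior algebra with $\mathfrak{g}_x$ in cohomological degree $-1$.

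\textbf{Step 1: a minimal model.} Because $V/\mathbb{G}_m$ has a single closed point and the structure sheaf is a finite-dimensional graded-local algebra, the perfect complex $\nu^{\mathrm{reg}*}\cE$ is represented by a minimal semi-free module. That is, it is represented by $\Lambda^\bullet(\mathfrak{g}_x)\otimes W$ with a twisted differential, where $W$ is a finite-dimensional graded $\mathbb{G}_m$-representation, and minimality means the differential vanishes modulo the augmentation ideal. The derived fiber at $0$ then gives $\nu^*\cE\simeq 0^*\nu^{\mathrm{reg}*}\cE\simeq W$ with zero differential, so $\wt(\nu^*\cE)=\wt(W)$. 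On the other hand, $\iota_*\nu^{\mathrm{reg}*}\cE$ as a $\mathbb{G}_m$-representation is the underlying complex $\Lambda^\bullet(\mathfrak{g}_x)\otimes W$.

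\textbf{Step 2: weights of the pushforward.} The weights of $\Lambda^\bullet(\mathfrak{g}_x)$ range exactly over $[c_1(\mathfrak{g}_x^{<0}),c_1(\mathfrak{g}_x^{>0})]$. The extremes are attained by $\Lambda^{\mathrm{top}}(\mathfrak{g}_x^{<0})$ and $\Lambda^{\mathrm{top}}(\mathfrak{g}_x^{>0})$. Hence the weights of $\Lambda^\bullet(\mathfrak{g}_x)\otimes W$ lie in $\wt(W)+[c_1(\mathfrak{g}_x^{<0}),c_1(\mathfrak{g}_x^{>0})]$, and the maximal and minimal weights are exactly $\wt^{\max}(W)+c_1(\mathfrak{g}_x^{>0})$ and the analogous minimum.

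\textbf{Step 3: the main obstacle.} The delicate point is that $\iota_*$ takes cohomology of the complex $\Lambda^\bullet(\mathfrak{g}_x)\otimes W$ with its differential, so extreme weights could cancel. To rule this out, I would use the minimal model. The differential strictly increases exterior degree, because it vanishes modulo the augmentation ideal. Consider the extreme summand $\Lambda^{\mathrm{top}}(\mathfrak{g}_x^{>0})\otimes W_{\max}$, where $W_{\max}$ is the top-weight part of $W$. The differential lands in weight-preserving terms, and inside the maximal weight space the only contributions are of this exact form. Filtering by exterior degree therefore shows this summand survives: the maximal weight space is a single graded piece on which the differential is zero. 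The same argument applies to the minimal weights. This gives both directions of the first claim, with $I$ the convex hull of $\wt(W)$; for a general interval $I$, use that $\wt(W)\subset I$ if and only if its min and max lie in $I$.

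\textbf{Step 4: the particular condition.} Self-duality $\mathbb{L}_{\mathfrak{M}}\simeq\mathbb{T}_{\mathfrak{M}}$ gives
\[
\nu^*\mathbb{L}_{\mathfrak{M}}=T_x-\mathfrak{g}_x-\mathfrak{g}_x^{\vee}
\]
in $K(\bgm)$. So
\[
\tfrac12c_1(\nu^*\mathbb{L}^{>0})=\tfrac12c_1(T_x^{>0})-\tfrac12c_1(\mathfrak{g}_x^{>0})+\tfrac12c_1(\mathfrak{g}_x^{<0}),
\]
and similarly for the negative part. Adding $[c_1(\mathfrak{g}_x^{<0}),c_1(\mathfrak{g}_x^{>0})]$ to the interval in (\ref{wt:cond2}) yields
\[
\left[\tfrac12c_1(T_x^{<0}),\tfrac12c_1(T_x^{>0})\right]+\tfrac12c_1(\mathfrak{g}_x),
\]
since $c_1(\mathfrak{g}_x^{>0})+c_1(\mathfrak{g}_x^{<0})=c_1(\mathfrak{g}_x)$. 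This is exactly (\ref{wt:nu}), and the equivalence follows from the first part.
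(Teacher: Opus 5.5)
Steps 1, 2 and 4 are fine. Step 2 gives the easy direction, because the weights of the cohomology are among the chain-level weights, and the $K$-theory bookkeeping in Step 4 is correct. The gap is in Step 3, which is the only nontrivial part of the converse.

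\textbf{What goes wrong in Step 3.} The weight-$c_1(\mathfrak{g}_x^{>0})$ subspace of $\Lambda^\bullet(\mathfrak{g}_x)$ is not $\Lambda^{\mathrm{top}}(\mathfrak{g}_x^{>0})$. It is $\Lambda^{\mathrm{top}}(\mathfrak{g}_x^{>0})\otimes\Lambda^\bullet(\mathfrak{g}_x^{0})$, and $\mathfrak{g}_x^0$ is never zero here. It contains the derivative of the cocharacter, which is nonzero if $\nu$ is nontrivial, and it equals $\mathfrak{g}_x$ if $\nu$ is trivial. So the top-weight part of $\Lambda^\bullet(\mathfrak{g}_x)\otimes W$ spreads over several exterior degrees, and $d$ need not vanish on it.

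For example, take $0\neq e\in\mathfrak{g}_x^0$ and $P=\mathrm{Cone}(K[1]\xrightarrow{e}K)$ with both generators of maximal weight. The minimal model has $dw_2=e\,w_1$, and $w_2$ lies in your ``extreme summand'' but is not a cycle. Filtering by exterior degree only shows that the lowest-degree piece contains no boundaries. It does not produce a cycle, so cancellation of the extreme weight is not ruled out by your argument.

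\textbf{The missing idea: Nakayama over $\Lambda^\bullet(\mathfrak{g}_x^0)$.} Let $\omega$ generate $\Lambda^{\mathrm{top}}(\mathfrak{g}_x^{>0})$ and take $v\in W_{\max}$. Multiplying $dv$ by $\omega$ kills every monomial with a factor from $\mathfrak{g}_x^{>0}$. Among the remaining monomials, those with a factor from $\mathfrak{g}_x^{<0}$ have negative weight, so they cannot occur in $dv$, since $W$ has no weight above $\wt^{\max}(W)$.

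Hence the top-weight subcomplex of $\iota_*\nu^{\mathrm{reg}*}\cE$ is $\omega\cdot M_0$, where $M_0=(\Lambda^\bullet(\mathfrak{g}_x^0)\otimes W_{\max},d_0)$ and $d_0$ is the $\Lambda^\bullet(\mathfrak{g}_x^0)\otimes W$-component of $d$. Multiplication by $\omega$ is injective on $\Lambda^\bullet(\mathfrak{g}_x^0)$, so this is an isomorphism of complexes up to sign. Equivalently, $M_0$ is the weight-$\wt^{\max}(W)$ summand of the restriction of $\nu^{\mathrm{reg}*}\cE$ to $(\mathfrak{g}_x^{0})^{\vee}[-1]/\mathbb{G}_m$.

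$M_0$ is a minimal semi-free $\Lambda^\bullet(\mathfrak{g}_x^0)$-module. Therefore $M_0\otimes^{\mathbf{L}}_{\Lambda^\bullet(\mathfrak{g}_x^0)}k\simeq W_{\max}\neq 0$, so $M_0$ is not acyclic. This gives $\wt^{\max}(\iota_*\nu^{\mathrm{reg}*}\cE)=\wt^{\max}(\nu^*\cE)+c_1(\mathfrak{g}_x^{>0})$, and symmetrically for the minimum. With these two equalities your interval argument and Step 4 go through.
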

\begin{remark}\label{rmk:cond:r}
The conditions (\ref{wt:nu}), (\ref{wt:cond2}) are unchanged by the composition $\nu$ with 
$\bgm \to \bgm, t\mapsto t^r$ for $r>0$. Indeed this replacement just multiplies both 
sides in (\ref{wt:nu}), (\ref{wt:cond2}) by $r$. 
\end{remark}

For non-quasi-compact case, the limit category is defined as follows: 
\begin{defn}(\cite[Section~3.7]{PTlim})\label{def:IndL}
For a general (not necessarily quasi-compact) quasi-smooth derived stack 
$\mathfrak{M}$ satisfying (\ref{L:cotangent}), 
we define 
\begin{align*}
    \IndL(\mathfrak{M})_{\delta}:=\lim_{\mathcal{U}\subset \mathfrak{M}} \Ind(\LL(\mathcal{U})_{\delta})
\end{align*}
where the limit is over all the QCA open substacks $\mathcal{U}\subset \mathfrak{M}$,
and with respect to pull-backs of open immersions. 
We define 
\begin{align*}
    \LL(\mathfrak{M})_{\delta} \subset \IndL(\mathfrak{M})_{\delta}
\end{align*}
to be the subcategory of compact objects. 
For a smooth stack $\mathcal{X}$, we 
call $\LL(\Omega_{\X})_{\delta}$ the \textit{limit category}.
\end{defn}

Let $\Omega_{\mathfrak{M}}[-1]$ be the $(-1)$-shifted cotangent of $\mathfrak{M}$, and 
let 
\begin{align}\label{nilp:cone}
    \mathcal{N} \subset \Omega_{\mathfrak{M}}[-1]
\end{align}
be the nilpotent cone; a fiber of $\Omega_{\mathfrak{M}}[-1] \to \mathfrak{M}$
at $x \in \mathfrak{M}(k)$ is the Lie algebra $\mathfrak{g}_x$ of $\Aut(x)$ by the condition (\ref{L:cotangent}),
and the fiber of $\mathcal{N} \to \mathfrak{M}$ at $x$ consists of nilpotent elements of $\mathfrak{g}_x$. 
We also have the subcategories with nilpotent singular supports~\cite{AG}:
\begin{align*}
    \LL_{\mathcal{N}}(\mathfrak{M})_{\delta} \subset \LL(\mathfrak{M})_{\delta}, \ 
    \IndL_{\mathcal{N}}(\mathfrak{M})_{\delta} \subset \IndL(\mathfrak{M})_{\delta}.
\end{align*}

\begin{defn}(\cite[Remark~3.18]{PTlim})\label{def:Ltilde}
We define the following dg-categories 
\begin{align}\label{def:Ltilde0}
    \widetilde{\LL}_{\mathcal{N}}(\mathfrak{M})_{\delta}
    :=\lim_{\mathcal{U}\subset \mathfrak{M}}\LL_{\mathcal{N}}(\mathcal{U})_{\delta}, 
    \  \widetilde{\LL}(\mathfrak{M})_{\delta}
    :=\lim_{\mathcal{U}\subset \mathfrak{M}}\LL(\mathcal{U})_{\delta}.
\end{align}
\end{defn}

In general, we have 
\begin{align}\label{Ltilde:chain}
    \LL_{(\mathcal{N})}(\mathfrak{M})_{\delta} \subset \widetilde{\LL}_{(\mathcal{N})}(\mathfrak{M})_{\delta} \subset \IndL_{(\mathcal{N})}(\mathfrak{M})_{\delta}
\end{align}
and the dg-categories (\ref{def:Ltilde0}) correspond to locally compact objects. 
In general 
they are strictly larger than the subcategories of compact objects, see~\cite[Remark~3.18]{PTlim}.

\subsection{Some variants of limit categories}\label{subsec:variant}
We also use a variant of limit categories of stacks without self-dual 
cotangent complex. 
Suppose that a derived stack $\mathfrak{M}$ with self-dual 
$\mathbb{L}_{\mathfrak{M}}$ is equivalent to  
\begin{align*}
    \mathfrak{M} \simeq \mathfrak{M}_{\circ}\times Y \times k[-1]
    \end{align*}
    for a smooth scheme $Y$. 
Then for any map $\nu \colon \bgm \to \mathfrak{M}_{\circ}$
with image $x\in \mathfrak{M}_{\circ}(k)$, 
we have the decomposition $\mathfrak{g}_x=(\mathfrak{g}_x)_0 \oplus k$, and 
we can take the regularization map of the form 
\begin{align*}
    \nu_{\circ}^{\mathrm{reg}} \colon (\mathfrak{g}_x)_0^{\vee}[-1]/\mathbb{G}_m \to 
    \mathfrak{M}_{\circ}. 
\end{align*}
Then if $\mathfrak{M}_{\circ}$ is QCA, for $\delta \in \Pic(\mathfrak{M}_{\circ})_{\mathbb{Q}}$, 
the subcategory 
\begin{align*}
    \LL(\mathfrak{M}_{\circ})_{\delta} \subset 
     \Coh(\mathfrak{M}_{\circ})_{\delta}
\end{align*}
is defined as in Definition~\ref{def:Lcat}, using the above regularization map 
and 
\begin{align}\notag
    T_x=\mathcal{H}^{0}(\mathbb{T}_{\mathfrak{M}_{\circ}|_{x}}), \ 
    \mathfrak{g}_x=\mathcal{H}^{-1}(\mathbb{T}_{\mathfrak{M}_{\circ}}|_{x}).
\end{align}
Note that these are the same as those defined for $\mathfrak{M}$ at $(x, y)$ for any $y\in Y$. 
Namely an object $\mathcal{E} \in     \Coh(\mathfrak{M}_{\circ})_{\delta}$
lies in $\LL_{(\mathcal{N})}(\mathfrak{M}_{\circ})_{\delta}$
if for any map $\nu$ as above, we have 
\begin{align}\label{cond:regcirc}
\wt(\iota_{\ast}\nu_{\circ}^{\mathrm{reg}\ast}(\cE))
\subset \left[\frac{1}{2} c_1 (T_x^{<0}),
\frac{1}{2} c_1 (T_x^{>0}) \right]+
\frac{1}{2}c_1(\mathfrak{g}_x)+c_1(\nu^*\delta).
\end{align}

In general, we define 
\begin{align*}
    \IndL(\mathfrak{M}_{\circ})_{\delta}=\lim_{\mathcal{U} \subset \mathfrak{M}_{\circ}}\Ind(\LL(\mathcal{U})_{\delta})
\end{align*}
where the limit is over QCA open substacks $\mathcal{U} \subset \mathfrak{M}_{\circ}$ as in 
   Definition~\ref{def:IndL}.
   Also the nilpotent cone $\mathcal{N} \subset \Omega_{\mathfrak{M}}[-1]$ is written as 
   \begin{align*}
       \mathcal{N}=\mathcal{N}_{\circ} \times Y \times \{0\} \subset 
       \Omega_{\mathfrak{M}}[-1]=\Omega_{\mathfrak{M}_{\circ}}\times Y \times \mathbb{A}^1.
   \end{align*}
   The subcategory 
   \begin{align*}
         \IndL_{\mathcal{N}}(\mathfrak{M}_{\circ})_{\delta} \subset 
           \IndL(\mathfrak{M}_{\circ})_{\delta}
   \end{align*}
   is defined to be objects with singular supports contained in $\mathcal{N}_{\circ}$. 
   The subcategories of compact/locally compact objects 
   \begin{align}\label{def:Ltilde:circ}
     \LL_{(\mathcal{N})}(\mathfrak{M}_{\circ})_{\delta} \subset 
     \widetilde{\LL}_{(\mathcal{N})}(\mathfrak{M}_{\circ})_{\delta} \subset 
       \IndL_{(\mathcal{N})}(\mathfrak{M}_{\circ})_{\delta}
   \end{align}
   are also defined in the same way as (\ref{Ltilde:chain}).

\subsection{Dolbeault geometric Langlands conjecture}\label{subsec:dlconj}
We recall the formulation of Dolbeault geometric Langlands conjecture 
proposed in~\cite{PTlim}. We have the decomposition 
\begin{align*}
    \IndCoh(\Hig_G)=\bigoplus_{w\in Z_G^{\vee}} \IndCoh(\Hig_G)_{w}
\end{align*}
where each summand corresponds to objects with $Z_G$-weight $w$.
We have the subcategory 
\begin{align}\label{indl}
    \IndL_{\mathcal{N}}(\Hig_G(\chi))_w \subset \IndCoh_{\mathcal{N}}(\Hig_G(\chi))_w
\end{align}
by setting $\delta=\delta_w \in \Pic(\Hig_G)_{\mathbb{Q}}$ associated with $w$, see~\cite[Section~7.2]{PTlim}. 
In the case of $G=\GL_r$, we have 
\begin{align}\label{deltaw}
    \delta_w=\det(\cF|_{c\times \Hig_{\GL_r}})^{w/r}
\end{align}
where $\cF$ is the universal bundle (\ref{univ:F}) and $c\in C$.

The following is a version of Dolbeault geometric Langlands conjecture in~\cite{PTlim}:
\begin{conj}\emph{(\cite[Conjecture~1.1]{PTlim})}\label{conj:dl}
For $(\chi, w)\in \pi_1(G)\times Z_G^{\vee}$, there is a $\rB_G$-linear equivalence 
\begin{align}\label{equiv:main}
    \IndCoh_{\mathcal{N}}(\Hig_{^{L}G}(w)^{\mathrm{ss}})_{-\chi} \simeq \IndL_{\mathcal{N}}(\Hig_G(\chi))_w.
\end{align}
\end{conj}

\begin{remark}\label{rmk:compact}
Since both sides of (\ref{equiv:main}) are compactly generated by~\cite[Theorem~7.18 and 7.19]{PTlim}, 
    an equivalence (\ref{equiv:main}) is equivalent to an equivalence between 
    compact objects 
    \begin{align}\label{equiv:main2}
         \Coh_{\mathcal{N}}(\Hig_{^{L}G}(w)^{\mathrm{ss}})_{-\chi} \simeq \LL_{\mathcal{N}}(\Hig_G(\chi))_w.
    \end{align}
\end{remark}

\subsection{Quasi-BPS categories}
By applying the definition of limit category to the 
quasi-compact moduli stack $\Hig_G(\chi)^{\mathrm{ss}}$, we obtain 
the \textit{quasi-BPS category} (with nilpotent singular support)
\begin{align*}
    \LL_{\mathcal{N}}(\Hig_G(\chi)^{\mathrm{ss}})_w \subset \Coh_{\mathcal{N}}(\Hig_G(\chi)^{\mathrm{ss}})_w.
\end{align*}
The quasi-BPS category is related to BPS invariants in categorical
Donaldson--\allowbreak Thomas theory; see~\cite{PThiggs, PThiggs2, PTlim}. 

We have the open immersion 
\begin{align*}
    j\colon \Hig_G(\chi)^{\mathrm{ss}}\subset \Hig_G(\chi)
\end{align*}
which induces the pull-back functor 
\begin{align}\label{funct:jast}
j^* \colon \LL_{\mathcal{N}}(\Hig_G(\chi))_w\to \LL_{\mathcal{N}}(\Hig_G(\chi)^{\mathrm{ss}})_{w}.
\end{align}
The following is one of the key properties of limit categories:
\begin{thm}\emph{(\cite[Theorem~7.19]{PTlim})}\label{thm:left}
The functor (\ref{funct:jast}) admits a fully-faithful left adjoint 
\begin{align*}
    j_{!} \colon \LL_{\mathcal{N}}(\Hig_G(\chi)^{\mathrm{ss}})_w\hookrightarrow\LL_{\mathcal{N}}(\Hig_G(\chi))_w.
\end{align*}
    
\end{thm}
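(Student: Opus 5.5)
The plan is to build $j_!$ stratum by stratum, using the Harder--Narasimhan $\Theta$-stratification of $\Hig_G(\chi)$ and a magic-window argument in the style of Halpern-Leistner and \v{S}penko--Van den Bergh, with window widths tuned to the weight bounds in Definition~\ref{def:Lcat}. By Definition~\ref{def:IndL}, objects of $\LL_{\mathcal{N}}(\Hig_G(\chi))_w$ are compatible systems over QCA opens, and the Harder--Narasimhan opens $\mathcal{U}$ (finite unions of strata together with the semistable locus) are cofinal. It therefore suffices to do two things for each such $\mathcal{U}$:
\begin{enumerate}
\item construct a fully faithful left adjoint $j_{\mathcal{U}!}$ of $j^*\colon \LL_{\mathcal{N}}(\mathcal{U})_w \to \LL_{\mathcal{N}}(\Hig_G(\chi)^{\mathrm{ss}})_w$;
\item check that, for $\mathcal{U}\subset \mathcal{U}'$, the restriction of $j_{\mathcal{U}'!}$ to $\mathcal{U}$ is $j_{\mathcal{U}!}$.
\end{enumerate}
The compatible family then defines $j_!$, landing in compact objects because each $j_{\mathcal{U}!}$ preserves coherence and compactness is detected by the generators of~\cite[Theorem~7.18]{PTlim}.

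For fixed $\mathcal{U}=\Hig_G(\chi)^{\mathrm{ss}}\sqcup \cS_1\sqcup\cdots\sqcup\cS_N$, I argue by induction on $N$, removing one closed stratum $\cS$ with center $\mathcal{Z}$ and cocharacter $\lambda$ at a time. By Halpern-Leistner's theorem, $\Coh(\mathcal{U})$ has a semiorthogonal decomposition. Its pieces are objects supported on $\cS$ with $\lambda$-weights outside a window $[a, a+\eta_{\cS})$, and the window $\mathbb{G}_{\cS}$ of objects whose restriction to $\mathcal{Z}$ has weights in $[a,a+\eta_{\cS})$. Restriction identifies the window with $\Coh(\mathcal{U}\setminus\cS)$. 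Here $\eta_{\cS}=c_1(\mathbb{L}^{>0}_{\mathcal{U}}|_{\mathcal{Z}})$, which uses the self-duality (\ref{L:cotangent}) and quasi-smoothness to handle the derived directions.

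The choice I would make is $a=\tfrac12 c_1(\mathbb{L}^{<0})+c_1(\delta_w)$, interpreted after passing to the regularized maps $\nu^{\mathrm{reg}}$ as in Lemma~\ref{lem:perfect}. With this choice, the condition of Definition~\ref{def:Lcat} along $\lambda$ is exactly the closed interval $[a,a+\eta_{\cS}]$. Given $E$ in the limit category of $\mathcal{U}\setminus\cS$, define $j_{\mathcal{U}!}E$ as its lift to the window. To see that this lift is left adjoint to restriction on $\LL(\mathcal{U})_w$, I need
\[
\Hom(\mathbb{G}_{\cS}\text{-object}, \Gamma_{\cS}F)=0
\]
for every $F\in\LL(\mathcal{U})_w$. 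Via the Koszul/local cohomology description, $\Gamma_{\cS}F$ has $\lambda$-weights shifted by the conormal weights, so it sits in the right-orthogonal piece of weights $\geq a+\eta_{\cS}$. The boundary weight $a+\eta_{\cS}$ is exactly where the half-open window meets the closed limit interval, and this is where the symmetric bound $\tfrac12$ on both sides is essential.

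Next I check that the window lift itself satisfies the limit weight conditions for all other maps $\nu\colon\bgm\to\mathcal{U}$, not only $\lambda$. I would first try the ``dual semistability'' argument (Proposition~\ref{prop:natural}-style): any $\nu$ through a point of $\cS$ factors, up to the reduction of Remark~\ref{rmk:cond:r}, through the center $\mathcal{Z}$. There I would apply induction for the smaller Levi-type stack $\mathcal{Z}$, which is again a product of Higgs stacks with self-dual cotangent complex, so the limit conditions on $\mathcal{Z}$ are of the same form. I expect this step to be the main obstacle: matching the weight intervals for non-HN cocharacters on the strata with the window condition.

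Nilpotent singular support is preserved because the window lift is obtained by cones and pushforwards from $\mathcal{Z}$, whose singular supports stay in $\mathcal{N}$. Full faithfulness follows from $j^*j_{\mathcal{U}!}\simeq\id$. Compatibility in $\mathcal{U}$ holds because the window conditions for the strata of $\mathcal{U}$ are unchanged when further strata are added.
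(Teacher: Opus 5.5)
Your overall structure is in line with what is recalled in Remarks~\ref{rmk:jshrink}--\ref{rmk:Ltilde2}: induct over Harder--Narasimhan strata, lift into a window, then pass to the limit over HN opens. But the key input you invoke is not available.

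Halpern-Leistner's window theorem for $\Coh$ of a quasi-smooth stack needs the stratum to be a quasi-smooth closed immersion, i.e.\ $\mathbb{L}_{\cS/\mathcal{U}}$ concentrated in degree $-1$. Here $\mathbb{L}_{\cS/\mathcal{U}}|_{\mathcal{Z}}\simeq(\mathbb{L}_{\mathcal{U}}|_{\mathcal{Z}})^{>0}[1]$, and by self-duality $\mathcal{H}^{-1}(\mathbb{L}_{\mathcal{U}}|_x)\cong\mathfrak{g}_x$. So $\mathfrak{g}_x^{>0}$ lands in degree $-2$. For $\GL_r$ this is $\bigoplus_{i<j}\Hom(M_j,M_i)$, which is nonzero in general, e.g.\ over non-reduced spectral curves. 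Compare Lemma~\ref{lem:SZqsm}, where a stratum is quasi-smooth only because these $\Hom$'s vanish for supports without common components. For the same reason $\nu$ need not be quasi-smooth, and the weights of $F|_{\mathcal{Z}}$ for non-perfect coherent $F$ are not the right invariant. This is why Definition~\ref{def:Lcat} is phrased via $\iota_*\nu^{\mathrm{reg}*}$, and Lemma~\ref{lem:perfect} only converts it for perfect objects.

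The missing idea is the one indicated in Remark~\ref{rmk:jshrink}. Pass through the Koszul equivalence to graded matrix factorizations on a smooth stack, where the window theorem does hold for every stratum. Use the magic windows there \cite[Definition~8.17]{PTlim} and compare them with limit categories \cite[Proposition~3.14]{PTlim}. This produces a semiorthogonal decomposition of the form used in Lemma~\ref{lem:consv} (cf.\ \cite[Proposition~8.23]{PTlim}), $\LL(\Hig_G(\chi)_{\preceq\mu})=\langle\tau_*f^*\LL(\mathcal{Z}_\mu),(j_\mu)_!\LL(\Hig_G(\chi)_{\prec\mu})\rangle$. Its statement already contains the weight conditions for \emph{all} cocharacters. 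In your sketch that is exactly the step you flag as the main obstacle, and it is left unproved.

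Independently, check the endpoint. Take the cocharacter of the $\Theta$-stratum; for HN strata this means $\nu_1>\cdots>\nu_k$ as in (\ref{map:nupt}). Then $\mathbb{L}_{\cS/\mathcal{U}}$ has positive weights, so $\cHom(\cO_{\cS},F)\cong F|_{\cS}\otimes\det\mathbb{L}_{\cS/\mathcal{U}}[-d]$ is shifted \emph{down} by $\eta_{\cS}$ (the computation (\ref{wt:2}) in Lemma~\ref{lem:nweight}), and higher thickenings lower weights further. So $\Gamma_{\cS}F$ has weights $\leq a$, not $\geq a+\eta_{\cS}$. The vanishing $\Hom(\text{window},\Gamma_{\cS}F)=0$ then forces the window to exclude $a$, i.e.\ to be $(a,a+\eta_{\cS}]$. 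That is strict on the left, matching the description of the image of $j_!$ after Theorem~\ref{thm:left}.

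With your $[a,a+\eta_{\cS})$, take $F\in\LL(\mathcal{U})_w$ with a weight equal to $a+\eta_{\cS}$. Then $\Gamma_{\cS}F$ has a weight-$a$ piece that can receive nonzero maps from the window; that window is the one adapted to a right adjoint. If your $\lambda$ is the inverse cocharacter, the picture is mirrored and your interval is fine. Either way the convention has to be fixed, since it decides which endpoint is excluded.
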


From the proof of~\cite[Theorem~7.18 and 7.19]{PTlim}, the image of $j_!$
is described as follows: for any map $\nu \colon \bgm \to \Hig_G(\chi)$
corresponding to a center of a \textit{Harder--Narasimhan stratum}, the 
weight condition (\ref{wt:nu}) is strict on the left. 

In the case of $G=\GL_r$, 
for an element (called a \textit{Harder--Narasimhan type})
\begin{align*}
\mu=(\mu_1, \ldots, \mu_k)\in \mathbb{Q}^k, \ \mu_1>\cdots>\mu_k
\end{align*}
let $\mathcal{S}_{\mu} \subset \Hig_{\GL_r}(\chi)$ the locally 
closed substack consisting of Higgs bundles with Harder-Narasimhan type $\mu$; a $k$-valued point $E$ of $\mathcal{S}_{\mu}$ admits a filtration 
\begin{align*}
    0\subset E_1 \subset \cdots \subset E_k=E, \ A_i=E_i/E_{i-1}, \ 
    \mu_i=\frac{\deg A_i}{\operatorname{rank}A_i}
\end{align*}
such that $A_i$ is semistable. 
Then the direct sum $\oplus_{i=1}^k A_i$ corresponds to a $k$-valued point 
of the center $\mathcal{Z}_{\mu}$ of $\mathcal{S}_{\mu}$, and 
the corresponding 
canonical map $\nu \colon \bgm \to \mathcal{Z}_{\mu}$ with image 
$\oplus_{i=1}^k A_i$ is of the form 
\begin{align}\label{map:nupt}
    \nu\colon \mathrm{pt} \mapsto \bigoplus_{i=1}^k A_i, \
    \mathbb{G}_m \mbox{-weight}=(\nu_1, \ldots, \nu_k), \ 
    \nu_1>\cdots>\nu_k. 
\end{align}
The inequality $\nu_i>\nu_{i+1}$ holds since the extension space $\Ext^1(A_{i+1}, A_i)$ has $\lambda$-weight $\nu_i-\nu_{i+1}$, which is positive as the $\mathbb{G}_m$-action on it via $\lambda$ 
has to converge for $t\to 0$ limit, where $t\in \mathbb{G}_m$. 
Then the image of $j_{!}$ equals to the subcategory such that the 
condition (\ref{wt:nu}) is strict on the left for each map $\nu$ as in (\ref{map:nupt}), composed with the natural map $\mathcal{Z}_{\mu} \to \Hig_{\GL_r}(\chi)$.

\begin{remark}\label{rmk:jshrink}
The above fact on the image of $j_{!}$ 
is implicit in the proof of~\cite[Proposition~8.23]{PTlim} and the definition of the magic window subcategory in~\cite[Definition~8.17]{PTlim} for matrix factorizations, together with the comparison of 
the limit category and magic window for matrix factorizations~\cite[Proposition~3.14]{PTlim} under Koszul equivalence.

In particular for 
a perfect complex $\cE$ on $\mathfrak{M}=\Hig_{\GL_r}(\chi)$, 
it is in the image of $j_!$ if it satisfies the 
following stronger condition than (\ref{wt:cond2})
for any map $\nu \colon \bgm \to \mathfrak{M}$ as in (\ref{map:nupt}):
\begin{align}\notag
    \wt(\nu^* \cE) \subset \left(\frac{1}{2} c_1 (\nu^{\ast}\mathbb{L}_{\mathfrak{M}}^{<0}), 
    \frac{1}{2} c_1 (\nu^{\ast}\mathbb{L}_{\mathfrak{M}}^{>0})
    \right]+c_1(\nu^{\ast}\delta_{w}).
\end{align}
\end{remark}

\begin{remark}\label{rmk:HN!}
More generally, 
there is a total order $\preceq$ on the set of Harder--Narasimhan types
$\mu=(\mu_1,\ldots,\mu_k)$, and a Harder--Narasimhan stratification
\[
  \Hig_{\GL_r}(\chi)=\sqcup_{\mu} \cS_{\mu},
\]
where $\cS_{\mu}$ is the Harder--Narasimhan stratum of type $\mu$.
By setting
\[
  \Hig_{\GL_r}(\chi):=\sqcup_{\mu'\preceq \mu}\cS_{\mu'},
   \quad
 \Hig_{\GL_r}(\chi)_{\prec \mu}:=\sqcup_{\mu'\prec \mu}\cS_{\mu'},
\]
we have a open/closed stratification
\[
    \Hig_{\GL_r}(\chi)_{\preceq \mu}=\Hig_{\GL_r}(\chi)_{\prec \mu} \sqcup \cS_{\mu},
\]
and $\cS_{\mu}$ is a $\Theta$-stratum of $\Hig_{\GL_r}(\chi)_{\preceq \mu}$.

For the open immersion
\begin{align}\label{open:ju}
   j_{\mu}\colon\Hig_{\GL_r}(\chi)_{\prec \mu}\hookrightarrow
  \Hig_{\GL_r}(\chi)_{\preceq \mu},
\end{align}
there is a fully faithful left adjoint to $j_{\mu}^{\ast}$,
by~\cite[Theorems~7.18 and~7.19]{PTlim},
\[
     (j_{\mu})_{!}\colon
     \LL_{\mathcal{N}}(\Hig_{\GL_r}(\chi)_{\prec \mu})_{w}
     \hookrightarrow
     \LL_{\mathcal{N}}(\Hig_{\GL_r}(\chi)_{\preceq \mu})_{w},
\]
whose image is described by the weight condition~\eqref{wt:cond2}, with the
left inequality strict, for any map $\nu$ of Harder--Narasimhan type $\mu$
as in~\eqref{map:nupt}.
\end{remark}

\begin{remark}\label{rmk:Ltilde2}
Recall the category $\widetilde{\LL}(\mathfrak{M})_{\delta}$ from
Definition~\ref{def:Ltilde}. We have an embedding
\[
    \LL_{\mathcal{N}}(\mathfrak{M})_{\delta}
    \subset
    \widetilde{\LL}_{\mathcal{N}}(\mathfrak{M})_{\delta}.
\]
In the case $\mathfrak{M}=\Hig_{\GL_r}(\chi)$, the subcategory
$\LL_{\mathcal{N}}(\mathfrak{M})_{w}$ is identified with the subcategory
obtained as the image of $(j_{\preceq \mu})_{!}$ for the open immersion (see~\cite[(8.65)]{PTlim})
\[
    j_{\preceq \mu}\colon
    \Hig_{\GL_r}(\chi)_{\preceq \mu}\hookrightarrow \Hig_{\GL_r}(\chi).
\]
Equivalently, it consists of objects satisfying the condition~\eqref{wt:cond2}
for all maps $\nu\colon \bgm \to \Hig_{\GL_r}(\chi)$, with the left inequality
strict for maps corresponding to the center of a HN stratum~\eqref{map:nupt} for sufficiently large $\mu$.
\end{remark}

\begin{remark}
    The remarks in Remarks~\ref{rmk:jshrink},~\ref{rmk:HN!},~\ref{rmk:T}, and~\ref{rmk:Ltilde2} also apply to any 
    reductive group $G$, see~\cite[Theorem~7.18 and 7.19]{PTlim}. 
\end{remark}

\subsection{Simplified notation}
Below we use the following simplified notation
in the case of $G=\GL_r$. We take  
 an open subset $\cB \subset \rB_{\GL_r}^{\mathrm{cl}}$, 
 and simply write 
\begin{align}\label{sH}
    \cH:=\Hig_{\GL_r}\times_{\rB_{\GL_r}} \cB.
\end{align}
Note that $\cH \times k[-1]$ is an open substack of $\Hig_{\GL_r}$, hence 
has a self-dual cotangent complex. For $\nu \colon \bgm \to \cH$, its 
regularization map is denoted by $\nu_{\circ}^{\mathrm{reg}}$ as in Subsection~\ref{subsec:variant}.
We also write 
\begin{align*}
    \cH(\chi)^{\mathrm{ss}} \subset \cH(\chi) \subset \cH
\end{align*}
where $\cH(\chi)$ is 
the connected component of $\cH$
corresponding to Higgs bundles $(F, \theta)$ such that $\deg F=\chi$, 
and $\cH(\chi)^{\mathrm{ss}}$ is the semistable part. 
For $T \to \cB$, we often write 
\begin{align*}\cH_T:=\cH\times_{\cB}T.
\end{align*}
In particular for a point $b\in \cB$, we write 
$\cH_b:=\cH\times_{\cB}\{b\}$.

Note that the limit category for $\cH_T(\chi)$
for an \'etale map $T\to \cB$ 
is defined as in Subsection~\ref{subsec:variant}. 
We say that the \textit{Dolbeault geometric Langlands conjecture (DL conjecture for short)}
holds over $\cB$ if there is a $\cB$-linear equivalence
\begin{align}\label{equiv:H}
    \IndCoh_{\mathcal{N}}(\cH(w)^{\mathrm{ss}})_{-\chi} \simeq \IndL_{\mathcal{N}}(\cH(\chi))_w.
\end{align}

\begin{remark}\label{rmk:nilp}
If $\cB \subset \rB_{\GL_r}^{\mathrm{red, cl}}$, and $T\to \cB$ 
is an \'etale map, we have 
\begin{align*}
   \Coh_{\mathcal{N}}(\cH_T(\chi))=\mathrm{Perf}(\cH_T(\chi)), \ 
   \IndCoh_{\mathcal{N}}(\cH_T(\chi))=\QCoh(\cH_T(\chi)).
\end{align*}
The above identities hold since the automorphism groups of $\cH_T(\chi)$ 
are subgroups of tori; in particular the nilpotent cone (\ref{nilp:cone}) is the zero section 
over $\cB$, see~\cite[Lemma~4.1]{TodaGL2}. 
\end{remark}

\begin{remark}\label{rmk:T}
The result of Theorem~\ref{thm:left}
    also applies to
    the limit category 
    \begin{align}\notag
    \LL_{\mathcal{N}}(\cH_T(\chi))_w \subset \IndL_{\mathcal{N}}(\cH_T(\chi))_w
\end{align}
    for an \'etale map $T\to \cB$; 
    for the open immersion 
    \begin{align*}j_T \colon \cH_T(\chi)^{\mathrm{ss}}:=\cH(\chi)^{\mathrm{ss}}\times_{\cB}T\hookrightarrow \cH_T(\chi),
    \end{align*}
    we have the 
    fully-faithful left adjoint of $j_T^*$
    \begin{align*}
(j_T)_{!} \colon \LL_{\mathcal{N}}(\cH_T(\chi)^{\mathrm{ss}})_w \hookrightarrow \LL_{\mathcal{N}}(\cH_T(\chi))_w
    \end{align*}
    and its image is characterized as in Remark~\ref{rmk:jshrink}, i.e.\ the weight condition (\ref{cond:regcirc}) is strict on the left for any 
  map $\nu \colon \bgm \to \cH_T(\chi)$
    whose image in $\cH(\chi)$ corresponds to a HN filtration. 
    It follows from the comparison of limit categories after removing the $k[-1]$-factor, 
    see Lemma~\ref{lem:equiv:k}. 
    
    The same statement as in Remark~\ref{rmk:HN!} applies for 
    the image of
    \begin{align*}
        (j_{T\mu})_{!} \colon \LL_{\mathcal{N}}(\cH_T(\chi)_{\prec \mu})_w \hookrightarrow \LL_{\mathcal{N}}(\cH_T(\chi)_{\preceq \mu})_w
    \end{align*}
    where $j_{T\mu}$ is the pull-back of the open immersion (\ref{open:ju}) to $T$, as well as the description of the subcategory
    as in Remark~\ref{rmk:Ltilde2}
    \begin{align*}
        \LL_{\mathcal{N}}(\cH_T(\chi))_w \subset \widetilde{\LL}_{\mathcal{N}}(\cH_T(\chi))_w.
    \end{align*}
\end{remark}

\subsection{Arinkin sheaf}\label{subsec:Arsheaf}
Below, we set $G=\GL_r$. 
Let $\cB \subset \rB_{\GL_r}^{\mathrm{cl}}$ be an open subset, and define $\cH$ as in (\ref{sH}). 
Consider 
\begin{align}\label{Psharp}
    (\cH\times_{\cB}\cH)^{\sharp}:=
    (\cH\times_{\cB}\cH^{\mathrm{reg}}) \cup (\cH^{\mathrm{reg}}\times_{\cB}\cH).
\end{align}
Then there is a line bundle 
\begin{align*}
    \cP^{\sharp} \to  (\cH\times_{\cB}\cH)^{\sharp}
\end{align*}
whose fiber at $(A_1, A_2)$, where $A_i \in \Coh^{\heartsuit}(\cC_b)$
for $b\in \cB$, is given by the \textit{Deligne pairing}
\begin{align}\label{def:Psharp}
     \cP^{\sharp}|_{(A_1, A_2)}=\det \chi(A_1\otimes A_2)\otimes \det \chi(A_1)^{-1}\otimes \det \chi(A_2)^{-1}\otimes \det \chi(\mathcal{O}_{\cC_b}).
\end{align}
The above formula makes sense since either $A_1$ or $A_2$ is a line bundle on $\cC_b$, 
so that $\chi(A_1\otimes A_2)$ makes sense. 

Arinkin~\cite{Ardual} constructed the maximal Cohen--Macaulay 
extension of $\cP^{\sharp}$ to $\cH\times_{\cB}\cH$
when $\cB=\rB_{\GL_r}^{\mathrm{ell, cl}}$. Outside the locus $\rB_{\GL_r}^{\mathrm{ell, cl}}$, i.e.\ when $\mathrm{B}_{\GL_r}^{\mathrm{ell, cl}} \subsetneq \cB$, it is observed in~\cite[Corollary~3.2.3]{MLi} (which works for any $G=\GL_r$, see~\cite[Remark~2.20]{TodaGL2}) that $\cP^{\sharp}$ can be extended 
to the generically regular Higgs locus. 
\begin{thm}\emph{(\cite{Ardual, MLi})}\label{thm:Pflat}
For an open subset $\cB \subset \rB_{\GL_r}^{\mathrm{cl}}$, 
the line bundle $\cP^{\sharp}$ extends to a maximal 
Cohen--Macaulay sheaf
\begin{align}\label{Arsheaf0}
    \cP\in \Coh^{\heartsuit}((\cH\times_{\cB}\cH)^{\flat}),
\end{align}
where $(\cH\times_{\cB}\cH)^{\flat}$ is defined by 
\begin{align}\label{Pflat}
    (\cH\times_{\cB}\cH)^{\flat}:=
    (\cH\times_{\cB}\cH^{\mathrm{greg}}) \cup (\cH^{\mathrm{greg}}\times_{\cB}\cH).
\end{align}
Moreover it is flat over both sides of $\cH$. 
\end{thm}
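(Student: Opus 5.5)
This theorem is cited from \cite{Ardual, MLi}; I will follow Arinkin's construction and Li's extension to the generically regular locus. First I reduce to a local statement over the base. Both conditions, maximal Cohen--Macaulay and flat over each factor, are local on $\cB$ and compatible with \'etale base change. Moreover $\cH^{\mathrm{greg}}$ is the moduli of rank-one torsion-free sheaves on the spectral curves. So it suffices to construct, for $A$ running over a local family of rank-one torsion-free sheaves (a smooth atlas of $\cH^{\mathrm{greg}}$), a sheaf $\cP_A$ on $\cH$. This sheaf should be flat, maximal Cohen--Macaulay on the total space, and should restrict to the Deligne pairing (\ref{def:Psharp}) wherever the other factor is regular. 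The two halves $\cH\times_{\cB}\cH^{\mathrm{greg}}$ and $\cH^{\mathrm{greg}}\times_{\cB}\cH$ are then glued along their intersection, where both are determined by $\cP^{\sharp}$ on a locus of codimension at least two.

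For the construction I will use Arinkin's Hilbert scheme description. I choose $n\gg 0$ and represent $A$ via the Hilbert scheme of $n$ points $\mathrm{Hilb}^n(\cC_b)$ on the spectral curves, which sits inside $\mathrm{Hilb}^n(S)$. The surface $S$ is smooth, so $\mathrm{Hilb}^n(S)$ is smooth. Over it there is the standard Haiman-type sheaf
\[
\mathcal{Q}=\bigl(\psi_{*}\cO_{\text{isospectral}}\otimes \det{}^{-1}\bigr)^{\mathrm{sign}},
\]
the sign-isotypic part of the pushforward from the isospectral Hilbert scheme. This sheaf is Cohen--Macaulay because the isospectral Hilbert scheme is Cohen--Macaulay (Haiman). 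I then define
\[
\cP_A = \bigl(p_{*}(\mathcal{Q}\otimes E^{\boxtimes n})\bigr)^{S_n}\otimes(\text{normalizing line bundle}).
\]
Here the pushforward runs along $\cC^n\to \cC^{(n)}$ for a second sheaf $E$, and the construction is arranged to compute the Deligne pairing when $E$ or $A$ is a line bundle. The key point is that both the Hilbert scheme of points and the relative compactified Jacobian of rank-one torsion-free sheaves on planar (reduced or not) curves are flat over $\cB$ with the expected dimension. This follows from Theorem~\ref{thm:higgs:basic}, which gives flatness and lci for the Hitchin map, together with the fact that $\cH^{\mathrm{greg}}$ is fiberwise dense and locally complete intersection. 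These facts let me check the Cohen--Macaulay property via depth and a dimension count.

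Flatness over each factor will follow from Cohen--Macaulayness combined with the fact that the fibers of the projections to $\cH$ and $\cH^{\mathrm{greg}}$ have the expected dimension (miracle flatness over smooth atlases). This is precisely where generic regularity matters: it guarantees the relevant fibre products have the correct dimension. The extension from the locus $(\cH\times_{\cB}\cH)^{\sharp}$ is unique, because a maximal Cohen--Macaulay sheaf on a Cohen--Macaulay stack is determined by its restriction away from codimension two, and the complement of $\sharp$ in $\flat$ has codimension at least two by Lemma~\ref{lem:dense}. Uniqueness also makes the local constructions glue, and independent of $n$ and of the choice of atlas.

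I expect the main obstacle to be proving that the sheaf above is maximal Cohen--Macaulay over the non-integral (reduced, reducible) spectral locus. Arinkin's dimension estimates for the incidence varieties used the irreducibility of $\cC_b$. I would try to replace them by Li's estimate \cite[Corollary~3.2.3]{MLi}, which bounds the loci in $\mathrm{Hilb}^n$ where the sheaf fails to be locally free along the generically regular locus. The remaining check is that the codimension bounds still hold component by component.
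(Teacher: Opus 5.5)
The paper gives no proof of this statement. It is quoted from Arinkin over the elliptic locus, and from Li's Corollary~3.2.3, which is cited for the extension to $(\cH\times_{\cB}\cH)^{\flat}$ itself (valid for all $r$ by Remark~2.20 of Toda's $\GL_2$ paper). So following Arinkin's construction and relying on Li off the integral locus is the paper's route.

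\textbf{The gap: your flatness argument.} The one argument you supply yourself, for flatness over both factors, breaks down exactly beyond the elliptic locus.
\begin{itemize}
\item Miracle flatness says: if $M$ is a Cohen--Macaulay module over $A$, with $R\to A$ local and $\dim M=\dim R+\dim M/\mathfrak{m}_R M$, then $M$ is $R$-flat. This requires $R$ to be regular, and a smooth atlas of $\cH$ is regular only where $\cH$ is smooth.
\item Off the elliptic locus, $\cH$ is not smooth. Take $M=M_1\oplus M_2$ with the $M_i$ supported on different components of a reduced $\cC_b$. Then $\dim\Hom(M,M)\geq 2$, and the obstruction space $\ker(\tr\colon\Ext^2_S(M,M)\to k)$ has dimension $\dim\Hom(M,M)-1\geq 1$. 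Hence $\dim\Ext^1_S(M,M)-\dim\Hom(M,M)>\dim\cH$, and every smooth atlas is singular over $M$.
\item Since $\cH^{\mathrm{greg}}=\cH$ over the reduced locus, both factors are affected.
\item Over a singular base, ``Cohen--Macaulay with fibres of the expected dimension'' does not give flatness: take $R=k[x,y]_{(x,y)}/(xy)$ and $M=R/(x)$.
\end{itemize}
Flatness has to come out of the construction itself. Let $\psi\colon\widetilde{\mathrm{Hilb}}\to\mathrm{Hilb}^n$ and $\sigma\colon\widetilde{\mathrm{Hilb}}\to S^n$ be the projections from the isospectral Hilbert scheme. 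Flatness over the $\cH$-factor follows if $L\sigma^{*}(E^{\boxtimes n})$ is a sheaf for every $E$ on that side. This is a Tor-independence statement, governed by a codimension bound for $\sigma^{-1}(\cC_b^{n})$ that is uniform in $E$. It is part of what the cited results supply. As written, your proposal does not establish the flatness assertion.

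\textbf{Smaller inaccuracies.}
\begin{itemize}
\item $\cH^{\mathrm{greg}}$ is not fibrewise dense in $\cH$ over non-reduced $b$. For $b=0$, the zero-Higgs-field locus $\Bun_{\GL_r}$ is a component of the nilpotent cone.
\item Even $\cH^{\mathrm{reg}}_b$ need not be dense in $\cH^{\mathrm{greg}}_b$. For $r=2$ and $b=0$, every line bundle on the double zero section $\cC_0=2C\subset S$ has $\deg\pi_{*}$ even. So in odd degree $\cH^{\mathrm{reg}}_0$ is empty while $\cH^{\mathrm{greg}}_0$ is not.
\item Lemma~\ref{lem:dense} only covers reduced $b$. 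The codimension-two bound on the complement of $(\cH\times_{\cB}\cH)^{\sharp}$ in $(\cH\times_{\cB}\cH)^{\flat}$, which Lemma~\ref{lem:MCM-extension} needs, holds over the non-reduced locus because that locus has codimension at least two in the Hitchin base (already $3g-3$ for $r=2$).
\item Your $\mathcal{Q}$ is just a line bundle, since the sign component of Haiman's Procesi bundle $\psi_{*}\cO$ has rank one. Arinkin's kernel is instead the sign component of $(\psi\times\mathrm{id})_{*}(\sigma\times\mathrm{id})^{*}\cE^{\boxtimes n}$ on $\mathrm{Hilb}^{n}\times_{\cB}\cH$, with $\cE$ the universal spectral sheaf, descended along the Abel map. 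It is not a pushforward along $\cC^{n}\to\cC^{(n)}$.
\end{itemize}
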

The maximal Cohen--Macaulay extension is unique up to isomorphism, which 
follows from the following well-known fact (which we will also use later): 
\begin{lemma}\emph{(\cite[Lemma~2.2]{Ardual}, \cite[Lemma~2.22]{TodaGL2})}\label{lem:MCM-extension}
Let $\mathcal{X}$ be a classical Artin stack of pure dimension, and let
$
M\in \Coh^{\heartsuit}(\mathcal{X})
$
be a maximal Cohen--Macaulay sheaf on $\mathcal{X}$. Let
$\mathcal{Z}\subset \mathcal{X}$ be a closed substack of codimension at
least two, and let
$
j\colon \mathcal{X}\setminus \mathcal{Z}\hookrightarrow \mathcal{X}
$
be the open immersion. Then the natural morphism
$
M \to j^{\heartsuit}_{*}(M|_{\mathcal{X}\setminus \mathcal{Z}})
$
is an isomorphism, where
$
j^{\heartsuit}_{*}:=\mathcal{H}^0(j_*).
$
\end{lemma}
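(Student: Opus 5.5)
The plan is to reduce to a statement about local cohomology on an affine scheme and then use the depth characterization of maximal Cohen--Macaulay modules.

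\emph{Step 1: reduction to affine schemes.} Choose a smooth atlas $p\colon U\to \mathcal{X}$ with $U$ affine. The formation of $j^{\heartsuit}_*$ commutes with flat base change, so the question is local in the smooth topology. It therefore suffices to check the following on $U$: the sheaf $p^*M$ is maximal Cohen--Macaulay, $U$ is of pure dimension, and $p^{-1}(\mathcal{Z})\subset U$ still has codimension $\geq 2$. For a smooth morphism of relative dimension $d$ the fibers are regular, so depth and local dimension both increase by the same amount along the fibers. Hence the Cohen--Macaulay property of $M_x$ relative to $\cO_{\mathcal{X},x}$ is preserved, and codimensions are preserved as well. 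Pure dimensionality passes to $U$ on each connected component. So we may assume $\mathcal{X}=\Spec R$ is a noetherian affine scheme of pure dimension, $M$ is a finitely generated maximal Cohen--Macaulay $R$-module, and $Z=V(I)$ has codimension $\geq 2$.

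\emph{Step 2: the local cohomology sequence.} Apply the local cohomology functor~(\ref{loc:coh}) to get the exact triangle $\Gamma_Z(M)\to M\to j_*j^*M$. Taking cohomology gives the exact sequence
\[
0\to H^0_Z(M)\to M\to \cH^0(j_*j^*M)\to H^1_Z(M)\to 0.
\]
It therefore suffices to show that $H^0_Z(M)=H^1_Z(M)=0$.

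\emph{Step 3: depth.} By Grothendieck's vanishing theorem, $H^i_Z(M)=0$ for all $i<\operatorname{depth}_I M$. Moreover, $\operatorname{depth}_I M=\inf_{\mathfrak{p}\in Z}\operatorname{depth} M_{\mathfrak{p}}$, where $\operatorname{depth} M_{\mathfrak{p}}=\infty$ if $M_{\mathfrak{p}}=0$. Fix $\mathfrak{p}\in Z\cap\operatorname{Supp}M$. Maximal Cohen--Macaulayness gives $\operatorname{depth}M_{\mathfrak{p}}=\dim R_{\mathfrak{p}}$. Since $\mathcal{X}$ has pure dimension, $\dim R_{\mathfrak{p}}=\operatorname{codim}(\overline{\{\mathfrak{p}\}})\geq\operatorname{codim}Z\geq 2$. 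Hence $\operatorname{depth}_I M\geq 2$, so $H^0_Z(M)=H^1_Z(M)=0$, and $M\to j^{\heartsuit}_*(M|_{\mathcal{X}\setminus\mathcal{Z}})$ is an isomorphism.

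\emph{Main obstacle.} The only delicate point is the identity $\dim R_{\mathfrak{p}}=\operatorname{codim}\overline{\{\mathfrak{p}\}}$ together with the compatibility of the maximal Cohen--Macaulay condition with smooth pullback. This is exactly where the pure-dimensionality hypothesis enters, since it rules out lower-dimensional components along which the codimension could drop. Both facts are standard for finite-type $k$-schemes; I would cite them rather than reprove them.
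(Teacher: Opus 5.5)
Your proposal is correct and is the standard argument behind this lemma. The paper gives no proof of its own and simply cites Arinkin's Lemma~2.2 and \cite[Lemma~2.22]{TodaGL2}; those rest on the same criterion you use: after a smooth-local reduction, $\operatorname{depth}_{\mathcal{Z}} M\geq 2$ kills $\mathcal{H}^0_{\mathcal{Z}}(M)$ and $\mathcal{H}^1_{\mathcal{Z}}(M)$.

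One cosmetic fix: stacks such as $\cH_{\Delta}$ in this paper are not quasi-compact, so the atlas should be a disjoint union of affine schemes rather than a single affine. Equivalently, check the isomorphism on each smooth affine chart; nothing else in your argument changes.
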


In the case that $\cB\subset \rB_{\GL_r}^{\mathrm{red, cl}}$, 
we have \begin{align*}\cH^{\mathrm{greg}}=\cH, \ 
(\cH\times_{\cB}\cH)^{\flat}=\cH\times_{\cB}\cH.
\end{align*}
In this case, we use 
the same symbol $\cP$ to denote its restriction to 
$\cH^{\mathrm{ss}}\times_{\cB}\cH$. 

For a rank-one torsion-free sheaf $E\in \Coh^{\heartsuit}(\cC_b)$ and 
the corresponding map $\iota_E \colon \mathrm{pt} \to \cH^{\mathrm{greg}}$, 
we write 
\begin{align}\label{def:PE}
\cP_E:=(\iota_E \times \id)^* \cP \in \Coh^{\heartsuit}(\cH_b)
\end{align}
which is a maximal Cohen--Macaulay sheaf. 

When $E=\cL$ is a line bundle on $\cC_b$, the sheaf $\cP_{\cL}$ is a line 
bundle on $\cH_b$. Later we will use the following lemma on the computation 
of its $\mathbb{G}_m$-weight: 
\begin{lemma}\label{lem:PLGm}
Let $\nu \colon \bgm \to \cH_b$ be a map corresponding to 
\begin{align*}
    \mathrm{pt}\mapsto M=M_1 \oplus \cdots \oplus M_k
\end{align*}
for $M_i \in \Coh^{\heartsuit}(\cC_b)$ 
with scheme theoretic support $C_i=\mathrm{Supp}(M_i)$ (which is regarded as an effective divisor on $S$) and 
$\mathbb{G}_m$-weight $(\nu_1, \ldots, \nu_k)$. 
Then the $\mathbb{G}_m$-weight of $\nu^* \cP_{\cL}$ is given by 
\begin{align*}
    \wt(\nu^* \cP_{\cL})=\sum_{i=1}^k l_i \cdot \nu_i.
\end{align*}
Here $l_i:=\deg_{C_i}(\cL|_{C_i})\in \mathbb{Z}$ is defined as in (\ref{deg:Cb}). 
\end{lemma}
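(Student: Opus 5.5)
We compute the weight directly from the Deligne pairing formula \eqref{def:Psharp}. Since $\cL$ is a line bundle, $\cP_{\cL}$ is the line bundle on $\cH_b$ whose fiber at $A$ is
\begin{align*}
\det \chi(\cL\otimes A)\otimes \det\chi(A)^{-1}\otimes(\text{terms independent of }A).
\end{align*}
The factors $\det\chi(\cL)^{-1}$ and $\det\chi(\cO_{\cC_b})$ are constant lines, so they carry $\mathbb{G}_m$-weight zero under any $\nu$. It therefore suffices to compute the weights of $\det\chi(\cL\otimes M)$ and $\det\chi(M)$ at $M=\oplus_i M_i$, where $\mathbb{G}_m$ acts on $M_i$ with weight $\nu_i$.

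\textbf{Step 1: weights of the determinant lines.} Both $\chi(-)$ and $\det$ are additive in direct sums, and scalars of weight $\nu_i$ on $M_i$ act on $R\Gamma(M_i)$ and on $R\Gamma(\cL\otimes M_i)$ by weight $\nu_i$. Hence the weight of $\det\chi(\cL\otimes M)$ is $\sum_i \nu_i\,\chi(\cL\otimes M_i)$, and the weight of $\det\chi(M)$ is $\sum_i \nu_i\,\chi(M_i)$. Consequently
\begin{align*}
\wt(\nu^*\cP_{\cL})=\sum_{i=1}^k \nu_i\bigl(\chi(\cL\otimes M_i)-\chi(M_i)\bigr).
\end{align*}

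\textbf{Step 2: identify each difference with $l_i$.} Since $M_i$ is scheme-theoretically supported on $C_i$, we have $\cL\otimes M_i=\cL|_{C_i}\otimes_{\cO_{C_i}} M_i$. We need
\begin{align*}
\chi(\cL|_{C_i}\otimes M_i)-\chi(M_i)=\deg_{C_i}(\cL|_{C_i})=\chi(\cL|_{C_i})-\chi(\cO_{C_i}).
\end{align*}
Here $M_i$ is a sheaf on the curve $C_i$ whose fundamental cycle is $[C_i]$. This holds because $M$ is a point of $\cH_b$ with cycle $[\cC_b]$, and the $C_i$ are supports of summands. In the reduced case relevant here, $M_i$ is generically of rank one on each component of $C_i$.

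For such a sheaf, the map $\mathcal{F}\mapsto \chi(\cL\otimes\mathcal{F})-\chi(\mathcal{F})$ on coherent sheaves on $C_i$ is additive. It vanishes on zero-dimensional sheaves, since a line bundle is locally trivial. By Riemann--Roch on a projective curve, it therefore depends only on the one-cycle of $\mathcal{F}$, giving $\sum_j m_j\deg(\cL|_{C_{ij}})$, where $C_{ij}$ runs over the components of $C_i$. To prove this, write $\cL=\cO(D_1-D_2)$ with $D_1,D_2$ Cartier divisors avoiding the associated points, and use $0\to\mathcal{F}(-D)\to\mathcal{F}\to\mathcal{F}|_D\to 0$.

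Applying the same computation to $\cO_{C_i}$, which has the same cycle $[C_i]$, gives $\chi(\cL|_{C_i})-\chi(\cO_{C_i})=l_i$, which completes the formula.

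\textbf{Main obstacle.} The only delicate point is Step 2 in the non-reduced case, namely the dependence on the cycle rather than on the rank of $M_i$. This is handled by the divisor argument above together with additivity over a filtration of $M_i$ whose graded pieces are sheaves on the reduced components. A further subtlety is checking that the weight convention for the action on determinant lines matches the convention of the paper, i.e.\ a sign. The constant factor $\det\chi(\cO_{\cC_b})$ and the normalization are compatible, because when $\nu$ acts by a common weight $\nu_0$ on all of $M$, the formula gives $\nu_0\sum_i l_i=\nu_0\deg_{\cC_b}\cL$, consistent with Lemma~\ref{lem:add}.
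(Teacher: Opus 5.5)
Your proposal is correct and follows the paper's proof. The paper expands $\nu^*\cP_{\cL}$ via the Deligne pairing \eqref{def:Psharp} into $\bigotimes_i\bigl(\det\chi(\cL\otimes M_i)\otimes\det\chi(M_i)^{-1}\bigr)$ times constant lines, then uses $\chi(\cL\otimes M_i)-\chi(M_i)=\chi(\cL|_{C_i})-\chi(\cO_{C_i})$; your Step~2 argument (the difference depends only on the one-cycle, with $C_i$ read as the divisor satisfying $[C_i]=[M_i]$) supplies the justification the paper leaves implicit.
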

\begin{proof}
By (\ref{def:Psharp}), we have 
\begin{align*}
  \nu^* \cP_{\cL}=\bigotimes_{i=1}^k \left(\det \chi(\cL\otimes M_i)\otimes \det \chi(M_i)^{-1}\right) \otimes \det \chi(\cL)^{-1}\otimes \det \chi(\cO_{\cC_b}).
\end{align*}
Then the lemma follows since the $\mathbb{G}_m$-weight of $M_i$ is $\nu_i$ 
and 
\begin{align*}
    \chi(\cL\otimes M_i)-\chi(M_i)=\chi(\cL|_{C_i})-\chi(\cO_{C_i})
\end{align*}
which is $\deg_{C_i}(\cL|_{C_i})$.
\end{proof}

Later we will also use the following calculation of $\mathbb{G}_m$-weights of the cotangent complex of $\cH$: 
\begin{lemma}\label{lem:Gmwt:L}
In the setting of Lemma~\ref{lem:PLGm}, 
we have 
\begin{align*}
    &c_1(\nu^* \mathbb{L}_{\Hig_{\GL_r}}^{>0})=c_1(\nu^* \mathbb{L}_{\cH}^{>0})=c_1(\nu^{*}\mathbb{L}_{\cH_b}^{>0}) \\
     &c_1(\nu^* \mathbb{L}_{\Hig_{\GL_r}}^{<0})=c_1(\nu^* \mathbb{L}_{\cH}^{<0})=c_1(\nu^{*}\mathbb{L}_{\cH_b}^{<0}) 
\end{align*}
and, if $\nu_1>\cdots>\nu_k$, they are equal to 
\begin{align}\label{compute:wg}
(2g-2)\sum_{i<j}r_i r_j (\nu_i-\nu_j), \ 
(2g-2)\sum_{i>j}r_i r_j (\nu_i-\nu_j)
    \end{align}
    respectively. 
    Here $r_i:=\operatorname{rank}(\pi_{*}\cO_{C_i})$, and we have used the same symbol $\nu$ for its compositions 
    $\cH_b \hookrightarrow \cH \hookrightarrow \Hig_{\GL_r}$. 
\end{lemma}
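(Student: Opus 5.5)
The plan is to compute the cotangent complex of $\Hig_{\GL_r}$ at the point $M=\oplus_i M_i$ via the deformation theory of Higgs bundles, or equivalently of compactly supported sheaves on $S$. It is also necessary to check that the $k[-1]$-factor and the base directions contribute only weight zero. Then the three expressions agree, and the explicit formula reduces to an Euler characteristic computation.

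\textbf{Step 1: reduce to $\cH_b$.} By Theorem~\ref{thm:higgs:basic} we have $\Hig_{\GL_r}=\Hig_{\GL_r}^{\mathrm{cl}}\times k[-1]$ locally over the base, and $\cH$ is an open substack of $\Hig^{\mathrm{cl}}_{\GL_r}$ restricted to $\cB$. The difference $\mathbb{L}_{\Hig_{\GL_r}}$ versus $\mathbb{L}_{\cH}$ is a trivial rank-one factor with $\mathbb{G}_m$ acting trivially, since it is pulled back from $\rB_{\GL_r}$ and $\nu$ maps to a point of the base. So it does not affect $c_1(\,\cdot\,^{>0})$ or $c_1(\,\cdot\,^{<0})$. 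Similarly, the triangle
\begin{align*}
h^*\mathbb{L}_{\cB}\to \mathbb{L}_{\cH}\to \mathbb{L}_{\cH/\cB},
\end{align*}
after base change to $b$, gives $\nu^*\mathbb{L}_{\cH_b}=\nu^*\mathbb{L}_{\cH/\cB}$. The term $\nu^*h^*\mathbb{L}_{\cB}$ is a trivial $\mathbb{G}_m$-representation, being pulled back from a point. Hence the positive and negative weight parts coincide, which gives the two chains of equalities.

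\textbf{Step 2: weight decomposition.} Use $\mathbb{T}_{\Hig_{\GL_r}}|_M=\mathrm{RHom}_{\Hig}(M,M)[1]$, where $\mathrm{RHom}_{\Hig}$ is the Higgs (Dolbeault) complex
\begin{align*}
\mathrm{R}\Gamma\bigl(\mathcal{E}nd(\pi_*M)\to \mathcal{E}nd(\pi_*M)\otimes\Omega_C\bigr).
\end{align*}
Via the spectral correspondence this is equivalently $\mathrm{RHom}_S(M,M)$ up to the $k[-1]$-correction. It decomposes as $\bigoplus_{i,j}\mathrm{RHom}_{\Hig}(M_i,M_j)[1]$, where the $(i,j)$-summand has $\mathbb{G}_m$-weight $\nu_j-\nu_i$ (up to the sign convention fixed by duality, which merely swaps the roles of $>0$ and $<0$ between $\mathbb{L}$ and $\mathbb{T}$).

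\textbf{Step 3: Euler characteristic.} For $i\neq j$, the Higgs complex has Euler characteristic
\begin{align*}
\chi(\mathcal{H}om(\pi_*M_i,\pi_*M_j))-\chi(\mathcal{H}om(\pi_*M_i,\pi_*M_j)\otimes\Omega_C)=-(2g-2)r_ir_j
\end{align*}
by Riemann--Roch; the degree terms cancel. After the shift $[1]$ the rank becomes $(2g-2)r_ir_j$. Taking $c_1$ therefore gives
\begin{align*}
c_1(\nu^*\mathbb{L}^{>0})=(2g-2)\sum_{i<j}r_ir_j(\nu_i-\nu_j),
\end{align*}
and likewise for the negative part, once the sign convention is settled. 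Here $\nu_i>\nu_j$ for $i<j$ identifies which pairs are positive. The values $r_i=\rank \pi_*\cO_{C_i}=\rank\pi_*M_i$ hold because $M_i$ is torsion-free of rank one on its support, in the generically regular setting.

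\textbf{Main obstacle.} The main difficulty is fixing the sign and duality convention, namely whether weight $\nu_i-\nu_j$ sits in $\mathbb{L}$ or $\mathbb{T}$. I would settle it by the extension argument recalled after (\ref{map:nupt}): $\Ext^1(A_{i+1},A_i)$ has positive weight $\nu_i-\nu_{i+1}$ in $\mathbb{T}$, hence negative weight in $\mathbb{L}$. By the self-duality (\ref{L:cotangent}), $c_1$ of the positive part is nonetheless the expression stated.
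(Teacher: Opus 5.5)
Your proposal is correct and is essentially the paper's argument: you discard the weight-zero contributions of the $k[-1]$ and $\cB$ directions, identify $\nu^*\mathbb{L}_{\Hig_{\GL_r}}\simeq\nu^*\mathbb{T}_{\Hig_{\GL_r}}=\mathrm{RHom}_S(M,M)[1]$, split it into pieces of weight $\nu_i-\nu_j$, and take Euler characteristics, and the sign question is indeed moot by self-duality, as you observe. The only cosmetic difference is that you evaluate the Euler characteristic through the Dolbeault complex and Riemann--Roch on $C$, which is exactly $\mathrm{RHom}_S$ with no $k[-1]$-correction needed, whereas the paper writes it as $-\chi_S(M_j,M_i)=C_i\cdot C_j=(2g-2)r_ir_j$ on $S$.
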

\begin{proof}
The first statement follows since the cotangent complexes of 
$\cH_b$, $\cH$ differ from those of $\Hig_{\GL_r}$ by the cotangent complexes of 
$k[-1]$ and $\cB$, which have trivial $\mathbb{G}_m$-weights. 

As for the second statement, note that we have 
\begin{align*}
    \nu^* \mathbb{L}_{\Hig_{\GL_r}} \simeq
     \nu^* \mathbb{T}_{\Hig_{\GL_r}}=
    \Hom_S(M, M)[1].
\end{align*}
Therefore we have 
\begin{align*}
     \nu^* \mathbb{L}_{\Hig_{\GL_r}}^{>0}=
     \bigoplus_{i<j}\Hom_S(M_j, M_i)[1]
\end{align*}
where $\Hom_S(M_j, M_i)[1]$ has $\mathbb{G}_m$-weight $\nu_i-\nu_j$. 
    Therefore the formula (\ref{compute:wg}) for positive part follows from 
    the following (which is computed using the Riemann-Roch formula)
    \begin{align*}
        -\chi_S(M_j, M_i)=C_i \cdot C_j=r_i r_j(2g-2).
    \end{align*}
    The computation of the negative part is the same. 
\end{proof}

We have the following proposition, which explains the naturality of the weight condition in the definition of the limit category, which is equivalent to the condition (\ref{wt:cond2}) for perfect complexes. Although this result will not be used in the rest of the paper, we include it because its proof serves as a model for the weight estimates in the next section.

\begin{prop}\label{prop:natural}
Let $\mathcal{C}_b \subset S$ be a spectral curve corresponding to
$b \in \rB_{\GL_r}^{\mathrm{cl}}$, which is not necessarily reduced. For a line
bundle $\mathcal{L}$ on $\cC_b$ with $w'=\deg_{\cC_b}(\mathcal{L})$, let
$\cP_{\mathcal{L}}$ be the corresponding line bundle on $\cH_b$. Then
$\mathcal{L}$ corresponds to a semistable Higgs bundle if and only if, for
any map $\nu \colon \bgm \to \cH_b$, we have
\begin{align}\label{nuPL}
\wt(\nu^* \cP_{\mathcal{L}})
\subset
\left[
 \frac{1}{2}c_1(\nu^*\mathbb{L}^{<0}_{\cH_b}),
 \frac{1}{2}c_1(\nu^* \mathbb{L}_{\cH_b}^{>0})
\right]
+c_1(\nu^* \delta_{w'}).
\end{align}
\end{prop}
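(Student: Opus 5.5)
The plan is to turn \eqref{nuPL} into an explicit linear inequality in the weights $\nu_i$, using Lemma~\ref{lem:PLGm}, Lemma~\ref{lem:Gmwt:L} and the formula \eqref{deltaw} for $\delta_{w'}$. Then decompose that inequality by Abel summation into one condition per ``initial'' subcurve, and check that these conditions are exactly the slope inequalities \eqref{ineq:surj} for the quotients $\cL\twoheadrightarrow \cL|_{C'}$.

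\textbf{Step 1: the weights.} A map $\nu\colon \bgm\to\cH_b$ gives a point $M=\bigoplus_{i=1}^k M_i$ of $\cH_b$, decomposed by $\mathbb{G}_m$-weight. Grouping summands of equal weight, we may assume the weights $(\nu_1,\ldots,\nu_k)$ satisfy $\nu_1>\cdots>\nu_k$. Let $C_i$ be the support divisors, so $\sum_i C_i=\cC_b$, and put $r_i=\rank \pi_*\cO_{C_i}$ and $l_i=\deg_{C_i}(\cL|_{C_i})$.
\begin{itemize}[leftmargin=1.5em]
\item By Lemma~\ref{lem:PLGm}, $\wt(\nu^*\cP_\cL)=\sum_i l_i\nu_i$.
\item Since $\pi_*M_i$ has rank $r_i$ and weight $\nu_i$, \eqref{deltaw} gives $c_1(\nu^*\delta_{w'})=\frac{w'}{r}\sum_i r_i\nu_i$.
\item By Lemma~\ref{lem:Gmwt:L}, the half-widths are $\pm(g-1)\sum_{i<j}r_ir_j(\nu_i-\nu_j)$.
\end{itemize}
Set $a_i:=l_i-\frac{w'}{r}r_i$. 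Then \eqref{nuPL} is equivalent to
\begin{align*}
\Bigl|\sum_{i} a_i\nu_i\Bigr| \leq (g-1)\sum_{i<j}r_ir_j(\nu_i-\nu_j).
\end{align*}
Lemma~\ref{lem:add}, applied iteratively, gives $\sum_i l_i=w'$, so $\sum_i a_i=0$.

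\textbf{Step 2: Abel summation.} Put $A_j=\sum_{i\leq j}a_i$ and $R_j=\sum_{i\le j}r_i$. Then the left side is $\sum_{j=1}^{k-1}(\nu_j-\nu_{j+1})A_j$, and the right side is $(g-1)\sum_{j=1}^{k-1}(\nu_j-\nu_{j+1})R_j(r-R_j)$. Since $\nu_j-\nu_{j+1}>0$, the inequality holds for all such $\nu$ if and only if, for every effective decomposition $\cC_b=C'+C''$ (with $C'=C_1+\cdots+C_j$) with ranks $r',r''$,
\begin{align*}
|l'-\tfrac{w'}{r}r'|\leq (g-1)r'r'', \qquad l'=\deg_{C'}(\cL|_{C'}).
\end{align*}
Both decomposition types are actually realized. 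The ``if'' direction of the proposition needs the condition for arbitrary $\nu$, so it only uses these two-piece bounds. For the ``only if'' direction, every two-piece decomposition is realized by the point $\cO_{C'}\oplus\cO_{C''}\in\cH_b$ with weights $(1,0)$. So \eqref{nuPL} for all $\nu$ is equivalent to the displayed bound for all $C'$. Replacing $C'$ by $C''$ changes $l'-\frac{w'}{r}r'$ into its negative, again because $\sum_i a_i=0$. Hence it suffices to show the lower bound $l'-\frac{w'}{r}r'\geq -(g-1)r'(r-r')$ for all $C'$.

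\textbf{Step 3: match with semistability.} By \eqref{formula:deg} applied to $C'$, we have $\deg\pi_*\cL|_{C'}=l'-(r'^2-r')(g-1)$. A direct rearrangement shows that the lower bound is exactly $\deg\pi_*(\cL|_{C'})/r'\geq \deg\pi_*\cL/r$. It remains to check that testing \eqref{ineq:surj} on the quotients $\cL|_{C'}$ is the same as semistability. Since $C'\subset S$ is a Cartier divisor, it is Cohen--Macaulay, so $\cL|_{C'}$ is pure; this gives one direction. Conversely, let $\cL\twoheadrightarrow E'$ be a pure quotient and let $C'$ be its fundamental cycle. Then $E'$ is annihilated by the equation of $C'$, so the surjection factors through $\cL|_{C'}\twoheadrightarrow E'$. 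Both sheaves have the same fundamental cycle, so the kernel is zero-dimensional, and it vanishes because $\cL|_{C'}$ is pure. Hence $E'=\cL|_{C'}$.

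The main obstacle is this last identification in the nonreduced case, specifically the assertion that a pure sheaf with fundamental cycle $C'$ is an $\cO_{C'}$-module. I would prove it via the Fitting-ideal/determinantal description of the support of a pure one-dimensional sheaf on a smooth surface, which contains the ideal of the fundamental cycle.
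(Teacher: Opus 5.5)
Your proposal is correct and follows essentially the same route as the paper. You reduce \eqref{nuPL} via Lemmas~\ref{lem:PLGm} and~\ref{lem:Gmwt:L} to the bounds $|l_I-\frac{w'}{r}r_I|\leq (g-1)r_Ir_{I^{\circ}}$ on initial segments, match the lower bound with the slope inequality for the quotient $\cL|_{C_I}$, and realize a destabilizing decomposition $\cC_b=C'+C''$ by a two-weight map. The remaining differences are minor: you identify pure quotients of $\cL$ with restrictions $\cL|_{C'}$ via the Fitting ideal, whereas the paper saturates and uses that a pure quotient of $\cO_{\cC_b}$ is $\cO_{C_1'}$ for a subdivisor. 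Also, in the nonreduced case the identity $\sum_i l_i=w'$ does not follow from Lemma~\ref{lem:add} as stated, since that lemma excludes common components; it should instead be derived from $0\to \cL|_{C''}(-C')\to\cL\to\cL|_{C'}\to 0$ together with bilinearity of $\chi$ on curves, an identity the paper's proof also uses implicitly.
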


\begin{proof}
Let $\nu \colon \bgm \to \cH_b$ be a map as in
Lemma~\ref{lem:PLGm}, and set $r_i=\operatorname{rank}(\pi_{*}\cO_{C_i})$, 
$l_i=\deg_{C_i}(\cL|_{C_i})$.  For a subset
$I\subset \{1,\ldots,k\}$, we write
$C_I:=\sum_{i\in I} C_i$ as an effective divisor on $S$, and set
$r_I:=\sum_{i\in I}r_i$ and $l_I:=\sum_{i\in I}l_i$. 
Note that $\wt(\nu^* \cP_{\mathcal{L}})$ consists of the single element
$c_1(\nu^* \cP_{\mathcal{L}})\in \mathbb{Z}$, as $\cP_{\mathcal{L}}$ is a line bundle.
Since
\[
c_1(\nu^* \delta_{w'})
=
\frac{w'}{r}\sum_{i=1}^k r_i \nu_i,
\]
if we set
$\widetilde{l}_i:=l_i-\frac{w'}{r}r_i$ and
$\widetilde{l}_I:=\sum_{i\in I}\widetilde{l}_i$, then
\begin{align}\label{eq:c1P}
c_1(\nu^* \cP_{\mathcal{L}})-c_1(\nu^* \delta_{w'})
=
\sum_{i=1}^k \nu_i \widetilde{l}_i
=
\sum_{i=1}^{k-1}\widetilde{l}_{I_i}(\nu_i-\nu_{i+1}),
\end{align}
where $I_i=\{1,\ldots,i\}$. Also, by Lemma~\ref{lem:Gmwt:L}, we have
\begin{align}\label{eq:Lpos}
    \frac{1}{2}c_1(\nu^* \mathbb{L}_{\cH_b}^{>0})
    =
    (g-1)\sum_{i<j}r_i r_j(\nu_i-\nu_j)
    =
    (g-1)\sum_{i=1}^{k-1}
    r_{I_i}r_{I_i^{\circ}}(\nu_i-\nu_{i+1}),
\end{align}
where $I^{\circ}$ denotes the complement of $I$.

Suppose that $\mathcal{L}$ corresponds to a semistable Higgs bundle.
For any subset $I\subset \{1,\ldots,k\}$, we have the exact sequence
\[
0 \to \mathcal{L}|_{C_{I^{\circ}}}(-C_I) \to \mathcal{L}
\to \mathcal{L}|_{C_I} \to 0 .
\]
The semistability of $\mathcal{L}$ gives (see Remark~\ref{rmk:ss:spec})
\[
\frac{\deg \pi_{*}\mathcal{L}}{\rank \pi_{*}\mathcal{L}}
\leq
\frac{\deg \pi_{*}(\mathcal{L}|_{C_I})}
     {\rank \pi_{*}(\mathcal{L}|_{C_I})},
\]
or equivalently (using (\ref{formula:deg}))
\[
\frac{w'-(r^2-r)(g-1)}{r}
\leq
\frac{l_I-(r_I^2-r_I)(g-1)}{r_I}.
\]
Expanding this inequality, we obtain
\[
    \widetilde{l}_I
    :=
    l_I-\frac{r_I}{r}w'
    \geq
    -(g-1)r_I r_{I^{\circ}}.
\]
Replacing $I$ with $I^{\circ}$ and using
$\widetilde{l}_I+\widetilde{l}_{I^{\circ}}=0$, we also obtain
$\widetilde{l}_I \leq (g-1)r_I r_{I^{\circ}}$. Hence, by
(\ref{eq:c1P}) and (\ref{eq:Lpos}), the condition~(\ref{nuPL}) holds.

Conversely, suppose that $\mathcal{L}$ is not semistable. Then there is
an exact sequence in $\Coh^{\heartsuit}(\cC_b)$
\[
    0\to E_2 \to \mathcal{L} \to E_1 \to 0
\]
such that
\begin{align}\label{ineq:LE}
\frac{\deg \pi_{*}\mathcal{L}}{\rank \pi_{*}\mathcal{L}}>
\frac{\deg \pi_{*}E_1}{\rank \pi_{*}E_1}.
\end{align}
By taking the saturation, we can take $E_1$ to be torsion-free on $\cC_b$. 
Then the induced surjection
$\mathcal{O}_{\cC_b} \twoheadrightarrow E_1 \otimes \mathcal{L}^{-1}$
gives $E_1 \otimes \mathcal{L}^{-1}\cong \cO_{C_1'}$ for a decomposition
$\cC_b=C_1'+C_2'$ as effective divisors on $S$. Writing
$r_i'=\rank(\pi_{*}\cO_{C_i'})$ and
$l_i'=\deg_{C_i'}(\mathcal{L}|_{C_i'})$, the inequality~(\ref{ineq:LE}) is
equivalent to
\[
\frac{w'-(r^2-r)(g-1)}{r}
>
\frac{l_1'-(r_1^{'2}-r_1')(g-1)}{r_1'}.
\]
Expanding this inequality gives
\[
    \widetilde{l}_1'
    :=
    l_1'-\frac{r_1'}{r}w'
    <
    -(g-1)r_1' r_2'.
\]
It follows that, for any map $\nu'\colon \bgm \to \cH_b$ of the form
$\mathrm{pt} \mapsto M_1'\oplus M_2'$ with
$\mathrm{Supp}(M_i')=C_i'$ and $\mathbb{G}_m$-weights $(\nu_1', \nu_2')$
with 
$\nu_1'>\nu_2'$, we have
\begin{align*}
   c_1((\nu')^* \cP_{\mathcal{L}})
   -c_1((\nu')^* \delta_{w'})
   &=
   \widetilde{l}_1'(\nu_1'-\nu_2')  \\
   &<
   -(g-1)r_1' r_2'(\nu_1'-\nu_2') \\
   &=
   \frac{1}{2}c_1((\nu')^*\mathbb{L}_{\cH_b}^{<0}).
\end{align*}
Therefore the condition~(\ref{nuPL}) is not satisfied.
\end{proof}

\subsection{Fourier--Mukai functor via Arinkin sheaf}
Let $\cB \subset \rB_{\GL_r}^{\mathrm{red, cl}}$ be an open subset. 
By the description (\ref{def:Psharp}), 
the Arinkin sheaf (\ref{Arsheaf0}) on 
$\cH(w)\times_{\cB}\cH(\chi)$ has bi-weight 
$(\chi', w')$, 
\begin{align*}
    \chi':=\chi+(r^2-r)(g-1), \ w':=w+(r^2-r)(g-1).
\end{align*}
Here the weight shift $(\chi, w)\mapsto (\chi', w')$ is due to 
the change of degrees on $C$ and $\cC_b$, see the formula (\ref{formula:deg}). 
Therefore it induces the $\cB$-linear functor 
\begin{align}\label{induce:Phi0}
\Phi \colon 
    \QCoh(\cH(w)^{\mathrm{ss}})_{-\chi'} \to \QCoh(\cH(\chi))_{w'}, 
\end{align}
defined by the Fourier--Mukai functor
\begin{align*}
    \Phi(-)=p_{2*}(p_1^{*}(-)\otimes \cP).
\end{align*}
Here $p_i$ are the projections from $\cH(w)^{\mathrm{ss}}\times_{\cB}\cH(\chi)$ onto the corresponding factors. 

By cohomological properness of $\cH(w)^{\mathrm{ss}}\to \cB$,
(which holds as $\cH(w)^{\mathrm{ss}}$ admits a good moduli space which is projective over $\cB$)
together with the flatness of $\cP$ over the second factor, the functor (\ref{induce:Phi0}) preserves coherent sheaves
\begin{align}\label{induce:Phi}
    \Phi \colon   \Coh(\cH(w)^{\mathrm{ss}})_{-\chi'} \to \Coh(\cH(\chi))_{w'}. 
\end{align}

\subsection{The Fourier--Mukai transform for compactified Jacobians of reduced curves}\label{subsec:FMJ}

Let $T$ be a $k$-scheme of finite type, and let $X\to T$ be a flat family of reduced projective curves with at worst planar singularities 
(e.g.\ $X\to T$ is the universal spectral curve $\cC\to \cB$ for $\cB \subset \rB_{\GL_r}^{\mathrm{red,cl}}$).
Here we recall the Fourier--Mukai transform for relative compactified Jacobians of $X$ 
via the Arinkin sheaf constructed in~\cite{MRVF2}, using the language of Gieseker semistable sheaves as in~\cite[Subsection~2.10]{TodaGL2}.

For a $T$-relative $\mathbb{Q}$-ample divisor $H$ on $X$ and $\chi' \in \mathbb{Z}$, let 
\begin{align}\label{map:T}
\overline{\mathcal{J}}_{X/T}(\chi')^H \to T
\end{align}
be the moduli stack of rank-one torsion-free $H$-semistable sheaves on the fibers of $X\to T$ of degree $\chi'$. 
Suppose that $H$ is generic, i.e.\ that $\overline{\mathcal{J}}_{X/T}(\chi')^H$
consists of $H$-stable sheaves. 
Then the good moduli space morphism
\begin{align*}
    \overline{\mathcal{J}}_{X/T}(\chi')^H \to \overline{J}_{X/T}(\chi')^H
\end{align*}
is a $\mathbb{G}_m$-gerbe with Brauer class $\beta$, and we have the decomposition into $\mathbb{G}_m$-weight subcategories
\begin{align*}
    \Coh(\overline{\mathcal{J}}_{X/T}(\chi')^H)
    =
    \bigoplus_{w'\in \mathbb{Z}}
    \Coh(\overline{\mathcal{J}}_{X/T}(\chi')^H)_{w'},
\end{align*}
where each summand is equivalent to the category of $\beta^{w'}$-twisted sheaves.

The construction of the Arinkin sheaf in the previous subsection yields a Cohen--Macaulay sheaf
\begin{align*}
    \cP_{\mathcal{J}/T} \in
    \Coh^{\heartsuit}\bigl(
    \overline{\mathcal{J}}_{X/T}(w')^H
    \times_T
    \overline{\mathcal{J}}_{X/T}(\chi')^H
    \bigr)
\end{align*}
of bi-weight $(\chi',w')$.

\begin{thm}\label{thm:MRV}\emph{(\cite[Theorem~A]{MRVF2}, \cite[Proposition~2.28]{TodaGL2})}
The Fourier--Mukai functor
\begin{align}\label{Phi:J}
\Phi_{\mathcal{J}/T} \colon
\Coh(\overline{\mathcal{J}}_{X/T}(w')^H)_{-\chi'}
\to
\Coh(\overline{\mathcal{J}}_{X/T}(\chi')^H)_{w'}
\end{align}
with kernel $\cP_{\mathcal{J}/T}$ is an equivalence. Moreover, the inverse of 
$\Phi_{\mathcal{J}/T}$ is given by the kernel object
\begin{align}\label{kernel:inv}
\mathbb{D}(\cP_{\mathcal{J}/T})\otimes p_1^*\omega_{\overline{J}/T}[p_a],
\end{align}
where $p_a$ is the arithmetic genus of the fibers of $X\to T$.
\end{thm}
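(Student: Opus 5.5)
The plan is to reduce the statement to the main theorem of~\cite{MRVF2}, which treats fine compactified Jacobians of reduced curves with planar singularities. The work is to match the two formulations: semistable sheaves on a $\mathbb{G}_m$-gerbe versus twisted sheaves on fine compactified Jacobians, and the Arinkin sheaf of Subsection~\ref{subsec:Arsheaf} versus the Poincar\'e sheaf of~\cite{MRVF2}.

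\emph{Step 1: compare the two sides.} Since $H$ is generic, every point of $\overline{\mathcal{J}}_{X/T}(\chi')^H$ is $H$-stable, so its stabilizers are exactly $\mathbb{G}_m$. Hence the good moduli space $\overline{J}_{X/T}(\chi')^H$ is a fine compactified Jacobian in the sense of~\cite{MRVF2}, proper and flat over $T$, and the stack is a $\mathbb{G}_m$-gerbe over it. Étale locally on $T$ we may choose a section of $X\to T$ in the smooth locus. This rigidifies the gerbe, so that the gerbe becomes trivial and $\Coh(\overline{\mathcal{J}}_{X/T}(\chi')^H)_{w'}\simeq \Coh(\overline{J}_{X/T}(\chi')^H)$. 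Globally, the weight-$w'$ part is identified with $\beta^{w'}$-twisted sheaves, and the Fourier--Mukai functor is compatible with these identifications because the kernel has bi-weight $(\chi',w')$.

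\emph{Step 2: identify the kernels.} Both $\cP_{\mathcal{J}/T}$ and the Poincar\'e sheaf of~\cite{MRVF2}, after rigidification, are maximal Cohen--Macaulay sheaves. Both restrict to the Deligne pairing~\eqref{def:Psharp} on the locus where one factor is a line bundle. By Lemma~\ref{lem:dense}, the non-regular locus has codimension at least one in each fiber. So the complement of the locus where one factor is regular has codimension at least two in the product, being a product of two codimension $\ge 1$ loci fibered over $T$. Lemma~\ref{lem:MCM-extension} then shows that the two sheaves agree, compatibly with the gerbe twists. With the kernels matched, \cite[Theorem~A]{MRVF2} gives that $\Phi_{\mathcal{J}/T}$ is an equivalence étale locally on $T$. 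Since $\Phi_{\mathcal{J}/T}$ is $T$-linear and equivalences of $T$-linear categories satisfy étale descent on $T$, it is an equivalence globally.

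\emph{Step 3: the inverse.} Because $\cP_{\mathcal{J}/T}$ is flat over each factor and the morphisms are proper, Grothendieck duality computes the right adjoint of $\Phi_{\mathcal{J}/T}$. Its kernel is $\mathbb{D}(\cP_{\mathcal{J}/T})\otimes p_1^*\omega_{\overline{J}/T}[\dim]$, where $\dim$ is the relative dimension of $\overline{J}_{X/T}(w')^H\to T$; this equals $p_a$, since fine compactified Jacobians of reduced planar curves have pure dimension equal to the arithmetic genus. Since $\mathbb{D}$ flips the bi-weight, this kernel defines a functor in the opposite direction between the correct weight pieces. For an equivalence the right adjoint is the inverse, which gives~\eqref{kernel:inv}.

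The main obstacle I expect is Step 2, specifically the bookkeeping of gerbe twists and weights: checking that the Brauer classes and bi-weights line up so that \cite{MRVF2}, which is stated for untwisted fine compactified Jacobians, applies verbatim. I would handle this by working étale locally on $T$ with a chosen smooth section, and then descending.
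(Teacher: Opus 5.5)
This is essentially the paper's route: the paper gives no independent proof and simply cites \cite[Theorem~A]{MRVF2} together with \cite[Proposition~2.28]{TodaGL2}, and your Steps 1--3 are a sound unpacking of that reduction. These steps are \'etale-local rigidification of the gerbe by a section in the smooth locus, identification of $\cP_{\mathcal{J}/T}$ with the Poincar\'e sheaf of \cite{MRVF2} via uniqueness of maximal Cohen--Macaulay extensions, and the inverse obtained as the Grothendieck-duality adjoint. One point to make explicit in Step~3: $\cP_{\mathcal{J}/T}$ is not perfect (compare the infinite resolution in Proposition~\ref{prop:resol}). So the identification $R\cHom(\cP_{\mathcal{J}/T}, p_2^{!}(-))\cong \mathbb{D}(\cP_{\mathcal{J}/T})\otimes p_2^{!}(-)$ must be justified by flatness over the second factor together with the restrictions to fibers being maximal Cohen--Macaulay on Gorenstein fibers, not by perfectness.
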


\begin{remark}\label{rmk:dualizing}
    The dualizing sheaf of $\overline{J}_{X/T}(\chi')$ is trivial 
    on the fiber over any $t\in T$ by~\cite[Theorem~A]{MRF3}. 
    Therefore $\omega_{\overline{J}/T}$ in Theorem~\ref{thm:MRV} is locally trivial on $T$.
\end{remark}

\subsection{Hitchin section}\label{subsec:Hitsec}
We set 
\begin{align*}
    \chi_0 := \deg \pi_{*}\mathcal{O}_{\cC_b}=(r^2-r)(1-g).
\end{align*}
We have the following \textit{Hitchin section}
\begin{align}\label{sec:s}
    s \colon \rB_{\GL_r} \to \Hig_{\GL_r}(\chi_0). 
\end{align}
It is induced by the structure sheaf of the universal spectral curve, i.e.\ 
it sends $b\in \rB_{\GL_r}$ to the structure sheaf
$\mathcal{O}_{\cC_b}$. 
It is well-known that $\mathcal{O}_{\cC_b}$ corresponds to a stable 
Higgs bundle, and this fact is implicitly used in~\cite{TodaGL2}. 
Here we give a proof of it. 
\begin{lemma}\label{lem:OCstable}
The sheaf $\mathcal{O}_{\cC_b}\in \Coh^{\heartsuit}(\cC_b)$ corresponds to a stable Higgs bundle. 
In particular, the Hitchin section $s$ factors through
$\Hig_{\GL_r}(\chi_0)^{\mathrm{st}} \subset \Hig_{\GL_r}(\chi_0)$. 
\end{lemma}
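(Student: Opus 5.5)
We use the characterization in Remark~\ref{rmk:ss:spec}: $\mathcal{O}_{\cC_b}$ is pure one-dimensional on $S$. So it suffices to show that for every surjection $\mathcal{O}_{\cC_b}\twoheadrightarrow E'$ onto a pure one-dimensional sheaf $E'\neq \mathcal{O}_{\cC_b}$ the strict inequality
\begin{align*}
\frac{\deg \pi_*\mathcal{O}_{\cC_b}}{r} < \frac{\deg \pi_* E'}{\operatorname{rank}\pi_*E'}
\end{align*}
holds. As in the converse part of the proof of Proposition~\ref{prop:natural}, a surjection from $\mathcal{O}_{\cC_b}$ onto a pure sheaf is the restriction to a closed subscheme $Z\subset \cC_b$ without embedded points. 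The kernel is an ideal sheaf $I$ of $\cC_b$. The plan is to reduce to the case where $Z$ is a spectral curve $C_1'$, i.e.\ an effective divisor on $S$ with $\cC_b=C_1'+C_2'$ and $r_1':=\operatorname{rank}\pi_*\mathcal{O}_{C_1'}$, $0<r_1'<r$.

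\emph{Reduction step.} A pure one-dimensional closed subscheme $Z\subset S$, finite over $C$, is Cohen--Macaulay of dimension one in a smooth surface. Hence it is a Cartier divisor: its ideal is locally principal, because a codimension-one CM subscheme of a regular surface with no embedded points is defined by a single equation. Since $Z\subset \cC_b$, this divisor satisfies $Z\le \cC_b$, so $\cC_b=Z+C_2'$. Its rank $r_1'$ satisfies $0<r_1'<r$, since $E'\ne0$ and $E'\neq \mathcal{O}_{\cC_b}$ (if $r_1'=r$ then $Z=\cC_b$). This step needs care but should be standard. I expect it to be the main, though mild, obstacle: one must make sure that non-reduced $\cC_b$ causes no trouble. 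It does not, because only the divisor structure is used.

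\emph{Computation.} For $Z$ a spectral curve of a rank-$r_1'$ Higgs bundle, (\ref{formula:deg}) applied to $Z$ (equivalently, $\chi_0$ for rank $r_1'$) gives
\begin{align*}
\deg \pi_*\mathcal{O}_Z=(r_1'^2-r_1')(1-g).
\end{align*}
The required inequality therefore becomes
\begin{align*}
(r-1)(1-g) < (r_1'-1)(1-g),
\end{align*}
that is, $(g-1)(r-r_1')>0$. This holds because $g\ge 2$ and $r_1'<r$. This proves stability.

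\emph{Factorization of $s$.} The Hitchin section then lands pointwise in the open substack $\Hig_{\GL_r}(\chi_0)^{\mathrm{st}}$. Since this is open, $s$ factors through it; on the derived level, the open substack is determined by its classical points.
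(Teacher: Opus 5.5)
Your proposal is correct and follows the paper's argument: a pure quotient of $\mathcal{O}_{\cC_b}$ is the structure sheaf of a sub-spectral curve of rank $r'<r$, and comparing the slopes $(r-1)(1-g)<(r'-1)(1-g)$ gives stability. Your Cartier-divisor justification of the reduction step (an unmixed height-one ideal in a regular surface is principal, hence $Z\le \cC_b$ as divisors) fills in a point that the paper simply asserts.
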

\begin{proof}
    Let 
    \begin{align*}
        0\to E_2 \to \mathcal{O}_{\cC_b} \to E_1 \to 0
    \end{align*}
    be an exact sequence in $\Coh^{\heartsuit}(\cC_b)$
    such that $E_i$ are torsion-free sheaves on $\cC_b$;
    by the spectral 
    construction, it corresponds to an exact sequence of Higgs bundles. 
As $E_1$ is a quotient of $\mathcal{O}_{\cC_b}$, it is also 
    a structure sheaf of a spectral curve of a $\GL_{r'}$-Higgs bundle
    where $r'=\operatorname{rank}\pi_{*}E_1<r$. 
    Then we have 
    \begin{align*}
        \frac{\deg\pi_{*}\mathcal{O}_{\cC_b}}{\rank \pi_{*}\mathcal{O}_{\cC_b}}=(r-1)(1-g)<\frac{\deg\pi_{*}E_1}{\rank \pi_{*}E_1}=(r'-1)(1-g).
    \end{align*}
    Since $g\geq 2$ and $r\geq 2$, $\cO_{\cC_b}$ corresponds to a stable Higgs bundle. 
\end{proof}

For an open subset $\cB \subset \rB_{\GL_r}^{\mathrm{cl}}$, 
the Hitchin section $s$ 
induces the section 
\begin{align*}
    s \colon \cB \to \cH(\chi_0)
\end{align*}
which by Lemma~\ref{lem:OCstable} factors through 
\begin{align*}
    s\colon \cB \stackrel{\overline{s}}{\to} \cH(\chi_0)^{\mathrm{ss}} \stackrel{j}{\hookrightarrow} \cH(\chi_0)
\end{align*}
such that $\overline{s}$ is a closed immersion after the $\mathbb{G}_m$-rigidification, see the proof of~\cite[Lemma~4.3]{TodaGL2}.
We recall the following fact, which summarizes the arguments in~\cite[Section~4.2]{TodaGL2}:
\begin{lemma}\emph{(\cite[Section~4.2, Lemma~4.3]{TodaGL2})}\label{lem:factori}
The functor 
\begin{align*}
    s^* \colon \LL_{\mathcal{N}}(\cH(\chi_0))_{w'} \to \Coh(\cB)
\end{align*}
admits a left adjoint
\begin{align}\label{sleft}
   s_{!} \colon \Coh(\cB) \to \LL_{\mathcal{N}}(\cH(\chi_0))_{w'}
\end{align}
given by 
\begin{align}\label{formula:sbar}
    s_{!}=j_{!} \circ \overline{s}_{!} \colon 
    \Coh(\cB) \stackrel{\overline{s}_!}{\to} \LL_{\mathcal{N}}(\cH(\chi_0)^{\mathrm{ss}})_{w'} \stackrel{j_!}{\to}
    \LL_{\mathcal{N}}(\cH(\chi_0))_{w'}.
\end{align}
Moreover 
the 
functor $\overline{s}_{!}$ is given by 
\begin{align}\label{isom:sbar!}
    \overline{s}_!\cong(\overline{s}_{*})_{w'}[-p_a]
\end{align}
where $\overline{s}_{*}$ is the usual $*$-push forward and $(-)_{w'}$ is taking the direct summand of the weight $w'$-part.
\end{lemma}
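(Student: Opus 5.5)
The plan is to construct the left adjoint in two stages along the factorization $s=j\circ\overline{s}$, and then use that a composite of left adjoints is a left adjoint of the composite. Since $s^*=\overline{s}^*\circ j^*$ on $\LL_{\mathcal{N}}(\cH(\chi_0))_{w'}$, and $j^*$ has the fully faithful left adjoint $j_!$ by Theorem~\ref{thm:left} (in the form of Remark~\ref{rmk:T} for $\cH$ over $\cB$), it suffices to show two things. First, that $\overline{s}^*\colon \LL_{\mathcal{N}}(\cH(\chi_0)^{\mathrm{ss}})_{w'}\to \Coh(\cB)$ has a left adjoint. Second, that this left adjoint is given by $(\overline{s}_*)_{w'}[-p_a]$ and lands in $\LL_{\mathcal{N}}(\cH(\chi_0)^{\mathrm{ss}})_{w'}$.

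\textbf{Local structure near the section.} By Lemma~\ref{lem:OCstable}, $\overline{s}$ lands in the stable locus, and in fact in the regular locus, since $\cO_{\cC_b}$ is a line bundle. I will work on an open neighbourhood $\mathcal{U}$ of the image in $\cH(\chi_0)^{\mathrm{st}}\cap\cH^{\mathrm{reg}}$. Over $\cB\subset \rB^{\mathrm{red,cl}}_{\GL_r}$, $\mathcal{U}$ is a $\mathbb{G}_m$-gerbe over a scheme $U$ which is smooth over $\cB$ of relative dimension $p_a$: it is an open piece of the relative Picard scheme, whose fibres have dimension $p_a$. Equivalently, by Theorem~\ref{thm:higgs:basic}, the relative dimension of $\mathcal{U}\to\cB$ as a stack is $p_a-1$. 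The section factors as
\[
\cB\to \cB\times \bgm \to \mathcal{U},
\]
and the second map is the gerbe pull-back of a closed immersion $\cB\hookrightarrow U$, which is regular of codimension $p_a$.

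\textbf{Computing the adjoint.} On weight-$w'$ objects, the left adjoint of the pull-back along $\cB\to\cB\times\bgm$ is inserting the weight-$w'$ summand. For the regular closed immersion, the left adjoint of the $*$-pull-back is
\[
\overline{s}_*\bigl(-\otimes \det N^{\vee}[-p_a]\bigr),
\]
as in the formula $f_!=f_*(-\otimes\det\mathbb{L}_f[\dim f])$ recalled in the conventions. The determinant of the conormal bundle is the restriction of the relative canonical bundle of $U/\cB$. This is trivial along the Hitchin section: the normal bundle of the section is the relative tangent space $H^1(\cO_{\cC_b})$, and its determinant is trivial on the affine space $\cB$, consistent with Remark~\ref{rmk:dualizing}. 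Alternatively, the Lagrangian property of the Hitchin section identifies $N$ with $\Omega_{\cB}$, which is trivial. Combining these gives $\overline{s}_!\cong(\overline{s}_*)_{w'}[-p_a]$. Adjunctions computed on the open substack $\mathcal{U}$ transfer to all of $\cH(\chi_0)^{\mathrm{ss}}$, because $\overline{s}_*$ of a coherent sheaf is supported in $\mathcal{U}$ and the Homs are computed there.

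\textbf{Membership in the limit category.} It remains to check that $(\overline{s}_*\cF)_{w'}$ lies in $\LL_{\mathcal{N}}(\cH(\chi_0)^{\mathrm{ss}})_{w'}$. It is perfect, because $\mathcal{U}$ is smooth over the smooth base $\cB$, and it has nilpotent singular support by Remark~\ref{rmk:nilp}. For the weight condition~(\ref{wt:cond2}), every map $\nu\colon\bgm\to\mathcal{U}$ factors through the central $\mathbb{G}_m$ of a stable point. On such a map, both sides of~(\ref{wt:cond2}) reduce to the central weight, which equals $c_1(\nu^*\delta_{w'})$ because the object has pure weight $w'$.

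I expect the main obstacle to be the bookkeeping of the shift and determinant: matching the gerbe contribution and the codimension $p_a$ so that the total shift is exactly $[-p_a]$, and trivializing $\det N^{\vee}$ canonically enough that the isomorphism~(\ref{isom:sbar!}) holds globally over $\cB$.
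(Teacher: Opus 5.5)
\textbf{Overall.} Your route is the one the paper relies on. You write $s=j\circ\overline{s}$, take $j_!$ from Theorem~\ref{thm:left} in the form of Remark~\ref{rmk:T}, and compute $\overline{s}_!$ from the fact that $\overline{s}$ is a closed immersion after $\mathbb{G}_m$-rigidification (Remark~\ref{rmk:sstar}). Your bookkeeping of the shift is right: $\cB\times\bgm$ has codimension $p_a$ in $\mathcal{U}$, and the gerbe direction contributes no shift on the weight-$w'$ summand. The check of the weight condition at stable points is also right.

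\textbf{The gap.} One step is asserted rather than proved, and it is exactly the step that separates $\overline{s}$ from $s$. You only show that $\cB\times\bgm\hookrightarrow\mathcal{U}$ is closed inside the neighbourhood $\mathcal{U}$. You then claim that $\overline{s}_*$ of a coherent sheaf is supported in $\mathcal{U}$. Everything afterwards rests on this: the transfer of adjunctions from $\mathcal{U}$ to $\cH(\chi_0)^{\mathrm{ss}}$, perfectness on $\cH(\chi_0)^{\mathrm{ss}}$, and the vacuity of the weight condition for maps $\nu$ with $\nu(\mathrm{pt})\notin\mathcal{U}$. This needs $\overline{s}(\cB)$ to be closed in the whole of $\cH(\chi_0)^{\mathrm{ss}}$.

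Your argument never uses the passage from $\cH(\chi_0)$ to $\cH(\chi_0)^{\mathrm{ss}}$ at this point. The substack $\mathcal{U}$ is equally open in $\cH(\chi_0)$, so verbatim the same reasoning would give $s_!\cong(s_*)_{w'}[-p_a]$. Remark~\ref{rmk:sstar} says this is false outside the elliptic locus, precisely because $s(\cB)$ is not closed in $\cH(\chi_0)$.

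\textbf{The fix.} The missing input is that $\cH(\chi_0)^{\mathrm{ss}}$ has a good moduli space $\mathrm{H}(\chi_0)^{\mathrm{ss}}$ which is projective over $\cB$.
\begin{enumerate}
\item The induced map $\cB\to\mathrm{H}(\chi_0)^{\mathrm{ss}}$ is a section of a separated morphism, hence a closed immersion. Call its image $Z$.
\item $Z$ lies in the open stable part of $\mathrm{H}(\chi_0)^{\mathrm{ss}}$. The preimage of this stable part is $\cH(\chi_0)^{\mathrm{st}}$, which is a $\mathbb{G}_m$-gerbe over it, because a semistable object S-equivalent to a stable one is isomorphic to it.
\item So the preimage of $Z$ in $\cH(\chi_0)^{\mathrm{ss}}$ is a $\mathbb{G}_m$-gerbe over $\cB$ with a section, i.e.\ $\cB\times\bgm$. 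Hence $\overline{s}(\cB)$ is closed in $\cH(\chi_0)^{\mathrm{ss}}$.
\end{enumerate}
Taking $U$ inside this stable part of $\mathrm{H}(\chi_0)^{\mathrm{ss}}$ also supplies the separatedness you implicitly use to say $\cB\hookrightarrow U$ is closed. With this added, your proof goes through.

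\textbf{Two small corrections.}
\begin{itemize}
\item For the regular closed immersion $\cB\times\bgm\hookrightarrow\mathcal{U}$ the twist is by $\det\mathbb{L}_{\cB\times\bgm/\mathcal{U}}=\det(N^\vee[1])=\det N$, not $\det N^\vee$. This is harmless because $\Pic(\cB)=0$: $\cB$ is an open subset of affine space, not affine space itself.
\item You should say explicitly that $N$ has $\mathbb{G}_m$-weight zero, since scalars act trivially on $\operatorname{Ext}^1(E,E)$. This is what ensures the twist does not move the weight-$w'$ summand.
\end{itemize}
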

\begin{remark}\label{rmk:sstar}
An isomorphism (\ref{isom:sbar!}) holds as $\overline{s}$ is a closed immersion 
after the $\mathbb{G}_m$-rigidification. The same formula does not apply to 
$s_{!}=j_{!}\overline{s}_!$, as the image of $s$ in $\cH$ is not closed if $\cB$ is not contained in the elliptic locus. 
\end{remark}

We also recall the following lemma, which is immediate from the definition of the Deligne pairing (\ref{def:Psharp}) and the fact that the image of $s$ lies in the regular part: 
\begin{lemma}\emph{(\cite[Lemma~4.2]{TodaGL2})}\label{lem:ids}
For the morphism 
\begin{align*}
    \id \times s\colon \cH=\cH\times_{\cB} \cB \to \cH\times_{\cB} \cH, 
\end{align*}
there is an isomorphism 
\begin{align*}
    (\id\times s)^* \cP \cong \cO_{\cH}.
\end{align*}
\end{lemma}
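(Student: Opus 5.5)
The statement is Lemma~\ref{lem:ids}: $(\id\times s)^*\cP\cong\cO_{\cH}$. The strategy is to reduce to the Deligne pairing formula on the locus where one factor is regular. There the pairing with $\cO_{\cC_b}$ is canonically trivial.

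First, $s$ lands in $\cH^{\mathrm{reg}}$, since $\cO_{\cC_b}$ is a line bundle on $\cC_b$. Hence $\id\times s$ factors through $\cH\times_{\cB}\cH^{\mathrm{reg}}\subset(\cH\times_{\cB}\cH)^{\sharp}$. On this open locus, $\cP$ restricts to the line bundle $\cP^{\sharp}$. Indeed, $\cP$ is by construction the maximal Cohen--Macaulay extension of $\cP^{\sharp}$ in Theorem~\ref{thm:Pflat}, and it agrees with $\cP^{\sharp}$ wherever the latter is defined. So it suffices to show $(\id\times s)^*\cP^{\sharp}\cong\cO_{\cH}$. This is a pullback of a line bundle, so no derived subtleties arise: flatness of $\cP$ over each factor makes the derived and underived pullbacks agree.

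Next, I compute with the determinant-of-cohomology description~\eqref{def:Psharp} in families. Write $\cA$ for the universal sheaf on the universal spectral curve $\cC_{\cH}\to\cH$. Let $q\colon\cC_{\cH}\to\cH$ be the projection. Then $(\id\times s)^*\cP^{\sharp}$ is
\[
\det Rq_*(\cA\otimes\cO_{\cC_{\cH}})\otimes\det Rq_*(\cA)^{-1}\otimes\det Rq_*(\cO_{\cC_{\cH}})^{-1}\otimes\det Rq_*(\cO_{\cC_{\cH}}).
\]
The first two factors cancel canonically because $\cA\otimes\cO_{\cC_{\cH}}=\cA$. The last two cancel tautologically. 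Since all identifications are canonical, they are compatible with base change and $\mathbb{G}_m$-equivariant, so they glue over the stack $\cH$.

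The only delicate point is normalization. The Hitchin section uses $\cO_{\cC_b}$, which carries no extra line-bundle twist from $C$. One must also check that the universal family over $\cB$ pulled back by $s$ is literally $\cO_{\cC_{\cB}}$ with its tautological $\mathbb{G}_m$-weight. If a twist by a line bundle pulled back from $\cB$ appeared, it would contribute $\det Rq_*(\cA)\otimes\det Rq_*(\cA)^{-1}$-type terms, which again cancel. So I expect no real obstacle.
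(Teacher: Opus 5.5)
Your argument is correct and is essentially the paper's. The paper likewise just notes that $s$ lands in $\cH^{\mathrm{reg}}$, so $\cP$ restricts there to the Deligne-pairing line bundle $\cP^{\sharp}$ of \eqref{def:Psharp}, and pairing with $\cO_{\cC_b}$ is canonically trivial.

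One small correction to your closing aside: twisting the second argument by a line bundle $N$ pulled back from the base would change the pairing by $N^{\deg_{\cC_b}(A)}$, and this need not cancel. The point is moot, however, since $s$ is defined by $\cO_{\cC_{\cB}}$ itself.
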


\subsection{Whittaker normalization}\label{subsec:whit}
Note that the functor $\Phi$ in (\ref{induce:Phi}) for $\chi'=0$ 
is the functor 
\begin{align*}
    \Phi \colon \Coh(\cH(w)^{\mathrm{ss}})_0 \to \Coh(\cH(\chi_0))_{w'}.
\end{align*}
The following conjecture is the main subject in this paper: 
\begin{conj}\emph{(\cite[Conjecture~4.4]{TodaGL2})}\label{conj:Whit}
    For an open subset $\cB \subset \rB_{\GL_r}^{\mathrm{red, cl}}$, there is an isomorphism 
    \begin{align*}
        \Phi(\mathcal{O}_{\cH(w)^{\mathrm{ss}}}) \cong s_{!}\mathcal{O}_{\cB}.
    \end{align*}
\end{conj}

The above conjecture is relevant to Conjecture~\ref{conj:dl}
as follows: 
\begin{thm}\emph{(\cite[Theorem~4.29]{TodaGL2})}\label{thm:WhitGL}
For $G=\GL_r$ and an open subset $\cB \subset \rB_{\GL_r}^{\mathrm{red, cl}}$, 
suppose that the following conditions hold: 
\begin{enumerate}
    \item Conjecture~\ref{conj:Whit} holds;
    \item For $b\in \cB$, let $x\in \cC_b$ be a singular point and $I_x \subset \widehat{\cO}_{\cC_b, x}$
    the conductor ideal. Then $\widehat{\cO}_{\cC_b, x}/I_x$ is isomorphic to $k[t]/t^m$ for some $m\geq 1$.
\end{enumerate}
Then the functor (\ref{induce:Phi0}) induces an equivalence
\begin{align}\label{equiv:WhitL}
    \Phi \colon \IndCoh_{\mathcal{N}}(\cH(w)^{\mathrm{ss}})_{-\chi'} \stackrel{\sim}{\to} \IndL_{\mathcal{N}}(\cH(\chi))_{w'}.
\end{align}
\end{thm}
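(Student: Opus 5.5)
The plan is to follow the standard ``Whittaker plus Hecke eigen-property plus generation'' pattern. \textbf{Step 1.} Show that $\Phi$ intertwines Wilson operators on the semistable side with Hecke operators on the limit side. For $c\in C$, the Wilson operator $W_c$ on $\QCoh(\cH(w)^{\mathrm{ss}})$ is tensoring with the universal bundle $\cF|_{c\times \cH(w)^{\mathrm{ss}}}$, or more generally with a pullback along a point of $S$ over $c$. The Hecke operator $H_c$ on $\IndL_{\mathcal{N}}(\cH(\chi))$ is given by the correspondence of elementary modifications $E'\subset E$ with length-one cokernel at a point over $c$. We want $\Phi\circ W_c\cong H_c\circ \Phi$, up to the weight and degree shifts. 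This should follow from the Deligne pairing description (\ref{def:Psharp}) of $\cP^{\sharp}$: on $(\cH\times_{\cB}\cH)^{\sharp}$ the kernel identity $\cP_{E}\otimes \cP_{E'}^{-1}\cong$ (fiber of the universal sheaf) is a line bundle computation. It then extends to all of $\cH\times_{\cB}\cH$ because both kernels are maximal Cohen--Macaulay and agree outside codimension two, by Lemma~\ref{lem:MCM-extension} and Lemma~\ref{lem:dense}. We also need that $H_c$ preserves the limit category $\LL_{\mathcal{N}}(\cH(\chi))_{w'}$; we take this from the weight estimates in \cite{TodaGL2}.

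\textbf{Step 2: essential image on compact generators.} On the left, $\Coh(\cH(w)^{\mathrm{ss}})_{-\chi'}=\mathrm{Perf}$ by Remark~\ref{rmk:nilp}. It is generated by the objects $W_{c_1}\cdots W_{c_n}\cO$, with shifts of $\chi'$ absorbed by the periodicity in Remark~\ref{rmk:thm1}, since tensor products of universal bundles form an ample family on the QCA stack $\cH(w)^{\mathrm{ss}}$. By Step 1 and condition (1),
\begin{align*}
\Phi(W_{c_1}\cdots W_{c_n}\cO_{\cH(w)^{\mathrm{ss}}})\cong H_{c_1}\cdots H_{c_n}\, s_{!}\cO_{\cB}.
\end{align*}
Condition (2), the curvilinearity of the conductor, is exactly the input for \cite[Corollary~4.28]{TodaGL2}. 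That result says $\LL_{\mathcal{N}}(\cH(\chi))_{w'}$ is generated by iterated Hecke operators applied to $s_!\cO_{\cB}$, and its proof filters by HN strata using Remark~\ref{rmk:HN!}. Hence $\Phi$ lands in $\IndL_{\mathcal{N}}$, sends compacts to compacts, and is essentially surjective onto a set of compact generators.

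\textbf{Step 3: full faithfulness, the main obstacle.} We must compare
\begin{align*}
\Hom(\cO, W\cO)=\Gamma(\cH(w)^{\mathrm{ss}}, \mathcal{W})
\end{align*}
with $\Hom(s_!\cO_{\cB}, H s_!\cO_{\cB})=\Hom_{\cB}(\cO_{\cB}, s^*H s_!\cO_{\cB})$, where $W$ and $H$ are composites of Wilson and Hecke operators. The second equality is the adjunction of Lemma~\ref{lem:factori}. Rewriting $s^*Hs_!\cO_{\cB}=s^*\Phi(W\cO)$ and using Lemma~\ref{lem:ids}, $s^*\Phi(-)$ equals $p_*$ of $\cH(w)^{\mathrm{ss}}\to\cB$, taken as $\cB$-linear global sections. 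So the two Hom spaces agree as $\cB$-modules. The remaining work is to check that the map induced by $\Phi$ is this identification. This is a compatibility of the adjunction unit with the Hecke intertwining, which I would verify by étale-locally perturbing stability on $\cB$ and comparing with the Melo--Rapagnetta--Viviani equivalence, Theorem~\ref{thm:MRV}, on fine compactified Jacobians. Finally, since both sides are compactly generated (Remark~\ref{rmk:compact}) and everything is $\cB$-linear, full faithfulness on generators together with Step 2 gives the equivalence (\ref{equiv:WhitL}).
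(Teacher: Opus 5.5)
Your overall architecture matches what the paper attributes to \cite[Theorem~4.29]{TodaGL2}: Whittaker normalization, Wilson/Hecke compatibility, and Hecke-generation of the limit category, which is where condition (2) enters via \cite[Corollary~4.28]{TodaGL2}. As written, however, Steps 2 and 3 have genuine gaps.

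\textbf{Source-side generation (Step 2).} Your claim that the objects $W_{c_1}\cdots W_{c_n}\cO$ generate is false for pointwise Wilson operators. Without it, full faithfulness on these objects plus essential surjectivity onto target generators gives neither an equivalence nor even that $\Phi$ lands in $\IndL_{\mathcal{N}}$. To see the failure, take $b\in\cB$ with $\cC_b$ smooth.
\begin{itemize}
\item $\cH_b(w)^{\mathrm{ss}}$ is a trivial $\mathbb{G}_m$-gerbe over a Jacobian-torsor $J$.
\item Each $\cF_c|_{\cH_b}$ lies in the thick closure of the fibres $\cE_x$, $x\in\pi^{-1}(c)$, of the universal line bundle, and $\cE_x\otimes\cE_{x'}^{-1}$ is topologically trivial.
\item So in a fixed weight all your objects are sums of $P_\xi\otimes L_0$ with $P_\xi\in\Pic^0(J)$ and $L_0$ fixed. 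Under Fourier--Mukai these become skyscrapers, whose thick closure consists of finitely supported complexes. Hence an ample line bundle on $J$ is not in their thick closure.
\item Since $i_b^*$ sends compact generators to compact generators, the global claim fails too.
\end{itemize}
The fix is to use integral Wilson functors $\cA\mapsto p_*(q^*K\otimes\cE^{\boxtimes n}\otimes\cA)$ over $\cC\times_{\cB}\cdots\times_{\cB}\cC$ with coefficients $K$, together with their duals. These produce the tautological bundles $R\Gamma(\cC_b,E\otimes\pi^*\cO_C(m))$ of a presentation $[Q/\GL_N]$, hence a generating family. They must then be matched with the corresponding integral Hecke functors along iterated $\cHecke$.

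Relatedly, Step 1 is not a codimension-two extension argument. At a singular point the fibre of the universal sheaf is not a line bundle, and the relation between Arinkin sheaves is the extension of Proposition~\ref{prop:exact2}, not a tensor identity. Moreover, the Hecke-transformed kernel is a push--pull along $\cHecke$, whose evaluation maps have $\mathbb{P}^1$-fibres over the non-locally-free locus (proof of Lemma~\ref{lem:heckeclass}). So Lemma~\ref{lem:MCM-extension} applies only after you prove that this kernel is maximal Cohen--Macaulay.

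\textbf{Full faithfulness (Step 3).} You only compare $\Hom(\cO,W\cO)$. To reach $\Hom(W_1\cO,W_2\cO)$ you must use $\Hom(W_1\cO,W_2\cO)=\Hom(\cO,W_1^{\vee}W_2\cO)$ on one side and $\Hom(H_1\Phi\cO,\Phi W_2\cO)=\Hom(\Phi\cO,H_1^{R}\Phi W_2\cO)$ on the other. This needs the mate $\Phi\circ W_1^{\vee}\to H_1^{R}\circ\Phi$ of $\Phi W_1\cong H_1\Phi$ to be an isomorphism compatible with (co)units.
\begin{itemize}
\item This is not formal: taking adjoints of $\Phi W_1\cong H_1\Phi$ only gives $W_1^{\vee}\Phi^{R}\cong\Phi^{R}H_1^{R}$.
\item It must be proved separately, e.g.\ by identifying $H_1^{R}$ with a Hecke functor in the opposite direction via the duality $\psi$ of (\ref{dia:heckedual}). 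In effect, $\Phi$ must be a module functor over a rigid monoidal category.
\end{itemize}
By contrast, the compatibility you single out as the main obstacle is formal and needs neither perturbation nor Theorem~\ref{thm:MRV}. By Lemma~\ref{lem:ids} and base change, $s^*\Phi\cong h_*$ naturally. So the composite $\Hom(\cO,W\cO)\to\Hom(\Phi\cO,\Phi W\cO)\cong\Hom(\cO_{\cB},h_*W\cO)$ is $f\mapsto h_*(f)\circ u$, where $u\colon\cO_{\cB}\to h_*\cO$ is adjoint to the chosen Whittaker isomorphism. For the isomorphism constructed in the proof of Theorem~\ref{thm:whit}, $u$ is the unit, so the composite is just the adjunction isomorphism.
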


\begin{remark}\label{rmk:thm1}
Since we have equivalences (\cite[Remark~2.14]{TodaGL2})
\begin{align*}
 \IndCoh_{\mathcal{N}}(\cH(w)^{\mathrm{ss}})_{-\chi'} &\simeq
    \IndCoh_{\mathcal{N}}(\cH(w)^{\mathrm{ss}})_{-\chi}, \\
 \IndL_{\mathcal{N}}(\cH(\chi))_{w'} &\simeq \IndL_{\mathcal{N}}(\cH(\chi))_{w}
\end{align*}
the equivalence (\ref{equiv:WhitL}) implies that DL conjecture holds over $\cB$. 
\end{remark}

\begin{remark}\label{rmk:thm2}
The condition (ii) in Theorem~\ref{thm:WhitGL} is automatic 
for $b\in \rB_{\GL_r}^{A, \mathrm{cl}}$, see~\cite[Example~4.19]{TodaGL2}. 
\end{remark}

\begin{remark}\label{rmk:thm3}
    If the DL conjecture holds over an open subset 
    $\cB \subset \rB_{\GL_r}^{\mathrm{cl}}$, then the DL conjecture holds 
    over $\cB \times k[-1] \subset \rB_{\GL_r}$ by taking $\otimes \QCoh(k[-1])$, 
    see~\cite[Remark~2.15]{TodaGL2} and Lemma~\ref{lem:equiv:k}. 
    Therefore the DL conjecture over $\rB_{\GL_r}^A \subset \rB_{\GL_r}$
    is reduced to showing Conjecture~\ref{conj:Whit} for $\cB=\rB_{\GL_r}^{A, \mathrm{cl}}$. 
\end{remark}

\subsection{The stack of Hecke correspondences}
For $G=\GL_r$, let
$
    \mathrm{Hecke}_{\GL_r}
$
be the derived stack which 
classifies exact sequences 
\begin{align}\label{E:hecke}
    0\to E_1 \to E_2 \to \mathcal{O}_x \to 0
\end{align}
where each $E_i \in \Coh^{\heartsuit}(S)$ is a compactly supported pure 
one-dimensional sheaf and $x\in S$. 
We have the evaluation morphisms 
\begin{align*}
    \xymatrix{
\mathrm{Hecke}_{\GL_r} \ar[r]^-{\ev_2^{\sharp}} \ar[d]_{(\ev_3^{\sharp}, \ev_1^{\sharp})} & 
\Hig_{\GL_r} \ar[d] \\
\mathfrak{M}(1) \times \Hig_{\GL_r} \ar[r] & \rB_{\GL_r}.
    }
\end{align*}
The stack $\mathfrak{M}_S(1)$ is the derived moduli stack of 
zero-dimensional sheaves on $S$ with length one. It is equivalent to 
\begin{align*}
    \mathfrak{M}(1) \simeq S\times k[-1] \times \bgm.
\end{align*}
The map $\ev_i^{\sharp}$ sends (\ref{E:hecke}) to $E_i$ and $\ev_3^{\sharp}$ 
sends it to $\mathcal{O}_x$. 

We consider the base-change which removes the $k[-1]$-factor of $\mathfrak{M}_S(1)$
\begin{align*}
\mathrm{Hecke}_{\GL_r}':=\mathrm{Hecke}_{\mathrm{GL}_r}\times_{\mathfrak{M}(1)}(S\times \bgm). 
\end{align*}
It admits the following evaluation maps 
\begin{align}\label{dia:hecke'}
    \xymatrix{
\mathrm{Hecke}_{\GL_r}' \ar[r]^-{\ev_2} \ar[d]_{(\ev_3, \ev_1)} & 
\Hig_{\GL_r} \ar[d] \\
S\times \Hig_{\GL_r} \ar[r] & \rB_{\GL_r}.
    }
\end{align}
Here $\ev_1, \ev_2$ are induced from $\ev_1^{\sharp}, \ev_2^{\sharp}$, and $\ev_3$ is the composition of the projection to $S\times \bgm$ with the projection onto $S$. We will use the following lemma: 
\begin{lemma}\emph{(\cite[Lemma~3.1]{TodaGL2})}\label{lem:qsmooth}
    The maps $\ev_1, \ev_2$ are quasi-smooth and proper of relative virtual dimension $1$. The maps $(\ev_3, \ev_2)$, $(\ev_3, \ev_1)$ are also quasi-smooth and proper of relative virtual dimension $-1$. 
\end{lemma}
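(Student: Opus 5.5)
Since this lemma is quoted from \cite[Lemma~3.1]{TodaGL2}, the plan is to compute the relative cotangent complexes of the evaluation maps through the deformation theory of the Hecke exact sequence (\ref{E:hecke}), and to get properness from a Quot-scheme description.

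\textbf{Properness.} The fiber of $\ev_2$ over a point $E_2$ classifies surjections $E_2\twoheadrightarrow \cO_x$ up to scalars. For the $\bgm$-factor coming from $\mathfrak{M}(1)$, the fiber is the Quot scheme of length-one quotients of $E_2$. Since $E_2$ is supported on the proper curve $\cC_b$, this Quot scheme is proper, and it is isomorphic to $\mathbb{P}(E_2)$ over $\cC_b$. Dually, the fiber of $\ev_1$ over $E_1$ classifies extensions $0\to E_1\to E_2\to \cO_x\to 0$ with $E_2$ pure. This is the projectivization of $\Ext^1_S(\cO_x,E_1)$ over $x\in \mathrm{Supp}(E_1)$, the open condition of purity being automatic for nonsplit extensions. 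Hence $\ev_1$ is also proper. Adding the constraint from $\ev_3$ makes the fibers over $S\times \Hig_{\GL_r}$ even smaller: projective spaces of $\Hom(E_2,\cO_x)$, respectively $\Ext^1(\cO_x,E_1)$. So these maps are proper too.

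\textbf{Quasi-smoothness and virtual dimension.} The tangent complex of $\mathrm{Hecke}^{\sharp}_{\GL_r}$ at (\ref{E:hecke}) is $\mathrm{RHom}_{\mathrm{filt}}(E_\bullet,E_\bullet)[1]$, the derived endomorphisms of the two-step filtered object. It therefore sits in a triangle with $\mathrm{RHom}(E_1,E_1)[1]\oplus\mathrm{RHom}(\cO_x,\cO_x)[1]$ and $\mathrm{RHom}(E_1,\cO_x)$ (degree shifts to be fixed carefully). Concretely,
\begin{align*}
\mathbb{T}_{(\ev_3^{\sharp},\ev_1^{\sharp})}\simeq \mathrm{RHom}_S(\cO_x,E_1)[1].
\end{align*}
By Serre duality on the surface $S$, whose canonical bundle is trivial, this has amplitude $[-1,1]$. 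It lies in $[0,1]$ after accounting for $\Hom(\cO_x,E_1)=0$ by purity. Its Euler characteristic is $\chi_S(\cO_x,E_1)=0$, since $\cO_x$ has zero Chern character in degrees $\le 1$. Removing the $k[-1]$-factor via the base change to $S\times \bgm$ changes the count by one, giving relative virtual dimension $-1$ for $(\ev_3,\ev_1)$. The symmetric argument with $\mathrm{RHom}(E_2,\cO_x)$ handles $(\ev_3,\ev_2)$. For $\ev_1$ alone we additionally vary $x\in S$, which adds $\dim S=2$, so the relative virtual dimension is $-1+2=1$; the same holds for $\ev_2$. Quasi-smoothness of $\ev_i$ follows by composing with the smooth projection $S\times \Hig_{\GL_r}\to \Hig_{\GL_r}$.

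\textbf{Main obstacle.} The delicate point is the bookkeeping of the $k[-1]$ and $\bgm$ factors in $\mathfrak{M}(1)$ and the correct identification of the relative tangent complex, including its amplitude, through the filtered-object deformation theory. This is where I would be most careful. Properness itself reduces to the Quot-scheme description above.
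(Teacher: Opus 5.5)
Your proposal is correct in substance and takes the natural route. This paper gives no proof of the lemma and only cites \cite[Lemma~3.1]{TodaGL2}. Where it uses the lemma, in Lemma~\ref{lem:heckeclass}, it relies on the same classical fiber description $\mathbb{P}(\Hom(E,\cO_x))$. It also uses the duality $\psi$, which exchanges $\ev_1$ and $\ev_2$ and would let you deduce the $\ev_1$ statements from the $\ev_2$ ones.

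The bookkeeping you flagged as delicate does contain slips worth fixing.

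\emph{Direction of the off-diagonal term.} In the filtered-endomorphism triangle, the off-diagonal term is $\mathrm{RHom}_S(\cO_x,E_1)$, i.e.\ maps from the quotient to the sub, not $\mathrm{RHom}_S(E_1,\cO_x)$. Your ``concrete'' formula $\mathbb{T}_{(\ev_3^{\sharp},\ev_1^{\sharp})}\simeq\mathrm{RHom}_S(\cO_x,E_1)[1]$ is the right one.

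\emph{Source of the $-1$.} The $-1$ does not come from removing the $k[-1]$-factor. Base change along $S\times\bgm\to\mathfrak{M}(1)$ leaves relative tangent complexes unchanged. So $\mathrm{Hecke}_{\GL_r}'\to S\times\bgm\times\Hig_{\GL_r}$ is quasi-smooth with relative tangent complex $\mathrm{RHom}_S(\cO_x,E_1)[1]$ of rank $0$. The map $(\ev_3,\ev_1)$ is its composite with the smooth projection forgetting $\bgm$, which has relative dimension $-1$. Concretely, $\mathbb{T}_{(\ev_3,\ev_1)}\simeq\mathrm{cofib}\bigl(k\xrightarrow{\xi}\mathrm{RHom}_S(\cO_x,E_1)[1]\bigr)$, where $\xi\in\Ext^1_S(\cO_x,E_1)$ is the extension class. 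This complex is the tangent space of $\mathbb{P}(\Ext^1_S(\cO_x,E_1))$ at $[\xi]$ in degree $0$, with obstruction space $\Ext^2_S(\cO_x,E_1)$ in degree $1$.

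In particular, do not argue via $\mathrm{Hecke}_{\GL_r}'\to\mathrm{Hecke}_{\GL_r}$. That map is a base change of $\mathrm{pt}\to k[-1]$, whose relative cotangent complex is $\cO[2]$. So it is not quasi-smooth, even though its ``rank'' is $+1$.

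\emph{Properness of $(\ev_3,\ev_i)$.} Properness is not a fiberwise statement, so ``the fibers are even smaller'' does not prove it. Instead, deduce it from properness of $\ev_i$ together with separatedness of $S\times\Hig_{\GL_r}\to\Hig_{\GL_r}$. Properness of $\ev_i$ itself follows from your Quot-scheme and projective-cone descriptions, using that $x$ lies on the universal spectral curve, which is proper over $\Hig_{\GL_r}$.
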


\section{Weight estimates of Arinkin sheaf}
In this section, we estimate the $\mathbb{G}_m$-weights of Arinkin sheaf (\ref{def:PE})
and show that it lies in the limit category if $E$ is semistable. 
The result of this section is a key step relating Arinkin's Cohen--Macaulay extension 
and the definition of limit categories. 

In what follows, we consider $\GL_r$-Higgs bundles, and use the notation (\ref{sH}) for 
an open subset $\cB \subset \rB_{\GL_r}^{\mathrm{cl}}$ and the base-change 
$\cH=\Hig_{\GL_r}\times_{\rB_{\GL_r}} \cB$. 
\subsection{Resolution of the Arinkin sheaf}
We consider the following open subset of $\rB_{\GL_r}^{\mathrm{cl}}$
\begin{align*}
   \cB= \cB^{A} :=\rB_{\GL_r}^{A, \mathrm{cl}}.
\end{align*}
Below we fix $b\in \cB$ and a torsion-free sheaf $E\in \Coh^{\heartsuit}(\cC_b)$ of rank-one. Let
\begin{align*}
    \cP_E \in \Coh^{\heartsuit}(\cH_b)
\end{align*}
be the corresponding Arinkin sheaf defined in (\ref{def:PE}). We first discuss the weight condition 
for the above object when $E$ is semistable, by constructing its explicit resolution by vector bundles. 

We denote by $\mathrm{Sing}(\cC_b) \subset \cC_b$ the set of singular points. 
Then for $p\in \mathrm{Sing}(\cC_b)$, 
the complete local ring $\widehat{\mathcal{O}}_{\cC_b, p}$ is isomorphic to 
\begin{align*}
    k[[x, y]]/(y^2-x^{m_p})
\end{align*}
for some $m_p \in \mathbb{Z}_{\geq 2}$. 
The conductor ideal at $p$ defines the closed subscheme 
\begin{align}\label{def:Zp}
    Z_p \cong \Spec k[t]/t^{m_p'} \hookrightarrow \cC_b
\end{align}
where $m_p'$ is the round-down of $m_p/2$, i.e.\ $m_p'=m_p/2$ when $m_p$ is even and $m_p'=(m_p-1)/2$ when $m_p$ is odd. 
We denote by 
\begin{align*}
    N \subset \mathrm{Sing}(\cC_b)
\end{align*}
the subset such that $E$ is not a line bundle at $N$. We recall the following 
structure result for rank-one torsion-free sheaves on $\cC_b$ with only type $A$ singularities:
\begin{lemma}\emph{(cf.~\cite[Lemma~5.2]{TodaGL2})}\label{lem:Qp}
There is an exact sequence 
\begin{align}\label{exact:LE}
    0\to \mathcal{L} \to E \to \bigoplus_{p\in N}Q_p \to 0
\end{align}
where $\mathcal{L}$ is a line bundle on $\cC_b$ and $Q_p$ is a zero-dimensional sheaf of length $n_p$, 
where $1\leq n_p \leq m_p'$, which is isomorphic to $\mathcal{O}_{W_p}$ for 
a closed subscheme $W_p \hookrightarrow Z_p$. 

In particular, for any $x\in \cC_b$ we have 
\begin{align}\label{dim:est}
    \dim \Hom(E, \mathcal{O}_x) \leq 2
\end{align}
and $\dim \Hom(E, \cO_x)=1$ if $x\in \cC_b$ is a smooth point.  
\end{lemma}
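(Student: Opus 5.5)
The plan is to reduce everything to the local structure of rank-one torsion-free modules over $R_p:=\widehat{\cO}_{\cC_b,p}\cong k[[x,y]]/(y^2-x^{m_p})$, and then glue. At a smooth point of $\cC_b$, and at singular points where $E$ is locally free, $E$ is locally a line bundle. So we only need to understand $\widehat{E}_p$ for $p\in N$.

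For rank-one torsion-free modules over an $A_{m-1}$ singularity, the classification is classical (Bass, Greuel--Kn\"orrer). Every such module is isomorphic to one of the ideals
\[
I_j=(x^j,y)\subset R_p, \qquad 0\le j\le m_p' ,
\]
with $I_0=R_p$. Equivalently, these are the intermediate rings $R_p\subset R_p[y/x^j]\subset \widetilde{R}_p$, which form a chain because the singularity is of type $A$. In the reducible case ($m_p$ even) one also has to check that nothing outside this chain occurs: here the normalization is $k[[t]]\times k[[t]]$ and the conductor is $(t^{m_p'},t^{m_p'})$. One can verify directly that every torsion-free rank-one module is isomorphic to an overring, and that the overrings form the chain above. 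This is where the curvilinearity of the conductor scheme $Z_p\cong\Spec k[t]/t^{m_p'}$ enters.

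Given $\widehat{E}_p\cong I_j$ with $j\ge1$, we use the inclusion $I_j\supset x^jR_p$, whose cokernel is $I_j/x^jR_p\cong R_p/(x^j,y)\cong k[t]/t^j$, generated by the class of $y$. A choice of local generator of this type at each $p\in N$, extended by an isomorphism away from $N$, glues to a line subbundle $\cL\subset E$ (a saturation of $E$ on the complement of $N$ by a locally principal sublattice). This gives the sequence \eqref{exact:LE} with $Q_p\cong\cO_{W_p}$ and $n_p=j$. Since $j\le m_p'$ and $W_p=\Spec R_p/(x^j,y)$ is contained in $Z_p=\Spec R_p/(x^{m_p'},y)$, this matches the statement. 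The condition $j\ge1$ is exactly $p\in N$, which gives $n_p\ge1$.

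For the dimension estimate \eqref{dim:est}, note that $\Hom(E,\cO_x)$ is dual to $E\otimes k(x)$, so its dimension equals the minimal number of generators of $\widehat{E}_x$. At smooth points this number is $1$, and for $I_j=(x^j,y)$ it is at most $2$.

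The main obstacle is the classification step, namely showing that every rank-one torsion-free $R_p$-module is one of the $I_j$. I would try to handle it uniformly by passing to the endomorphism ring: $\operatorname{End}(\widehat{E}_p)$ is an overring $R'$ with $R_p\subseteq R'\subseteq\widetilde{R}_p$, and $\widehat{E}_p$ is an invertible $R'$-module. Since $R'$ is semilocal, hence has trivial Picard group, this gives $\widehat{E}_p\cong R'$. It then remains to show that the overrings are exactly $R_p[y/x^j]$, and this should follow from $\widetilde{R}_p/R_p$ being a cyclic, uniserial $R_p$-module.
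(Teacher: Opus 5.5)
Your proof is correct, but it is organized differently from the paper's.

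The paper never classifies modules. It takes from \cite[Lemma~4.26]{TodaGL2} a line bundle $\cL\subset E$, equal to $E$ where $E$ is locally free, with $E/\cL\subset g_*g^*\cL/\cL$ for the normalization $g$. Its only local input is that $(g_*g^*\cL/\cL)_p\cong \widetilde{R}_p/R_p\cong k[t]/t^{m_p'}$ is uniserial (in your notation $R_p=\widehat{\cO}_{\cC_b,p}$). Hence every submodule is automatically of the form $\cO_{W_p}$ with $W_p\subset Z_p$. The bound $\dim\Hom(E,\cO_x)\leq 2$ then holds because $E$ is locally generated by a generator of $\cL$ together with one generator of $Q_p$.

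You instead put $\widehat{E}_p$ into the normal form $I_j=(x^j,y)$. You then choose the explicit free sublattice $x^jR_p$, whose cokernel is the cyclic module $R_p/(x^j,y)$, and glue by an elementary modification at the points of $N$. This gives more explicit local information, but you pay for it with the classification.

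On that classification step, note that invertibility of $\widehat{E}_p$ over $R'=\operatorname{End}(\widehat{E}_p)$ is not automatic for curve singularities. It holds here because every overring of a double point is Gorenstein, and for Gorenstein $R'$ the condition $(M:M)=R'$ forces $M$ to be invertible. Your sketch should state this.

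A shorter route avoids endomorphism rings altogether, and it is really the local content of the paper's argument. Rescale $M=\widehat{E}_p$ by a unit of the total ring of fractions so that the free module $\widetilde{R}_pM$ equals $\widetilde{R}_p$. Then pick $a,b\in M$ that are units on the first and second branch respectively; $a+\lambda b$ is a unit of $\widetilde{R}_p$ for generic $\lambda\in k$, and dividing by it gives $1\in M$. This yields $R_p\subset M\subset\widetilde{R}_p$, and uniseriality of $\widetilde{R}_p/R_p$ then forces $M=R_p[y/x^j]\cong I_j$ on the nose.
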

\begin{proof}
The assertion is local at the singular points and follows from the classification of rank-one torsion-free modules over the complete local rings 
$k[[x,y]]/(y^2-x^m)$. Indeed from~\cite[Lemma~4.26]{TodaGL2}, there is a line bundle $\cL$ on $\cC_b$
and an injection $\cL \hookrightarrow E$, which is an isomorphism at 
the locus where $E$ is a line bundle on $\cC_b$, such that 
$E/\cL \subset g_{*}g^* \cL/\cL$ for the normalization $g \colon \widetilde{\cC}_b \to \cC_b$. 
Then $g_{*}g^* \cL/\cL$ is supported at $\mathrm{Sing}(\cC_b)$, and 
$(g_{*}g^*\cL/\cL)_{p}$ is isomorphic to $k[t]/t^{m_p'}$. The lemma follows as $E/\mathcal{L}$ is a subsheaf of $g_{*}g^* \cL/\cL$. 

From the exact sequence (\ref{exact:LE}), since each $Q_p$ is locally generated by one element as an $\mathcal{O}_{\cC_b}$-module, 
the sheaf $E$ is locally generated at $p\in \cC_b$ 
by two elements as a $\cO_{\cC_b}$-module. Therefore we obtain the dimension estimate (\ref{dim:est}). 
\end{proof}

For $p\in \mathrm{Sing}(\cC_b)$ let $c=\pi(p) \in C$, where $\pi$ is the projection (\ref{project:pi}). 
We have the rank $r$-vector bundle, given by the restriction of the universal rank $r$ vector bundle (\ref{univ:F}) to $\{c\}\times \cH_b$:
\begin{align*}
    \cF_c=\cF|_{c\times \cH_b} \in \Coh^{\heartsuit}(\cH_b) 
\end{align*}
There is a rank two sub-vector bundle (indeed a direct summand)
\begin{align}\label{def:Vp}
    \mathcal{V}_p \subset \mathcal{F}_c
\end{align}
corresponding to the generalized eigenspace of the Higgs field with 
generalized eigenvalue $p \in \Omega_{C}|_{c}$. In other words
for $M\in \Coh^{\heartsuit}(\cC_b)$ corresponding to a point in $\cH_b$, 
we have 
\begin{align*}
    \mathcal{V}_p|_{M}=(M\otimes \mathcal{O}_{f_c})_p \subset \cF_c|_{M}=M\otimes \cO_{f_c}. 
\end{align*}
Here $f_c=\mathbb{A}^1$ is the fiber of $\pi \colon S \to C$ at $c$.

Let $\mathcal{L}$ and $n_p$ be as in Lemma~\ref{lem:Qp}. Note that 
the Arinkin sheaf $\cP_{\mathcal{L}}$ is a line bundle on $\cH_b$. 
We define the following 
vector bundle of rank $2^{\sum_{p\in N}n_p}$ on $\cH_b$
\begin{align}\label{def:VE}
    \mathcal{V}_E:=\mathcal{P}_{\mathcal{L}}\otimes \bigotimes_{p\in N}\mathcal{V}_p^{\otimes n_p}.
\end{align}

We have the following proposition, which generalizes~\cite[Corollary~5.12]{TodaGL2}.
Its proof will be completed in Subsection~\ref{subsec:exAr}. 
Since this is long and technical, we first explain an idea of the proof.
The details of the proof will be given in the latter part of this section, 
and will be finished in Subsection~\ref{subsec:exAr}.
\begin{prop}\label{prop:resol}
The sheaf $\cP_E$ admits a left resolution of the following form for some $k_i \in \mathbb{Z}_{\geq 0}$
    \begin{align}\label{resol:VE}
        \cdots \to \mathcal{V}_E^{\oplus k_1} \to \mathcal{V}_E^{\oplus k_0} \to \mathcal{P}_E \to 0.
    \end{align}
\end{prop}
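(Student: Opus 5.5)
The plan is to build $\cP_E$ from $\cP_{\cL}$ by a sequence of elementary Hecke modifications, and to track how the Arinkin sheaf changes at each step. The sequence (\ref{exact:LE}) supplies the starting point. Since each $Q_p\cong \cO_{W_p}$ with $W_p\subset Z_p$ curvilinear of length $n_p$, we can factor $\cL\subset E$ as a chain
\begin{align*}
\cL=E_0\subset E_1\subset \cdots \subset E_n=E, \qquad n=\sum_{p\in N}n_p,
\end{align*}
in which every $E_{i}/E_{i-1}\cong \cO_{x_i}$ is the skyscraper at some point $x_i\in N$. At each step we then want a short exact sequence on $\cH_b$
\begin{align*}
0\to \cP_{E_{i-1}}\otimes \mathcal{K}_i \to \cP_{E_{i-1}}\otimes \mathcal{V}_{x_i} \to \cP_{E_i}\to 0,
\end{align*}
or at least a surjection $\cP_{E_{i-1}}\otimes \mathcal{V}_{x_i}\twoheadrightarrow \cP_{E_i}$ whose kernel is again built out of the same kind of pieces. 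Here $\mathcal{K}_i$ should be a line bundle, roughly $\det\mathcal{V}_{x_i}$ twisted by a line bundle pulled back from $\cB$, or some other controlled sheaf.

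To construct these maps I would use the Hecke stack $\mathrm{Hecke}'_{\GL_r}$ from (\ref{dia:hecke'}). Arinkin's sheaf satisfies a Hecke-eigen property: for the sequence $0\to E_{i-1}\to E_i\to \cO_{x_i}\to 0$, the fiber of $\cP_{E_i}$ is expressed through $\cP_{E_{i-1}}$ tensored with the fiber of the universal sheaf at $x_i$. On $\cH_b$ that fiber is $\cF_c$ localized at the generalized eigenvalue $x_i$, which is exactly $\mathcal{V}_{x_i}$. Concretely, I would compare the two sheaves on the open locus $\cH_b^{\mathrm{reg}}$, where both are line bundles and the maps come from Deligne pairing functoriality. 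I would then extend across the complement using the fact that all objects are maximal Cohen--Macaulay: apply Lemma~\ref{lem:MCM-extension} together with Lemma~\ref{lem:dense}, and use that $\cP_{E_i}$ is MCM.

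Because of (\ref{dim:est}) the relevant fibers have rank at most $2$, which is why $\mathcal{V}_p$ has rank two. Iterating these surjections gives $\cP_{\cL}\otimes\bigotimes_{p}\mathcal{V}_p^{\otimes n_p}=\mathcal{V}_E\twoheadrightarrow \cP_E$. The kernels are successive extensions of sheaves of the form $\cP_{E'}\otimes(\text{tensor products of }\mathcal{V}_p\text{ and line bundles})$ with $E'$ intermediate in the chain. Since the $\det \mathcal{V}_p$ factors are pulled back trivially in weight, or can be absorbed, each kernel again surjects from sums of copies of $\mathcal{V}_E$. Repeating the argument inductively on $n$ produces the resolution (\ref{resol:VE}).

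The main obstacle is establishing the local surjectivity $\cP_{E_{i-1}}\otimes \mathcal{V}_{x_i}\to \cP_{E_i}$ near non-regular points of $\cH_b$, together with identifying its kernel. I would handle this formally locally at $x_i$ using the model $k[[x,y]]/(y^2-x^{m})$. There the curvilinearity of $Z_p$ lets one compute explicitly, via the normalization, as in the $\GL_2$ case of \cite[Section~5]{TodaGL2}, and I expect the computation to reduce to that case. Since the singularities are analytically local and the $\mathcal{V}_p$ only see the two branches at $p$, the extra components of $\cC_b$ away from $p$ should not matter.
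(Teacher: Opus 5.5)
Your outline matches the paper's: split off one skyscraper at a time via $0\to E'\to E\to\cO_p\to0$, produce $\mathcal{V}_p\otimes\cP_{E'}\twoheadrightarrow\cP_E$, and induct on $\chi(Q)$. But the steps that carry the content do not work as written.

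\emph{The extension step is invalid.} Lemma~\ref{lem:MCM-extension} needs a complement of codimension $\geq 2$. Lemma~\ref{lem:dense} only gives codimension $\geq 1$ for $\cH_b\setminus\cH_b^{\mathrm{reg}}$, and this complement really is a divisor in general. Already for an irreducible nodal $\cC_b$ it is the boundary of the compactified Jacobian. So nothing on $\cH_b^{\mathrm{reg}}$ determines its MCM extension to $\cH_b$.

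The paper instead deforms the Hecke modification, and its $\sigma$-conjugate, over a smooth pointed base $(\Delta,0)$ whose generic point lies over smooth spectral curves (Lemmas~\ref{lem:Delta}, \ref{lem:sigma}). Then the complement in $\cH_\Delta$ has codimension $\geq2$ (Lemma~\ref{lem:codimension}), one works on $\cH_\Delta$, and one restricts to $0$ by $\Delta$-flatness.

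Even there, $j^{\mathrm{reg}\heartsuit}_*$ only gives the left exact sequence of Corollary~\ref{cor:CM}. The surjectivity you flag is not a formal-local computation in $k[[x,y]]/(y^2-x^m)$, because it concerns the global sheaf $\cP$ near $(E,F)\in\cH\times_{\cB}\cH$. The paper proves it in two stages:
\begin{itemize}
\item At nodes (Lemma~\ref{lem:nodal}): a double cover $\widetilde\Delta\to\Delta$ maps to $\cH_\Delta$ by $x\mapsto E_x$, the nonsplit extension of $\cO_x$ by $E'_t$. Its Arinkin sheaf pushes forward to an honest short exact sequence, whose middle term is $\cong\mathcal{V}_\Delta\otimes\cP_{\Delta1}$ by MCM uniqueness.
\item At general type $A$ points (Proposition~\ref{prop:exact2}): take $\dim\Delta=2$ with $\Delta\setminus0$ over nodal-or-smooth curves, which is possible by Lemma~\ref{lem:nodeB}. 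A cokernel supported over $0$ then contradicts flatness after restricting to a generic divisor through $0$.
\end{itemize}

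\emph{The kernel is misidentified, and this is the key idea you are missing.} On $\cH_b^{\mathrm{reg}}$ one has $\mathcal{V}_p|_F=F|_{2p}$ with $2p\cong\Spec k[t]/t^2$. The sequence $0\to\cO_p\to\cO_{2p}\to\cO_p\to0$ shows the kernel has fiber $\cP_{E'}|_F\otimes F|_p\cong\cP_E|_F$. Globally the kernel is $\cP_E$ itself, namely the specialization of $\cP^{\sigma}_{\Delta2}$. It is not $\cP_{E'}\otimes\mathcal{K}$ for a line bundle $\mathcal{K}$ such as $\det\mathcal{V}_p$, which restricts to $(F|_p)^{\otimes 2}$ rather than $F|_p$.

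A line-bundle kernel is in fact impossible. The MCM sheaf $\cP_E$ would then have projective dimension $\leq 1$, hence be locally free by Auslander--Buchsbaum. That fails when $E$ is not a line bundle: for irreducible nodal $\cC_b$, Theorem~\ref{thm:MRV} identifies $\Ext^*(\cP_E,\cP_E)$ with $\Ext^*(\cO_E,\cO_E)$, which is unbounded at the singular point $E$.

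The self-reproducing sequence $0\to\cP_E\to\mathcal{V}_p\otimes\cP_{E'}\to\cP_E\to0$ is exactly what gives a periodic resolution all of whose terms are $\mathcal{V}_p\otimes\cP_{E'}$. By induction these are resolved by copies of $\mathcal{V}_p\otimes\mathcal{V}_{E'}\cong\mathcal{V}_E$. With your shape, the terms would be $\mathcal{V}_{E_{i-1}}\otimes\mathcal{K}_i$, and these cannot be ``absorbed''. The $\nu$-weight of $\det\mathcal{V}_p$ is $\nu_{\alpha(p)}+\nu_{\beta(p)}$, not $0$, and Proposition~\ref{prop:VE} is a statement about $\mathcal{V}_E$ exactly.
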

\begin{proof}
\textit{Idea of the proof.}
For simplicity, we explain the simplest case, that is $\cL=\cO_{\cC_b}$, $N=\{p\}$
and $\cC_b$ has a nodal singularity at $p$ (i.e.\ $m_p=2$), 
so that we have the exact sequence 
\begin{align}\label{exact:OEOp}
    0\to \cO_{\cC_b} \to E \to \cO_{p} \to 0
\end{align}
In this case, we show the existence of an exact sequence in $\Coh^{\heartsuit}(\cH_b)$
\begin{align}\label{exact:Vp}
0\to \cP_E \to \mathcal{V}_E \to \cP_E \to 0.
\end{align}
Indeed, then we have the left resolution 
\begin{align*}
    \cdots \to \mathcal{V}_E \to \mathcal{V}_E \to \cP_E \to 0.
\end{align*}

For a line bundle $F$ on $\cC_b$, by the Deligne pairing (\ref{def:Psharp}) and the exact sequence (\ref{exact:OEOp}),
we have the following identification of the fiber of $\cP_E$ at $F$: 
    \begin{align}\label{left:res}
        \mathcal{P}_E|_{F}=F|_{p}.
    \end{align}
    By the assumption on the singularity of $\cC_b$, 
    we have
    \begin{align*}
        2p:=\pi|_{\cC_b}^{-1}(c)_p \cong \Spec k[t]/(t^2).
    \end{align*}
     Indeed, the fiber over $c$ is obtained by setting $x=0$ in the local model (\ref{loc:model}), hence it is $\Spec k[t]/(t^2)$.
    By the definition of $\mathcal{V}_E$, we have 
    \begin{align*}
        \mathcal{V}_E|_{F}=F|_{2p}.
    \end{align*}
Then from the exact sequence 
\begin{align*}
0\to \cO_p \to \cO_{2p} \to \cO_p \to 0
\end{align*}
there is an exact sequence 
\begin{align}\label{exact:F}
    0\to \cP_E|_{F} \to \mathcal{V}_E|_{F} \to \cP_E|_{F} \to 0.
\end{align}
Namely we have the exact sequence (\ref{exact:Vp}) on $\cH_b^{\mathrm{reg}}$. 

On the other hand, for $x\in \cC_b$, let $E_x=I_x^{\vee}$ be the dual of the ideal sheaf $I_x \subset \cO_{\cC_b}$, which fits into 
an exact sequence 
    \begin{align*}
        0\to \mathcal{O}_{\cC_b} \to E_x \to \mathcal{O}_x \to 0.
    \end{align*}
    The above construction gives a family of objects of $\Coh^{\heartsuit}(\cC_b)$ parametrized by $x\in \cC_b$, 
    and hence there is a map $\cC_b \to \cH_b$ sending $x\in \cC_b$ to $E_x$. By restricting it to $2p \hookrightarrow \cC_b$, we obtain the map $2p \to \cH_b$. By pulling back the Arinkin sheaf $\cP$ on 
    $\cH\times_{\cB} \cH$ to $2p \times_{\cB}\cH$, and pushing-forward to $\cH_b$, we obtain the Cohen--Macaulay sheaf 
    \begin{align*}
        \widetilde{\cP}_E \in \Coh^{\heartsuit}(\cH_b). 
    \end{align*}
    It fits into the exact sequence 
    \begin{align}\label{exact:tilde}
        0\to \cP_E \to \widetilde{\cP}_E \to \cP_E \to 0.
    \end{align}
   From the construction, we have $\widetilde{\cP}_E|_{\cH_b^{\mathrm{reg}}}\cong \mathcal{V}_E|_{\cH_b^{\mathrm{reg}}}$
   and the above exact sequence is identified with (\ref{exact:F}) over $\cH_b^{\mathrm{reg}}$. 

   Then if the complement of $\cH_b^{\mathrm{reg}} \subset \cH_b$ were of codimension at least two, then 
   we have $\widetilde{\cP}_E=\mathcal{V}_E$ by the uniqueness of Cohen--Macaulay extension. In particular, we have the exact sequence (\ref{exact:Vp}) by (\ref{exact:tilde}). 
   However, although $\cH_b^{\mathrm{reg}} \subset \cH_b$ is dense (see~\cite[Remark~2.5]{TodaGL2}), its complement may be of codimension one; instead we take a flat deformation $E_{\Delta}$
of $E$ over a smooth affine $0\in \Delta$ with $\dim \Delta>0$ such that the induced map 
\begin{align*}\Delta \to \cH \to \cB
\end{align*}
sends the generic point to the locus $\cB^{\mathrm{sm}} \subset \cB$ corresponding to the smooth 
spectral curves. Then since $\cH^{\mathrm{reg}}\times_{\cB}\cB^{\mathrm{sm}}=\cH\times_{\cB}\cB^{\mathrm{sm}}$, $\cH_{\Delta}\to \Delta$ is flat, and $\cH_b^{\mathrm{reg}} \subset \cH_b$ is dense, 
the complement of $\cH_{\Delta}^{\mathrm{reg}} \subset \cH_{\Delta}$ is of codimension 
at least two. Here $(-)_{\Delta}$ is the base change $(-)\times_{\cB}\Delta$. 
We then apply the above construction for 
$E_{\Delta}$, restrict it to $0\in \Delta$, and conclude that $\widetilde{\cP}_E=\mathcal{V}_E$.
    \end{proof}

\subsection{The weight estimate of the Arinkin sheaf}
For $b\in \cB^A$, let $E\in \Coh^{\heartsuit}(\cC_b)$ be a rank-one 
torsion-free sheaf. 
In this subsection, by admitting Proposition~\ref{prop:resol}, we show that $\cP_E$ lies in the limit category, 
by investigating the $\mathbb{G}_m$-weights of the vector bundle $\mathcal{V}_E$ defined by (\ref{def:VE}). This result will be used 
in the next section to show that $\Phi(\mathcal{O}_{\cH(w)^{\mathrm{ss}}})$
lies in the image of $!$-push-forward from the limit category of the semistable locus. 

Below we often regard an object in $\Coh^{\heartsuit}(\cH_b)$
as an object in $\Coh^{\heartsuit}(\cH)$ by the $*$-push-forward along 
$i_b \colon \cH_b \hookrightarrow \cH$, e.g.\ we regard $\cP_E, \mathcal{V}_E \in \Coh^{\heartsuit}(\cH)$. 
We also set 
\begin{align*}w' :=\deg_{\cC_b}(E)=\chi(E)-\chi(\mathcal{O}_{\cC_b}).
\end{align*}
Recall the category $\widetilde{\LL}(\cH)_{w'}$ in Definition~\ref{def:Ltilde} (also see (\ref{def:Ltilde:circ})). We have the 
following proposition: 
\begin{prop}\label{prop:VE}
If $E\in \Coh^{\heartsuit}(\cC_b)$ is semistable, we have 
\begin{align}\label{VE:semistable}
\mathcal{V}_E \in \widetilde{\LL}(\cH)_{w'}. 
\end{align}
\end{prop}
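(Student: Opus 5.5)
Since $\mathcal{V}_E$ is a vector bundle on $\cH_b$, hence a perfect complex on $\cH$ supported on $\cH_b$, I will verify the weight condition via Lemma~\ref{lem:perfect} in the form (\ref{wt:cond2}), on every QCA open $\mathcal{U}\subset\cH$. By Lemma~\ref{lem:Gmwt:L} it does not matter whether the cotangent weights are computed on $\cH_b$, $\cH$ or $\Hig_{\GL_r}$, so everything is pointwise. By Remark~\ref{rmk:cond:r} it suffices to treat a map $\nu\colon\bgm\to\cH_b$ of the form $\mathrm{pt}\mapsto M=M_1\oplus\cdots\oplus M_k$ with $\mathrm{Supp}(M_i)=C_i$ and weights $\nu_1>\cdots>\nu_k$, grouping summands of equal weight. (Strictly speaking, $\nu^*$ of an $i_{b*}$-pushforward differs from the restriction to $\cH_b$ by the Koszul factor $\wedge^\bullet T_b\cB^\vee$, which has weight zero, so nothing changes.) The target interval is then $c_1(\nu^*\delta_{w'})+\sum_{i=1}^{k-1}(\nu_i-\nu_{i+1})\,(g-1)r_{I_i}r_{I_i^\circ}\cdot[-1,1]$, with $I_i=\{1,\dots,i\}$ as in (\ref{eq:Lpos}).

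Next I compute $\wt(\nu^*\mathcal{V}_E)$ from (\ref{def:VE}). Lemma~\ref{lem:PLGm} gives $c_1(\nu^*\cP_{\cL})=\sum_i l_i\nu_i$ with $l_i=\deg_{C_i}(\cL|_{C_i})$. For $\mathcal{V}_p$ with $p\in N$, the fiber $(M\otimes\cO_{f_c})_p$ splits as $\bigoplus_i(M_i\otimes\cO_{f_c})_p$. Since $\cC_b$ has an $A$-singularity at $p$ with $(f_c\cap\cC_b)_p$ of length two, either both local branches lie in one $C_i$, giving weight $\{\nu_i\}$ twice, or exactly two components $C_i$ and $C_j$ meet at $p$ (the case $m_p$ even, each branch smooth over $C$), giving weights $\{\nu_i,\nu_j\}$. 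The weights of $\nu^*\mathcal{V}_E$ are therefore $\sum_i l_i\nu_i+\sum_{p\in N}\sum_{a=1}^{n_p}\epsilon_{p,a}$ with $\epsilon_{p,a}\in\{\nu_{i(p)},\nu_{j(p)}\}$. The extreme weights put, for each $I_s$, as much of the $n_p$ as possible on the $I_s$ side or on the complement side. After subtracting $c_1(\nu^*\delta_{w'})$ and summing by parts as in (\ref{eq:c1P}), the extreme value in front of $(\nu_s-\nu_{s+1})$ is $\widetilde{l}_{I_s}+n^{+}_{I_s}$, where $n^{+}_{I_s}$ is the sum of $n_p$ over $p\in N$ that lie on $C_{I_s}$, i.e.\ on $C_{I_s}$ including the points of $C_{I_s}\cap C_{I_s^\circ}$. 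The minimum uses $n^{-}_{I_s}$, the sum over $p\in N$ interior to $C_{I_s}$. Each extreme configuration is a monotone choice, so it suffices to bound each coefficient separately: $-(g-1)r_Ir_{I^\circ}\le \widetilde{l}_I+n^{-}_I$ and $\widetilde{l}_I+n^{+}_I\le (g-1)r_Ir_{I^\circ}$.

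These bounds should come from semistability of $E$, mimicking Proposition~\ref{prop:natural}, applied to the quotients $E\twoheadrightarrow E|_{C_I}/\mathrm{tors}$ and $E\twoheadrightarrow E|_{C_{I^\circ}}/\mathrm{tors}$. For the lower bound, the torsion-free quotient $E_I$ of $E|_{C_I}$ contains $\cL|_{C_I}$, and by (\ref{exact:LE}) it contains it with cokernel at least $\bigoplus_{p\in N\text{ interior to }C_I}Q_p$. Hence $\deg_{C_I}E_I\ge l_I+n^{-}_I$, and semistability gives $\deg_{C_I}E_I-\frac{r_I}{r}w'\ge-(g-1)r_Ir_{I^\circ}$. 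So I need the sharper relation $\chi(E_I)\ge\chi(\cL|_{C_I})+n^-_I$ together with $w'=\deg\cL+\sum_{p\in N}n_p$.

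The main obstacle is the upper bound, because the points of $N$ on $C_I\cap C_{I^\circ}$ enter with the wrong sign. I would apply the lower bound to $I^\circ$ and use $\widetilde{l}_I+\widetilde{l}_{I^\circ}=-\sum_{p\in N}n_p$ (from $w'=\deg\cL+\sum n_p$) together with Lemma~\ref{lem:add}. Then $\widetilde{l}_I+n^+_I=-\widetilde{l}_{I^\circ}-n^{-}_{I^\circ}$, since every $p\in N$ lies either interior to $C_{I^\circ}$ or on $C_I$; this is $\le (g-1)r_Ir_{I^\circ}$. The delicate point is the precise local computation at a node-like point of $C_I\cap C_{I^\circ}$ where $E$ is not locally free. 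There I must check that the torsion-free restriction of $E$ to each branch gains nothing, so that this $p$ correctly counts on neither side in $n^-$. I expect to verify this with the local model $k[[x,y]]/(y^2-x^{m})$ and the description of $Q_p\cong\cO_{W_p}\subset g_*g^*\cL/\cL$ from Lemma~\ref{lem:Qp}.
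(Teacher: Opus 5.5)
Your reduction is the paper's. After its normalization $\sum_j l_j=0$, the paper's (\ref{lI:bound}) is exactly your coefficientwise bound $\widetilde l_I+n^-_I\ge -(g-1)r_Ir_{I^\circ}$. Your upper-bound step is also correct: applying the lower bound to $I^\circ$ and using $\widetilde l_I+\widetilde l_{I^\circ}=-\sum_{p\in N}n_p$ and $n^+_I+n^-_{I^\circ}=\sum_{p\in N}n_p$ gives the upper bound.

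The gap is in the lower bound itself, and it is a direction error. Semistability gives $\deg_{C_I}E_I-\frac{r_I}{r}w'\ge -(g-1)r_Ir_{I^\circ}$, which is a \emph{lower} bound on $\deg_{C_I}E_I$. To transfer it to $l_I+n^-_I$ you need $\deg_{C_I}E_I\le l_I+n^-_I$, not $\ge$. The relation you say you need, $\chi(E_I)\ge\chi(\mathcal{L}|_{C_I})+n^-_I$, is the trivial direction and gives nothing. So as written neither bound is established, since your upper bound is derived from the lower bound for $I^\circ$.

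What you actually need is that $\mathcal{L}|_{C_I}\hookrightarrow E_I$ has zero cokernel at every $p\in N\cap C_I\cap C_{I^\circ}$. In other words, the whole excess $Q_p$ of $E$ over $\mathcal{L}$ at such a point becomes torsion in $E|_{C_I}$, so that $\deg_{C_I}E_I=l_I+n^-_I$ exactly. This is the paper's Lemma~\ref{lem:torsion}, and it is where the type $A$ hypothesis enters. At such $p$ there are exactly two smooth branches, so the normalization of $R=\widehat{\cO}_{\cC_b,p}$ is $R'\oplus R''$. By the proof of Lemma~\ref{lem:Qp} one has $R\subset M\subset R'\oplus R''$, with $R$ corresponding to $\mathcal{L}$. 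The torsion-free quotient of $M\otimes_R R'$ is the image of $M$ under the projection to $R'$. That image contains the image of $R$, which is all of $R'$, and it lies in $R'$, so it equals $R'$; that is, $\mathcal{L}|_{C_I}\to E_I$ is an isomorphism at $p$.

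You flagged this local fact as the ``delicate point'', but you attached it to the upper bound and left it as an expectation. It is in fact the one non-formal input of the lower bound, needed for every $I$ and hence also for $I^\circ$ in your upper-bound step. Once you prove it, your argument goes through.
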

\begin{proof}
We take the line bundle $\mathcal{L}$ and $Q_p$ for $p\in N$ with length $n_p$ as in Lemma~\ref{lem:Qp}.
We divide the proof into 3 steps. 
\begin{step}
    Reduction to the weight inequality.
\end{step}
Let $\nu$ be a map 
\begin{align*}
    \nu \colon \bgm \to \cH
\end{align*}
corresponding to the object 
\begin{align*}
M=M_1 \oplus \cdots \oplus M_k, \ M_i \in \Coh^{\heartsuit}(\cC_b)
\end{align*}
such that $r_i:=\operatorname{rank}(\pi_{*}M_i)$
satisfy $r_1+\cdots+r_k=r$, and 
with $\mathbb{G}_m$-weight
\begin{align*}(\nu_1, \ldots, \nu_k), \ 
\nu_1>\cdots>\nu_k.
\end{align*}
Let $\delta_{w'}$ be the $\mathbb{Q}$-line bundle on $\cH$ defined as in (\ref{deltaw})
\begin{align*}
    \delta_{w'}=\det (\cF|_{c\times \cH})^{w'/r}.
\end{align*}
Since $\mathcal{V}_E$ is perfect on $\cH$ (as it is a push-forward of a vector bundle under the regular closed immersion $i_b \colon \cH_b \hookrightarrow \cH$),
it is enough to show the following by Lemma~\ref{lem:perfect} and Lemma~\ref{lem:eta}
\begin{align}\label{wt:V1}
    \mathrm{wt}(\nu^* \mathcal{V}_E) \subset \left[\frac{1}{2}c_1(\nu^* \mathbb{L}_{\cH}^{<0}), 
    \frac{1}{2}c_1(\nu^* \mathbb{L}_{\cH}^{>0})\right]+c_1(\nu^* \delta_{w'}).
\end{align}

Note that we have 
\begin{align*}
    \cC_b=C_1 \cup \cdots \cup C_k, \ C_i:=\mathrm{Supp}(M_i)
\end{align*}
where $C_i$ is a reduced curve, which is not necessarily irreducible. 
We set 
\begin{align*}
    l_i:=\deg_{C_i}(\mathcal{L}|_{C_i})=\chi(\mathcal{L}|_{C_i})-\chi(\mathcal{O}_{C_i}).
\end{align*}
Since we have (using Lemma~\ref{lem:add})
\begin{align*}
    w'=\deg_{\cC_b}(\mathcal{L})+\sum_{p\in N}\mathrm{length}(Q_p)
    =\sum_{i=1}^k l_i+\sum_{p\in N}n_p
\end{align*}
we have the equality 
\begin{align*}
    c_1(\nu^* \delta_{w'})=\frac{1}{r}\left(\sum_{i=1}^k l_i+\sum_{p\in N}n_p  \right)
  \sum_{i=1}^k r_i \nu_i.
\end{align*}
We also have the equality by Lemma~\ref{lem:Gmwt:L}
\begin{align*}
    c_1(\nu^* \mathbb{L}_{\cH}^{<0})=\sum_{i>j} (2g-2)r_i r_j(\nu_i-\nu_j).
\end{align*}

A $\nu$-weight of $\mathcal{V}_E$ is of the form 
\begin{align}\label{wt:VE}
    \sum_{i=1}^k l_i \nu_i+\sum_{p\in N}(a_p\nu_{\alpha(p)}+b_p\nu_{\beta(p)})
\end{align}
where $1\leq \alpha(p) \leq \beta(p) \leq k$ satisfies $p \in C_{\alpha(p)}\cap C_{\beta(p)}$, 
and $a_p, b_p$ satisfy 
\begin{align*}
    a_p+b_p=n_p, \ a_p, b_p \geq 0.
\end{align*}
Note that the pair $(\alpha(p), \beta(p))$ exists uniquely because $p\in \cC_b$ is of type $A$ (i.e.\ there are at most two local branches). Here the first term of (\ref{wt:VE}) is the $\nu$-weight of $\mathcal{P}_{\mathcal{L}}$ by Lemma~\ref{lem:PLGm}.
The second term at each $p\in N$ is a weight of $\mathcal{V}_p^{\otimes n_p}$ since the set of $\nu$-weights of $\mathcal{V}_p$ is $\{\nu_{\alpha(p)}, \nu_{\beta(p)}\}$. 
We also note that, when $\mathcal V_E$ is regarded as an object of $\Coh(\cH)$, it means 
$(i_b)_*\mathcal V_E$ for $i_b\colon \cH_b\hookrightarrow \cH$. Since $i_b$ is obtained by base change from the regular closed immersion 
$\{b\}\hookrightarrow \cB$, the object $i_b^* (i_b)_{*}\mathcal{V}_E$ is generated by $\mathcal{V}_E$ and other Koszul factors, which are direct sums of $\mathcal{V}_E$ up to shift. Hence they do not affect the above weight estimate of $\nu^* \mathcal{V}_E$.

Then the lower bound in (\ref{wt:V1}) is equivalent to 
\begin{align} \label{ubound1}
    &\sum_{i>j}(g-1)r_i r_j (\nu_i-\nu_j)  \\
  \notag &\leq \sum_{i=1}^k l_i \nu_i+\sum_{p\in N}(a_p\nu_{\alpha(p)}+b_p\nu_{\beta(p)})-\frac{1}{r}\left(\sum_{i=1}^k l_i+\sum_{p\in N}n_p  \right)
  \sum_{i=1}^k r_i \nu_i.
\end{align}
\begin{step}
    The inequalities from the semistability of $E$.
\end{step}
Now suppose that $E$ is semistable. We will derive the inequality (\ref{ubound1})
from the semistability of $E$. 

For a subset $I \subset \{1, \ldots, k\}$, 
we write 
\begin{align*}
    C_I:=\bigcup_{i\in I} C_i, \ r_I:=\sum_{i\in I} r_i, \ 
    l_I:=\sum_{i\in I} l_i=\deg_{C_I}(\mathcal{L}|_{C_I}).
\end{align*}
Here the last identity follows from Lemma~\ref{lem:add}.
We also write $I^{\circ}$ for the complement of $I$ in $\{1, \ldots, k\}$. 
By Lemma~\ref{lem:torsion} below, 
we have the exact sequence 
\begin{align*}
    0\to \mathcal{L}|_{C_I} \to E|_{C_I}\to \bigoplus_{p\in N \cap C_I}Q_p \to 0
\end{align*}
such that the torsion part of $E|_{C_I}$ is 
\begin{align*}
    \bigoplus_{p\in N \cap C_I \cap C_{I^{\circ}}}Q_p \subset E|_{C_I}. 
\end{align*}
Let $E|_{C_I}\twoheadrightarrow E|_{C_I}^{\mathrm{free}}$ be the torsion-free quotient of $E|_{C_I}$. Then we have the 
exact sequence 
\begin{align}\label{ex:E1}
    0\to \mathcal{L}|_{C_I} \to E|_{C_I}^{\mathrm{free}} \to \bigoplus_{p\in (N\cap C_I) \setminus C_{I^{\circ}}}Q_p \to 0. 
\end{align}
The kernel $E'$ of $E\twoheadrightarrow E|_{C_I}^{\mathrm{free}}$
fits into the commutative diagram of exact sequences
\begin{equation}\label{com:exE}
\xymatrix@C=4em{
0 \ar[r]
&
\mathcal{L}|_{C_{I^\circ}}(-C_I)
\ar[r]
\ar@{^{(}->}[d]
&
\mathcal{L}
\ar[r]
\ar@{^{(}->}[d]
&
\mathcal{L}|_{C_I}
\ar[r]
\ar@{^{(}->}[d]
&
0
\\
0 \ar[r]
&
E'
\ar[r]
&
E
\ar[r]
&
E|_{C_I}^{\mathrm{free}}
\ar[r]
&
0
}
\end{equation}
By taking the cokernels, we obtain the exact sequence 
\begin{align}\label{ex:E2}
    0\to \mathcal{L}|_{C_{I^{\circ}}}(-C_I)\to E' \to \bigoplus_{p\in N\cap C_{I^{\circ}}}Q_p  \to 0. 
\end{align}
Here we have used that 
\begin{align*}
  N \setminus ((N\cap C_I)\setminus C_{I^{\circ}})=N\cap C_{I^{\circ}}.  
\end{align*}

By the bottom exact sequence (\ref{com:exE})
and the semistability of $E$, we have the inequality 
\begin{align}\label{ineq:rII}
    \frac{\deg \pi_{*}E'}{r_{I^{\circ}}} \leq \frac{\deg \pi_{*}E|_{C_I}^{\mathrm{free}}}{r_I}.
\end{align}
From (\ref{ex:E1}) and the formula (\ref{formula:deg}), we have 
\begin{align*}
\deg \pi_{*}E|_{C_I}^{\mathrm{free}}=l_I-(r_I^2-r_I)(g-1)+\sum_{p\in (N\cap C_I)\setminus C_{I^{\circ}}}n_p
\end{align*}
and from (\ref{ex:E2}) we have 
\begin{align*}
    \deg \pi_{*}E'=l_{I^{\circ}}-(r_{I^{\circ}}^2-r_{I^{\circ}})(g-1)-C_I \cdot C_{I^{\circ}}+\sum_{p\in N \cap C_{I^{\circ}}}n_p.
\end{align*}
By noting that 
$C_I \cdot C_{I^{\circ}}=(2g-2)r_I r_{I^{\circ}}$, 
the inequality (\ref{ineq:rII}) is simplified as 
\begin{align}\label{ineq:lI}
    \frac{1}{r_{I^{\circ}}}\left(\sum_{p\in N \cap C_{I^{\circ}}}n_p  \right)
    -\frac{1}{r_I}\left(\sum_{p\in (N\cap C_I)\setminus C_{I^{\circ}}}n_p  \right) 
    \leq \frac{l_I}{r_I}-\frac{l_{I^{\circ}}}{r_{I^{\circ}}}+(g-1)r.
\end{align}

Let $\widetilde{l}_i$ be defined by 
\begin{align*}
    \widetilde{l}_i:=l_i -\frac{r_i}{r}\sum_{j=1}^k l_j.
\end{align*}
Then we have $\widetilde{l}_1+\cdots +\widetilde{l}_k=0$, and 
the inequalities (\ref{ubound1}), (\ref{ineq:lI}) are unchanged after replacing $l_i$ with $\widetilde{l}_i$. 
Therefore we may assume that 
\begin{align}\label{sum:l:VE}l_1+\cdots+l_k=0.
\end{align}
Then by substituting $l_{I}+l_{I^{\circ}}=0$ to (\ref{ineq:lI}), and multiplying $r_I r_{I^{\circ}}/r$,
we obtain 
\begin{align}\label{lI:bound}
    \frac{r_I}{r}\left(\sum_{p\in N \cap C_{I^{\circ}}}n_p  \right)-\frac{r_{I^{\circ}}}{r}
\left(\sum_{p\in (N\cap C_I)\setminus C_{I^{\circ}}}n_p  \right) -(g-1)r_I r_{I^{\circ}}\leq l_I.
\end{align}
By setting $\mu_i:=\nu_i-\nu_{i+1}>0$, by (\ref{sum:l:VE}) the right-hand side in (\ref{ubound1}) is 
\begin{align*}
\sum_{i=1}^{k-1}(l_1+\cdots+l_i)\mu_i+\sum_{p\in N}(a_p\nu_{\alpha(p)}+b_p\nu_{\beta(p)})-
\frac{1}{r}\sum_{p\in N}n_p 
  \sum_{i=1}^k r_i \nu_i. 
\end{align*}
By setting $I_i=\{1, \ldots, i\}$ and substituting (\ref{lI:bound}), the right-hand side in (\ref{ubound1}) is at least
\begin{align*}
   & -\sum_{i=1}^{k-1} (g-1)r_{I_i}r_{I_i^{\circ}} \mu_i+\sum_{p\in N}(a_p\nu_{\alpha(p)}+b_p\nu_{\beta(p)})-\frac{1}{r}\sum_{p\in N}n_p 
  \sum_{i=1}^k r_i \nu_i \\
&+\sum_{i=1}^{k-1}\left(\frac{r_{I_i}}{r}\left(\sum_{p\in N \cap C_{I_i^{\circ}}}n_p  \right) 
  -
    \frac{r_{I_i^{\circ}}}{r}\left(\sum_{p\in (N \cap C_{I_i})\setminus C_{I_i^{\circ}}}n_p\right)\right)\mu_i.
\end{align*}
Since we have 
\begin{align*}
        \sum_{i<j}(g-1)r_i r_j (\nu_i-\nu_j)= \sum_{i=1}^{k-1} (g-1)r_{I_i}r_{I_i^{\circ}} \mu_i
\end{align*}
the inequality (\ref{ubound1}) follows if the following inequality holds: 
\begin{align}\label{ineq:lambdap}
   & \sum_{p\in N}(a_p\nu_{\alpha(p)}+b_p\nu_{\beta(p)})-\frac{1}{r}\sum_{p\in N}n_p  
  \sum_{i=1}^k r_i \nu_i \\
&\notag \geq \sum_{i=1}^{k-1}\left(-\frac{r_{I_i}}{r}\left(\sum_{p\in N \cap C_{I_i^{\circ}}}n_p  \right) 
  +
    \frac{r_{I_i^{\circ}}}{r}\left(\sum_{p\in (N \cap C_{I_i})\setminus C_{I_i^{\circ}}}n_p\right)\right)\mu_i.
\end{align}

\begin{step}
    Proof of the inequality (\ref{ineq:lambdap}).
\end{step}
In order to prove (\ref{ineq:lambdap}), it is enough to prove this for each $p\in N$, 
so we may assume that $N=\{p\}$.
Recall that $\alpha(p)\leq \beta(p)$, therefore 
$\nu_{\beta(p)} \leq \nu_{\alpha(p)}$. Then the left-hand side in (\ref{ineq:lambdap}) is at least
\begin{align}\label{lambda:jp}
   n_p \nu_{\beta(p)}-\frac{n_p}{r}
\sum_{i=1}^k r_i \nu_i=n_p\left(\sum_{i=\beta(p)}^{k-1}\frac{r_{I_i^{\circ}}}{r}\mu_i -\sum_{i=1}^{\beta(p)-1}\frac{r_{I_i}}{r}\mu_i\right).
\end{align}
On the other hand, we have 
\begin{align*}
    p \in N \cap C_{I_i^{\circ}} \ \mbox{ if and only if } \ \beta(p) > i
\end{align*}
and 
\begin{align*}
    p\in (N \cap C_{I_i})\setminus C_{I_i^{\circ}}  \ \mbox{ if and only if } \ \beta(p) \leq i.
\end{align*}
It follows that (\ref{lambda:jp}) equals the right-hand side in (\ref{ineq:lambdap}), therefore we have the inequality (\ref{ineq:lambdap}). We conclude the lower bound in (\ref{wt:V1}). The upper bound is proved similarly. 
\end{proof}

For later use, we record the following lemma, which follows from the above proof. 
\begin{lemma}\label{lem:lower}
In the proof of Proposition~\ref{prop:VE}, the lower bound in (\ref{wt:V1}) is achieved only if 
the inequality (\ref{ineq:rII}) is an equality for all $I=I_i$ with $1\leq i\leq k-1$.  
\end{lemma}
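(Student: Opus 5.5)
We revisit the chain of inequalities in the proof of Proposition~\ref{prop:VE} and record where slack can occur. Fix $\nu$ with $\nu_1>\cdots>\nu_k$, set $\mu_i=\nu_i-\nu_{i+1}>0$, and normalize $l_1+\cdots+l_k=0$ as in (\ref{sum:l:VE}). The proof of the lower bound in (\ref{wt:V1}) decomposes the difference
\[
\Bigl(\text{weight (\ref{wt:VE})}-c_1(\nu^*\delta_{w'})\Bigr)-\tfrac12 c_1(\nu^*\mathbb{L}_{\cH}^{<0})
\]
into a sum of nonnegative contributions. The plan is to write this difference explicitly as
\[
\sum_{i=1}^{k-1}\bigl(l_{I_i}-\lambda_{I_i}\bigr)\mu_i \;+\; \Bigl(\mathrm{LHS}(\ref{ineq:lambdap})-\mathrm{RHS}(\ref{ineq:lambdap})\Bigr).
\]
Here $\lambda_{I}$ denotes the left-hand side of (\ref{lI:bound}). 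This is just bookkeeping: substitute the expression for $\frac12c_1(\nu^*\mathbb{L}^{<0}_\cH)$ from Lemma~\ref{lem:Gmwt:L}, use $\sum_{i<j}(g-1)r_ir_j(\nu_i-\nu_j)=\sum_i(g-1)r_{I_i}r_{I_i^\circ}\mu_i$, and rewrite $\sum_i l_i\nu_i=\sum_i l_{I_i}\mu_i$. By Step~3, the second summand is nonnegative for every choice of $(a_p,b_p)$.

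Each term $(l_{I_i}-\lambda_{I_i})\mu_i$ is also nonnegative. The key point is that (\ref{lI:bound}) was obtained from (\ref{ineq:rII}) by an equivalence, not merely an implication. Indeed, (\ref{ineq:rII}) is converted to (\ref{ineq:lI}) by exact degree formulas, using (\ref{formula:deg}), the exact sequences (\ref{ex:E1}), (\ref{ex:E2}), and $C_I\cdot C_{I^\circ}=(2g-2)r_Ir_{I^\circ}$. It is then multiplied by the positive constant $r_Ir_{I^\circ}/r$ after substituting $l_{I^\circ}=-l_I$. Hence
\[
l_{I}-\lambda_I=\kappa_I\Bigl(\frac{\deg\pi_*E|^{\mathrm{free}}_{C_I}}{r_I}-\frac{\deg\pi_*E'}{r_{I^\circ}}\Bigr)
\]
for an explicit constant $\kappa_I>0$. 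I would check this identity directly, taking care that the normalization $l_i\mapsto\widetilde l_i$ does not affect either side.

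Since all summands are nonnegative and each $\mu_i>0$, the lower bound is attained only if every summand vanishes. In particular $l_{I_i}=\lambda_{I_i}$ for each $1\le i\le k-1$, and by the displayed identity this is exactly equality in (\ref{ineq:rII}) for $I=I_i$.

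The only delicate point is confirming that no further inequality was used between (\ref{ineq:rII}) and (\ref{ineq:lambdap}). The step from (\ref{ubound1}) to the sufficient condition (\ref{ineq:lambdap}) substitutes (\ref{lI:bound}) with coefficients $\mu_i>0$, so any strictness there persists. The inequality in Step~3, $a_p\nu_{\alpha(p)}+b_p\nu_{\beta(p)}\ge n_p\nu_{\beta(p)}$, only adds a further nonnegative term, which is harmless for the "only if" direction.
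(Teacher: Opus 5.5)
Your proposal is correct and follows the same route as the paper. Strictness of (\ref{ineq:rII}) for $I=I_i$ passes, through the exact rescaling by $r_{I_i}r_{I_i^{\circ}}/r>0$, to strictness of (\ref{lI:bound}); this enters the estimate of the right-hand side of (\ref{ubound1}) with coefficient $\mu_i>0$, while the remaining slack in (\ref{ineq:lambdap}) is nonnegative for every $(a_p,b_p)$, so no weight of $\nu^*\mathcal{V}_E$ attains the lower bound. Your explicit decomposition of the slack as $\sum_i(l_{I_i}-\lambda_{I_i})\mu_i$ plus the slack in (\ref{ineq:lambdap}) is in fact a more precise version of the paper's terse remark that (\ref{ineq:lambdap}) ``becomes strict'', since that inequality does not itself involve the $l_i$.
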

\begin{proof}
    If (\ref{ineq:rII}) is strict for $I=I_i$, then the inequality (\ref{lI:bound}) is also strict, 
    therefore the inequality (\ref{ineq:lambdap}) is also strict. Therefore (\ref{ubound1}) is strict, and the lower bound in (\ref{wt:V1}) is not achieved. 
\end{proof}

We have used the following lemma: 
\begin{lemma}\label{lem:torsion}
In the setting of Lemma~\ref{lem:Qp}, let $\cC_b=C'\cup C''$ be a decomposition 
where $C', C''$ are unions of irreducible components (which do not have common irreducible components since $\cC_b$ is reduced). Then 
the classical $*$-restriction $E|_{C'} \in \Coh^{\heartsuit}(C')$ fits into 
the exact sequence 
\begin{align}\label{exact:L}
    0\to \mathcal{L}|_{C'} \to E|_{C'} \to \bigoplus_{p\in N\cap C'}Q_p \to 0
\end{align}
such that the torsion part of $E|_{C'}$ is $\oplus_{p\in N\cap C'\cap C''}Q_p$. 
\end{lemma}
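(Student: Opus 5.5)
The plan is to apply the right-exact functor $(-)|_{C'}$ (classical restriction, i.e.\ $\otimes_{\cO_{\cC_b}}\cO_{C'}$) to the exact sequence (\ref{exact:LE}) of Lemma~\ref{lem:Qp}, and then to analyse the resulting sequence locally at each $p\in N$ using the type $A$ local model (\ref{loc:model}). Restriction gives the exact sequence
\begin{align*}
\mathrm{Tor}_1(Q,\cO_{C'}) \to \mathcal{L}|_{C'} \to E|_{C'} \to Q|_{C'} \to 0, \qquad Q:=\bigoplus_{p\in N}Q_p.
\end{align*}
First, $\mathcal{L}|_{C'}$ is a line bundle on $C'$, hence torsion-free on the reduced curve $C'$. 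Since $\mathrm{Tor}_1(Q,\cO_{C'})$ is zero-dimensional, its image in $\mathcal{L}|_{C'}$ vanishes, so $\mathcal{L}|_{C'}\to E|_{C'}$ is injective.

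Next I identify $Q|_{C'}$. For $p\notin C'$ we have $Q_p|_{C'}=0$. For $p\in C'\setminus C''$, $C'$ agrees with $\cC_b$ near $p$, so $Q_p|_{C'}=Q_p$. The remaining case is $p\in N\cap C'\cap C''$. Here $p$ is a type $A$ point with two branches, so $m_p=2n$ is even and locally $\cC_b=\{(y-x^n)(y+x^n)=0\}$ with $C'=\{y=x^n\}$, say. Moreover $Q_p\cong\cO_{W_p}$ with $W_p\subset Z_p$, and the conductor $Z_p=\Spec k[t]/t^{n}$ has ideal $(x^n,y)$. I expect $Z_p\subset C'\cap C''$ (since $C'\cap C''=\{y=x^n=-y\}=\Spec k[x,y]/(y,x^n)$, which equals $Z_p$), hence $W_p\subset C'$ and $Q_p|_{C'}=\cO_{W_p}\otimes\cO_{C'}=Q_p$. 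This gives the exact sequence (\ref{exact:L}).

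For the torsion part, $\mathcal{L}|_{C'}$ is torsion-free and $\bigoplus Q_p$ is torsion, so $\mathrm{tors}(E|_{C'})$ maps injectively to $\bigoplus_{p\in N\cap C'}Q_p$. It remains to show two things:
\begin{enumerate}
\item at $p\in C'\setminus C''$, $E|_{C'}=E$ locally is torsion-free, so no torsion lies over $Q_p$;
\item at $p\in C'\cap C''$, the sequence $0\to\mathcal{L}|_{C'}\to E|_{C'}\to Q_p\to0$ splits locally, with $Q_p$ the torsion.
\end{enumerate}

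The second point is the main obstacle. My approach is to use the description $E\subset g_*g^*\mathcal{L}$ from the proof of Lemma~\ref{lem:Qp}. Near a node-like point $p$ with two smooth branches, the normalization is $\widetilde{\cC}_b=C'^{\nu}\sqcup C''^{\nu}$ locally, so $g_*g^*\mathcal{L}\cong \mathcal{L}|_{C'}\oplus \mathcal{L}|_{C''}$ locally. Thus $E$ sits between $\mathcal{L}$ and $\mathcal{L}|_{C'}\oplus\mathcal{L}|_{C''}$. The image of $E\to E|_{C'}\to (g_*g^*\mathcal{L})|_{C'}\to \mathcal{L}|_{C'}$ (projection to the $C'$-branch) is the torsion-free quotient. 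I will check that this composite restricts to an isomorphism on $\mathcal{L}|_{C'}$, which gives a splitting of $\mathcal{L}|_{C'}\hookrightarrow E|_{C'}$. Concretely, in terms of $x$-coordinates each branch is $k[[x]]$, and $E\cong \{(f,h): f\equiv h \bmod x^{n-n_p}\}$ is a submodule of $k[[x]]^2$ (the classification of modules over $k[[x,y]]/(y^2-x^{2n})$). One can then compute $E\otimes\cO_{C'}$ directly and verify that its torsion has length $n_p$ by a length count. Namely, the torsion-free quotient of $E|_{C'}$ is its image in $k[[x]]$, which is all of $k[[x]]$, and this equals $\mathcal{L}|_{C'}$. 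Hence the torsion has length equal to $\mathrm{length}(Q_p)=n_p$ and maps isomorphically onto $Q_p$.
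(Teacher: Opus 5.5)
Your proposal is correct and follows essentially the same route as the paper. You kill the $\mathrm{Tor}_1$ term using purity of $\mathcal{L}|_{C'}$, then split the sequence formally locally at $p\in C'\cap C''$ using $R\subset M\subset R'\oplus R''$ and the projection $M\otimes_R R'\to (R'\oplus R'')\otimes_R R'\to R'$ onto the $C'$-branch; your additional checks that $Z_p=C'\cap C''$ scheme-theoretically (so $Q_p|_{C'}=Q_p$) and that locally $M=\{(f,h): f\equiv h \bmod x^{n-n_p}\}$ make explicit points the paper leaves implicit.
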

\begin{proof}
    The exact sequence (\ref{exact:L}) follows from the long exact sequence 
    \begin{align*}
        \cdots \to \mathrm{Tor}_1^{\mathcal{O}_{\cC_b}}(Q, \mathcal{O}_{C'}) \to 
        \mathcal{L}|_{C'} \to E|_{C'} \to Q|_{C'} \to 0
    \end{align*}
    and the first map is a zero map as $\mathcal{L}|_{C'}$ is a line bundle and 
    $ \mathrm{Tor}_1^{\mathcal{O}_{\cC_b}}(Q, \mathcal{O}_{C'})$ is a zero-dimensional sheaf. 

    As for the second statement, it is enough to show that (\ref{exact:L}) splits formally locally 
    at any $p\in C'\cap C''$. We set 
    \begin{align*}R=\widehat{\cO}_{\cC_b, p}, \ R'=\widehat{\cO}_{C', p}, \ 
    R''=\widehat{\cO}_{C'', p}, \ M=E\otimes_{\mathcal{O}_{\cC_b}}R.
    \end{align*}
    Since $p\in C'\cap C''$ and the singularity is of type $A$, it has two local branches at $p$; hence the normalization of $R$ is $R'\oplus R''$.
    By~\cite[Lemma~4.26]{TodaGL2} and the construction of the exact sequence (\ref{exact:LE}) in~\cite[Lemma~5.2]{TodaGL2}, we have 
    \begin{align*}
        R \subset M \subset R' \oplus R''.
    \end{align*}
    Let $\otimes_R^{\mathrm{cl}}$ be the classical tensor product. 
    Then $R' \to M\otimes_R^{\mathrm{cl}} R'$ is a split injection, with splitting given by 
    \begin{align*}
        M\otimes_R^{\mathrm{cl}} R' \to (R' \oplus R'')\otimes_R^{\mathrm{cl}} R'=R' \oplus (R' \otimes_R^{\mathrm{cl}} R'') \twoheadrightarrow R'.
    \end{align*}
\end{proof}

By Proposition~\ref{prop:resol} and Proposition~\ref{prop:VE}, we obtain the following corollary: 
\begin{cor}\label{cor:PE}
    If $E$ is semistable, we have 
    \begin{align*}
        \cP_E \in \widetilde{\LL}(\cH)_{w'}.
    \end{align*}
\end{cor}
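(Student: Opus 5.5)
The corollary should follow by combining the left resolution of Proposition~\ref{prop:resol} with the weight bound of Proposition~\ref{prop:VE}. Since $\widetilde{\LL}(\cH)_{w'}$ is a limit over QCA opens $\mathcal{U}\subset\cH$ of $\LL(\mathcal{U})_{w'}$, it suffices to fix a QCA open $\mathcal{U}$ and prove $\cP_E|_{\mathcal{U}}\in \LL(\mathcal{U})_{w'}$. The compatibility with restrictions is automatic, because both $\cP_E$ and the resolution are global objects on $\cH$ (pushed forward along $i_b$).

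\textbf{Truncating the resolution.} The resolution $\cdots\to \mathcal{V}_E^{\oplus k_1}\to\mathcal{V}_E^{\oplus k_0}\to\cP_E\to 0$ is infinite, so I would use its stupid truncations. For each $n$ there is an exact triangle
\[
\sigma_{\le n}\to \cP_E \to K_n[n+1],
\]
where $\sigma_{\le n}$ is the complex $\mathcal{V}_E^{\oplus k_n}\to\cdots\to\mathcal{V}_E^{\oplus k_0}$ and $K_n$ is the $n$-th syzygy sheaf on $\cH_b$.
\begin{itemize}
\item The object $\sigma_{\le n}$ lies in the thick (extension-closed) subcategory generated by $\mathcal{V}_E$.
\item The condition~(\ref{wt:nu}) defining $\LL(\mathcal{U})_{w'}$ is stable under shifts, cones and direct summands. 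The functor $\iota_*\nu^{\mathrm{reg}*}_\circ$ is exact, and weights of a cone are contained in the union of the weights of its terms.
\end{itemize}
By Proposition~\ref{prop:VE}, $\mathcal{V}_E\in\LL(\mathcal{U})_{w'}$, so $\sigma_{\le n}\in \LL(\mathcal{U})_{w'}$ for every $n$.

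\textbf{Passing to the limit.} It remains to show that the weight condition for $\cP_E$ itself is detected on $\sigma_{\le n}$ for large $n$; this is the main step. Fix $\nu\colon\bgm\to\mathcal{U}$. The object $\iota_*\nu^{\mathrm{reg}*}_\circ(\cP_E)$ is coherent on $\bgm$ and hence has bounded cohomological amplitude. Since $\mathcal{U}$ is quasi-compact and quasi-smooth, pullback along $\nu^{\mathrm{reg}}_\circ$ has bounded Tor-amplitude on coherent sheaves, uniformly over the finitely many types of $\nu$ up to the $\Theta$-stratum data. Therefore $\nu^{\mathrm{reg}*}_\circ(K_n[n+1])$ is concentrated in cohomological degrees $\le -n+c$ for a constant $c$. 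For $n\gg0$ the triangle then identifies the cohomology of $\iota_*\nu^{\mathrm{reg}*}_\circ\cP_E$ in the relevant range with that of $\iota_*\nu^{\mathrm{reg}*}_\circ\sigma_{\le n}$. So every weight of the former already appears as a weight of the latter, and the bound~(\ref{wt:nu}) transfers.

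Here is why finitely many $\nu$ suffice. Weights are constant along connected families of maps $\nu$ (via the mapping stack $\Map(\bgm,\mathcal{U})$, which has finitely many relevant components over a QCA stack), and Remark~\ref{rmk:cond:r} lets us ignore rescalings $t\mapsto t^r$. This yields $\cP_E|_{\mathcal{U}}\in\LL(\mathcal{U})_{w'}$ for every $\mathcal{U}$, hence $\cP_E\in\widetilde{\LL}(\cH)_{w'}$.

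An alternative is available if $\cH_b$ is regular enough locally, or via the Koszul remark already used in the proof of Proposition~\ref{prop:VE}. Then one can argue directly with perfectness and Lemma~\ref{lem:perfect}: $\cP_E$ is maximal Cohen--Macaulay on the lci stack $\cH_b$, which is perfect over the regular part. I would use the truncation argument above as the primary route, since it needs only the bounded-amplitude input.
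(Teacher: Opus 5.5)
Your argument is correct and is essentially the paper's. The paper also writes $\cP_E$ as the colimit of the stupid truncations of the resolution in Proposition~\ref{prop:resol}, bounds the weights of each truncation via Proposition~\ref{prop:VE}, and passes to the colimit by citing Lemma~\ref{lem:perfect2}, where you instead give a direct truncation-triangle argument; for that step, right $t$-exactness of $\nu^{\mathrm{reg}*}_\circ$ and boundedness of the coherent object $\iota_*\nu^{\mathrm{reg}*}_\circ\cP_E$ already suffice, with no Tor-amplitude or uniformity input.

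Since the condition is checked one $\nu$ at a time, your remarks about finitely many components of $\Map(\bgm,\mathcal{U})$ and constancy of weights in families are unnecessary, and they are not accurate as stated. The perfectness alternative should be dropped: $\cP_E$ is in general not perfect, which is exactly why the resolution is needed.
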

\begin{proof}
The corollary follows from 
Proposition~\ref{prop:resol}, Proposition~\ref{prop:VE} and Lemma~\ref{lem:perfect2}.
Indeed, we have the colimit expression in $\QCoh(\cH)$
\begin{align*}
    \cP_E \cong \operatorname*{colim}_{m\to\infty}(\cdots \to 0 \to \mathcal{V}_E^{\oplus k_m} \to \cdots \to 
    \mathcal{V}_E^{\oplus k_0} \to 0 \to \cdots)
\end{align*}
by Proposition~\ref{prop:resol}, and each term satisfies the condition (\ref{wt:cond2})
by Proposition~\ref{prop:VE}. Therefore by Lemma~\ref{lem:perfect2}, 
the object $\cP_E$ satisfies the condition (\ref{wt:nu}). 
\end{proof}

\subsection{Construction of the double cover}\label{subsec:proof}
The rest of this section is devoted to a complete proof of Proposition~\ref{prop:resol}.
We first construct a double cover on an \'etale neighborhood of $C\times \cB$, 
which will be used to construct deformations of Arinkin sheaves. 

For a torsion-free sheaf $E\in \Coh^{\heartsuit}(\cC_b)$ of 
rank-one, recall the exact sequence (\ref{exact:LE}). 
We take $p\in N$ such that $\chi(Q_p)>0$ and a surjection 
$Q_p \twoheadrightarrow \cO_p$.
Then there is an exact sequence 
\begin{align}\label{exact:E1}
    0\to E' \to E \to \cO_p \to 0
\end{align}
where $E'$ fits into an exact sequence 
\begin{align}\label{exact:E2}
    0\to \cL \to E' \to \bigoplus_{q\in N}Q'_q\to 0
\end{align}
such that $Q'_q=Q_q$ for $q\neq p$
and $Q_p'$ is isomorphic to the structure 
sheaf of a closed subscheme of $Z_p$ with $\chi(Q_p')=\chi(Q_p)-1$.

For the above $p\in N$, let $c=\pi(p) \in C$. 
Let $\cC \to \cB$ be the universal spectral curve, and consider the projection 
\begin{align*}
    \pi \colon \cC \to C\times \cB.
\end{align*}
It is a finite flat morphism such that its fiber at $(c, b)$ decomposes into open and closed subschemes
\begin{align}\label{decompose:cb}
    \pi^{-1}(c, b)=\pi|_{\cC_b}^{-1}(c)_p \sqcup (\cdots).
\end{align}
Here $\pi|_{\cC_b}^{-1}(c)_p$ is isomorphic to 
\begin{align*}
    \pi|_{\cC_b}^{-1}(c)_p \cong \Spec k[t]/t^2
\end{align*}
by the type $A$ assumption on $p \in \cC_b$.
Therefore there is an \'{e}tale neighborhood 
\begin{align}\notag
(D, z) \to (C\times \cB, (c, b)) 
\end{align}
such that the Cartesian square 
\begin{align}\label{etale:D}
\xymatrix{
(\cD, q) \ar[r] \ar[d] \diasquare & (D, z) \ar[d] \\
(\cC, p) \ar[r] &(C\times \cB, (c, b))
}
\end{align}
decomposes into the open and closed subsets
\begin{align*}
\cD=\cD_p \sqcup (\cdots)
\end{align*}
which restricts to the decomposition (\ref{decompose:cb}) at $z$. 
Here $q\in \cD$ is determined by $(z, p, c, b)$ and it lies in $\cD_p$.
The composition  
\begin{align}\label{map:D}
\pi_D \colon 
    \cD_p \hookrightarrow \cD  \to D 
\end{align}
is a finite flat double cover with covering involution $\sigma$. 

\subsection{Deformations via Hecke correspondences}
In this subsection, we construct flat families of Cohen--Macaulay 
sheaves 
\begin{align*}
    \cP_{\Delta 1}, \cP_{\Delta 2}, \cP_{\Delta 2}^{\sigma}\in \Coh^{\heartsuit}(\cH_{\Delta})
\end{align*}
for a pointed smooth affine scheme $0\in \Delta$ over $\cB$, which 
restricts to $\cP_{E'}$, $\cP_E$, $\cP_E$ respectively. 
These sheaves will be used to construct an exact sequence relating 
$\cP_{E'}$, $\cP_E$ and $\mathcal{V}_p$. Below we will construct such deformations using the stack of Hecke correspondences. 

Let $\mathrm{Hecke}_{\GL}'$ be the (modified) stack of Hecke correspondences 
in the diagram (\ref{dia:hecke'}). 
We define $\cHecke$ to be 
\begin{align*}
    \cHecke:=\mathrm{Hecke}_{\GL_r}'\times_{\rB_{\GL_r}} \cB.
\end{align*}
We have the diagram of evaluation morphisms, given by the base-change of 
the diagram (\ref{dia:hecke'})
\begin{align*}
    \xymatrix{
\cHecke \ar[r]^{\ev_2} \ar[d]_{(\ev_3, \ev_1)} & \cH \ar[d] \\
S\times \cH \ar[r] & \cB.
    }
\end{align*}
The left vertical arrow factors through the closed 
immersion $\cC\times_{\cB}\cH \hookrightarrow S\times \cH$; 
we also use the same notation $\ev_3$ for the composition 
\begin{align*}
   \ev_3 \colon \cHecke \stackrel{(\ev_3, \ev_1)}{\to} \cC\times_{\cB}\cH 
   \to \cC
\end{align*}
where the last arrow is the projection. 

\begin{lemma}\label{lem:heckeclass}
The stack $\cHecke$ is classical, and the evaluation maps 
\begin{align*}
    \ev_i \colon \cHecke \to \cH
\end{align*}
for $i=1, 2$ are flat and surjective of relative dimension one. 
\end{lemma}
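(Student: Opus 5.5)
The plan is to exploit the fact that $\ev_1$ and $\ev_2$ are quasi-smooth and proper of relative virtual dimension $1$ (Lemma~\ref{lem:qsmooth}), together with the fact that $\cH$ is classical over $\cB$ (Theorem~\ref{thm:higgs:basic}, base changed along $\cB\subset \rB_{\GL_r}^{\mathrm{cl}}$). Both classicality and flatness will follow from a single fibre-dimension computation. If every fibre of the classical truncation of $\ev_i$ has dimension at most $1$, then the quasi-smooth morphism $\ev_i$ of virtual relative dimension $1$ has actual relative dimension equal to its virtual one. By the criterion recalled in the conventions (\cite[Proposition~1]{KhanClassicality}, applied relatively, or equivalently after noting that $\cHecke$ is then quasi-smooth over a classical lci stack with $\dim=\mathrm{vdim}$), this gives that $\cHecke$ is classical and that $\ev_i$ is lci of relative dimension one. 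Flatness then follows from miracle flatness, since source and target are Cohen--Macaulay and the fibres are equidimensional of the expected dimension $1$.

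So the key step is to compute the fibres. Fix a point $M\in \cH_b$, a rank-one torsion-free sheaf on $\cC_b$ with $b\in\cB\subset \rB^{\mathrm{red}}$; note that $\cC_b$ is reduced, so generically regular means rank one. The fibre $\ev_2^{-1}(M)$ parametrizes pairs $(x, M\twoheadrightarrow \cO_x)$ up to scaling, i.e.\ the projectivizations $\mathbb{P}(\Hom(M,\cO_x))$ over $x\in\cC_b$. The kernel is automatically pure one-dimensional, so every such point lies in $\cHecke$. By the dimension estimate (\ref{dim:est}) in Lemma~\ref{lem:Qp}, this is a point over smooth $x$ and at most a $\mathbb{P}^1$ over the finitely many singular $x$; here the type $A$ hypothesis on $\cB$ enters. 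Hence the fibre has dimension exactly $1$, and it is nonempty, which gives surjectivity.

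Similarly, $\ev_1^{-1}(M)$ parametrizes extensions $0\to M\to M'\to\cO_x\to0$ with $M'$ pure, i.e.\ nonzero classes in $\Ext^1(\cO_x,M)$ up to scaling, giving a nonempty fibre. By Serre duality on the Gorenstein curve $\cC_b$, we have $\Ext^1(\cO_x,M)\cong\Hom(M,\cO_x\otimes\omega)^\vee$ with $\omega$ invertible. The same bound applies, so this fibre is again one-dimensional. Purity of $M'$ cuts out an open subset, and we need this open subset to still be of dimension $1$; over smooth $x$ the unique nonzero class gives the pure extension, so this holds.

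The main obstacle I expect is making the dimension-to-classicality argument rigorous at the level of stacks rather than over a base point. The fibre computation controls the classical fibres, but one must check that $\cHecke^{\mathrm{cl}}$ has pure dimension $\dim\cH+1$, equal to its virtual dimension. I would argue as follows: every irreducible component of a quasi-smooth stack has dimension at least $\mathrm{vdim}$, while the fibre bound gives at most $\dim\cH+1$. Classicality then follows, and miracle flatness applies because $\cH$ is classical lci and hence Cohen--Macaulay, and $\cHecke$ is lci and hence Cohen--Macaulay. Surjectivity is immediate from the nonemptiness of the fibres shown above.
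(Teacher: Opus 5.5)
Your argument for classicality and for $\ev_2$ is the paper's. The classical fibre of $\ev_2$ over $M$ maps to $\cC_b$ with fibres $\mathbb{P}(\Hom(M,\cO_x))$, so \eqref{dim:est} gives fibre dimension one, which forces $\dim\cHecke^{\mathrm{cl}}=\operatorname{vdim}\cHecke$.

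For $\ev_1$ you take a different route. The paper does no further fibre computation: it uses the involution $\psi\colon E\mapsto \mathbb{D}_S(E)[1]$ of $\cH$. This lifts to $\cHecke$ by sending $0\to E_1\to E_2\to\cO_x\to 0$ to $0\to E_2^{\vee}\to E_1^{\vee}\to\cO_x\to 0$, and it intertwines $\ev_2$ with $\ev_1$. Flatness and surjectivity of $\ev_1$ are therefore inherited formally from $\ev_2$.

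Your direct computation can work, but its duality step is wrong as written. The isomorphism $\Ext^1(\cO_x,M)\cong\Hom(M,\cO_x\otimes\omega)^{\vee}$ is not an instance of Serre duality. The form $\Ext^i(F,G)\cong\Ext^{1-i}(G,F\otimes\omega)^{\vee}$ needs $F$ or $G$ to be perfect. At a singular $x$ where $M$ is not locally free, neither $\cO_x$ nor $M$ is perfect; indeed that form would give $\Ext^2(\cO_x,\cO_x)=0$, which fails at a singular point. What Grothendieck duality does give uses two facts: $M$ is reflexive, $M\cong\cHom(M^{\vee},\omega)$ with $M^{\vee}:=\cHom(M,\omega)$, and $\mathcal{E}xt^{>0}(M^{\vee},\omega)=0$. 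From these,
\[
\Ext^1(\cO_x,M)\cong \Ext^1\bigl(\cO_x\otimes^{\mathbb{L}} M^{\vee},\omega\bigr)\cong \bigl(M^{\vee}\otimes\cO_x\bigr)^{\vee},
\]
so $\dim\Ext^1(\cO_x,M)=\dim\Hom(M^{\vee},\cO_x)$. Since $M^{\vee}$ is again a rank-one torsion-free sheaf on $\cC_b$, applying \eqref{dim:est} to $M^{\vee}$ gives the bound $\leq 2$ you need. This corrected step is exactly the pointwise version of the paper's $\psi$.

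The flatness step is also misattributed. Miracle flatness needs a \emph{regular} target; a Cohen--Macaulay target is not enough. For example, the normalization of a cuspidal curve is a map between Cohen--Macaulay schemes with zero-dimensional fibres, but it is not flat. Here $\cH$ is only lci, and it is singular outside the elliptic locus, e.g.\ at $E$ with $\dim\Hom(E,E)\geq 2$ such as decomposable sheaves on reducible spectral curves.

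The correct tool is the fibrewise criterion for quasi-smooth morphisms. Smooth-locally, $\ev_i$ is cut out of a smooth $\cH$-scheme by $c$ equations. Because every classical fibre has the expected dimension one, these equations restrict to a regular sequence on each (smooth, hence Cohen--Macaulay) fibre. The local criterion of flatness then shows they form a regular sequence on the total space, with quotient flat over $\cH$. This gives classicality of $\cHecke$ and flatness of $\ev_i$ at once, with no regularity needed on $\cH$.

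With these two repairs your proof is complete. Compared with the paper, it replaces the global construction of $\psi$ on $\cHecke$ (which requires checking that derived duality preserves purity and Hecke sequences) with a local duality computation on the curve.
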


\begin{proof}
    By Lemma~\ref{lem:qsmooth}, the relative virtual dimension of $\ev_i$ is one. 
    On the other hand, let $p\in \cH(k)$ be a $k$-valued point, 
    and let $E\in \Coh^{\heartsuit}(\cC_b)$ be the corresponding 
    torsion-free sheaf. Consider the restriction of $\ev_3$
    \begin{align*}
     \ev_3' \colon    (\ev_2^{-1}(p))^{\mathrm{cl}} \to \cC_b. 
    \end{align*}
    Then, for each point $x\in \cC_b$, we have 
    \begin{align*}
        ((\ev_3')^{-1}(x))^{\mathrm{cl}}
        =\mathbb{P}(\Hom(E, \mathcal{O}_x)).
    \end{align*}
    By Lemma~\ref{lem:Qp}, this is either $\mathbb{P}^1$ or a point; 
    the former case occurs precisely when $E$ is not a line bundle at $x$. 
    It follows that $(\ev_2^{-1}(p))^{\mathrm{cl}}$ is one-dimensional, 
    which equals the relative virtual dimension of $\ev_2$. 
    Therefore we have
    \begin{align*}
        \dim \cHecke^{\mathrm{cl}}=\dim \cH+1=\operatorname{vdim}\cHecke.
    \end{align*}
    Hence $\cHecke$ is classical, and $\ev_2$ is flat and surjective of relative dimension one. 

    As for the statement for $\ev_1$, 
    let $\psi$ be the isomorphism given by the shifted derived dual on $S$:
    \begin{align*}
        \psi \colon \cH \stackrel{\cong}{\to} \cH, \quad E \mapsto E^{\vee},
    \end{align*}
    where $E^{\vee}=\mathbb{D}_S(E)[1]$. 
    Then we have the commutative diagram 
    \begin{align}\label{dia:heckedual}
        \xymatrix{
        \cHecke \ar[r]^-{\ev_2} \ar[d]^-{\psi}_-{\cong}  
        & \cH \ar[d]^-{\psi }_-{\cong} \\
        \cHecke  \ar[r]^-{\ev_1} & \cH.
        }
    \end{align}
    Here the left vertical arrow is given by taking the shifted derived dual over $S$:
    \begin{align*}
        (0\to E_1 \to E_2 \to \mathcal{O}_x \to 0) 
        \mapsto 
        (0\to E_2^{\vee} \to E_1^{\vee} \to \mathcal{O}_x \to 0).
    \end{align*}
    By the above diagram and the fact that $\ev_2$ is flat and surjective of relative dimension one, 
    it follows that $\ev_1$ is flat and surjective of relative dimension one. 
\end{proof}

We then consider the fiber product 
\begin{align}\label{dia:heckeD}
    \xymatrix{
    \cHecke_{\cD_p} \ar[r] \ar[d] \diasquare & \cHecke \ar[d]^-{\ev_3} \\
    \cD_p \ar[r] & \cC.
    }
\end{align}
Here the bottom arrow is the composition $\cD_p \subset \cD \to \cC$, which is \'{e}tale 
by the construction. 
Since $q\in \cD_p$ is a lift of $p\in \cC_b$, the exact sequence (\ref{exact:E1}) 
together with $q\in \cD_p$ 
corresponds to a map 
\begin{align}\label{map:tau0}
    \tau_0 \colon \Spec k \to \cHecke_{\cD_p}.
\end{align}
The following lemma is a generalization of~\cite[Lemma~5.3]{TodaGL2}.
\begin{lemma}\label{lem:Delta}
For each $\ell \geq 1$, there is a pointed smooth and connected affine variety 
    $(\Delta, 0)$ for $0\in \Delta$ with $\dim \Delta=\ell$ and an extension of the map (\ref{map:tau0})
    \begin{align}\label{map:Delta}
        \tau \colon \Delta \to \cHecke_{\cD_p}, \ \tau|_{0}=\tau_0
    \end{align}
    such that the composition 
    \begin{align}\label{map:DeltaB}
\Delta \to \cHecke_{\cD_p} \to \cHecke \to \cB
    \end{align}
    sends the generic point of $\Delta$ to $\cB^{\mathrm{sm}} \subset \cB$, the open 
    subset corresponding to smooth spectral curves. 
\end{lemma}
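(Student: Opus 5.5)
The plan is to realize $\Delta$ as a general smooth curve (or $\ell$-dimensional germ) through $\tau_0$ inside a suitable smooth chart of $\cHecke_{\cD_p}$, and to check that the projection to $\cB$ meets $\cB^{\mathrm{sm}}$. First, by Lemma~\ref{lem:heckeclass}, $\cHecke$ is classical and $\ev_2\colon \cHecke\to\cH$ is flat of relative dimension one. Since $\cD_p\to\cC$ is étale, the base change $\cHecke_{\cD_p}\to\cHecke$ is étale. Hence $\cHecke_{\cD_p}$ is a classical Artin stack, flat over $\cH$ via $\ev_2$. In turn $\cH\to\cB$ is flat (Theorem~\ref{thm:higgs:basic}).

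The key point is dominance. I will show that the composition $\cHecke_{\cD_p}\to\cB$ is flat, hence open, near $\tau_0$. It is the composition of an étale map, $\ev_2$ (flat) and the Hitchin map (flat). So the image of any open neighborhood of $\tau_0$ is an open neighborhood of $b$ in $\cB$. It therefore meets the dense open $\cB^{\mathrm{sm}}$, since $\cB^{\mathrm{sm}}$ is dense in $\cB\subset \rB^{\mathrm{cl}}_{\GL_r}$, which is irreducible. Consequently the preimage $U^{\mathrm{sm}}$ of $\cB^{\mathrm{sm}}$ in a connected open neighborhood $U\ni\tau_0$ of $\cHecke_{\cD_p}$ is open and dense in every component of $U$ through $\tau_0$. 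This uses flatness, so that no component lies over the complement.

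Next I pass to a smooth atlas. Take a smooth surjection from an affine scheme $V\to\cHecke_{\cD_p}$ with a point $v\mapsto\tau_0$, and replace $V$ by an open affine around $v$. Now $V$ may be singular at $v$, since $\cH$ is singular, so I cannot directly take a smooth subvariety through $v$. Instead I choose an irreducible curve $\Gamma\subset V$ through $v$ not contained in the preimage of $\cB\setminus\cB^{\mathrm{sm}}$. This is possible because that preimage is a proper closed subset of each component through $v$, so a general complete-intersection curve through $v$ works. Let $\widetilde\Gamma\to\Gamma$ be the normalization and $0$ a point over $v$. Then $\widetilde\Gamma$ minus finitely many points is a smooth connected affine curve with generic point over $\cB^{\mathrm{sm}}$, which gives $\ell=1$. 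For general $\ell$, take $\Delta=\widetilde\Gamma\times\mathbb{A}^{\ell-1}$ with the map factoring through the projection to $\widetilde\Gamma$. Its generic point still maps to $\cB^{\mathrm{sm}}$, and $\Delta$ is smooth, connected, affine, and pointed at $(0,0)$. If nonconstant higher-dimensional families are needed later, I instead take a general $\ell$-dimensional complete intersection through $v$ in $V$ and resolve it by blowups away from the generic point, keeping a point over $v$. Either way $\tau|_0=\tau_0$ up to the chosen lift, which suffices since $\tau_0$ is a $k$-point.

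The main obstacle is the density statement: ensuring that no irreducible component of $\cHecke_{\cD_p}$ through $\tau_0$ maps entirely into the discriminant locus $\cB\setminus\cB^{\mathrm{sm}}$. The flatness chain above handles it, with the crucial input being the flatness of $\ev_2$ from Lemma~\ref{lem:heckeclass}, which relies on the type $A$ estimate \eqref{dim:est}. The rest is routine curve-selection and normalization.
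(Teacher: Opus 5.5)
Your proposal is correct and follows the paper's proof. In both, the key point is that $\cHecke_{\cD_p}\to\cHecke\xrightarrow{\ev_2}\cH\to\cB$ is an étale map followed by two flat surjections (Lemma~\ref{lem:heckeclass}, Theorem~\ref{thm:higgs:basic}), so every irreducible component through $\tau_0$ dominates the irreducible $\cB$; one then extracts a smooth pointed $\Delta$ from an atlas. The only difference is in that last step: the paper resolves a component of the atlas and cuts by a generic complete intersection (or multiplies by an affine space when $\ell$ is large), whereas you take a curve through $v$, normalize it, and multiply by $\mathbb{A}^{\ell-1}$, which suffices here since the lemma only constrains the generic point of $\Delta$.
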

\begin{proof}
    It is enough to show that each irreducible component of $\cHecke_{\cD_p}$ which contains the image of $\tau_0$
    is dominant over $\cB$
    under the map $\cHecke_{\cD_p} \to \cB$, following the first part of the proof of~\cite[Lemma~5.3]{TodaGL2}. Indeed suppose that the above statement holds. Let $W\to \cHecke_{\cD_p}$ be an atlas with a point $p'\in W$ 
which maps to $\tau_0$, and take an irreducible component $p' \in W' \subset W$. 
We take a resolution of singularities $W'' \to W'$ and a lift $p'' \in W''$ of $p'$. Then we take a smooth affine open neighborhood 
$p''\in W'''$; by the construction $W''' \to \cB$ is dominant. A desired $(\Delta, 0)$ is 
constructed by taking a generic hypersurface $0=p'' \in \Delta \subset W'''$
if $\ell<\dim W'''$, or if $\ell \geq \dim W'''$ take $\Delta=W''' \times \mathbb{A}^{\ell-\dim W'''}$, $0=(p'', 0)$ and construct a map $\Delta \to \cHecke_{\cD_p}$ by 
composing with the projection $\Delta \to W'''$.

We have the factorization 
\begin{align}\label{factor:Hecke}
    \cHecke_{\cD_p} \to \cHecke \stackrel{\ev_2}{\to} \cH \stackrel{h}{\to} \cB.
\end{align}
The first map $\cHecke_{\cD_p} \to \cHecke$ is \'{e}tale, as it is obtained by 
an \'{e}tale base change from the diagram~\eqref{dia:heckeD}.
The second map $\ev_2$ is flat and surjective by Lemma~\ref{lem:heckeclass}.
The last map $h$ is also flat and surjective by Theorem~\ref{thm:higgs:basic}. 
It follows that the composition~\eqref{factor:Hecke} is flat. 
Since $\cB$ is irreducible, every irreducible component of 
$\cHecke_{\cD_p}$ dominates $\cB$ under a flat morphism. 
Therefore, for each irreducible component 
$\mathcal{Z} \subset \cHecke_{\cD_p}$, the morphism 
$\mathcal{Z} \to \cB$ is dominant.

\end{proof}

Below, we use the notation $(-)_{\Delta}$ for the base change $\times_{\cB}\Delta$, where $\Delta \to \cB$ is the map (\ref{map:DeltaB}). 
For example 
\begin{align*}
    \cC_{\Delta}:=\cC\times_{\cB} \Delta, \ \cD_{\Delta}:=\cD\times_{\cB} \Delta, \ \cH_{\Delta}:=\cH\times_{\cB}\Delta.
\end{align*}
The map (\ref{map:Delta}) corresponds to an exact sequence 
in $\Coh^{\heartsuit}(\cC_{\Delta})$
\begin{align}\label{exact:EQ}
    0\to E_{\Delta}'\to E_{\Delta} \to R_{\Delta}\to 0
\end{align}
such that each term is flat over $\Delta$, and the sequence restricts to the 
sequence (\ref{exact:E1}) at $0\in \Delta$, together 
with a lift of the support of $R_{\Delta}$ to $\cD_{p}$. 

More precisely, the sheaf $R_{\Delta}$ in (\ref{exact:EQ}) is a $\Delta$-flat family of 
skyscraper sheaves of points on the fibers of $\cC_{\Delta} \to \Delta$, 
hence its support determines a section 
$\eta \colon \Delta \hookrightarrow \cC_{\Delta}$
of the projection $\cC_{\Delta} \to \Delta$. 
A lift of $\eta$ is a commutative diagram 
\begin{align}\label{dia:eta}
    \xymatrix{
    & (\cD_p)_{\Delta} \ar[d] \ar[dr] \\
    \Delta \ar@<-0.3ex>@{^{(}->}[ru]^-{\eta_D} \inclusion^-{\eta} & \cC_{\Delta} \ar[r] & \Delta. 
    }
\end{align}
Below by shrinking $\Delta$ if necessary, we may assume that 
\begin{align*}
    R_{\Delta} \cong \eta_{*}\mathcal{O}_{\Delta} \in \Coh^{\heartsuit}(\cC_{\Delta}).
\end{align*}

Let $\tau_i$ for $i=1, 2$ be the composition 
\begin{align}\label{def:taui}
\tau_i \colon
    \Delta \stackrel{\tau}{\to} \cHecke_{\cD_p} \to \cHecke  \stackrel{\ev_i}{\to}\cH.
\end{align}
The above maps define 
\begin{align*}
    \cP_{\Delta i}:=(\tau_i \times \id)^* \cP\in 
    \Coh^{\heartsuit}(\cH_{\Delta}), \ i=1, 2.
\end{align*}
Here $\cP$ is the Cohen--Macaulay sheaf in Theorem~\ref{thm:Pflat}. 
Since $\cP$ is flat over both factors of $\cH$, 
the sheaves $\cP_{\Delta i}$ are 
Cohen--Macaulay sheaves on $\cH_{\Delta}$ flat over $\Delta$, 
such that $\cP_{\Delta 2}|_{0}\cong \cP_E$ and 
$\cP_{\Delta 1}|_{0}\cong \cP_{E'}$. 

Recall that $\sigma$ is the covering involution on $\cD_p$ of the map (\ref{map:D}).
It induces an involution on $(\cD_p)_{\Delta}$. 
By applying $\sigma$ to the diagram (\ref{dia:eta}), we obtain the following 
commutative diagram 
\begin{align}\label{dia:eta:sigma}
    \xymatrix{
    & (\cD_p)_{\Delta} \ar[d] \ar[dr] \\
    \Delta \ar@<-0.3ex>@{^{(}->}[ru]^-{\eta_D^{\sigma}} \inclusion^-{\eta^{\sigma}} & \cC_{\Delta} \ar[r] & \Delta
    }
\end{align}
where $\eta_D^{\sigma} :=\sigma \circ \eta_D$ and $\eta^{\sigma}$ is defined by the above 
commutative diagram. We also define 
\begin{align*}
    R_{\Delta}^{\sigma}:=\eta^{\sigma}_{*}\cO_{\Delta} \in \Coh^{\heartsuit}(\cC_{\Delta}).
\end{align*}

\begin{lemma}\label{lem:sigma} After shrinking $\Delta$ and replacing the map (\ref{map:Delta}) if necessary, there is an exact sequence 
    \begin{align}\label{ext:sigma}
        0\to E_{\Delta}' \to E_{\Delta}^{\sigma} \to R_{\Delta}^{\sigma} \to 0
    \end{align}
    such that $E_{\Delta}^{\sigma}$ is a $\Delta$-flat family of rank-one torsion-free sheaves on the fibers of 
    $\cC_{\Delta}\to \Delta$. Moreover it is isomorphic to (\ref{exact:EQ}) at $0\in \Delta$ under the 
    identifications 
    \begin{align}\label{identify:sigma}R_{\Delta}^{\sigma}|_{0} \cong \mathcal{O}_{p} \cong 
    R_{\Delta}|_{0}.
\end{align}
\end{lemma}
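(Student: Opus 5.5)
We produce the extension $E_\Delta^{\sigma}$ as a point of a relative Ext space over $\Delta$, in the same way that $E_\Delta$ arises from $\tau$. Consider the coherent sheaf
\[
\mathcal{E}xt^1_{\cC_\Delta/\Delta}(R_\Delta^{\sigma}, E_\Delta') \cong \mathcal{H}om_{\Delta}\bigl(\eta^{\sigma *}\omega\text{-type twist},\dots\bigr),
\]
which is more concretely the sheaf on $\Delta$ whose fiber at $t$ is $\Ext^1_{\cC_t}(\cO_{\eta^\sigma(t)}, E'_t)$. By Serre duality on the Gorenstein fibers this is dual to $\Hom(E'_t, \cO_{\eta^\sigma(t)})$, so by Lemma~\ref{lem:Qp} its dimension is $1$ or $2$. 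The plan is as follows. (i) At $t=0$ we have $\eta^\sigma(0)=p=\eta(0)$, because $q$ lies over the branch point $z$ of the double cover $\pi_D$, so $\sigma(q)=q$. Hence $R^\sigma_\Delta|_0\cong\cO_p\cong R_\Delta|_0$, which gives \eqref{identify:sigma}, and the class $e_0\in\Ext^1(\cO_p,E')$ of \eqref{exact:E1} is a point of the fiber at $0$. (ii) We want a section $e^\sigma$ of this Ext sheaf near $0$ that restricts to $e_0$ and whose extension is torsion-free on every fiber.

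For (ii), base change for relative Ext is the main technical point, because the fiber dimension may jump. Two ways to handle it. The first is to pass to the total space $\mathbb{V}\to\Delta$ of the relevant cone, i.e.\ the fiber product $\cHecke\times_{\ev_1,\cH,\tau_1}\Delta\times_{\ev_3,\cC}\eta^\sigma$, which parametrizes extensions of $R^\sigma_\Delta$ by $E'_\Delta$ up to scaling. We then choose a local section through the point given by $e_0$. This is exactly where we use the freedom to \emph{replace} the map \eqref{map:Delta}: we set $\Delta^{\mathrm{new}}$ to be a smooth pointed neighborhood in (a resolution of) the relevant component of this fiber product, exactly as in the proof of Lemma~\ref{lem:Delta}. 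The fiber product $\cHecke\times_{\ev_1,\cH}\cHecke_{\cD_p}$, with the second Hecke point forced to be $\sigma$ of the first, is flat over $\cHecke_{\cD_p}$ by Lemma~\ref{lem:heckeclass} (the map $\ev_1$ is flat), so its components dominate $\cB$ and the generic point still maps to $\cB^{\mathrm{sm}}$. Pulling back the original $\tau$ along the projection keeps $E_\Delta,E'_\Delta,\eta$, and we gain a universal exact sequence \eqref{ext:sigma}.

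Torsion-freeness is an open condition in flat families, and it holds at $0$ because the extension there is $E$ itself. So after shrinking $\Delta$, the sheaf $E^\sigma_\Delta$ is a flat family of rank-one torsion-free sheaves. Flatness of $E^\sigma_\Delta$ over $\Delta$ follows from the flatness of $E'_\Delta$ and $R^\sigma_\Delta$.

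The main obstacle is to make sure that the chosen lift passes through the extension class $e_0$ and not merely through some nonsplit class, since $\Ext^1(\cO_p,E')$ can be two-dimensional. The point that resolves this is that the fiber of $\ev_1^{-1}(E')\times_{\ev_3}\{p\}$ is $\mathbb{P}(\Ext^1(\cO_p,E'))$, and we simply take the point $[e_0]$ in it. Since $\eta^\sigma(0)=\eta(0)$, both $[e_0]$ and the point defining $E_\Delta$ sit over the same point of the fiber product. Hence the identifications \eqref{identify:sigma} match the two sequences at $0$.
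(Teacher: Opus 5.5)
There is a genuine gap at the one step that carries the content of the lemma: showing that the parameter space through the base point dominates $\cB$.

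\textbf{The flatness claim fails.} You justify dominance by saying that $\cHecke\times_{\ev_1,\cH}\cHecke_{\cD_p}$, with the second Hecke point forced to be $\sigma$ of the first, is flat over $\cHecke_{\cD_p}$ because $\ev_1$ is flat. Flatness of $\ev_1$ does give flatness of the \emph{unrestricted} fiber product. However, the condition $y'=\sigma(x')$ cuts out a closed substack; call it $\mathcal{Z}$. The fiber of $\mathcal{Z}\to\cHecke_{\cD_p}$ over $(E'\subset E,x')$ is $\mathbb{P}(\Ext^1(\cO_y,E'))$, where $y$ is the image of $\sigma(x')$. By Lemma~\ref{lem:Qp} this fiber jumps from a point to $\mathbb{P}^1$ exactly where $E'$ is not locally free at $y$. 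In particular this happens at the base point itself whenever $n_p\geq 2$.

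So $\mathcal{Z}$ is not flat over $\cHecke_{\cD_p}$. A priori it can have irreducible components lying over this jump locus that do not dominate $\cB$, and $[e_0]$ might lie only on such components.

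\textbf{The fixed-$\Delta$ version has the same problem.} Inside your $\mathbb{V}$, the closure of the part over the generic point of $\Delta$ may meet the fiber $\mathbb{P}(\Ext^1(\cO_p,E'))\cong\mathbb{P}^1$ over $0$ in a point different from $[e_0]$ (e.g.\ when $\dim\Delta=1$). The component through $[e_0]$ is then the vertical $\mathbb{P}^1$, whose generic point does not map to $\cB^{\mathrm{sm}}$. Choosing the point $[e_0]$ in the fiber is easy. The issue is whether some component through it dominates $\cB$.

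\textbf{The missing idea is a dimension count.}
\begin{itemize}
\item Over $\cH^{\mathrm{reg}}$, the map $\mathcal{Z}\to\cH$ is identified with $\cD_p\times_{\cB}\cH^{\mathrm{reg}}$, since for a line bundle $E'$ the diagram is determined by the lifts $x',y'=\sigma(x')$. All components of this space dominate $\cB$.
\item Hence a non-dominant component $\mathcal{Z}'$ lies over $\cH\setminus\cH^{\mathrm{reg}}$, which has codimension $\geq 2$ by Lemma~\ref{lem:codimension}.
\item Since $\widetilde{\cHecke}_{\cD_p}\to\cH$ is flat of relative dimension two, this gives $\dim\mathcal{Z}'\leq\dim\cH$.
\item On the other hand, $\mathcal{Z}$ consists of components of the pullback to $\widetilde{\cHecke}_{\cD_p}$ of the diagonal of $D\times_{\cB}D$. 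This diagonal is a Cartier divisor because $D\to\cB$ is smooth, and $\widetilde{\cHecke}_{\cD_p}$ has dimension $\dim\cH+2$. So every component of $\mathcal{Z}$ has dimension $\geq\dim\cH+1$.
\end{itemize}
This contradiction shows every component of $\mathcal{Z}$ dominates $\cB$.

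Only after this can you choose $\Delta^{\mathrm{new}}$ through the base point as in Lemma~\ref{lem:Delta}. The first Hecke factor then gives the new $\tau$; it is not a pullback of the old one. The second factor gives $E^{\sigma}_{\Delta}$.

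The remaining parts of your plan are fine: $\sigma(q)=q$, openness of torsion-freeness, and flatness of the middle term.
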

\begin{proof}
    We define $\widetilde{\mathcal{H}ecke}$ to be the fiber product
    \begin{align*}
    \xymatrix{
       \widetilde{\mathcal{H}ecke} \ar[r] \ar[d] \ar@{}[dr]|{\square} & \mathcal{H}ecke \ar[d]^-{\ev_1} \\
       \mathcal{H}ecke \ar[r]_-{\ev_1} & \mathcal{H}.
    }
    \end{align*}
Since $\cHecke, \cH$ are classical and $\ev_1$ is flat by Lemma~\ref{lem:heckeclass}, the stack $\widetilde{\cHecke}$ is classical. 
    We then define $\widetilde{\cHecke}_{\cD_p}$ to be the fiber product 
    \begin{align}\label{hecke:dp}
        \xymatrix{
        \widetilde{\cHecke}_{\cD_p}\ar[r] \ar[d] \diasquare & \cD_p\times_{\cB} \cD_p \ar[d] \\
        \widetilde{\cHecke} \ar[r]^{(\ev_3, \ev_3)} & \cC \times_{\cB} \cC.
        }
    \end{align}
    It consists of diagrams in $\Coh^{\heartsuit}(\cC_b)$ for $b\in \mathcal{B}$
    \begin{align}\label{dia:xy}
    \xymatrix{
    0\ar[r] & E' \ar[r] \ar@{=}[d] & E \ar[r] & \mathcal{O}_x \ar[r] & 0 \\
    0\ar[r] & E' \ar[r] & \widetilde{E} \ar[r] & \mathcal{O}_y \ar[r] & 0
    }
    \end{align}
    where $x, y \in \cC_b$, each horizontal row is exact, together with their lifts $x', y' \in \cD_p$.
    Since the left vertical arrow in (\ref{hecke:dp}) is \'etale, the stack 
    $\widetilde{\cHecke}_{\cD_p}$ is also classical, and the map 
    \begin{align*}
        \widetilde{\cHecke}_{\mathcal{D}_p} \to \cH
    \end{align*}
    is flat of relative dimension two. 
    Let
    \begin{align*}
        \mathcal{Z} \subset \widetilde{\mathcal{H}ecke}_{\cD_p}
    \end{align*}
    be the classical closed substack consisting of \eqref{dia:xy} such that $y'=\sigma(x')$. It is enough to show that each irreducible component $\mathcal{Z}' \subset \mathcal{Z}$ is dominant over $\mathcal{B}$.

We show that $\mathcal{Z}'$ is dominant over $\cB$
following a similar argument of the proof of~\cite[Lemma~5.5]{TodaGL2}.
Consider the composition 
\begin{align}\label{map:ZH}
    \mathcal{Z} \subset \widetilde{\cHecke}_{\cD_p} \to \cHecke \to \cH. 
\end{align}
Then over the regular locus $\cH^{\mathrm{reg}} \subset \cH$, the fibers 
of the above map over $b\in \cB$ consist of $(x', y') \in \cD_p$, which lie over $b$, such that $y'=\sigma(x')$;
this follows since for a line bundle $E'$ on $\cC_b$ the diagram (\ref{dia:xy}) 
is uniquely determined by $x, y \in \cC_b$ up to isomorphisms, which 
are in turn determined by their lifts $x', y' \in \cD_p$. 
Therefore the map (\ref{map:ZH}) over $\cH^{\mathrm{reg}}$ is isomorphic to 
$\cD_p \times_{\cB}\cH^{\mathrm{reg}}$. 
Each irreducible component of $\cD_p \times_{\cB}\cH^{\mathrm{reg}}$
is dominant over $\cB$, as we have the factorization 
\begin{align*}
    \cD_p \times_{\cB}\cH^{\mathrm{reg}} \to \cC \times_{\cB}\cH^{\mathrm{reg}} \to \cB,
\end{align*}
the first map is \'etale, and the second map is dominant 
on each irreducible component. 
Therefore if an irreducible component $\mathcal{Z}' \subset \mathcal{Z}$ 
is not dominant over $\cB$, then it lies in the complement of $\cH^{\mathrm{reg}}$ in $\cH$. Since $\cH^{\mathrm{reg}} \subset \cH$ 
is an isomorphism in codimension one by Lemma~\ref{lem:codimension}, 
this implies that 
\begin{align}\label{codim:Z}
    \dim \mathcal{Z}' \leq \dim (\cH \setminus \cH^{\mathrm{reg}})+2 \leq \dim \cH.
\end{align}

However, $\mathcal{Z}'$ is an irreducible component of the classical 
stack $\widetilde{\mathcal{Z}}$, defined by the (classical) Cartesian square 
\begin{align*}
\xymatrix{
    \widetilde{\mathcal{Z}} \inclusion \ar[d] & \widetilde{\cHecke}_{\mathcal{D}_p} \ar[d] \\
    D \inclusion^-{\Delta} & D\times_{\cB} D
    }
\end{align*}
The right vertical arrow is the top arrow in (\ref{hecke:dp}), which 
records the points $(x', y')$ of a lift of $(x, y)$ in the diagram (\ref{dia:xy}), composed with the double covering map $\cD_p \to D$ in the diagram (\ref{etale:D}). 
We have decompositions 
\begin{align*}
    \widetilde{\mathcal{Z}}=\widetilde{\mathcal{Z}}_1 \cup \widetilde{\mathcal{Z}}_2
\end{align*}
where each $\widetilde{\mathcal{Z}}_i$ is a union of irreducible components, 
 $\widetilde{\mathcal{Z}}_1$ corresponds to $y'=x'$, 
and $\widetilde{\mathcal{Z}}_2$ corresponds to $y'=\sigma(x')$. 
We have $\mathcal{Z}=\widetilde{\mathcal{Z}}_2$, and $\mathcal{Z}'$ is one of the irreducible components of $\widetilde{\mathcal{Z}}_2$. 

However, since $D \to C\times \cB$ is \'etale and $C$ is smooth, 
the map $D \to \cB$ is smooth and 
$\Delta(D) \subset D\times_{\cB}D$ is a Cartier divisor. 
As a pull-back of the Cartier divisor, each irreducible component of 
$\widetilde{\mathcal{Z}}$ should have codimension at most one, which 
contradicts with (\ref{codim:Z}). 
\end{proof}

The sheaf $E_{\Delta}^{\sigma}$ in (\ref{ext:sigma}) defines a morphism 
\begin{align}\label{def:tausig}
    \tau_2^{\sigma} \colon \Delta \to \mathcal{H}. 
\end{align}
We define 
\begin{align*}
    \cP_{\Delta 2}^{\sigma}:=(\tau_2^{\sigma}\times \id)^* \cP\in \Coh^{\heartsuit}(\cH_{\Delta}).
\end{align*}
Similarly to $\mathcal{P}_{\Delta i}$, the sheaf $\mathcal{P}_{\Delta 2}^{\sigma}$ is a Cohen--Macaulay sheaf on 
$\mathcal{H}_{\Delta}$ flat over $\Delta$ such that $\mathcal{P}_{\Delta 2}^{\sigma}|_{0} \cong \mathcal{P}_E$. 
Moreover we have the following lemma: 
\begin{lemma}\emph{(cf.~\cite[Lemma~5.6]{TodaGL2})}\label{lem:inde}
The sheaf $\cP_{\Delta 2}^{\sigma}$ is 
independent of a choice of an extension (\ref{ext:sigma}) up to an isomorphism. 
\end{lemma}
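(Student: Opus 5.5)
The plan is to show that any two extensions as in (\ref{ext:sigma}) differ by an automorphism-type datum that is absorbed into the Arinkin sheaf. By Lemma~\ref{lem:MCM-extension}, it suffices to identify $\cP_{\Delta 2}^{\sigma}$ over an open subset of $\cH_{\Delta}$ whose complement has codimension at least two. The key observation is that $\cP$ is Cohen--Macaulay and flat over both factors. Hence $\cP_{\Delta 2}^{\sigma}$ is a maximal Cohen--Macaulay sheaf on $\cH_{\Delta}$, and it is determined by its restriction to the complement of a codimension $\geq 2$ locus.

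First I will identify that open locus. Since $\Delta$ has generic point mapping into $\cB^{\mathrm{sm}}$ by Lemma~\ref{lem:Delta}, and $\cH_b^{\mathrm{reg}}\subset \cH_b$ is dense by Lemma~\ref{lem:dense}, the same flatness argument as in the idea of proof of Proposition~\ref{prop:resol} applies. That argument shows the complement of $\cH_{\Delta}^{\mathrm{reg}}\subset \cH_{\Delta}$ has codimension at least two. On $\cH_{\Delta}^{\mathrm{reg}}$ the sheaf $\cP$ restricts to the Deligne-pairing line bundle $\cP^{\sharp}$ of (\ref{def:Psharp}). Its fiber at a line bundle $F$ is
\[
\det\chi(E^{\sigma}_{\Delta}\otimes F)\otimes\det\chi(E^{\sigma}_{\Delta})^{-1}\otimes\det\chi(F)^{-1}\otimes\det\chi(\cO).
\]
Using (\ref{ext:sigma}) and the multiplicativity of $\det\chi$ in exact sequences, this is canonically
\[
\cP^{\sharp}_{E'_{\Delta}}|_F\otimes \bigl(F|_{\eta^{\sigma}}\otimes (R^{\sigma}_{\Delta})^{-1}\text{-terms}\bigr).
\]
Globally over $\cH_{\Delta}^{\mathrm{reg}}$ this gives
\[
\cP_{\Delta 2}^{\sigma}|_{\cH_{\Delta}^{\mathrm{reg}}}\cong \cP_{\Delta1}|_{\cH_{\Delta}^{\mathrm{reg}}}\otimes (\eta^{\sigma}\times\id)^*\cF^{\mathrm{spec}},
\]
where $\cF^{\mathrm{spec}}$ is the universal spectral sheaf on $\cC_{\Delta}\times_{\Delta}\cH_{\Delta}$, up to a line bundle pulled back from $\Delta$. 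The right-hand side depends only on $E'_{\Delta}$ and $\eta^{\sigma}$, and not on the extension class.

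Next I will handle the remaining ambiguity, which is the line bundle pulled back from $\Delta$ (for example $\det\chi(E^{\sigma}_{\Delta})$ terms). Two extensions with the same $E'_{\Delta}$ and $R^{\sigma}_{\Delta}$ give sheaves $E^{\sigma}_{\Delta}$ whose $\det\chi$ differ by a line bundle on $\Delta$. I will shrink $\Delta$, which is affine, so that this line bundle is trivial; alternatively, the statement is read as "up to isomorphism" after shrinking. Then I will extend the isomorphism over $\cH^{\mathrm{reg}}_{\Delta}$ to all of $\cH_{\Delta}$ using Lemma~\ref{lem:MCM-extension}.

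The main obstacle is the codimension-two claim for $\cH_{\Delta}\setminus\cH_{\Delta}^{\mathrm{reg}}$, which requires that $\cH_{\Delta}\to\Delta$ is flat with pure-dimensional fibers. This follows from Theorem~\ref{thm:higgs:basic} by base change. The fibers over the generic point of $\Delta$ are entirely regular, and the special fibers have non-regular locus of codimension at least one (Lemma~\ref{lem:dense}, or Lemma~\ref{lem:codimension}). Together these give total codimension at least two.
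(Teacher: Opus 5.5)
Your proposal is correct and follows the paper's own route. On $\cH_{\Delta}^{\mathrm{reg}}$ the sheaf $\cP_{\Delta 2}^{\sigma}$ is the Deligne-pairing line bundle, which depends only on the class $[E^{\sigma}_{\Delta}]=[E'_{\Delta}]+[R^{\sigma}_{\Delta}]$; uniqueness then follows from the codimension-two statement (Lemma~\ref{lem:codimension}) together with Lemma~\ref{lem:MCM-extension}.

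Your extra step of shrinking $\Delta$ to remove a line bundle pulled back from $\Delta$ is harmless but unnecessary. Multiplicativity of $\det\chi$ in the exact sequence (\ref{ext:sigma}) already gives $\det\chi(E^{\sigma}_{\Delta})\cong \det\chi(E'_{\Delta})\otimes\det\chi(R^{\sigma}_{\Delta})$ for every choice of extension, so that line bundle never depends on the extension class.
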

\begin{proof}
By the construction, 
the sheaf $\mathcal{P}_{\Delta 2}^{\sigma}$ is a Cohen--Macaulay extension of a line bundle on 
$\Delta \times_{\cB} \mathcal{H}^{\mathrm{reg}}$, which is determined by the K-theory class of 
$[E_{\Delta}^{\sigma}]$. 
Since the complement of $\Delta \times_{\cB} \mathcal{H}^{\mathrm{reg}}$ in $\mathcal{H}_{\Delta}$ has 
codimension at least two by Lemma~\ref{lem:codimension} below, the lemma follows from Lemma~\ref{lem:MCM-extension}. 
\end{proof}

\begin{lemma}\emph{(cf.~\cite[Lemma~5.7]{TodaGL2})}\label{lem:codimension}
Let $T$ be an irreducible variety and $T\to \cB$ be a dominant map. 
The open immersion 
\begin{align}\label{jreg}
    j^{\mathrm{reg}}\colon \cH^{\mathrm{reg}}_{T}:=T\times_{\cB}\cH^{\mathrm{reg}}
    \hookrightarrow \cH_{T}     
\end{align}
    is an isomorphism in codimension one. 
\end{lemma}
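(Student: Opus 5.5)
Since $\cH_T\to T$ is flat (it is a base change of the flat map $\cH\to\cB$ of Theorem~\ref{thm:higgs:basic}), every irreducible component of $\cH_T$ dominates $T$, and the relative dimension is constant, equal to $p_a$. Hence a closed substack $\mathcal{W}\subset \cH_T$ has codimension at least two as soon as two things hold. First, over a dense open subset of $T$ its fibers have dimension at most $p_a-2$. Second, over the remaining proper closed subset $T'\subsetneq T$, its fibers have dimension at most $p_a-1+\dim T-\dim T'-1$ or so. We take $\mathcal{W}=\cH_T\setminus \cH_T^{\mathrm{reg}}$ and bound $\dim(\cH_b\setminus\cH_b^{\mathrm{reg}})$ pointwise for $b\in \cB$. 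The plan is therefore to combine a fiberwise codimension estimate with the dominance of $T\to\cB$.

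\textbf{Step 1 (generic part).} Over $\cB^{\mathrm{sm}}$ every rank-one torsion-free sheaf is a line bundle, so $\cH_b=\cH_b^{\mathrm{reg}}$. Since $T\to\cB$ is dominant, the preimage of $\cB^{\mathrm{sm}}$ is a dense open subset of $T$, and $\mathcal{W}$ lies over its complement $T'$, which is a proper closed subset. By flatness, $\dim \cH_{T'}\leq \dim\cH_T-1$.

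\textbf{Step 2 (special part).} By Lemma~\ref{lem:dense}, for each $b\in\cB$ the complement $\cH_b\setminus\cH_b^{\mathrm{reg}}$ has codimension at least one in $\cH_b$. Combining this with Step~1, $\mathcal{W}\subset\cH_{T'}\setminus \cH^{\mathrm{reg}}_{T'}$ has dimension at most
\[
\dim T'+p_a-1\leq \dim T-1+p_a-1=\dim\cH_T-2,
\]
which is the desired bound. Some care is needed with stacky dimensions: the fiber dimension of $\cH_b$ should be counted uniformly, for instance as $p_a-1$ after accounting for the $\mathbb{G}_m$ automorphisms. I would check this on the rigidification or on an atlas. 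I would also handle the case where $T'$ is reducible by applying the estimate to each component.

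\textbf{Main obstacle.} The delicate point is that Lemma~\ref{lem:dense} is stated pointwise for reduced spectral curves. I must make sure $\cB=\cB^A\subset\rB^{\mathrm{red}}$, and that the dimension of the closed substack $\cH_{T'}\setminus\cH^{\mathrm{reg}}_{T'}$ can be bounded by fiber dimension plus base dimension via upper semicontinuity of fiber dimension, applied componentwise on an atlas. If Lemma~\ref{lem:dense} only gives codimension one at each fiber, this argument suffices. The extra codimension comes from $T'$, and that is exactly why dominance of $T\to\cB$ is needed.
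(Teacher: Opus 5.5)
Your proposal is correct and follows the paper's own argument: the non-regular locus lies over the proper closed subset of $T$ lying outside the preimage of $\cB^{\mathrm{sm}}$, which has codimension at least one in $\cH_T$ by flatness of $\cH_T\to T$, and Lemma~\ref{lem:dense} supplies a second codimension one fiberwise. As you suspected, the relative dimension of $\cH\to\cB$ is $p_a-1$ rather than $p_a$, but the argument only uses that it is constant, so this does not affect the proof.
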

\begin{proof}
Since $T$ is irreducible and $T \to \cB$ sends the generic point of $T$ to $\cB^{\mathrm{sm}}$, 
there is a closed subset $Z\subset T$ with 
$\operatorname{codim}_{T} Z \geq 1$ such that 
$j^{\mathrm{reg}}$ is an isomorphism over $T \setminus Z$. 
Therefore $\cH_{T} \setminus \cH_{T}^{\mathrm{reg}}$ is contained in 
$\cH_{T}\times_{T} Z$. Since $\cH_{T} \to T$ is flat by 
Theorem~\ref{thm:higgs:basic}, the closed substack 
$\cH_{T}\times_{T} Z$ has codimension at least one in 
$\cH_{T}$. Moreover, $\cH_{T}\times_{T} Z \to Z$ is flat, and 
each fiber contains the dense regular locus by Lemma~\ref{lem:dense}. Hence 
$\cH_{T} \setminus \cH_{T}^{\mathrm{reg}}$ has codimension at least 
one in $\cH_{T}\times_{T}Z$. Combining the two codimension estimates gives
$\operatorname{codim}_{\cH_{T}}
(\cH_{T} \setminus \cH_{T}^{\mathrm{reg}})\geq 2$.
\end{proof}

\subsection{Construction of an exact sequence of deformed sheaves}
We next construct a rank two vector bundle 
\begin{align*}
    \mathcal{V}_{\Delta} \in \Coh^{\heartsuit}(\cH_{\Delta})
\end{align*}
which gives a deformation of $\mathcal{V}_p$ given in (\ref{def:Vp}), and 
show the existence of an exact sequence 
relating $\cP_{\Delta}$, $\cP_{\Delta 2}^{\sigma}$ and $\mathcal{V}_{\Delta}$. 

We have the following commutative diagram 
\begin{align}\label{dia:delta1}
    \xymatrix{
& \Delta \ar[d]_-{\eta_D'} \ar[rd]^-{\eta_D''} \ar[ld]_-{\eta_D} & &\\
(\cD_{p})_{\Delta} \inclusion^-{i} \ar[rd]_-{g} & \cD_{\Delta} \ar[r]^-{\pi}\ar[d]_-{g'} \diasquare & D_{\Delta}  \ar[d]_-{g''} \ar[rd] & \\
& \cC_{\Delta} \ar[r]^-{\pi} & C\times \Delta \ar[r]^-{\pi'} &
C.       }
\end{align}
Here the bottom arrows are projections. 
By taking $\times_{\cB}\cH$, it also induces the commutative diagram 
\begin{align}\label{dia:delta2}
    \xymatrix{
& \cH_{\Delta} \ar[d]_-{\eta_D'} \ar[rd]^-{\eta_D''} \ar[ld]_-{\eta_D} & &\\
(\cD_{p})_{\Delta}\times_{\Delta}\cH_{\Delta} \inclusion^-{i} \ar[rd]_-{g} & \cD_{\Delta}\times_{\Delta}\cH_{\Delta} \ar[r]^-{\pi}\ar[d]_-{g'} \diasquare & D_{\Delta}\times_{\Delta}\cH_{\Delta}  \ar[d]_-{g''} \ar[rd] & \\
& \cC_{\Delta}\times_{\Delta}\cH_{\Delta} \ar[r]^-{\pi} & C\times \cH_{\Delta} \ar[r]^-{\pi'} &
C \times \cH.       }
\end{align}
Here we have used the same symbols as in (\ref{dia:delta1}) by abuse of notation. 

We have the vector bundle of rank $r$ on $\cH_{\Delta}$
\begin{align*}
  \mathcal{F}_{\Delta}
  :=
  (\eta_D'')^*(g'')^*(\pi')^*\cF
\end{align*}
given by the pull-back of the universal vector bundle $\cF$ on $C \times \cH$ to $\cH_{\Delta}$.

\begin{lemma}\label{lem:Vdelta}
There is a rank two sub-vector bundle (indeed a direct summand) 
\begin{align*}
  \mathcal{V}_{\Delta} \subset \mathcal{F}_{\Delta}
\end{align*}
which restricts to $\mathcal{V}_p \subset \cF_c$ at $0 \in \Delta$.
\end{lemma}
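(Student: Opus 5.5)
The idea is to construct $\mathcal{V}_{\Delta}$ as the generalized-eigenspace summand of $\mathcal{F}_{\Delta}$ cut out by the double cover $\cD_p \to D$, working over $D_{\Delta}$ and pulling back along the section $\eta_D''$.

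First, pull back the universal Higgs bundle along $\pi'\circ g''$. This gives a rank $r$ bundle $(g'')^*(\pi')^*\cF$ on $D_{\Delta}\times_{\Delta}\cH_{\Delta}$ with Higgs field $\vartheta$. By the spectral correspondence, it equals $\pi_{*}$ of the universal sheaf pulled back to $\cD_{\Delta}\times_{\Delta}\cH_{\Delta}$. Since the Cartesian square in (\ref{dia:delta2}) is obtained from (\ref{etale:D}) by base change, $\pi$ is finite flat. Moreover $\cD_{\Delta} = (\cD_p)_{\Delta}\sqcup(\cdots)$ is a decomposition into open and closed pieces. Pushing forward along $\pi$ therefore splits the rank $r$ bundle as a direct sum
\begin{align*}
(g'')^*(\pi')^*\cF \cong \pi_{*}\bigl(i_{*}i^{*}\widetilde{\cF}\bigr)\oplus \pi_{*}(\cdots),
\end{align*}
where $\widetilde{\cF}$ is the universal sheaf on $\cC_{\Delta}\times_{\Delta}\cH_{\Delta}$ pulled back by $g'$. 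Both summands are locally free and $\vartheta$-stable.

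Second, check that the first summand has rank two. The map $(\cD_p)_{\Delta}\to D_{\Delta}$ is a finite flat double cover, and the universal sheaf is flat over $\cH_{\Delta}$ with fibers of rank one on the spectral curve. Hence $\pi_{*}i_{*}i^*\widetilde{\cF}$ is flat, and its fiberwise rank equals the degree of the cover, which is $2$. One can check this on the dense regular locus, where the sheaf is a line bundle, and conclude by flatness and local constancy of rank. Define $\mathcal{V}_{\Delta}:=(\eta_D'')^*\pi_{*}i_{*}i^*\widetilde{\cF}$. It is a direct summand of $\mathcal{F}_{\Delta}=(\eta_D'')^*(g'')^*(\pi')^*\cF$.

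Third, identify the restriction to $0\in\Delta$. There $\eta_D''(0)=z$, which lies over $(c,b)$, and the decomposition (\ref{decompose:cb}) says that the fiber of $(\cD_p)$ over $z$ is $\pi|_{\cC_b}^{-1}(c)_p\cong\Spec k[t]/t^2$. So at a point $M\in\cH_b$ we get
\[
\mathcal{V}_{\Delta}|_{M}=(M\otimes\cO_{f_c})_p,
\]
which is the generalized eigenspace of $\theta|_c$ for the eigenvalue $p$. This is exactly $\mathcal{V}_p\subset\cF_c$ from (\ref{def:Vp}). The base change needed here holds because $\pi$ is finite and the relevant sheaves are flat.

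The main obstacle is the rigorous compatibility of pushforward with base change along $\eta_D''$ over the derived or non-quasi-compact stack $\cH_{\Delta}$. I would handle it by noting that all maps involved are finite flat, so $\pi_*$ commutes with arbitrary base change and is exact. The question is then local on a smooth atlas of $\cH$ and reduces to the scheme case.
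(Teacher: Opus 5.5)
Your proposal is correct and follows essentially the same argument as the paper. Like the paper, you use base change to identify $\cF_{\Delta}$ with $(\eta''_D)^*\pi_*(g')^*\cE_{\Delta}$, split it along the open-closed decomposition $\cD=\cD_p\sqcup(\cdots)$, and take the $\cD_p$-summand $(\eta''_D)^*\pi_*i_*g^*\cE_{\Delta}$. Your remarks on the rank (via the dense regular locus, or pointwise) and on base change for the finite map $\pi$ fill in details the paper leaves implicit.
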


\begin{proof}
Let
\begin{align*}
  \cE_{\Delta} \in \Coh^{\heartsuit}(\cC_{\Delta} \times_{\Delta} \cH_{\Delta})
\end{align*}
be the pull-back of the universal one-dimensional sheaf $\cE$ in
$\Coh(\cC \times_{\cB} \cH)$ given by the spectral construction.
By its construction, we have
\begin{align*}
  \pi_*\cE_{\Delta} \cong (\pi')^*\cF.
\end{align*}
Therefore we have
\begin{align*}
  \cF_{\Delta}
  &\cong (\eta_D'')^*(g'')^*\pi_*\cE_{\Delta} \\
  &\cong (\eta_D'')^*\pi_* (g')^*\cE_{\Delta} \hookleftarrow
  (\eta_D'')^*\pi_{*} i_* g^* \cE_{\Delta}.
\end{align*}
Here the last arrow is a split injection given by the decomposition into the open and closed
subsets $\cD = \cD_p \sqcup (\cdots)$.
The direct summand 
\begin{align}\label{def:Vdelta}
  \mathcal{V}_{\Delta}
  :=
  (\eta_D'')^*\pi_{*} i_* g^*\cE_{\Delta},
\end{align}
is a rank two vector bundle since the map (\ref{map:D}) is a double 
cover, which restricts to $\mathcal{V}_p$ at $0\in \Delta$ by the construction. Therefore we obtain the lemma.
\end{proof}

We have the following proposition: 
\begin{prop}\label{prop:exact}
There is an exact sequence of vector bundles on $\Delta\times_{\cB}\cH^{\mathrm{reg}}$
\begin{align}\label{exact:jF}
    0\to j^{\mathrm{reg}*} \cP_{\Delta 2}^{\sigma} \to j^{\mathrm{reg}*} \mathcal{V}_{\Delta}
    \otimes j^{\mathrm{reg}*} \cP_{\Delta 1}\to j^{\mathrm{reg}*} \cP_{\Delta 2}\to 0.
\end{align}
Here $j^{\mathrm{reg}}$ is the open immersion 
\begin{align*}
    j^{\mathrm{reg}}\colon \cH^{\mathrm{reg}}_{\Delta}:=\Delta\times_{\cB}\cH^{\mathrm{reg}}
    \hookrightarrow \cH_{\Delta}. 
\end{align*}
\end{prop}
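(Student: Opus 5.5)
The plan is to compute everything fiberwise on the regular locus, where all three sheaves are line bundles given by the Deligne pairing~(\ref{def:Psharp}), and then to check that the fiberwise construction globalizes. Over a point of $\cH^{\mathrm{reg}}_{\Delta}$ lying over $t\in\Delta$, the corresponding object is a line bundle $F$ on $\cC_t$. The Hecke sequences~(\ref{exact:EQ}) and~(\ref{ext:sigma}), tensored with $F$ and passed through the determinant of cohomology, give canonical isomorphisms
\[
j^{\mathrm{reg}*}\cP_{\Delta 2}\cong j^{\mathrm{reg}*}\cP_{\Delta 1}\otimes \eta^*\cE_{\Delta},\qquad
j^{\mathrm{reg}*}\cP_{\Delta 2}^{\sigma}\cong j^{\mathrm{reg}*}\cP_{\Delta 1}\otimes (\eta^{\sigma})^*\cE_{\Delta}.
\]
Here $\eta^*\cE_{\Delta}$ denotes the line bundle on $\cH_{\Delta}^{\mathrm{reg}}$ whose fiber at $F$ is $F|_{\eta(t)}$. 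These come from $\det\chi(F\otimes E_{\Delta})=\det\chi(F\otimes E'_{\Delta})\otimes\det(F\otimes R_{\Delta})$ together with $\chi(E_{\Delta})=\chi(E'_{\Delta})+1$, where the $\det\chi(E)$ correction terms are constant along $\cH_{\Delta}$ and are absorbed after trivializing on the affine base $\Delta$ (shrinking $\Delta$ if needed). Tensoring everything with $(j^{\mathrm{reg}*}\cP_{\Delta 1})^{-1}$, it therefore suffices to produce an exact sequence of vector bundles on $\cH^{\mathrm{reg}}_{\Delta}$
\[
0\to (\eta^{\sigma})^*\cE_{\Delta}\to j^{\mathrm{reg}*}\mathcal{V}_{\Delta}\to \eta^*\cE_{\Delta}\to 0.
\]

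To build this sequence I would use the description~(\ref{def:Vdelta}): $\mathcal{V}_{\Delta}=(\eta_D'')^*\pi_*i_*g^*\cE_{\Delta}$, the pushforward along the double cover $\pi_D\colon(\cD_p)_{\Delta}\to D_{\Delta}$ restricted along the section $\eta''_D(\Delta)$. The fiber of $\pi_D$ over $\eta''_D(\Delta)$ is a flat degree-two subscheme $W\subset(\cD_p)_{\Delta}$, which contains the two sections $\eta_D$ and $\eta_D^{\sigma}=\sigma\circ\eta_D$. Restriction to $\eta_D$ gives a surjection $j^{\mathrm{reg}*}\mathcal{V}_{\Delta}=(g^*\cE_{\Delta})|_{W}\to\eta^*\cE_{\Delta}$, and this surjection is onto because $g^*\cE_{\Delta}$ is a line bundle over the regular locus. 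Its kernel is $g^*\cE_{\Delta}\otimes I_{\eta_D\subset W}$. Since $W$ is a Cartier divisor in the smooth-over-$D$ family and $\sigma$ swaps the two sections, $I_{\eta_D\subset W}\cong\cO_{\eta^{\sigma}_D}$ twisted by a line bundle on $\Delta$. This holds generically, where $W=\eta_D\sqcup\eta_D^{\sigma}$, and at $0$, where $W\cong\Spec k[t]/t^2$ and both sections coincide. Thus the kernel is $(\eta^{\sigma})^*\cE_{\Delta}$ up to a line bundle pulled back from $\Delta$, which is trivial after shrinking $\Delta$.

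The main obstacle is making the two determinant identifications and the subscheme computation compatible and canonical in families, especially at the points of $\Delta$ where $\eta_D$ and $\eta_D^{\sigma}$ collide. The step I expect to need care is identifying the ideal of $\eta_D(\Delta)$ in $W$ with $\cO_{\eta^{\sigma}_D(\Delta)}$. I would argue this using the fact that $W\to\Delta$ is finite flat of degree two and is cut out inside $(\cD_p)_{\Delta}$ by the Cartier equation $\pi_D=\eta''_D$, which factors as $(s-\eta_D)(s-\eta_D^{\sigma})$ in a local coordinate $s$ adapted to the involution. That factorization gives the short exact sequence $0\to\cO_{\eta^{\sigma}_D}\to\cO_W\to\cO_{\eta_D}\to 0$ on $W$. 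Tensoring this with $g^*\cE_{\Delta}$ (flat over $\cH^{\mathrm{reg}}_{\Delta}$) and pushing forward yields~(\ref{exact:jF}).
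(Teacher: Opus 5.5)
Your proposal is correct and is essentially the paper's argument. The paper likewise uses the Deligne pairing to get $j^{\mathrm{reg}*}\cP_{\Delta 2}\cong j^{\mathrm{reg}*}\cP_{\Delta 1}\otimes\eta_D^*\cE_D^{\mathrm{reg}}$ and its $\sigma$-analogue, then tensors the sequence $0\to\sigma_*\eta_{D*}\cO_\Delta\to\pi_p^*\pi_{p*}\eta_{D*}\cO_\Delta\to\eta_{D*}\cO_\Delta\to 0$ with the universal line bundle and pushes forward along the double cover; that sequence is exactly your $0\to\cO_{\eta_D^{\sigma}}\to\cO_W\to\cO_{\eta_D}\to 0$, valid after shrinking $\Delta$ because the two sections meet along a Cartier divisor.
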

\begin{proof}
   We have the following diagram
   \begin{align*}
    \xymatrix{
& \cH_{\Delta}^{\mathrm{reg}}  \ar[d]_-{\eta_D}  & \\
(\cD_p)_{\Delta} \ar[d]_-{g} \diasquare &
(\cD_{p})_{\Delta}\times_{\Delta}\cH_{\Delta}^{\mathrm{reg}}
\ar[r]^-{q_{2}} \ar[l]_-{q_{1}} \ar[d]_-{g} &
\cH_{\Delta}^{\mathrm{reg}} \\
\cC_{\Delta} & \cC_{\Delta}\times_{\Delta}\cH_{\Delta}^{\mathrm{reg}} \ar[l]_-{p_1} \ar[ru]_-{p_2}
&
}
\end{align*}
Here the bottom arrows are projections. Let 
\begin{align}\label{E:pull}
\cE_{\cD}^{\mathrm{reg}}\in \Coh^{\heartsuit}((\cD_p)_{\Delta}\times_{\Delta}\cH_{\Delta}^{\mathrm{reg}}), \
\cE^{\mathrm{reg}}_{\Delta}\in \Coh^{\heartsuit}(\cC_{\Delta}\times_{\Delta}\cH_{\Delta}^{\mathrm{reg}})
\end{align}
be the pull-back of the universal one-dimensional sheaf $\mathcal{E} \in \Coh^{\heartsuit}(\cC\times_{\cB}\cH)$. Note that (\ref{E:pull}) are line bundles by the definition of $\cH^{\mathrm{reg}}$. 
Then by the construction and using (\ref{exact:EQ}), we have an isomorphism 
of line bundles on $\cH_{\Delta}^{\mathrm{reg}}$
\begin{align}\notag
    j^{\mathrm{reg}*}\cP_{\Delta 2}&=\det p_{2*}(p_1^*E_{\Delta}\otimes 
   \cE^{\mathrm{reg}}_{\Delta})\otimes (\det p_{2*}p_1^*E_{\Delta})^{-1} \otimes (\det p_{2*}\cE^{\mathrm{reg}}_{\Delta})^{-1}\otimes \det p_{2*}\mathcal{O}\\
   \notag &\cong j^{\mathrm{reg}*}\cP_{\Delta 1}\otimes \det p_{2*}(p_1^* R_{\Delta}\otimes \cE^{\mathrm{reg}}_{\Delta}).
\end{align}
We have isomorphisms 
\begin{align*}
    p_{2*}(p_1^* R_{\Delta}\otimes \cE^{\mathrm{reg}}_{\Delta})&\cong 
    p_{2*}(g_{*}\eta_{D*}\mathcal{O}_{\cH_{\Delta}^{\mathrm{reg}}}\otimes \cE^{\mathrm{reg}}_{\Delta}) \\
    &\cong p_{2*}g_{*}(\eta_{D*}\mathcal{O}_{\cH_{\Delta}^{\mathrm{reg}}}\otimes \cE^{\mathrm{reg}}_{D}) \\
    &\cong \eta_D^* \cE^{\mathrm{reg}}_{D}.
\end{align*}
Therefore we have 
\begin{align}\label{isom:P}
      j^{\mathrm{reg}*}\cP_{\Delta 2} \cong j^{\mathrm{reg}*}\mathcal{P}_{\Delta 1} \otimes \eta_D^* \cE^{\mathrm{reg}}_{D}.
\end{align}
In a similar way, we also have an isomorphism 
\begin{align}\label{isom:P2}
      j^{\mathrm{reg}*}\cP_{\Delta 2}^{\sigma} \cong j^{\mathrm{reg}*}\mathcal{P}_{\Delta 1} \otimes (\sigma \circ \eta_D)^* \cE^{\mathrm{reg}}_{D}.
\end{align}

Below we use the same notation as in (\ref{dia:delta2}) for the 
restricted diagram 
\begin{align}\label{dia:delta3}
    \xymatrix{
& \cH_{\Delta}^{\mathrm{reg}} \ar[d]_-{\eta_D'} \ar[rd]^-{\eta_D''} \ar[ld]_-{\eta_D} & &\\
(\cD_{p})_{\Delta}\times_{\Delta}\cH_{\Delta}^{\mathrm{reg}} \inclusion^-{i} \ar[rd]_-{g} & \cD_{\Delta}\times_{\Delta}\cH_{\Delta}^{\mathrm{reg}} \ar[r]^-{\pi}\ar[d]_-{g'} \diasquare & D_{\Delta}\times_{\Delta}\cH_{\Delta}^{\mathrm{reg}}  \ar[d]_-{g''} \ar[rd] & \\
& \cC_{\Delta}\times_{\Delta}\cH_{\Delta}^{\mathrm{reg}} \ar[r]^-{\pi} & C\times \cH_{\Delta}^{\mathrm{reg}} \ar[r]^-{\pi'} &
C \times \cH^{\mathrm{reg}}.       }
\end{align}
The composition of the maps in the diagram (\ref{dia:delta1})
\begin{align*}
    \pi_p \colon (\cD_p)_{\Delta} \stackrel{i}{\hookrightarrow} \cD_{\Delta} \stackrel{\pi}{\to} D_{\Delta}
\end{align*}
is a double cover with covering involution~$\sigma$. 
In the notation of (\ref{dia:delta1}), we have the decomposition
into the irreducible components 
\begin{align*}
    \pi_p^{-1}\eta_{D}''(\Delta)=\eta_D(\Delta)\cup (\sigma\circ \eta_D)(\Delta)
\end{align*}
whose intersection is a Cartier divisor on $\Delta$. Therefore by shrinking 
$\Delta$ if necessary, we have 
the exact sequence in $\Coh^{\heartsuit}((\cD_p)_{\Delta})$
\begin{align*}
    0\to \sigma_{*}\eta_{D*}\mathcal{O}_{\Delta} \to \pi_p^* \pi_{p*}\eta_{D*}\mathcal{O}_{\Delta} \to \eta_{D*}\mathcal{O}_{\Delta} \to 0.
\end{align*}
By pulling it back to $(\cD_p)_{\Delta}\times_{\Delta}\cH_{\Delta}^{\mathrm{reg}}$
and applying $\otimes \cE^{\mathrm{reg}}_{D}$, 
we obtain the exact sequence in $\Coh^{\heartsuit}((\cD_p)_{\Delta}\times_{\Delta}\cH_{\Delta}^{\mathrm{reg}})$
\begin{align*}
     0\to q_1^*\sigma_{*}\eta_{D*}\mathcal{O}_{\Delta} \otimes \cE^{\mathrm{reg}}_{D}\to q_1^*\pi_p^* \pi_{p*}\eta_{D*}\mathcal{O}_{\Delta}\otimes \cE^{\mathrm{reg}}_{D} \to q_1^*\eta_{D*}\mathcal{O}_{\Delta}\otimes \cE^{\mathrm{reg}}_{D} \to 0. 
\end{align*}
Then applying $\pi_{p*}$ for the composition 
\begin{align*}
    \pi_p \colon (\cD_p)_{\Delta}\times_{\Delta}\cH_{\Delta}^{\mathrm{reg}}
    \stackrel{i}{\hookrightarrow} \cD_{\Delta}\times_{\Delta}\cH_{\Delta}^{\mathrm{reg}} \stackrel{\pi}{\to} D_{\Delta}\times_{\Delta}\cH_{\Delta}^{\mathrm{reg}},
\end{align*}
we obtain 
the exact sequence in $\Coh^{\heartsuit}(D_{\Delta}\times_{\Delta}\cH_{\Delta}^{\mathrm{reg}})$
\begin{align*}
0\to {}&
\pi_{p*}(q_1^*\sigma_{*}\eta_{D*}\mathcal{O}_{\Delta}
\otimes \cE^{\mathrm{reg}}_{D}) \\
\to {}&
\pi_{p*}(q_1^*\pi_p^* \pi_{p*}\eta_{D*}\mathcal{O}_{\Delta}
\otimes \cE^{\mathrm{reg}}_{D}) \\
\to {}&
\pi_{p*}(q_1^*\eta_{D*}\mathcal{O}_{\Delta}
\otimes \cE^{\mathrm{reg}}_{D}) \to 0. 
\end{align*}

Note that by the definition of $\mathcal{V}_{\Delta}$ in (\ref{def:Vdelta}), we have 
\begin{align*}
    j^{\mathrm{reg}*}\mathcal{V}_{\Delta}=(\eta_D'')^* \pi_{p*}\cE^{\mathrm{reg}}_{D}.
\end{align*}
Therefore we have 
\begin{align*}
    \pi_{p*}(q_1^*\pi_p^* \pi_{p*}\eta_{D*}\mathcal{O}_{\Delta}\otimes \cE^{\mathrm{reg}}_{D})  &\cong \pi_{p*}(\pi_p^* \pi_{p*}\eta_{D*}\mathcal{O}_{\cH_{\Delta}^{\mathrm{reg}}}\otimes \cE^{\mathrm{reg}}_{D}) \\
    &\cong \pi_{p*}\eta_{D*}\mathcal{O}_{\cH_{\Delta}^{\mathrm{reg}}}\otimes \pi_{p*}\cE^{\mathrm{reg}}_{D}\\
    &\cong \eta_{D*}''(\eta_D'')^*\pi_{p*}\cE^{\mathrm{reg}}_{D} \\
    &\cong \eta_{D*}''j^{\mathrm{reg}*}\mathcal{V}_{\Delta}.
\end{align*}
Similarly, we have 
\begin{align*}
   &\pi_{p*}(q_1^*\eta_{D*}\mathcal{O}_{\Delta}\otimes \cE^{\mathrm{reg}}_{D}) \cong \eta_{D*}''\eta_D^* \cE^{\mathrm{reg}}_{D}, \\
   & \pi_{p*}(q_1^*\sigma_*\eta_{D*}\mathcal{O}_{\Delta}\otimes \cE^{\mathrm{reg}}_{D}) \cong \eta_{D*}''(\sigma \circ \eta_D)^* \cE^{\mathrm{reg}}_{D}.
\end{align*} 

It follows that we obtain the exact sequence 
\begin{align*}
    0\to \eta^{''}_{D*}(\sigma \circ \eta_D)^* \cE^{\mathrm{reg}}_{D} \to \eta^{''}_{D*}j^{\mathrm{reg}*}\mathcal{V}_{\Delta}\to \eta^{''}_{D*}\eta_D^{*}\cE^{\mathrm{reg}}_{D} \to 0.
\end{align*}
Since $\eta_D''$ is a closed immersion (as it is a section of the projection),
we obtain the exact sequence in $\Coh^{\heartsuit}(\cH_{\Delta}^{\mathrm{reg}})$
\begin{align*}
     0\to (\sigma \circ \eta_D)^* \cE^{\mathrm{reg}}_{D} \to j^{\mathrm{reg}*}\mathcal{V}_{\Delta}\to \eta_D^{*}\cE^{\mathrm{reg}}_{D}\to 0.
\end{align*}
By applying $j^{\mathrm{reg}*}\cP_{\Delta 1} \otimes$ and using (\ref{isom:P}), (\ref{isom:P2}), we
obtain the desired exact sequence (\ref{exact:jF}). 
\end{proof}

As a corollary of the above proposition, we have the following: 
\begin{cor}\label{cor:CM}
    The exact sequence (\ref{exact:jF}) extends to a left exact sequence of maximal Cohen--Macaulay 
    sheaves on $\cH_{\Delta}$
    \begin{align}\label{extend:CM}
        0\to \cP_{\Delta 2}^{\sigma}\to \mathcal{V}_{\Delta}\otimes \cP_{\Delta 1}
        \to \cP_{\Delta 2}.
    \end{align}
\end{cor}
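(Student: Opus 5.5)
The plan is to push the exact sequence (\ref{exact:jF}) forward along the open immersion $j^{\mathrm{reg}}$ and then use Lemma~\ref{lem:MCM-extension} to identify each pushed-forward term with the corresponding sheaf on $\cH_{\Delta}$.

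\textbf{Codimension.} By Lemma~\ref{lem:Delta}, the map $\Delta \to \cB$ is dominant, since it sends the generic point of $\Delta$ into $\cB^{\mathrm{sm}}$. Lemma~\ref{lem:codimension} therefore shows that $\cH_{\Delta}\setminus \cH_{\Delta}^{\mathrm{reg}}$ has codimension at least two in $\cH_{\Delta}$. Moreover, $\cH_{\Delta}$ is a classical stack of pure dimension, because $\cH_{\Delta}\to\Delta$ is flat with equidimensional fibers (Theorem~\ref{thm:higgs:basic}) and $\Delta$ is smooth and connected.

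\textbf{Each term is maximal Cohen--Macaulay.} The sheaves $\cP_{\Delta 1}$, $\cP_{\Delta 2}$ and $\cP_{\Delta 2}^{\sigma}$ are maximal Cohen--Macaulay on $\cH_{\Delta}$. I would argue this as follows. Each is the pullback of the maximal Cohen--Macaulay sheaf $\cP$ along $\tau_i\times\id$ (resp.\ $\tau_2^{\sigma}\times\id$). The sheaf $\cP$ is flat over the first factor, and a sheaf flat over a smooth base whose fibers $\cP_{E}$ are maximal Cohen--Macaulay is itself maximal Cohen--Macaulay. The sheaf $\mathcal{V}_{\Delta}$ is a vector bundle by Lemma~\ref{lem:Vdelta}, so $\mathcal{V}_{\Delta}\otimes\cP_{\Delta 1}$ is locally a direct sum of copies of $\cP_{\Delta 1}$ and is again maximal Cohen--Macaulay.

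\textbf{Pushforward.} Apply the left exact functor $j^{\mathrm{reg}\heartsuit}_{*}=\cH^0(j^{\mathrm{reg}}_*)$ to (\ref{exact:jF}). This gives a left exact sequence
\[
0\to j^{\mathrm{reg}\heartsuit}_{*}j^{\mathrm{reg}*}\cP_{\Delta 2}^{\sigma}\to j^{\mathrm{reg}\heartsuit}_{*}j^{\mathrm{reg}*}(\mathcal{V}_{\Delta}\otimes\cP_{\Delta 1})\to j^{\mathrm{reg}\heartsuit}_{*}j^{\mathrm{reg}*}\cP_{\Delta 2}.
\]
By Lemma~\ref{lem:MCM-extension} and the codimension bound, each term is canonically isomorphic to the corresponding maximal Cohen--Macaulay sheaf on $\cH_{\Delta}$, through the unit maps $M\to j^{\mathrm{reg}\heartsuit}_*j^{\mathrm{reg}*}M$. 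The maps in the sequence are the unique extensions of the maps over $\cH_{\Delta}^{\mathrm{reg}}$, because $\Hom(M,N)\cong\Hom(j^{\mathrm{reg}*}M,j^{\mathrm{reg}*}N)$ when $N$ is maximal Cohen--Macaulay. This yields (\ref{extend:CM}).

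\textbf{Main obstacle.} The one point that needs care is the maximal Cohen--Macaulay property of the $\Delta$-families. If the flatness argument above is delicate, I would fall back on Serre's $S_2$ property: it is all that Lemma~\ref{lem:MCM-extension} really uses, and it follows from flatness over the smooth $\Delta$ together with the maximal Cohen--Macaulay property of the fibers $\cP_E$ and $\cP_{E'}$.
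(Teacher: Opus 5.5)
Your proposal is correct and follows the paper's proof. You apply the left exact functor $j^{\mathrm{reg}\heartsuit}_{*}=\cH^0(j^{\mathrm{reg}}_{*})$ to (\ref{exact:jF}) and identify each term with the original sheaf via Lemma~\ref{lem:MCM-extension} and the codimension-two bound of Lemma~\ref{lem:codimension}. Your extra justifications — maximal Cohen--Macaulayness of the $\Delta$-families via flatness over the smooth base, pure dimensionality of $\cH_{\Delta}$, and uniqueness of the extended maps — are points the paper leaves implicit.

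One small correction: $\Delta\to\cB$ is in general not dominant (e.g.\ $\dim\Delta=2$ in Proposition~\ref{prop:exact2}), and sending the generic point into $\cB^{\mathrm{sm}}$ does not imply dominance. This does not affect the argument, because the proof of Lemma~\ref{lem:codimension} only uses that the generic point maps into $\cB^{\mathrm{sm}}$, which Lemma~\ref{lem:Delta} guarantees.
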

\begin{proof}
    Let $j_{*}^{\mathrm{reg}\heartsuit}$ be the functor 
    \begin{align*}
    j_{*}^{\mathrm{reg}\heartsuit}:=\cH^0(j_{*}^{\mathrm{reg}}) \colon 
    \Coh^{\heartsuit}(\cH_{\Delta}^{\mathrm{reg}}) \to 
     \Coh^{\heartsuit}(\cH_{\Delta}).
    \end{align*}
    The above functor is left exact. 
    By Lemma~\ref{lem:codimension}, the open immersion $\cH_{\Delta}^{\mathrm{reg}} \subset 
    \cH_{\Delta}$ is an isomorphism in codimension one. Then as $\cP_{\Delta i}, \cP_{\Delta}^{\sigma}$ are 
    maximal Cohen--Macaulay sheaves and $\mathcal{V}_{\Delta}$ is a vector bundle, by Lemma~\ref{lem:MCM-extension} we have 
    \begin{align*}j_{*}^{\mathrm{reg}\heartsuit}j^{\mathrm{reg}*} \cP_{\Delta i} \cong \cP_{\Delta i}, \ j_{*}^{\mathrm{reg}\heartsuit}j^{\mathrm{reg}*} \cP_{\Delta 2}^{\sigma} \cong \cP_{\Delta 2}^{\sigma}, \ 
    j_{*}^{\mathrm{reg}\heartsuit}j^{\mathrm{reg}*} \mathcal{V}_{\Delta} \cong \mathcal{V}_{\Delta}.
    \end{align*}
    Therefore by applying $j_{*}^{\mathrm{reg}\heartsuit}$ to (\ref{exact:jF}), 
    we obtain the left exact sequence (\ref{extend:CM}). 
\end{proof}

\subsection{Exact sequence of Arinkin sheaves}\label{subsec:exAr}
We keep the situation of Corollary~\ref{cor:CM}.
By restricting \eqref{extend:CM} to $0\in \Delta$, we obtain in
$\Coh^{\heartsuit}(\cH_b)$ the sequence
\begin{align}\label{extend:CM2}
0\to \cP_E \to \mathcal{V}_p \otimes \cP_{E'} \to \cP_E
\end{align}
which at this stage is not yet known to be either left exact or right exact. 
In this subsection, we show that the sequence (\ref{extend:CM}) is right exact, which in particular implies that (\ref{extend:CM2}) is an 
exact sequence. 
Then by an induction argument, we will complete the proof of Proposition~\ref{prop:resol}.

We first show this in the case that $p\in \cC_b$ is a nodal singularity: 
\begin{lemma}\label{lem:nodal}
If $p\in \cC_b$ is a nodal singularity, the 
sequence (\ref{extend:CM}) is also right exact. In particular, the sequence (\ref{extend:CM2}) is an exact sequence. 
\end{lemma}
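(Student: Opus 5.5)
The plan is to show that the cokernel
\[
\mathcal{Q}:=\operatorname{Coker}\bigl(\mathcal{V}_{\Delta}\otimes \cP_{\Delta 1}\to \cP_{\Delta 2}\bigr)\in \Coh^{\heartsuit}(\cH_{\Delta})
\]
vanishes. Exactness of (\ref{extend:CM2}) would then follow by restricting to $0\in\Delta$. Since all three terms of (\ref{extend:CM}) are flat over $\Delta$ and the sequence would then be exact, the restriction stays exact.

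First I record what we know about $\mathcal{Q}$. By Proposition~\ref{prop:exact}, $\mathcal{Q}$ is supported on $\cH_{\Delta}\setminus \cH_{\Delta}^{\mathrm{reg}}$, which has codimension at least two by Lemma~\ref{lem:codimension}. Let $\mathcal{I}$ be the image of the middle map. Then
\[
0\to \cP_{\Delta 2}^{\sigma}\to \mathcal{V}_{\Delta}\otimes\cP_{\Delta 1}\to \mathcal{I}\to 0,\qquad 0\to \mathcal{I}\to \cP_{\Delta 2}\to \mathcal{Q}\to 0 .
\]
Since the first two terms are maximal Cohen--Macaulay, $\mathcal{I}$ has depth at least $\dim-1$ at every point.

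A pure depth argument would kill $\mathcal{Q}$ only if the non-regular locus had codimension at least three, which is not available. I therefore plan to use the nodal hypothesis directly.

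For a node we have $m_p=2$, hence $m_p'=1$ and $n_p=1$. So $E'$ is a line bundle near $p$, $E$ is locally the pushforward from the partial normalization at $p$, and (\ref{exact:E1}) is locally the sequence (\ref{exact:OEOp}). By Nakayama it suffices to prove surjectivity on fibers at every point $M\in\cH_b$; points over $\Delta\setminus\{0\}$ can be handled by the same argument or by openness of the support.

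At a point $M$, I will use the symmetry of $\cP$: it is flat over both factors, and its restriction $\cP_M$ to $\{M\}\times\cH$ is a maximal Cohen--Macaulay sheaf. With this, the fiber map $\mathcal{V}_p|_M\otimes\cP_{E'}|_M\to\cP_E|_M$ is identified with a map built from $\cP_M$ evaluated along the Hecke family $E'\subset E\subset E^{\sigma}$. The goal is to repeat the construction of Proposition~\ref{prop:exact} with the roles of the two factors swapped. That is, deform $M$, via Lemma~\ref{lem:Delta} applied in the second factor, into the locus where the spectral curve is smooth, so that $M$ becomes a line bundle. There the sequence is exact by the explicit computation (\ref{exact:F}), because $\mathcal{O}_{2p}$ is an extension of $\mathcal{O}_p$ by $\mathcal{O}_p$.

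Surjectivity of a map between coherent sheaves flat over the deformation base is an open condition on the total space. Since the special fiber lies over a point of the base, surjectivity at the special point reduces to surjectivity over a generic point of that base, up to the closed non-surjective locus. To control this locus I will use a $K$-theoretic count. Restricted to a fiber over a general point of $\Delta$, the sequence is exact, so
\[
[\cP_{\Delta 2}]=[\mathcal{V}_{\Delta}\otimes\cP_{\Delta 1}]-[\cP^{\sigma}_{\Delta 2}].
\]
By flatness over $\Delta$, this identity persists at $0$, for example at the level of Euler characteristics against a sufficiently positive line bundle on a quasi-compact open. This forces $[\mathcal{Q}|_0]=0$.

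A nonzero coherent sheaf has a nonzero top-dimensional cycle, so $\mathcal{Q}|_0=0$. Then $\mathcal{Q}=0$ near the central fiber, and hence near all points we need after shrinking $\Delta$.

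The main obstacle is making this Euler-characteristic comparison rigorous on the non-quasi-compact stack $\cH_b$. The plan is to first restrict to a quasi-compact open substack containing the given point, then pass to a smooth atlas and compare Hilbert polynomials there, where flatness over $\Delta$ of all three terms and exactness over the generic point of $\Delta$ give the needed equality.
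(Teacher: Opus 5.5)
Your reduction to showing that $\mathcal{Q}$ vanishes is correct, and so is your remark that depth alone cannot do it. The gap is in the step meant to replace the depth argument.

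Write $\mathbb{P}_{\Delta}$ for the complex (\ref{extend:CM}) with $\cP_{\Delta 2}$ in degree zero. By left exactness $\mathbb{P}_{\Delta}\simeq\mathcal{Q}$. Since its terms are $\Delta$-flat, its naive restriction to $0$ computes $\mathcal{Q}\otimes^{L}k_0$. So flatness plus generic exactness only gives you
\[
\sum_i (-1)^i\,\bigl[\mathrm{Tor}_i^{\mathcal{O}_{\Delta}}(\mathcal{Q},k_0)\bigr]=0 ,
\]
not $[\mathcal{Q}|_0]=0$. The term $\mathcal{Q}|_0=\mathrm{Tor}_0$ can be cancelled by $\mathrm{Tor}_1$. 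This identity holds for every left exact, generically exact complex of $\Delta$-flat sheaves, whether or not its cokernel vanishes, so it cannot force $\mathcal{Q}=0$.

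For instance, over $R=k[t]$ take $0\to R\to R^{\oplus 2}\to R$ with maps $a\mapsto (a,0)$ and $(x,y)\mapsto ty$. It has free terms and is left exact and generically exact, but its cokernel is $R/tR\neq 0$. On the special fiber $H^{-1}$ and $H^0$ are both $k$, and the Euler characteristic is zero.

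The step before has the same defect. Surjectivity is an open condition, so knowing it over the generic point of a deformation says nothing at the special point. Deforming $M$ to a line bundle on a smooth spectral curve only revisits the locus where exactness is already known from Proposition~\ref{prop:exact}.

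What is missing is a construction that makes the middle term an honest extension. The paper uses the nodal hypothesis as follows.
\begin{itemize}
\item Since $E'$ is locally free at $p$, the assignment $x\mapsto E_x$ (the non-split extension of $\mathcal{O}_x$ by $E'_t$) defines a map from a neighborhood $\cC'_{\Delta}$ of $p$ to $\cH_{\Delta}$.
\item Pull back along the double cover $\widetilde{\Delta}=\Delta\times_{D_{\Delta}}(\cD_p)'_{\Delta}$. This is the union $\Delta_1\cup\Delta_2$ of two sections meeting in a divisor, and the pullback gives a Cohen--Macaulay sheaf $\cP_{\widetilde{\Delta}}$.
\item Since $\cP$ is flat over the first factor, the sequence $0\to\mathcal{O}_{\Delta_2}\to\mathcal{O}_{\widetilde{\Delta}}\to\mathcal{O}_{\Delta_1}\to 0$ yields a short exact sequence $0\to\cP^{\sigma}_{\Delta 2}\to\widetilde{\pi}_{p*}\cP_{\widetilde{\Delta}}\to\cP_{\Delta 2}\to 0$.
\item Its restriction to $\cH^{\mathrm{reg}}_{\Delta}$ is (\ref{exact:jF}). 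Lemma~\ref{lem:MCM-extension} and Lemma~\ref{lem:codimension} then identify the middle term with $\mathcal{V}_{\Delta}\otimes\cP_{\Delta 1}$ and this sequence with (\ref{extend:CM}).
\end{itemize}

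Your remark about evaluating $\cP_M$ along the Hecke family points in this direction. Fiberwise, the map becomes restriction from the length-two scheme $2p$ to $p$, which is visibly surjective. But it only becomes a proof once the middle term is identified with such a pushforward, and the $K$-theoretic count cannot substitute for that identification.
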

\begin{proof}
    The assumption on $\cC_b$ implies that 
    $m_p=2$ and 
    the sheaf $E'$ on $\cC_b$ is a line bundle at $p$. Let $\cC_{\Delta}'\subset \cC_{\Delta}$ be an open neighborhood of $p \in \cC_b=\cC_{\Delta}\times_{\Delta}0$ such that 
    $E_t':=E_{\Delta}'|_{\cC_{\Delta}\times_{\Delta}t}$ is a line bundle on $\cC_{\Delta}'\times_{\Delta} t$
    for all $t\in \Delta$. 
    Then there is a morphism 
    \begin{align}\label{mor:circ}
\cC_{\Delta}' \to \cH_{\Delta}
    \end{align}
    sending $x\in \cC_{\Delta}'\times_{\Delta}t$ 
    to $E_x \in \Coh^{\heartsuit}(\cC_t)$ which fits into 
    the unique non-trivial extension (up to isomorphism)
    \begin{align*}
        0\to E_t' \to E_x \to \mathcal{O}_x \to 0. 
    \end{align*}
In the diagram (\ref{dia:eta}), note that $\eta(0)=p\in \cC_{\Delta}'$. 
Therefore by shrinking $\Delta$ if necessary we may assume that 
    $\eta \colon \Delta \to \cC_{\Delta}$ factors through 
    $\eta \colon \Delta \to \cC_{\Delta}'$.
    Then from the diagram (\ref{dia:eta}), 
    the map $\eta_D$ factors as 
    \begin{align*}
        \eta_D \colon \Delta \to (\cD_p)_{\Delta}':=(\cD_p)_{\Delta}\times_{\cC_{\Delta}}\cC_{\Delta}'.
        \end{align*}
The above situation is summarized in the following diagram 
    \begin{align}\label{dia:Dpprime}
        \xymatrix{
&(\cD_p)_{\Delta}' \inclusion \dinclusion \diasquare & (\cD_p)_{\Delta} \dinclusion  & \\
&\cD_{\Delta}' \inclusion \ar[d] \diasquare & \cD_{\Delta} \ar[r] \ar[d] \diasquare & D_{\Delta} \ar[d] \\
\Delta \ar[r]^-{\eta} \ar[ruu]^-{\eta_D} &\cC_{\Delta}' \inclusion & \cC_{\Delta} \ar[r] & C\times \Delta.  
        }
    \end{align}
 We define 
    $\widetilde{\Delta}$ to be the Cartesian square 
    \begin{align}\label{dia:tdelta}
        \xymatrix{
\widetilde{\Delta} \ar[d]_-{\widetilde{\pi}_p} \ar[rr] \ar@{}[rrd]|\square & & (\cD_p)_{\Delta}' \ar[d]_-{\pi_p} \\
\Delta \ar[r]^-{\eta_D} & (\cD_p)_{\Delta}' \ar[r]^-{\pi_p} & D_{\Delta}.
        }
    \end{align}
The map $\widetilde{\pi}_p \colon \widetilde{\Delta} \to \Delta$ is a double cover 
with involution $\sigma$ induced by the covering involution of 
$(\cD_p)_{\Delta} \to D_{\Delta}$. 
The map $\widetilde{\pi}_p$ has two sections 
\begin{align*}
    \widetilde{\eta}_D:=(\eta_D, \eta_D) \colon \Delta \hookrightarrow \widetilde{\Delta}, \ 
     \widetilde{\eta}_D^{\sigma}:=(\eta_D, \eta_D^{\sigma}) \colon \Delta \hookrightarrow \widetilde{\Delta} 
\end{align*}
such that 
\begin{align*}\widetilde{\Delta}=\widetilde{\eta}_D(\Delta)\cup \widetilde{\eta}_D^{\sigma}(\Delta).\end{align*}
Let $\Delta_1=\widetilde{\eta}_D(\Delta)$
and $\Delta_2=\widetilde{\eta}^{\sigma}_D(\Delta)$. 
Then $\Delta_1 \cap \Delta_2$ is a divisor in $\Delta_i$. 
By shrinking 
$\Delta$ if necessary, we may assume that $\mathcal{O}_{\Delta_i}(\Delta_1 \cap \Delta_2)$ is trivial, so that we have the exact sequence 
\begin{align}\label{exact:sigma}
    0\to \mathcal{O}_{\Delta_2} \to \mathcal{O}_{\widetilde{\Delta}}
    \to \mathcal{O}_{\Delta_1} \to 0. 
\end{align}

   By composing (\ref{mor:circ}) with the top horizontal arrow in (\ref{dia:tdelta}) and the left vertical arrow in (\ref{dia:Dpprime}), 
   we obtain the map 
   \begin{align*}
       \widetilde{\tau} \colon \widetilde{\Delta} \to (\cD_p)'_{\Delta} \to \cC_{\Delta}' \to \cH_{\Delta}. 
   \end{align*}
   It yields the maximal Cohen--Macaulay sheaf 
   \begin{align*}
       \cP_{\widetilde{\Delta}}:=(\widetilde{\tau}\times \id)^* \cP\in \Coh^{\heartsuit}(\cH_{\widetilde{\Delta}})
       \end{align*}
       Here $\widetilde{\tau}\times \id$ is 
       \begin{align*}
           \widetilde{\tau}\times \id \colon \cH_{\widetilde{\Delta}}=\widetilde{\Delta}\times_{\Delta}\cH_{\Delta} \to 
           \cH_{\Delta}\times_{\Delta} \cH_{\Delta} \to \cH\times_{\cB} \cH.
       \end{align*}
By the constructions, we have the commutative diagrams 
\begin{align*}
    \xymatrix{
\widetilde{\Delta} \ar[r]^-{\widetilde{\tau}} & \cH_{\Delta}, \\
\Delta \ar[u]^-{\widetilde{\eta}_D} \ar[ur]_-{\tau_2} &
    } \quad 
      \xymatrix{
\widetilde{\Delta} \ar[r]^-{\widetilde{\tau}} & \cH_{\Delta}. \\
\Delta \ar[u]^-{\widetilde{\eta}_D^{\sigma}} \ar[ur]_-{\tau_2^{\sigma}} &
    }
\end{align*}
Here $\tau_2, \tau_2^{\sigma}$ are given in (\ref{def:taui}), (\ref{def:tausig}) respectively. 
Therefore the exact sequence (\ref{exact:sigma}) induces the exact
       sequence in $\Coh^{\heartsuit}(\cH_{\Delta})$
       \begin{align}\label{exact:Pdelta}
        0\to \cP_{\Delta 2}^{\sigma} \to \widetilde{\pi}_{p*}\cP_{\widetilde{\Delta}} \to \cP_{\Delta 2} \to 0. 
       \end{align}
       Here we have used the same notation $\widetilde{\pi}_p$ for the 
       base-change map 
       $\cH_{\widetilde{\Delta}} \to \cH_{\Delta}$ induced by 
       $\widetilde{\pi}_p \colon \widetilde{\Delta} \to \Delta$.
       
Unraveling the construction, the exact sequence (\ref{exact:Pdelta}) restricted to 
$\cH_{\Delta}^{\mathrm{reg}}$ is identified with (\ref{exact:jF}). 
Since $\cH_{\Delta}^{\mathrm{reg}} \subset \cH_{\Delta}$ is an isomorphism in codimension one by Lemma~\ref{lem:codimension}, and both $\widetilde{\pi}_{p*}\cP_{\widetilde{\Delta}}$ and $\mathcal{V}_{\Delta}\otimes \cP_{\Delta 1}$ are maximal Cohen--Macaulay sheaves, by Lemma~\ref{lem:MCM-extension} we 
conclude that \begin{align*}
    \widetilde{\pi}_{p*}\cP_{\widetilde{\Delta}}\cong 
   \mathcal{V}_{\Delta}\otimes \cP_{\Delta 1}. 
\end{align*}
Therefore, from the exact sequence (\ref{exact:Pdelta}), we obtain the lemma. 
       
\end{proof}

Using the above lemma for the nodal case, we prove that (\ref{extend:CM2}) is exact 
in general:
\begin{prop}\label{prop:exact2}
The sequence (\ref{extend:CM2}) is an exact sequence
\begin{align}\notag
0\to \cP_E \to \mathcal{V}_p\otimes \cP_{E'} \to \cP_E \to 0.
\end{align}
\end{prop}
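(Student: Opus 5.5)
The plan is to prove that the left exact sequence \eqref{extend:CM} on $\cH_{\Delta}$ is also right exact. Granting this, the proposition follows by restriction to $0\in\Delta$. All three terms $\cP_{\Delta 2}^{\sigma}$, $\mathcal{V}_{\Delta}\otimes\cP_{\Delta 1}$ and $\cP_{\Delta 2}$ are flat over $\Delta$, so a short exact sequence of $\Delta$-flat sheaves stays exact after derived restriction to $0$. Their restrictions to $0$ are $\cP_E$, $\mathcal{V}_p\otimes\cP_{E'}$ and $\cP_E$ respectively (Lemma~\ref{lem:Vdelta} and the identifications following \eqref{def:taui} and \eqref{def:tausig}). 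So the whole task is to show that the cokernel $K$ of $\mathcal{V}_{\Delta}\otimes \cP_{\Delta 1}\to \cP_{\Delta 2}$ vanishes. By Proposition~\ref{prop:exact}, $K$ is supported on $\cH_{\Delta}\setminus\cH_{\Delta}^{\mathrm{reg}}$, which has codimension at least two by Lemma~\ref{lem:codimension}.

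\textbf{Step 1 (reduction to $\Delta$ near a nodal degeneration).} The support of $K$ is closed and its image in $\Delta$ is constructible. Since surjectivity is an open condition, it suffices to show that $K$ vanishes in a neighbourhood of the fibre $\cH_b$. To get this, I would use the freedom in Lemma~\ref{lem:Delta} to choose $\Delta$ of large dimension $\ell$. I would construct $\Delta$, via the versal deformation $y^2=x^{m_p}+\sum_{i<m_p} a_i x^i$ of the type $A$ singularity, so that it contains a divisor $\Delta'\ni 0$ with the following property: for generic $t\in\Delta'$, the point $\eta(t)$ is an ordinary node of $\cC_t$, and $E'_t$ is a line bundle there. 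This step uses that \emph{small deformations of type $A$ singularities are of type $A$}. Over the generic point of $\Delta'$, the argument of Lemma~\ref{lem:nodal} applies verbatim after base change: one forms the double cover $\widetilde{\Delta}$ as in \eqref{dia:tdelta}, restricted to a neighbourhood where $E'_t$ is locally free at $\eta(t)$. This produces an exact sequence \eqref{exact:Pdelta} over that open part and identifies its middle term with $\mathcal{V}_{\Delta}\otimes\cP_{\Delta 1}$ via Lemma~\ref{lem:MCM-extension}. Over the generic point of $\Delta$, the curve is smooth, so the sequence is exact there by Proposition~\ref{prop:exact}. Hence $\mathrm{Supp}(K)$ lies over a closed subset $W\subset\Delta$ with $\operatorname{codim}_{\Delta}W\geq 2$, and $W$ meets $\Delta'$ in a proper closed subset.

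\textbf{Step 2 (depth argument to kill $K$).} Let $I\subset\cP_{\Delta 2}$ be the image, so there are exact sequences $0\to\cP_{\Delta 2}^{\sigma}\to \mathcal{V}_{\Delta}\otimes\cP_{\Delta1}\to I\to 0$ and $0\to I\to \cP_{\Delta 2}\to K\to 0$. Since $\cH_{\Delta}$ is Cohen--Macaulay and all three sheaves in \eqref{extend:CM} are maximal Cohen--Macaulay, the first sequence gives $\operatorname{depth} I\geq \dim-1$. I would then run an induction on $n_p=\mathrm{length}(Q_p)$ combined with the duality $\psi\colon E\mapsto E^{\vee}$ of diagram \eqref{dia:heckedual}. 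The dual sequence has the same shape for $E^{\vee}$ with its roles of $E,E'$ reversed. Applying Grothendieck duality $\mathbb{D}$, which is exact on maximal Cohen--Macaulay sheaves and sends $\cP_{E}$ to $\cP_{E^{\vee}}$ up to a line bundle twist, then shows that $\mathcal{E}xt^1(I,\omega)$ and $\mathcal{E}xt^2(K,\omega)$ coincide with the cokernel of the analogous map for $E^{\vee}$. A codimension count then forces $\mathcal{E}xt^{2}(K,\omega)$ to vanish on the locus where $K$ has codimension exactly two, and iterating over codimensions gives $K=0$.

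The main obstacle is Step 2, and specifically ruling out that $K$ is a nonzero sheaf supported in codimension $\geq2$ over $W$. Cohen--Macaulay extension alone (Lemma~\ref{lem:MCM-extension}) only controls codimension-one behaviour. If the duality bookkeeping proves too delicate, my first fallback is a K-theory and rank count on the fibre. In $K(\cH_b)$ one has $[\mathcal{V}_p\otimes\cP_{E'}]=2[\cP_E]$: this holds over $\cH_b^{\mathrm{reg}}$ by Proposition~\ref{prop:exact}, and since flat restriction of \eqref{extend:CM} is compatible with Euler characteristics against a sufficiently positive test object, it should extend to all of $\cH_b$. Combined with left exactness, this would force the restricted cokernel $K|_0$ to have zero class and hence to vanish, since it is a sheaf. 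That in turn gives $K=0$ near $\cH_b$ by Nakayama.
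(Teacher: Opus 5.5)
Your reduction matches the paper: show that the cokernel $K$ of $\mathcal{V}_{\Delta}\otimes\cP_{\Delta1}\to\cP_{\Delta2}$ vanishes, then restrict to $0$ using $\Delta$-flatness. The gap is in Step~2, and the fallback has the same problem: neither actually kills $K$.

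Everything Step~2 uses is also satisfied by the Koszul complex $0\to\cO\to\cO^{\oplus 2}\to\cO$ on $\mathbb{A}^2$. Its terms are MCM, it is left exact, it is exact off codimension two, and its terms are even flat over the base. Yet its cokernel is $\cO_0\neq 0$. Your depth count only gives $\operatorname{depth}K\geq\dim-2$, so if $K\neq 0$ it is Cohen--Macaulay of codimension exactly two; no contradiction arises.

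Grothendieck duality only turns \eqref{extend:CM} into another left exact sequence with cokernel $\mathcal{E}xt^2(K,\omega)$. By local duality this is nonzero precisely when $K\neq 0$ has codimension two. So the situation is symmetric, and your ``codimension count'' has nothing to force. Identifying the dual with ``the analogous sequence for $E^{\vee}$'' is also unsupported. After dualizing, the term that appears twice corresponds to $E^{\vee}\subset E'^{\vee}$, which reverses the original pattern. The identification would also need a duality statement for $\cP$ that is not available here.

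The K-theory fallback fails for three reasons.
\begin{enumerate}
\item A class identity over $\cH_b^{\mathrm{reg}}$ cannot see classes supported on the complement, which is exactly where $K$ lives.
\item Restricting to $0$ gives the derived restriction $K\otimes^{L}_{\cO_{\Delta}}k(0)$. Its class is an alternating sum of $\mathrm{Tor}$'s, not $[K|_0]$, and \eqref{extend:CM2} is not known to be left exact.
\item A nonzero coherent sheaf on a non-proper stack can have zero class.
\end{enumerate}

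The missing idea is to use the compatibility of the whole construction with base change to divisors of $\Delta$. The paper takes $\dim\Delta=2$ with $\Delta\setminus\{0\}$ lying over smooth or nodal curves, which is possible by Lemma~\ref{lem:nodeB}. Then Lemma~\ref{lem:nodal} shows $K$ is supported over $0$. Your Step~1 conclusion $\operatorname{codim}_{\Delta}W\geq 2$ also needs this kind of input: it requires exactness at every codimension-one point of $\Delta$, not only at the generic points of $\Delta$ and of one chosen $\Delta'$.

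Next, take a generic smooth curve $0\in\Delta'\subset\Delta$. Its generic point still lies over $\cB^{\mathrm{sm}}$, so Lemma~\ref{lem:codimension} and Corollary~\ref{cor:CM} apply to $\Delta'$, and the restricted sequence is left exact. Because the three terms are $\Delta$-flat, that restricted complex computes $K\otimes^{L}_{\cO_{\Delta}}\cO_{\Delta'}$. Hence $\mathrm{Tor}_1^{\cO_{\Delta}}(K,\cO_{\Delta'})=0$. But the local equation of $\Delta'$ is a zero divisor on any nonzero $K$ supported over $0\in\Delta'$, so $K=0$. This is exactly what excludes the Koszul behaviour: the Koszul complex is not left exact after restriction to a line through the origin.
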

\begin{proof}
The proof is the same as~\cite[Proposition~5.11]{TodaGL2}, but we include it for completeness.
Let $\cB^{\mathrm{sg}} \subset \cB$ be the closed subset corresponding 
to the singular spectral curves. It is of codimension one, and its 
generic point corresponds to those with at worst nodal singularities, see Lemma~\ref{lem:nodeB} below. 
Then we can take $\Delta$ so that $\dim \Delta =2$ and 
the image of the composition 
\begin{align*}\Delta \setminus \{0\} \hookrightarrow \Delta \to \cB
\end{align*}
corresponds to the locus of spectral curves which are either smooth or have at worst nodal singularities. Then, from Lemma~\ref{lem:nodal}, the sequence (\ref{extend:CM}) is 
exact on $\Delta \setminus \{0\}$. 

Assume that the sequence (\ref{extend:CM}) is not exact.
Let $\mathbb{P}_{\Delta}$ be the complex (\ref{extend:CM})
regarded as an object in $\Coh(\cH_{\Delta})$, where 
$\cP_{\Delta 2}$ is located in degree zero. 
Since (\ref{extend:CM}) is left exact, the object $\mathbb{P}_{\Delta}$ is a non-zero object
in $\Coh^{\heartsuit}(\cH_{\Delta})$ and supported 
over $0\in \Delta$. 
Let $i \colon \Delta' \subset \Delta$ be a generic smooth divisor 
such that $0\in \Delta'$.
We use the same notation $i$ for the induced map $i \colon \cH_{\Delta'} \hookrightarrow \cH_{\Delta}$. 
Then as $\mathbb{P}_{\Delta}$ is supported over $0\in \Delta'$, 
it follows that 
\begin{align*}
0\neq \cH^{-1}(i^* \mathbb{P}_{\Delta}) \in \Coh^{\heartsuit}(\cH_{\Delta'}).
\end{align*}

On the other hand, 
since each term of $\mathbb{P}_{\Delta}$ is flat over $\Delta$, 
we have $i^* \mathbb{P}_{\Delta}=\mathbb{P}_{\Delta'}$ and it 
lies in $\Coh^{\heartsuit}(\cH_{\Delta'})$ by
the left exactness of
the sequence 
   \begin{align}\notag
        0\to \cP_{\Delta' 2}^{\sigma}\to \mathcal{V}_{\Delta'}\otimes \cP_{\Delta' 1}
        \to \cP_{\Delta' 2}
    \end{align}
    which is obtained as in 
(\ref{extend:CM}) by replacing $\Delta$ with $\Delta'$. 
It contradicts $\cH^{-1}(i^*\mathbb{P}_{\Delta})\neq 0$,
hence the sequence (\ref{extend:CM}) is exact. 
Then by taking the restriction of (\ref{extend:CM}) to $0\in \Delta$, 
we conclude that the sequence (\ref{extend:CM2}) is exact. 
\end{proof}

We have used the following lemma: 
\begin{lemma}\label{lem:nodeB}
Let $\rB_{\GL_r}^{\mathrm{node}} \subset \rB_{\GL_r}^{\mathrm{cl}}$ be the open subset 
corresponding to spectral curves with at worst nodal singularities. 
Then
\[
\operatorname{codim}(\rB_{\GL_r}^{\mathrm{cl}} \setminus \rB_{\GL_r}^{\mathrm{node}}) \geq 2 .
\]
\end{lemma}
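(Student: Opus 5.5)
The plan is a dimension count on the Hitchin base via the spectral curve family. Write $\rB:=\rB_{\GL_r}^{\mathrm{cl}}$ and let $\cC\subset S\times \rB$ be the universal spectral curve. The complement $\rB\setminus\rB_{\GL_r}^{\mathrm{node}}$ is the image under the proper projection of the locus
\begin{align*}
\Sigma:=\{(b,x)\in \rB\times S : x\in \cC_b \text{ is a singular point which is not an ordinary node}\}.
\end{align*}
A point of this locus satisfies one of two conditions. Either $\cC_b$ is non-reduced along a component through $x$, or $x$ is an isolated singular point with Milnor number $\geq 2$. The isolated case covers cusps, tacnodes, and triple points. It suffices to show $\dim \Sigma\leq \dim\rB-1$, because $\Sigma$ has positive-dimensional fibers over the non-reduced locus. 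In the isolated case the fiber over $b$ is finite, and there we instead need $\dim\Sigma\leq \dim \rB-2$. I will treat the two cases separately.

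\textbf{Isolated case.} Fix $x\in S$ lying over $c\in C$. The key input is that the evaluation map from $\rB$ to the $2$-jets at $x$ of the defining equation $\sum (-1)^i b_i y^{r-i}$ is surjective. I will prove this by choosing local coordinates $(t,y)$ at $x$. The coefficient $b_i$ restricted to the $2$-jet at $c$ is freely prescribable, since $\Omega_C^{i}$ separates $2$-jets for $i\geq 2$ when $g\geq 2$: by Riemann--Roch, $h^1(\Omega_C^i(-3c))=0$ for $i\geq 2$. For $i=1$ one only gets $1$-jets in general.

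With this surjectivity, the conditions ``$f(x)=0$, $df(x)=0$, Hessian degenerate'' cut out codimension at least $1+2+1=4$ in $\rB$ for fixed $x$. Varying $x$ over the surface $S$ gives codimension at least $2$. One subtlety is that only the $y$-derivatives come from the fixed-degree coefficient $b_0=1$. I will handle this by noting that the $1$-jet in $t$ of $b_r$ together with the $2$-jet of $b_{r-1},b_{r-2}$ already fill out the needed constant, linear, and $t^2,ty,y^2$ coefficients. This uses $r\geq2$, and for $r=2$ also $b_1$'s $1$-jet. I expect this jet-separation bookkeeping, including the edge case $r=2$ with the weight-one coefficient, to be the main obstacle. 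If the Hessian condition fails to be independent, the first fallback is to use the $H^0(\Omega_C)$-translation action \eqref{act:gamma} to shift $y$ and absorb the $ty$ term.

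\textbf{Non-reduced case.} The non-reduced locus is the image of the addition maps $\rB_{\GL_{r_1}}\times\rB_{\GL_{r_2}}\to \rB$ with a repeated factor, i.e.\ of $(b',b'')\mapsto 2b'+b''$ with $2r_1+r_2=r$. Its dimension is at most
\begin{align*}
(g-1)(r_1^2+r_2^2)+2\leq (g-1)r^2+1-2,
\end{align*}
since $r^2-r_1^2-r_2^2\geq 3r_1^2\geq 3$ and $g\geq2$. Combining the two cases yields the codimension estimate.
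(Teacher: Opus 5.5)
\textbf{There is a genuine gap: your jet-separation input fails in genus $2$.}

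Your Riemann--Roch claim that $h^1(\Omega_C^i(-3c))=0$ for all $i\geq 2$ and $g\geq 2$ is false when $g=2$, $i=2$. There $h^1(\Omega_C^2(-3c))=h^0(\Omega_C^{-1}(3c))$ with $\deg \Omega_C^{-1}(3c)=1$. This is nonzero exactly when $\Omega_C\cong\cO_C(2c)$, i.e.\ at the six Weierstrass points of $C$. The same points obstruct surjectivity of $H^0(\Omega_C^2)\to\Omega_C^2\otimes\cO_C/m_c^2$ and of $H^0(\Omega_C)\to\Omega_C\otimes\cO_C/m_c^2$.

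Your jet bookkeeping is also off. At $x=(c,0)$, the $2$-jet of $\sum(-1)^ib_iy^{r-i}$ is controlled by the $2$-jet of $b_r$ (coefficients of $1,t,t^2$), the $1$-jet of $b_{r-1}$ (coefficients of $y,ty$) and the value of $b_{r-2}$ (coefficient of $y^2$). It is not controlled by the jets you list, and for $r=2$ the $y^2$-coefficient is the constant $b_0=1$. Consequently, for $g=2$ and $r\in\{2,3\}$ the map $\rB\to J^2_x$ is not surjective for $x$ over a Weierstrass point.

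For $r=2$ even your conclusion fails. After translating $b_1$ away, $\cC_b=\{y^2=b_2\}$ with $b_2=g^*q$ for the hyperelliptic cover $g\colon C\to\mathbb{P}^1$. The curves with a tacnode at a fixed $x$ over a Weierstrass point are those with $q$ having a double zero at the branch point. Equivalently, they are unions of two sections $y=b_1/2\pm s$ with $\operatorname{div}(s)=2c$. This locus has codimension exactly $3$ in $\rB$, not $4$. So the count ``codimension $\geq 4$ at every $x$, minus $\dim S=2$'' does not go through as stated.

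\textbf{How to repair it.} The lemma is still true there, but you need the refined count the paper carries out for $g=2$.
\begin{itemize}
\item The bad points $x$ lie in the six fibres of $S\to C$ over Weierstrass points, a one-dimensional subset of $S$. So codimension $3$ at those $x$ suffices.
\item You must compute the actual image of the jet map at those points. For $r=3$ it is the hyperplane $\{c_{11}=0\}$, and the non-nodal locus still has codimension $4$ inside it. For $r=2$ one uses the hyperelliptic description above.
\item For $g=2$ and $r\geq 4$ your argument works once the jets are bookkept correctly.
\item Reduce to $x=(c,0)$ via the $H^0(\Omega_C)$-action (using that $\Omega_C$ is globally generated) from the outset, not as a fallback. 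At $\lambda\neq 0$ all the $b_i$ mix into the $2$-jet.
\end{itemize}

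For $g\geq 3$ the paper simply cites Kouvidakis--Pantev, and your jet count is a valid self-contained alternative there. The only blemish is that for $r=2$ on a hyperelliptic curve the $1$-jet of $b_1$ is not free at Weierstrass points. This is harmless if you work with the discriminant $b_1^2/4-b_2$, whose $2$-jets are free when $g\geq 3$. Your dimension count for the non-reduced locus is correct.
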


\begin{proof}
The case $g\geq 3$ follows from~\cite[Corollary~1.5 and Remark~1.7]{KouvidakisPantev1995}. 
Indeed, it is proven in loc. cit. that the locus 
$\rB_{\GL_r}^{\mathrm{sg}} \subset \rB_{\GL_r}^{\mathrm{cl}}$ corresponding to singular 
spectral curves is an irreducible hypersurface, and that there exists 
$b\in \rB_{\GL_r}^{\mathrm{sg}}$ such that $\cC_b$ is irreducible with only one node. 
Therefore the generic point of $\rB_{\GL_r}^{\mathrm{sg}}$ corresponds to an 
irreducible curve with only one nodal singularity. 
It follows that the complement of $\rB_{\GL_r}^{\mathrm{node}}$ in 
$\rB_{\GL_r}^{\mathrm{cl}}$ has codimension at least two.

The case $g=2$ has to be treated separately, which we prove in Subsection~\ref{subsec:g=2}.

\end{proof}

\textit{Proof of Proposition~\ref{prop:resol}}
\begin{proof}
Let $Q=\bigoplus_{p\in N}Q_p$ be the zero-dimensional sheaf as in the exact sequence (\ref{exact:LE}). 
We prove the proposition by induction on $\chi(Q)\geq 0$. 
The base case, $\chi(Q)=0$, is obvious since $E=\mathcal{L}$ 
and $\cP_E=\mathcal{V}_{E}$ in this case. 

Suppose that $\chi(Q)>0$. 
    By Proposition~\ref{prop:exact2}, 
    there is an exact sequence of the form 
    \begin{align*}
        \cdots\to \mathcal{V}_{p}\otimes \cP_{E'} \to \mathcal{V}_p\otimes \cP_{E'} \to 
        \mathcal{V}_p\otimes \cP_{E'} \to \cP_E \to 0.
    \end{align*}
    Since $\chi(Q')=\chi(Q)-1$ in the sequence (\ref{exact:E2}), 
    we can apply the induction hypothesis for $\cP_{E'}$. Then the claim 
    for $\cP_E$ holds by the above exact sequence together with 
    $\mathcal{V}_p\otimes \mathcal{V}_{E'}\cong \mathcal{V}_E$. 
\end{proof}

\section[Whittaker normalization over the type A-locus]{Whittaker normalization over the type \texorpdfstring{$A$}{A}-locus}
In this section, we prove Conjecture~\ref{conj:Whit} over the type $A$-locus 
of the Hitchin base. Below we keep the setting of the previous section. 
We may further assume that $w> 0$ by the periodicity 
of Higgs moduli stacks of degree/weight by $r$, e.g.\ we may replace $w$ with $w+kr$ for $k\gg 0$ if necessary, 
see~\cite[Remark~2.14]{TodaGL2}. 
Since the proof is long, 
we first give an idea of a proof in Subsection~\ref{subsec:idea}. 
\subsection{Idea of the proof}\label{subsec:idea}
We first prove the following statement (see Theorem~\ref{thm:PhiO}):
\begin{align}\label{idea:j!}
\Phi(\cO_{\cH(w)^{\mathrm{ss}}}) \in j_{!}\LL(\cH(\chi_0)^{\mathrm{ss}})_{w'}.
\end{align}
Here $j \colon \cH(\chi_0)^{\mathrm{ss}} \hookrightarrow \cH(\chi_0)$ is the open immersion. 
This is a necessary condition for Conjecture~\ref{conj:Whit} to hold.
We will derive~\eqref{idea:j!} from the weight estimates for the Arinkin sheaf obtained in 
the previous section. 

One of the difficulties in proving~\eqref{idea:j!} is that $\cH(w)^{\mathrm{ss}}$ is an Artin stack, 
rather than a scheme. Suppose that there were an open substack
$\cH^{\circ} \subset \cH(w)^{\mathrm{ss}}$
satisfying the following properties:
\begin{enumerate}
    \item $\cH^\circ$ is ``almost'' a scheme; more precisely, it is a $\mathbb{G}_m$-gerbe over a 
    scheme which is proper over $\cB$;
    \item for all $y\in \cH^{\circ}$, we have 
    \begin{align}\label{idea:Phi}
        \Phi(\cO_y) \in  j_{!}\LL(\cH(\chi_0)^{\mathrm{ss}})_{w'};
    \end{align}
    \item the natural map
    \[
        \Phi(\cO_{\cH(w)^{\mathrm{ss}}})\to \Phi(\cO_{\cH^{\circ}})
    \]
    is an isomorphism. 
\end{enumerate}
If (i), (ii), and (iii) hold, then intuitively we would like to conclude that
\begin{align*}
   \Phi(\cO_{\cH(w)^{\mathrm{ss}}})
   \cong \Phi(\cO_{\cH^{\circ}}) 
   \mathrel{\text{``=``}} 
   \int_{y\in \cH^{\circ}}\Phi(\mathcal{O}_y)
   \in j_{!}\LL(\cH(\chi_0)^{\mathrm{ss}})_{w'}.
\end{align*}

Ideally, we would like to construct $\cH^{\circ} \subset \cH(w)^{\mathrm{ss}}$ by deforming a 
stability condition so that the ``semistable = stable'' condition holds, and then define
$\cH^{\circ}$ to be the corresponding semistable locus. The condition~\eqref{idea:Phi} would then
follow by modifying the proof of Corollary~\ref{cor:PE}, imposing the additional condition that
$y$ corresponds to a semistable sheaf with respect to the deformed stability condition. 

However, there is essentially only one stability condition for Higgs bundles on curves, since the 
Picard number of $S=\mathrm{Tot}_C(\Omega_C)$ is one. Instead, we take \'etale coverings of the 
Hitchin base in order to allow more deformations of stability conditions. Namely, we take an
\'etale base change $T\to \cB$, and find a suitable perturbation of a stability condition in order
to construct the desired open substack
$\cH_T^{\circ}$ of $\cH(w)^{\mathrm{ss}}\times_{\cB}T$. We also construct 
$\cH_T^{\circ}$ step by step for each Harder--Narasimhan stratum of $\cH(\chi_0)$, rather than as
a single semistable locus as in (ii). 

Once~\eqref{idea:j!} is proved, the claim is reduced to showing that
(see Theorem~\ref{thm:Whit:intro})
\begin{align}\label{isom:sbar}
    \Phi(\cO_{\cH(w)^{\mathrm{ss}}})|_{\cH(\chi_0)^{\mathrm{ss}}}
    \cong \overline{s}_! \mathcal{O}_{\cB},
\end{align}
where $\overline{s}$ is the Hitchin section for the semistable locus 
$\cH(\chi_0)^{\mathrm{ss}} \to \cB$. The point is that, since $\overline{s}$ is ``almost'' 
a closed immersion---more precisely, a closed immersion after $\mathbb{G}_m$-rigidification---the object 
$\overline{s}_!(\cO_{\cB})$ is much easier to understand than $s_{!}\cO_{\cB}$; it is 
``almost'' the usual $*$-pushforward of $\cO_{\cB}$ as stated in Lemma~\ref{lem:factori}.
Then the isomorphism~\eqref{isom:sbar} is proved by combining the perturbation of stability 
conditions after the above \'etale base change with the derived equivalence for 
stable sheaves with respect to a generic stability condition in Theorem~\ref{thm:MRV}.

\subsection[Maps from BGm: spectral and automorphic side]{Maps from \texorpdfstring{$\bgm$}{BGm}: spectral and automorphic side}\label{subsec:setting}
In this subsection, we fix the notation needed to implement the strategy outlined in the previous subsection.
In what follows, we fix $b\in \cB$ and take maps from $\bgm$ to both $\cH(w)^{\mathrm{ss}}$ and $\cH(\chi)$
(or their \'{e}tale base changes) which map to $b$. In order to avoid the confusion, 
we say that a construction in $\cH(w)^{\mathrm{ss}}$ is on the \textit{spectral side} and
a construction in $\cH(\chi)$ is on the \textit{automorphic side}. 

\subsubsection*{Maps corresponding to HN filtration (automorphic side)}
We take a map 
\begin{align}\label{nu:F12}
    \nu \colon \bgm \to \cH(\chi)
\end{align}
over $b\in \cB$, 
corresponding to the center of a Harder--Narasimhan stratum of $\cH(\chi)$, 
see Remark~\ref{rmk:jshrink}. 
Namely the map $\nu$ corresponds to a direct sum 
\begin{align*}
    M=M_1\oplus M_2 \oplus \cdots \oplus M_k
\end{align*}
such that $M_i\in \Coh^{\heartsuit}(\cC_b)$ is semistable, and by setting 
\begin{align}\label{Ci}
    C_i=\mathrm{Supp}(M_i), \ r_i=\operatorname{rank}(\pi_{*}M_i), \ \chi_i=\deg \pi_{*}M_i
\end{align}
we have the Harder--Narasimhan inequalities
\begin{align}\label{slope:rchi}
    \frac{\chi_1}{r_1}>\cdots>\frac{\chi_k}{r_k}, \ (r_1, \chi_1)+\cdots+(r_k, \chi_k)=(r, \chi).
\end{align}
Moreover the $\mathbb{G}_m$-weight on $M$ is given by 
\begin{align*}(\nu_1, \nu_2, \cdots, \nu_k), \ \nu_1>\nu_2>\cdots>\nu_k. 
\end{align*}
Note that we have 
\begin{align*}
    \cC_b=C_1 \cup \cdots \cup C_k. 
\end{align*}

\subsubsection*{Perturbation of an ample divisor after \'{e}tale base change}
For a map $T\to \cB$, we denote by $(-)_T$ the base change for $T\to \cB$, i.e.\ 
\begin{align*}\cC_T:=\cC\times_{\cB}T, \ 
\cH_T:=\cH\times_{\cB} T.
\end{align*}
The pull-back of the Arinkin sheaf (\ref{Arsheaf0}) is denoted by 
\begin{align}\label{P:pullT}
    \cP_T \in \Coh^{\heartsuit}(\cH_T(w)^{\mathrm{ss}} \times_T \cH_T(\chi))
\end{align}
and the associated Fourier--Mukai functor is denoted by 
\begin{align*}
    \Phi_T \colon \Coh(\cH_T(w)^{\mathrm{ss}})_{-\chi'} \to \Coh(\cH_T(\chi))_{w'}.
\end{align*}

Below we fix $b\in \cB$, and let 
\begin{align*}
    \cC_b=\cC_b^{(1)}\cup \cdots \cup \cC_b^{(m)}
\end{align*}
be the decomposition into irreducible components. 
We will use the following lemma: 
\begin{lemma}\label{lem:etale}
There is an \'etale neighborhood $(T, 0) \to (\cB, b)$
and Cartier divisors $H^{(1)}, \ldots, H^{(m)}$
on $\cC_T$ such that $H^{(i)} \cdot \cC_b^{(j)}=\delta_{ij}$.
Here the fiber of $\cC_T \to T$ at $0$ is identified with $\cC_b$. 
\end{lemma}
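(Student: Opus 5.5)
The plan is to produce, after an étale base change, one point on each irreducible component of $\cC_b$ that extends to a section of the family, and to take the $H^{(i)}$ to be the divisors traced out by these sections.

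\textbf{Choosing the points.} Since $\cC_b$ is reduced, each component $\cC_b^{(i)}$ has a dense open set of points that are smooth points of $\cC_b$. In particular they lie on no other component. Fix such a point $x_i \in \cC_b^{(i)}$ for $1\leq i\leq m$, with the $x_i$ pairwise distinct.

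\textbf{Extending to sections.} The universal spectral curve $\cC \to \cB$ is flat, being a family of divisors in $S$. Its fiber at $b$ is smooth at $x_i$, so $\cC \to \cB$ is smooth at $x_i$. A smooth morphism admits sections through any given point étale locally on the base. Hence there is an étale neighborhood $(T_i,0)\to(\cB,b)$ and a section $\sigma_i \colon T_i \to \cC_{T_i}$ with $\sigma_i(0)=x_i$, landing in the smooth locus of $\cC_{T_i}\to T_i$. Let $T$ be a connected component of $T_1\times_{\cB}\cdots\times_{\cB}T_m$ containing the point $(0,\ldots,0)$. Then all $m$ sections are defined on $T$.

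\textbf{Defining the divisors.} Set $H^{(i)} := \sigma_i(T) \subset \cC_T$. This is a closed subscheme isomorphic to $T$, contained in the open locus $U_i$ where $\cC_T \to T$ is smooth of relative dimension one. A section of a smooth morphism of relative dimension one is a regular immersion of codimension one, so $H^{(i)}$ is an effective Cartier divisor on $U_i$. Shrinking $T$ around $0$, we may assume the section stays away from the non-smooth locus of $\cC_T\to T$. That locus is closed, and $\sigma_i(0)=x_i$ lies outside it. Then $H^{(i)}$ is closed in $\cC_T$ and disjoint from the non-smooth locus. Its ideal sheaf is therefore invertible on all of $\cC_T$: on $U_i$ by the previous sentence, and trivially on the complement of $H^{(i)}$. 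So $H^{(i)}$ is a Cartier divisor on $\cC_T$.

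\textbf{Computing the intersection numbers.} The divisor $H^{(i)}$ is flat over $T$, since it is isomorphic to $T$. So its restriction to the fiber at $0$ is the reduced point $x_i$, which is a smooth point of $\cC_b$. Its intersection number with $\cC_b^{(j)}$ is the degree of $\cO(H^{(i)})|_{\cC_b^{(j)}}$. This degree is $1$ if $x_i\in\cC_b^{(j)}$ and $0$ otherwise. Since $x_i$ lies only on $\cC_b^{(i)}$, we get $H^{(i)}\cdot\cC_b^{(j)}=\delta_{ij}$.

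\textbf{Main subtlety.} The only point needing care is showing that $H^{(i)}$ is Cartier on the whole singular total space $\cC_T$, and not merely on the smooth locus. This is handled by the shrinking step above.
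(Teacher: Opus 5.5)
Your proof is correct and follows essentially the paper's route. The paper makes the étale-local sections explicit: it slices $\cC$ by a general hypersurface $U$ meeting $\cC_b$ at smooth points, takes small opens $U^{(i)}\subset U$ that are étale over $\cB$ with their tautological sections (the standard proof of the section property of smooth morphisms you invoke), and sets $H^{(i)}$ to be these sections over $T=U^{(1)}\times_{\cB}\cdots\times_{\cB}U^{(m)}$; your check that $H^{(i)}$ is Cartier on all of $\cC_T$ and your degree computation fill in what the paper leaves as ``by the construction''.
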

\begin{proof}
Let $U\hookrightarrow \cC$ be a general hypersurface such that 
it intersects $\cC_b$ at smooth points of $\cC_b$. 
For each $1\leq i\leq m$, choose $t^{(i)} \in U\cap \cC_b^{(i)}$, 
and an open neighborhood $t^{(i)} \in U^{(i)} \subset U$ such that 
$U^{(i)} \to \cB$ is \'etale. 
Then we have the section 
\begin{align}\label{sec:U}
    U^{(i)}\hookrightarrow \cC_{U^{(i)}}
\end{align}
which intersects at $\cC_{U^{(i)}}\times_{U^{(i)}}\{t^{(i)}\}=\cC_b$
only at $t^{(i)} \in \cC_b$. 
We set 
\begin{align*}
    T=U^{(1)}\times_{\cB} \cdots \times_{\cB}U^{(m)}, \ 0=(t^{(1)}, \ldots, t^{(m)})
\end{align*}
and $H^{(i)} \subset \cC_T$ the pull-back of the section (\ref{sec:U}) for each $i$. By the construction, it satisfies $H^{(i)} \cdot \cC_b^{(j)}=\delta_{ij}$. 
\end{proof}

We take an \'{e}tale neighborhood 
\begin{align*}(T, 0) \to (\cB, b)\end{align*} 
as in Lemma~\ref{lem:etale}.
Let $\nu \colon \bgm \to \cH(\chi)$ be
a map as (\ref{nu:F12}), 
with $(C_i, r_i, \chi_i)$ as in (\ref{Ci}). 
Then by Lemma~\ref{lem:etale}, there is 
a $\mathbb{Q}$-ample divisor $h_T^{\varepsilon}$ 
on $\cC_T$ 
for $0<\varepsilon \ll 1$ satisfying 
the following: for any irreducible component $\cC_b^{(a)}$ of $\cC_b$
with $\cC_b^{(a)} \subset C_i$ 
and $r^{(a)}=\operatorname{rank}(\pi_{*}\cO_{\cC_b^{(a)}})$, 
we have 
\begin{align}\label{div:hT}
    h_T^{\varepsilon} \cdot \cC_b^{(a)}=r^{(a)}\left(1+ \left(\frac{\chi_i}{r_i}-\frac{\chi}{r}\right)\varepsilon \right). 
\end{align}
The ample divisor $h_T^{\varepsilon}$ is a small perturbation of $\pi_T^* h$ where $h$ is a degree 
one ample divisor on $C$ and $\pi_T$ is the projection 
$\pi_T \colon \cC_T \to C$. Note that we have to take an \'{e}tale cover for the existence of 
such a $\mathbb{Q}$-ample divisor. 
By the construction, the map $\nu$ lifts to a map over $0\in T$, which 
we denote by the same symbol $\nu$
\begin{align*}
    \nu \colon \bgm \to \cH_T(\chi).
\end{align*}

The $\mathbb{Q}$-ample divisor $h_T^{\varepsilon}$ defines the refinement of 
stability conditions on $\Coh^{\heartsuit}(\cC_t)$ for $\cC_t=\cC_{T}\times_T \{t\}$ with $t\in T$. 
Namely let $\mu^{\varepsilon}(E)$ for $E\in \Coh^{\heartsuit}(\cC_t)$ be defined by 
\begin{align}\label{slope:mue} 
    \mu^{\varepsilon}(E):=\frac{\deg \pi_{*}E}{h_T^{\varepsilon}\cdot [E]} \in \mathbb{Q} \cup \{\infty\}
\end{align}
where $[E]$ is the fundamental one-cycle associated with $E$. 
Then $E$ is $h_T^{\varepsilon}$-(semi)stable 
if we have the inequality 
\begin{align}\label{ineq:muep}
 \mu^{\varepsilon}(E') \leq (<)  \mu^{\varepsilon}(E)
\end{align}
for any non-trivial subsheaf $0\neq E' \subsetneq E$ in $\Coh^{\heartsuit}(\cC_t)$. (Here, recall that $E'$ corresponds to sub-Higgs bundle under the spectral 
correspondence.)
Note that for $\varepsilon=0$, the above stability condition is the same
as the usual stability condition for Higgs bundles. 

\subsubsection*{The HN stratification for $h_T^{\varepsilon}$-stability (spectral side)}
For $0<\varepsilon \ll 1$, the above $h_T^{\varepsilon}$ refines the 
stability condition for $\varepsilon=0$. Therefore we have the corresponding
Harder--Narasimhan stratification with respect to $h_T^{\varepsilon}$, which is 
a 
$\Theta$-stratification in~\cite{Halpinstab}:
\begin{align}\label{Theta:eps}
    \cH_T(w)^{\mathrm{ss}}=\cH_T(w)^{\varepsilon \text{-ss}}\sqcup \mathcal{S}_1 \sqcup \cdots \sqcup \mathcal{S}_N 
\end{align}
with center $\cS_i \to \mathcal{Z}_i$. We regard $\cS_i$, $\mathcal{Z}_i$ as derived stacks where the derived structures are given by the descriptions as mapping stacks from $\Theta$ or $\bgm$ as in Subsection~\ref{subsec:notation0}, see~\cite{Halpinstab, HalpK32}.
We have the corresponding diagram 
\begin{align}\label{dia:S}
\xymatrix{
\cS_i \inclusion \ar[d] 
& \cH_T(w)^{\mathrm{ss}}. \\
\mathcal{Z}_i \ar[ru] \ar@/^12pt/[u]
}
\end{align}

Let $\lambda \colon \bgm \to \mathcal{Z}_1$ be the canonical 
map over $0\in T$. By the identification $\cC_T\times_T \{0\}=\cC_b$, it corresponds to a direct sum 
\begin{align}\label{map:lambda}
    \lambda \colon \mathrm{pt} \mapsto E_1 \oplus \cdots \oplus E_a, \ E_j \in \Coh^{\heartsuit}(\cC_b)
\end{align}
where each $E_j$ is $h_T^{\varepsilon}$-semistable such that 
\begin{align}\label{slope:cond}
 \mu^{\varepsilon}(E_1)>\cdots>\mu^{\varepsilon}(E_a), \ 
 \mu^0(E_1)=\cdots=\mu^0(E_a). 
 \end{align}
 Moreover the $\mathbb{G}_m$-weight of $E_1 \oplus \cdots \oplus E_a$ is 
 of the form 
\begin{align*}(\lambda_1, \ldots, \lambda_a), \ 
\lambda_1>\cdots>\lambda_a.
\end{align*}

Below we write 
\begin{align*}
D_j=\mathrm{Supp}(E_j), \ R_j=\operatorname{rank}(\pi_{*}E_j), \ 
C_{ij}=C_i \cap D_j, \ r_{ij}=\operatorname{rank}(\pi_{*}\mathcal{O}_{\cC_{ij}}).
\end{align*}
Note that 
\begin{align*}
    r_i=r_{i1}+\cdots+r_{ia}, \ R_j=r_{1j}+\cdots+r_{kj}
\end{align*}
and that $\deg \pi_{*}E_j=R_j \cdot w/r$ by the second condition of (\ref{slope:cond}). 
\begin{lemma}\label{slope:equiv}
   The condition (\ref{slope:cond}) implies that 
\begin{align}\label{slope:cond2}
\frac{1}{R_j}\sum_{i=1}^k \frac{r_{ij}}{r_i}\chi_i < 
\frac{1}{R_{j'}}\sum_{i=1}^k \frac{r_{ij'}}{r_i}\chi_i, \ j'>j.
\end{align} 
\end{lemma}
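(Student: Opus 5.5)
The plan is to compute $\mu^{\varepsilon}(E_j)$ to first order in $\varepsilon$ using \eqref{div:hT}, and then read off \eqref{slope:cond2} from the first inequality in \eqref{slope:cond}. Write $D_j=\bigcup_i C_{ij}$. Each $C_{ij}$ is a union of irreducible components $\cC_b^{(a)}$ contained in $C_i$, and $\sum_{\cC_b^{(a)}\subset C_{ij}} r^{(a)}=r_{ij}$. Since $E_j$ is rank one torsion-free on its support $D_j$, the fundamental cycle is $[E_j]=\sum_{\cC_b^{(a)}\subset D_j}\cC_b^{(a)}$. Summing \eqref{div:hT} over these components gives
\begin{align*}
h_T^{\varepsilon}\cdot [E_j]=\sum_{i=1}^k r_{ij}\left(1+\left(\frac{\chi_i}{r_i}-\frac{\chi}{r}\right)\varepsilon\right)
=R_j\left(1+\varepsilon\Bigl(\frac{1}{R_j}\sum_{i=1}^k\frac{r_{ij}}{r_i}\chi_i-\frac{\chi}{r}\Bigr)\right).
\end{align*}
Set $s_j:=\frac{1}{R_j}\sum_i \frac{r_{ij}}{r_i}\chi_i$. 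The second condition of \eqref{slope:cond} gives $\deg\pi_*E_j=R_j w/r$. Hence
\[
\mu^{\varepsilon}(E_j)=\frac{w/r}{1+\varepsilon(s_j-\chi/r)}.
\]

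We assumed $w>0$ at the start of this section, so $w/r>0$. For $0<\varepsilon\ll1$ all denominators are positive, and $x\mapsto \frac{w/r}{1+\varepsilon x}$ is strictly decreasing. Therefore $\mu^{\varepsilon}(E_j)>\mu^{\varepsilon}(E_{j'})$ for $j<j'$ is equivalent to $s_j<s_{j'}$, which is exactly \eqref{slope:cond2}. Because the formula for $\mu^{\varepsilon}$ is exact rather than a first-order approximation, we do not need any separate argument about higher-order terms in $\varepsilon$.

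The only point that needs care is the identification of $[E_j]$ with the reduced cycle of $D_j$ split along the $C_{ij}$. This uses that $\cC_b$ is reduced, so each component appears with multiplicity one, and that $E_j$ has rank one on every component of its support. The sign $w>0$ also has to be tracked, since for $w<0$ the inequality would reverse. Beyond that the argument is a direct computation.
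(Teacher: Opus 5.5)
Your proof is correct and follows the paper's argument: the paper likewise computes $\mu^{\varepsilon}(E_j)=\frac{w}{r}\cdot\frac{R_j}{\sum_{i} r_{ij}\left(1+\left(\frac{\chi_i}{r_i}-\frac{\chi}{r}\right)\varepsilon\right)}$ from \eqref{div:hT} and $\deg\pi_*E_j=R_j w/r$, then concludes using $w>0$. Your remarks that $[E_j]$ is the reduced cycle of $D_j$ and that the formula is exact in $\varepsilon$ just make explicit what the paper leaves implicit.
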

\begin{proof}
The lemma follows from the following computation, together with the $w>0$ assumption: 
\begin{align*}
    \mu^{\varepsilon}(E_j)=\frac{w}{r} \cdot \frac{R_j}{\sum_{i=1}^k r_{ij}\left(1+ \left(\frac{\chi_i}{r_i}-\frac{\chi}{r}\right)\varepsilon \right)}.
\end{align*}
\end{proof}

We will also use the following lemma: 
\begin{lemma}\label{lem:SZqsm}
    The stacks $\cS_i$, $\mathcal{Z}_i$ are quasi-smooth, 
    and the maps $\cS_i \to \mathcal{Z}_i$, $\cS_i \hookrightarrow \cH_T(w)^{\mathrm{ss}}$ are quasi-smooth. 
\end{lemma}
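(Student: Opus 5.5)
We reduce quasi-smoothness of $\cS_i$, $\mathcal{Z}_i$ and of the two maps to a cotangent-complex amplitude computation, using the description of $\mathcal{Z}_i$ and $\cS_i$ as open and closed substacks of $\Map(\bgm,\cH_T(w)^{\mathrm{ss}})$ and $\Map(\Theta,\cH_T(w)^{\mathrm{ss}})$. Since $T\to\cB$ is étale, $\cH_T(w)^{\mathrm{ss}}$ is étale over an open substack of $\cH$, and $\cH\times k[-1]$ is open in the quasi-smooth stack $\Hig_{\GL_r}$. At a point $E$ over $t\in T$ we therefore have
\[
\mathbb{T}_{\cH_T}|_E \simeq \mathrm{cone}\bigl(\Hom_S(E,E)[1]\to \cO\text{-part from } k[-1]\bigr),
\]
which is perfect of amplitude $[-1,1]$.

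The first step is to identify the fibers of the tangent complexes by the standard formulas for mapping stacks; see \cite{Halpinstab, HalpK32}. Take a $k$-point of $\mathcal{Z}_i$, i.e.\ $\lambda\colon\bgm\to\cH_T(w)^{\mathrm{ss}}$ with image $\oplus_j E_j$. Then $\mathbb{T}_{\mathcal{Z}_i}|_\lambda=(\lambda^*\mathbb{T}_{\cH_T})^{\mathbb{G}_m}$ is the weight-zero part, $\mathbb{T}_{\cS_i}|=(\lambda^*\mathbb{T}_{\cH_T})^{\geq 0}$, and the relative tangent complexes are as follows:
\begin{itemize}
\item for $\cS_i\to\mathcal{Z}_i$, the part of weight $>0$;
\item for $\cS_i\to\cH_T(w)^{\mathrm{ss}}$, the part of weight $<0$ shifted by $[-1]$.
\end{itemize}
With $\lambda^*\mathbb{T}_{\cH}=\Hom_S(\oplus E_j,\oplus E_j)[1]$ modulo the weight-zero $k[-1]$-factor and plus the $T$-directions (also weight zero), the weight-$\ne 0$ parts are $\oplus_{j\neq j'}\Hom_S(E_j,E_{j'})[1]$. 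These are perfect of amplitude $[-1,1]$ because $S$ is a surface and $E_j$ are pure one-dimensional sheaves. Hence $\mathbb{T}_{\cS_i\to\mathcal{Z}_i}$ has amplitude $[-1,1]$.

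The second step handles the remaining three pieces. The weight-zero part of $\lambda^*\mathbb{T}_{\cH_T}$ is a direct summand of a complex of amplitude $[-1,1]$, so it has amplitude $[-1,1]$; this gives $\mathcal{Z}_i$. The nonnegative part does too, which gives $\cS_i$. The delicate piece is the relative tangent of the closed immersion $\cS_i\hookrightarrow\cH_T(w)^{\mathrm{ss}}$, namely $(\oplus_{j>j'}\Hom_S(E_j,E_{j'})[1])[-1]$ up to orientation, i.e.\ $\oplus\Hom_S(E_j,E_{j'})$. It sits in degrees $[0,2]$, and we need it in $[0,1]$ (relative tangent of a quasi-smooth closed immersion lives in degree $1$ only, cotangent in $-1$). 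So we must show that $\Ext^2_S$ and $\Hom_S$ vanish for the relevant pairs:
\[
\Ext^2_S(E_j,E_{j'})=\Hom_S(E_{j'},E_j)^\vee=0 \quad\text{and}\quad \Hom_S(E_j,E_{j'})=0,
\]
using $K_S\cong\cO_S$ together with the slope conditions \eqref{slope:cond} and $h_T^\varepsilon$-semistability of the $E_j$.

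Concretely, $\Hom(E_j,E_{j'})=0$ for $\mu^\varepsilon(E_j)>\mu^\varepsilon(E_{j'})$ when $j<j'$. This weight direction, the one dual to the attracting direction, is where the HN ordering enters, so the correct vanishing of $\Hom$ is automatic in one direction. For the other, $\Hom_S(E_{j'},E_j)$ with $j<j'$, we would need vanishing in $\Ext^2$ only, which by Serre duality is again a $\Hom$ in the vanishing direction. Checking which direction is needed is the main obstacle: keeping track of sign conventions so that the cohomological amplitude lands in $[-1,1]$ rather than an asymmetric range.

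If both Hom groups are not forced to vanish, we instead argue that it suffices to have amplitude $[-1,1]$ for the cotangent complex of the map itself, not degree $1$ only. For $\Hom_S(E_j,E_{j'})[\,\cdot\,]$ with the shift making it $\Ext^{\bullet+1}$, we get amplitude in $[-1,1]$ directly from $\Ext^{\leq 2}_S$. In that case no vanishing is needed at all, and quasi-smoothness follows from the $[-1,1]$ amplitude in all four cases.
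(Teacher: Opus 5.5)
There is a genuine gap, and it sits in the one step of this lemma that is not formal: quasi-smoothness of the closed immersion $\cS_i\hookrightarrow\cH_T(w)^{\mathrm{ss}}$. Your treatment of $\cS_i$, $\mathcal{Z}_i$ and $\cS_i\to\mathcal{Z}_i$ through the weight pieces of $\lambda^*\mathbb{T}$ is fine; the paper simply quotes \cite[Lemma~3.1.4]{HalpK32} for these.

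For the closed immersion, your fallback ``no vanishing is needed at all'' is false. By \cite[Lemma~1.3.2]{HalpK32},
$\lambda^*\mathbb{L}_{\cS_i/\cH_T(w)^{\mathrm{ss}}}\simeq\widetilde{\lambda}^*\mathbb{L}^{>0}_{\cH_T(w)^{\mathrm{ss}}}[1]$.
This is a sum of complexes $\mathrm{RHom}_S(E_j,E_{j'})[2]$ over the pairs $j\neq j'$ in one fixed order. It therefore has amplitude $[-2,0]$, not $[-1,1]$: you used the shift $[1]$ of $\mathbb{L}_{\cH_T}$ and forgot the extra shift in the relative term.
\begin{itemize}
\item The degree-$0$ piece is dual to Homs from a higher-slope factor into a lower-slope one. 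It vanishes by semistability, as it must for a closed immersion.
\item The degree-$(-2)$ piece is $\mathfrak{g}_x^{>0}$, the positive-weight part of $\Hom_S(\oplus_j E_j,\oplus_j E_j)$. It consists of Homs from lower-slope factors into higher-slope ones.
\end{itemize}
Your claim that Serre duality turns the needed $\Ext^2$-vanishing into ``a Hom in the vanishing direction'' is backwards. $\Ext^2_S(E_j,E_{j'})\cong\Hom_S(E_{j'},E_j)^\vee$ is precisely the Hom that semistability does not control. It is nonzero in general, for instance when two HN factors are supported on the same reduced curve, as can happen over non-reduced spectral curves. So HN strata need not be quasi-smooth closed immersions, and slope arguments alone cannot prove this statement.

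The missing input is the support argument the paper uses.
\begin{itemize}
\item $\cC_b$ is reduced and the $E_j$ are the HN factors of a rank-one torsion-free sheaf. Hence $D_j=\mathrm{Supp}(E_j)$ and $D_{j'}$ share no irreducible component for $j\neq j'$.
\item The $E_j$ are pure one-dimensional. A map $E_j\to E_{j'}$ would have zero-dimensional image inside a pure sheaf, so $\Hom_S(E_j,E_{j'})=0$ for all $j\neq j'$, regardless of slopes.
\item Serre duality with $K_S\cong\cO_S$ then gives $\Ext^2_S(E_j,E_{j'})=0$.
\end{itemize}
So $\lambda^*\mathbb{L}_{\cS_i/\cH_T(w)^{\mathrm{ss}}}$ is concentrated in degree $-1$. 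Equivalently, the stabilizers are tori (cf.\ Remark~\ref{rmk:nilp}), so $\mathfrak{g}_x^{>0}=0$.

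You also need to pass from points in the image of $\mathcal{Z}_i$ to all of $\cS_i$. The paper does this using openness of the quasi-smooth locus. Every point of $\cS_i$ flows into the image of $\mathcal{Z}_i$ under the $\mathbb{G}_m$-action, so any open neighbourhood of that image is all of $\cS_i$.
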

\begin{proof}
    The statements of the lemma except the quasi-smoothness of $\cS_i \hookrightarrow \cH_T(w)^{\mathrm{ss}}$ follow from~\cite[Lemma~3.1.4]{HalpK32}.
    We prove the quasi-smoothness of $\cS_i \hookrightarrow \cH_T(w)^{\mathrm{ss}}$. For simplicity, we only prove the case $i=N$, i.e. for the closed stratum. 
    For a map $\lambda$ as in (\ref{map:lambda}), by~\cite[Lemma~1.3.2]{HalpK32}, we have 
    \begin{align}\label{lambda:pull}
        \lambda^* \mathbb{L}_{\cS_N/\cH_T(w)}=\widetilde{\lambda}^* \mathbb{L}_{\cH_T(w)}^{>0}[1]=  \bigoplus_{j<j'}\Hom(E_j, E_{j'})[2].
    \end{align}
    Here $\widetilde{\lambda}$ is the composition of $\lambda$ with 
    $\mathcal{Z}_N \to \cH_T(w)^{\mathrm{ss}}$. 
   We have $\Hom(E_j, E_{j'})=0$ for $j\neq j'$
   as $D_j$ and $D_{j'}$ do not have common 
   irreducible components and $E_j$ are pure one-dimensional sheaves on $D_j$. 
   By the Serre duality, we also have $\Hom^2(E_j, E_{j'})=0$. 
   Therefore (\ref{lambda:pull}) has only degree $-1$-cohomology, hence 
   $\cS_N \hookrightarrow \cH_T(w)^{\mathrm{ss}}$ is quasi-smooth
   at the image of $\lambda$. Since this holds for any $\lambda$, and 
   any open neighborhood of the image of $\mathcal{Z}_N \to \cS_N$ is 
   $\cS_N$ itself, we conclude that $\cS_N \hookrightarrow \cH_T(w)^{\mathrm{ss}}$ is quasi-smooth. The same argument applies 
   to all other stratum $\cS_i \hookrightarrow \cH_T(w)^{\mathrm{ss}}$. 
\end{proof}

We took maps from $\bgm$ both to $\cH(\chi)$ and $\cH(w)^{\mathrm{ss}}$. The notation
is summarized in Table~\ref{tab:notation}. 
\begin{table}[t]
\centering
\caption{Notation on the automorphic and spectral sides}
\label{tab:notation}
\begin{tabular}{l|c|c}
 & automorphic side & spectral side
 \\
\hline
moduli stack 
& $\mathcal{H}(\chi)$
& $\mathcal{H}(w)^{\mathrm{ss}}$
 \\
maps from $\bgm$
& $\nu$
& $\lambda$
 \\
 index
& $1\leq i\leq k$
& $1\leq j\leq a$
 \\
$\mathbb{G}_m$-weights
& $\nu_i$
& $\lambda_j$
 \\
components
& $M_i$
& $E_j$
 \\
ranks
& $r_i$
& $R_j$
 \\
degrees
& $\chi_i$
& $R_j \cdot w/r$
 \\
supports
& $C_i$
& $D_j$
\end{tabular}
\end{table}

\subsection{Proof of cohomology vanishings}\label{subsec:pfvanish}
In this subsection, we give a certain local cohomology vanishing which 
will be used to show the isomorphism in (iii) in Subsection~\ref{subsec:idea}. 
Below, we follow the setting in the previous subsection. 

 For $A \in \LL(\cH(w)^{\mathrm{ss}})_{-\chi'}$, its $*$-pull-back to $\cH_b$ is denoted by 
    \begin{align*}A_b:=A|_{\cH_b} \in \Coh(\cH_b(w)^{\mathrm{ss}})_{-\chi'}.
    \end{align*}
    We regard objects in $\Coh(\cH_b(w)^{\mathrm{ss}})$ as an object in $\Coh(\cH_T(w)^{\mathrm{ss}})$
    by the $*$-push-forward along the closed immersion \begin{align*}\cH_b(w)^{\mathrm{ss}}=\cH_T(w)^{\mathrm{ss}}\times_T \{0\}\hookrightarrow \cH_T(w)^{\mathrm{ss}},
    \end{align*}
    For example, $A_b \in \Coh(\cH_T(w)^{\mathrm{ss}})$.

    For simplicity, we write $\cS=\cS_N$ and $\mathcal{Z}=\mathcal{Z}_N$
    for the HN stratification (\ref{Theta:eps}). 
Then we have
\begin{align}
    \label{show:zero}
    \cHom_{\cH_T(w)^{\mathrm{ss}}}(\mathcal{O}_{\cS}, A_b \otimes \mathcal{V}_M)
    \cong (A_b \otimes \mathcal{V}_M)|_{\cS}\otimes \det \mathbb{L}_{\cS/\cH_T(w)^{\mathrm{ss}}}[-d].
\end{align}
Here $d$ is the relative dimension of $\cS$ in $\cH_T(w)^{\mathrm{ss}}$. 
The above isomorphism holds by the adjunction $(i_{S*}, i_{S}^{!})$
for the closed immersion $i_S \colon \cS \hookrightarrow \cH_T(w)^{\mathrm{ss}}$, where, as $i_S$ is quasi-smooth by Lemma~\ref{lem:SZqsm}
 we have (see Subsection~\ref{subsec:notation0} or~\cite[Proposition~7.3.8]{MR3136100})
 \begin{align*}
     i_S^!(-)=i_S^*(-) \otimes \det \mathbb{L}_{\cS/\cH_T(w)^{\mathrm{ss}}}[-d].
 \end{align*}

We prove the following lemma: 
\begin{lemma}\label{lem:nweight}
    The object (\ref{show:zero}) has negative $\lambda$-weights on $\mathcal{Z}$ along the diagram (\ref{dia:S}). 
\end{lemma}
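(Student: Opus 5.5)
The plan is to compute the $\lambda$-weights of the three factors in (\ref{show:zero}) separately, at a point of $\mathcal{Z}$ given by a map $\lambda$ as in (\ref{map:lambda}), and then add them. Here $\mathcal{V}_M$ is the vector bundle on the spectral side attached to the automorphic point $M=\oplus_i M_i$ in the same way as $\mathcal{V}_E$ in (\ref{def:VE}): a line bundle $\cP_{\mathcal{L}_M}$ tensored with rank two bundles $\mathcal{V}_p$ at the non-locally-free points of $M$. Since $A_b$ and $\mathcal{V}_M$ are perfect on the relevant loci, Lemma~\ref{lem:perfect} lets me read off weights directly from $\lambda^*$ rather than from the regularization. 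The reduction of weights along $\cH_b\hookrightarrow \cH_T$ is handled by the Koszul remark from the proof of Proposition~\ref{prop:VE}.

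The three contributions are as follows.
\begin{enumerate}
\item Since $A\in \LL(\cH(w)^{\mathrm{ss}})_{-\chi'}$, Lemma~\ref{lem:perfect} and Lemma~\ref{lem:Gmwt:L} give
\[
\wt(\lambda^*A_b)\subset \left[\tfrac12 c_1(\lambda^*\mathbb{L}^{<0}),\tfrac12 c_1(\lambda^*\mathbb{L}^{>0})\right]+c_1(\lambda^*\delta_{-\chi'}),
\]
where $\tfrac12 c_1(\lambda^*\mathbb{L}^{>0})=(g-1)\sum_{j<j'}R_jR_{j'}(\lambda_j-\lambda_{j'})$.
\item By (\ref{lambda:pull}), $\lambda^*\mathbb{L}_{\cS/\cH_T(w)^{\mathrm{ss}}}$ is the shift of $\lambda^*\mathbb{L}^{>0}$. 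Hence the factor $\det\mathbb{L}_{\cS/\cH_T(w)^{\mathrm{ss}}}$ contributes the single weight $-c_1(\lambda^*\mathbb{L}^{>0})$. I will check the sign against the convention that $\Ext^1(E_{j'},E_j)$ has positive weight $\lambda_j-\lambda_{j'}$.
\item The weights of $\lambda^*\mathcal{V}_M$ are computed exactly as in (\ref{wt:VE}) with the roles of the two sides swapped. They have the form $\sum_j m_j\lambda_j+\sum_p(a_p\lambda_{\alpha(p)}+b_p\lambda_{\beta(p)})$, where $m_j=\deg_{D_j}(\mathcal{L}_M|_{D_j})$ by Lemma~\ref{lem:PLGm}.
\end{enumerate}
Adding these, negativity of every weight reduces to a strict upper bound
\[
\wt(\lambda^*\mathcal{V}_M)+c_1(\lambda^*\delta_{-\chi'}) < \tfrac12 c_1(\lambda^*\mathbb{L}^{>0}).
\]
This is the upper-bound half of the estimate (\ref{wt:V1}) in Proposition~\ref{prop:VE}, transported to the spectral side and made strict.

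To prove it I repeat Steps 2--3 of Proposition~\ref{prop:VE}, with the subsets $J_j=\{1,\dots,j\}$ of spectral indices, and rewrite in terms of $\lambda_j-\lambda_{j+1}>0$. The role previously played by semistability of $E$ is now played by the fact that $M$ is a direct sum of semistable $M_i$ with the HN inequalities (\ref{slope:rchi}). Restricting $M$ to $D_{J_j}$ and using Lemma~\ref{lem:torsion} and (\ref{com:exE}) gives degree bounds for $\mathcal{L}_M|_{D_{J_j}}$ of the form (\ref{lI:bound}), now in terms of the quantities $\sum_i \frac{r_{ij}}{r_i}\chi_i$. Lemma~\ref{slope:equiv} orders these averaged slopes strictly in $j$, and this is the source of strictness in each coefficient of $\lambda_j-\lambda_{j+1}$. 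The point contributions at $p\in N$ are handled as in Step 3, using $\alpha(p)\le\beta(p)$ and $\lambda_{\alpha(p)}\ge \lambda_{\beta(p)}$; for the upper bound one takes the extreme weight $n_p\lambda_{\alpha(p)}$.

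The main obstacle is the bookkeeping for this strict inequality. Each component $D_j$ meets several $C_i$, so the degree bound for $\mathcal{L}_M|_{D_{J_j}}$ is not a single semistability inequality; it must be assembled from the semistability of each $M_i|_{C_{ij}}$ and then combined with the HN slopes. I would first treat the case where each $C_{ij}$ is either empty or equal to $D_j$ (the $\varepsilon$-refinement of a single HN summand) to fix the signs, and then reduce the general case to it by summing over $i$ with weights $r_{ij}/r_i$. If strictness fails in a degenerate case, I would fall back on the equality characterization of Lemma~\ref{lem:lower}, showing that equality forces $\mu^{\varepsilon}(E_j)=\mu^{\varepsilon}(E_{j+1})$, which contradicts (\ref{slope:cond}).
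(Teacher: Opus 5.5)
Your proposal is correct and follows the paper's proof. The paper uses the same three weight contributions, (\ref{wt:1}), (\ref{wt:2}), and the weights of $\mathcal{V}_M$, and rewrites their sum in terms of $\lambda_m-\lambda_{m+1}$ using the degree identity (\ref{sum:l:lambda}). It bounds $l_{J_m}$ from above by summing the semistability inequalities of the individual $M_i$ for the quotients $M_i\twoheadrightarrow M_i|^{\mathrm{free}}_{C_{iJ}}$; this is (\ref{wt:lJ}), and it is what your ``semistability of each $M_i|_{C_{ij}}$'' should mean. After the $(g-1)$-terms and point terms cancel, this leaves $\beta_m\le \sum_i \frac{\chi_i}{r_i}r_{iJ_m}-\frac{\chi}{r}R_{J_m}$, which is negative by Lemma~\ref{slope:equiv}. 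The paper does this summation directly, treating points of $N_i$ and intersection points $C_i\cap C_{i'}$ separately, which is equivalent to your uniform treatment of all non-locally-free points. So neither your preliminary special case nor the fallback via Lemma~\ref{lem:lower} is needed.
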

\begin{proof} We divide the proof into 2 steps. 
\begin{sstep}
    Reduction to the combinatorial inequality. 
\end{sstep}
Since $A\in \LL(\cH(w)^{\mathrm{ss}})_{-\chi'}$, the $\lambda$-weights
of $A_b|_{\mathcal{S}}$ are contained in 
\begin{align}\label{wt:A}
    \left[\frac{1}{2}c_1(\lambda^*\mathbb{L}_{\cH_T(w)^{\mathrm{ss}}}^{<0}), 
    \frac{1}{2}c_1(\lambda^*\mathbb{L}_{\cH_T(w)^{\mathrm{ss}}}^{>0})
    \right]+c_1(\lambda^* \delta_{-\chi'}).
\end{align}
Therefore 
by noting that $\chi'=\chi+r(r-1)(g-1)$ and Lemma~\ref{lem:Gmwt:L}, 
a $\lambda$-weight of $A_b|_{\mathcal{S}}$
is at most
\begin{align}\label{wt:1}
    (g-1)\sum_{1\leq j<j' \leq a}R_j R_{j'}(\lambda_j-\lambda_{j'})-\frac{\chi}{r}\sum_{j=1}^a R_j \lambda_j-(r-1)(g-1)\sum_{j=1}^a R_j\lambda_j.
\end{align}
The $\lambda$-weight of 
$\det \mathbb{L}_{\cS/\cH_T(w)^{\mathrm{ss}}}$
is \begin{align}\label{wt:2}
c_1(\lambda^*\mathbb{L}_{\cH_T(w)^{\mathrm{ss}}}^{<0})=(2g-2)\sum_{1\leq j'<j \leq a}R_j R_{j'}(\lambda_j-\lambda_{j'}).
\end{align}

We will study $\lambda$-weights of $\mathcal{V}_M|_{\cS}$. 
Let 
\begin{align*}
    N_i \subset \mathrm{Sing}(C_i)
\end{align*}
be the subset on which $M_i$ is not locally free. 
By Lemma~\ref{lem:Qp}, we have the exact sequence 
\begin{align}\label{ex:LM1}
    0\to L_i \to M_i \to \bigoplus_{p\in N_i}Q_p \to 0
\end{align}
where $L_i$ is a line bundle on $C_i$ and $Q_p \cong \mathcal{O}_{W_p}$ for 
a closed subscheme $W_p \hookrightarrow Z_p$ with length $1\leq n_p \leq m_p/2$, where $Z_p$ is given in (\ref{def:Zp}). Moreover we have the exact sequence 
\begin{align}\label{ex:LM2}
    0\to L \to \bigoplus_{i=1}^k L_i \to \bigoplus_{p\in C_i \cap C_{i'}, i<i'}Q_p \to 0
\end{align}
where $L$ is a line bundle on $\cC_b$ obtained by gluing $L_i$ at each intersection 
points of $C_i \cap C_{i'}$; it satisfies that 
$L|_{C_i}\cong L_i$ and $Q_p \cong \mathcal{O}_{Z_p}$ 
for each $p\in C_i \cap C_{i'}$. 
Then we have 
\begin{align*}
    \mathcal{V}_M \cong \mathcal{P}_L \otimes \bigotimes_{p\in N_i, 1\leq i\leq k}\mathcal{V}_p^{\otimes n_p} \otimes 
    \bigotimes_{p\in C_i \cap C_{i'}, i<i'}\mathcal{V}_p^{\otimes m_p/2}.
\end{align*}
Here note that $m_p$ is even if $p$ is an intersection of irreducible components. By Lemma~\ref{lem:PLGm}, 
a $\lambda$-weight of 
$\mathcal{P}_L$ is given by 
\begin{align}\label{wt:3}
\sum_{j=1}^a l_j \lambda_j, \ l_j:=\deg_{D_j}(L|_{D_j})
\end{align}
Since $D_j=C_{1j}+\cdots+C_{kj}$ and $L|_{C_{ij}}\cong L_i|_{C_{ij}}$, 
by Lemma~\ref{lem:add} we have 
\begin{align}\label{lj}
l_j=\deg_{C_{1j}}(L_1|_{C_{1j}})+\deg_{C_{2j}}(L_2|_{C_{2j}})+\cdots+ \deg_{C_{kj}}(L_k|_{C_{kj}}).
\end{align}

A $\lambda$-weight of $\bigotimes_{p\in N_i, 1\leq i\leq k}\mathcal{V}_p^{\otimes n_p}$
is of the form 
\begin{align}\label{wt:4}
    \sum_{i=1}^k \sum_{p\in N_i} (a_p\lambda_{\alpha(p)}+b_p\lambda_{\beta(p)}), 
\end{align}
where $p \in C_{i\alpha(p)}\cap C_{i\beta(p)}$ for $1\leq \alpha(p) \leq \beta(p) \leq a$, 
and $a_p+b_p=n_p$ with $a_p, b_p\geq 0$. 

Since we have 
\begin{align*}
    C_{i} \cap C_{i'}=\bigcup_{1\leq j, j' \leq a} C_{ij} \cap C_{i'j'},
\end{align*}
and (noting that $m_p/2$ is the intersection multiplicity at $p\in C_{ij} \cap C_{i' j'}$)
\begin{align*}
    C_{ij} \cdot C_{i' j'}=\sum_{p\in C_{ij} \cap C_{i' j'}}\frac{m_p}{2}=r_{ij} r_{i' j'}(2g-2), 
\end{align*}
a $\lambda$-weight of $\bigotimes_{p\in C_i \cap C_{i'}, i<i'}\mathcal{V}_p^{\otimes m_p/2}$
is of the form 
\begin{align}\label{wt:5}
    \sum_{1\leq i<i' \leq k} \sum_{1\leq j, j' \leq a}(c_{i i' j j'}\lambda_j+
    d_{i i' j j'}\lambda_{j'})
\end{align}
where $c_{i i' j j'},  d_{i i' j j'}$ satisfy 
\begin{align}\label{cond:bc}
    c_{i i' j j'}\geq 0, \ d_{i i' j j'} \geq 0, \  c_{i i' j j'}+d_{i i' j j'}=r_{ij} r_{i' j'}(2g-2).
\end{align}
By (\ref{wt:1}), (\ref{wt:2}), (\ref{wt:3}), (\ref{wt:4}), (\ref{wt:5}), 
a $\lambda$-weight of (\ref{show:zero}) is at most 
\begin{align}\label{wt:final}
&(g-1)\sum_{1\leq j<j' \leq a}R_j R_{j'}(\lambda_j-\lambda_{j'})-\frac{\chi}{r}\sum_{j=1}^a R_j \lambda_j
 -(r-1)(g-1)\sum_{j=1}^a R_j \lambda_j 
\\
&\notag +(2g-2)\sum_{1\leq j'<j \leq a}R_j R_{j'}(\lambda_j-\lambda_{j'})
+\sum_{j=1}^a l_j \lambda_j \\
&\notag + \sum_{i=1}^k \sum_{p\in N_i} (a_p\lambda_{\alpha(p)}+b_p\lambda_{\beta(p)})
+ \sum_{1\leq i<i' \leq k} \sum_{1\leq j, j' \leq a}(c_{i i' j j'}\lambda_j+
    d_{i i' j j'}\lambda_{j'}).
\end{align}
\begin{sstep}
A $\lambda$-weight (\ref{wt:final}) is negative. 
\end{sstep}
Since $\deg \pi_{*}M=\chi$, by the formula (\ref{formula:deg}) we have 
\begin{align*}
    \deg_{\cC_b}(M)=\chi'=\chi+r(r-1)(g-1).
\end{align*}
By (\ref{ex:LM1}), (\ref{ex:LM2}), we have 
\begin{align*}
    \deg_{\cC_b}(M)
    =\deg_{\cC_b}(L)+\sum_{i=1}^k \sum_{p\in N_i}n_p
    +\sum_{1\leq i<i'\leq k}\sum_{1\leq j, j' \leq a}r_{ij} r_{i'j'}(2g-2).
\end{align*}
Using Lemma~\ref{lem:add}, it follows that 
\begin{align}\label{sum:l:lambda}
    &l_1+\cdots+l_a+\sum_{i=1}^k \sum_{p\in N_i}n_p +\sum_{1\leq i<i'\leq k}\sum_{1\leq j, j' \leq a}r_{ij} r_{i'j'}(2g-2) \\
    &\notag =\chi+r(r-1)(g-1).
\end{align}
By (\ref{sum:l:lambda}), the weight (\ref{wt:final}) is of the form 
\begin{align*}
    \sum_{j=1}^a \alpha_j \lambda_j, \ \alpha_1+\cdots+\alpha_a=0.
\end{align*}
Therefore it is written as 
\begin{align*}
    \sum_{m=1}^{a-1} \beta_m (\lambda_m-\lambda_{m+1}), \ \beta_m=\sum_{j=1}^m \alpha_j.
\end{align*}
Since $\lambda_m-\lambda_{m+1}>0$, it is enough to show that $\beta_m<0$. 

For a subset $J\subset \{1, \ldots, a\}$, 
we write 
\begin{align*}r_{iJ}:=\sum_{j\in J}r_{ij}, \ l_{J}:=\sum_{j\in J}l_j, \ 
R_J:=\sum_{j\in J} R_j.
\end{align*}
Let $J_m=\{1, \ldots, m\} \subset \{1, \ldots, a\}$, and $J_m^{\circ}$ its complement. 
A direct computation shows that 
\begin{align*}
    \beta_m&=-(g-1)R_{J_m}R_{J_m^{\circ}}-\frac{\chi}{r}R_{J_m}-(r-1)(g-1)R_{J_m}+l_{J_m} \\
    &+\sum_{i=1}^k\left(\sum_{p\in N_i, \alpha(p)\leq m}a_p+\sum_{p\in N_i, \beta(p)\leq m}b_p\right)+B_{J_m}.
\end{align*}
Here $B_{J_m}$ is given by 
\begin{align*}
    B_{J_m}=\sum_{i<i'}\left(\sum_{j\leq m, j'\leq a}c_{i i' j j'}+\sum_{j\leq a, j'\leq m}d_{i i' j j'}\right).
\end{align*}
By the condition (\ref{cond:bc}), 
we have 
\begin{align*}
    B_{J_m} \leq (2g-2)\sum_{i<i'}(r_{iJ_m}r_{i'J_m}+r_{iJ_m}r_{i' J_m^{\circ}}+r_{i J_m^{\circ}}r_{i' J_m}).
\end{align*}

For a subset $J\subset \{1, \ldots, a\}$, we have the 
exact sequence of the form 
\begin{align*}
    0\to M_i' \to M_i \to M_i|_{C_{iJ}}^{\mathrm{free}} \to 0.
\end{align*}
Here $C_{iJ}:=\cup_{j\in J} C_{ij}$. 
By the semistability of $M_i$, we have 
\begin{align}\label{wt:M}
    \frac{\deg \pi_{*}M_i'}{r_{iJ^{\circ}}} \leq \frac{\chi_i}{r_i}.
\end{align}
Similarly to (\ref{ex:E2}), we have the 
exact sequence 
\begin{align*}
    0\to L_i|_{C_{iJ^{\circ}}}(-C_{iJ}) \to M_i' \to\bigoplus_{p\in N_i \cap C_{iJ^{\circ}}}Q_p\to 0.
\end{align*}
Together with (\ref{wt:M}), we obtain the inequality
\begin{align*}
    \frac{1}{r_{iJ^{\circ}}}\left(\deg \pi_{*}(L_{i}|_{C_{iJ^{\circ}}}(-C_{iJ}))+\sum_{p\in N_i \cap C_{iJ^{\circ}}}n_p  \right)\leq\frac{\chi_i}{r_i}.
\end{align*}
By multiplying $r_{iJ^{\circ}}$ and replacing $J$ with $J^{\circ}$ and taking the sum over $1\leq i \leq k$, 
from (\ref{lj}) we have 
\begin{align}\label{wt:lJ}
    l_J \leq \sum_{i=1}^k \left(\frac{\chi_i}{r_i}r_{iJ}+r_{iJ}(r_{iJ}-1)(g-1)+r_{iJ}r_{iJ^{\circ}}(2g-2)-\sum_{p\in N_i \cap C_{iJ}}n_p   \right).
\end{align}
It follows that 
\begin{align*}
    \beta_m \leq & -(g-1)R_{J_m}R_{J_m^{\circ}}-\frac{\chi}{r}R_{J_m}-(r-1)(g-1)R_{J_m} \\
    &+\sum_{i=1}^k \left(\frac{\chi_i}{r_i}r_{iJ_m}+r_{iJ_m}(r_{iJ_m}-1)(g-1)+r_{iJ_m}r_{iJ_m^{\circ}}(2g-2)-\sum_{p\in N_i \cap C_{iJ_m}}n_p  \right) \\
    &+\sum_{i=1}^k \left(\sum_{p\in N_i, \alpha(p)\leq m}a_p+\sum_{p\in N_i, \beta(p)\leq m}b_p\right) \\
    &+(2g-2)\sum_{i<i'}(r_{iJ_m}r_{i'J_m}+r_{iJ_m}r_{i' J_m^{\circ}}+r_{i J_m^{\circ}}r_{i' J_m}).
\end{align*}
In the right-hand side, a straightforward computation shows that the terms which are divisible by $(g-1)$
vanish: 
\begin{align*}
    (g-1)&(-R_{J_m}R_{J_m^{\circ}}-(r-1)R_{J_m}+\sum_{i=1}^k(r_{iJ_m}(r_{iJ_m}-1)+2r_{iJ_m}r_{iJ_m^{\circ}}) \\
    &+2\sum_{i<i'}(r_{iJ_m}r_{i'J_m}+r_{iJ_m}r_{i'J_m^{\circ}}+r_{iJ_m^{\circ}}r_{i'J_m}))=0.
\end{align*}
Therefore we obtain 
\begin{align}\label{ineq:beta}
    \beta_m &\leq -\frac{\chi}{r}R_{J_m}+\sum_{i=1}^k \left(\frac{\chi_i}{r_i}r_{iJ_m}-
    \sum_{p\in N_i \cap C_{iJ_m}}n_p
  +\sum_{p\in N_i, \alpha(p)\leq m}a_p+\sum_{p\in N_i, \beta(p)\leq m}b_p  \right) \\
    &\notag \leq -\frac{\chi}{r}R_{J_m}+\sum_{i=1}^k \frac{\chi_i}{r_i}r_{iJ_m}.
\end{align}
Here the last inequality follows since $p\in N_i$ with $\alpha(p) \leq m$ implies 
$p\in C_{iJ_m}$, $\alpha(p)\leq \beta(p)$ and $a_p+b_p=n_p$.

By (\ref{slope:cond2}), we have 
\begin{align}\label{ineq:q}
    q_1<\cdots<q_a, \ q_j:=\frac{1}{R_j}\sum_{i=1}^k \frac{r_{ij}}{r_i}\chi_i.
\end{align}
It follows that 
\begin{align*}
    \frac{1}{R_{J_m}}\sum_{i=1}^k \frac{\chi_i}{r_i}r_{i J_m}=\frac{\sum_{j\leq m}R_j q_j}{\sum_{j\leq m}R_j}<\frac{\sum_{j\leq a}R_j q_j}{\sum_{j\leq a}R_j}=\frac{\chi}{r}.
\end{align*}
Therefore $\beta_m<0$ follows. 
\end{proof}

Using the above lemma, we prove the following proposition: 
\begin{prop}\label{prop:vanishing}
We have the vanishing 
\begin{align}\label{vanish:Gamma}
    \Gamma(\cH_T(w)^{\mathrm{ss}}, \Gamma_{\cS}(A_b \otimes \mathcal{V}_M))=0. 
\end{align}
\end{prop}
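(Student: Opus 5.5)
The plan is to deduce the vanishing from Lemma~\ref{lem:nweight} using the standard description of local cohomology along a $\Theta$-stratum in terms of $\lambda$-weights (as in Halpern-Leistner's theory of $\Theta$-stratifications / derived Kirwan surjectivity). Here $\cS=\cS_N$ is the closed stratum of \eqref{Theta:eps}, so it is a $\Theta$-stratum of $\cH_T(w)^{\mathrm{ss}}$ with center $\mathcal{Z}$, and the diagram \eqref{dia:S} gives a retraction $\cS\to\mathcal{Z}$.

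\textbf{Step 1: filter $\Gamma_{\cS}$.} Write $F:=A_b\otimes\mathcal{V}_M$. The object $\Gamma_{\cS}(F)$ is the colimit of $\cHom(\cO_{\cS^{(n)}},F)$ over the infinitesimal thickenings $\cS^{(n)}$ of $\cS$. Each $\cO_{\cS^{(n)}}$ has a finite filtration whose graded pieces are $\cO_{\cS}$ tensored with symmetric powers of the conormal bundle, that is, of $\mathbb{L}_{\cS/\cH_T(w)^{\mathrm{ss}}}$ placed in cohomological degree shifted by one. By Lemma~\ref{lem:SZqsm} and \eqref{lambda:pull}, this conormal complex has strictly positive $\lambda$-weights: it is built from $\Hom(E_j,E_{j'})$ with $j<j'$, on which the weight is $\lambda_{j'}-\lambda_j$ up to the dual convention, and the sign is fixed by \eqref{wt:2}. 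So each graded piece of $\cHom(\cO_{\cS^{(n)}},F)$ is $\cHom(\cO_{\cS},F)$ twisted by symmetric powers of $\mathbb{T}_{\cS/\cH_T(w)^{\mathrm{ss}}}$-type complexes of nonpositive $\lambda$-weights. Combined with Lemma~\ref{lem:nweight}, which says \eqref{show:zero} has negative $\lambda$-weights, every graded piece has strictly negative $\lambda$-weights along $\mathcal{Z}$.

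\textbf{Step 2: sections of negative-weight objects vanish.} For a quasi-coherent sheaf $G$ on $\cS$ all of whose $\lambda$-weights on $\mathcal{Z}$ are negative, I expect $\Gamma(\cS,G)=0$. The reason is that $\cS\to\mathcal{Z}$ is a quasi-smooth $\Theta$-type fibration, and pushforward along it followed by pullback to $\mathcal{Z}$ only involves the weight-zero part of the $\lambda$-action. More concretely, I would use Halpern-Leistner's result that the derived pushforward of the weight-$(<0)$ part of $\QCoh(\cS)$ to $\mathcal{Z}$ vanishes, namely the semiorthogonal decomposition $\QCoh(\cS)=\langle\QCoh(\cS)^{<0},\QCoh(\cS)^{\geq 0}\rangle$, together with $\mathrm{R}\Gamma(\mathcal{Z},-)$ picking out $\lambda$-weight $0$ because $\lambda$ acts through the automorphism groups of points of $\mathcal{Z}$. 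This gives $\Gamma(\cS,\cHom(\cO_{\cS^{(n)}},F))=0$ for each $n$. Since $\Gamma(\cH_T(w)^{\mathrm{ss}},-)$ commutes with filtered colimits ($\cH_T(w)^{\mathrm{ss}}$ is QCA), we obtain \eqref{vanish:Gamma}.

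\textbf{Main obstacle.} The hardest point is to make sure the weight of \eqref{show:zero} really controls all thickenings, and that the sign conventions match. This means checking that the conormal weights push further into the negative range rather than the positive one. I would verify this against \eqref{lambda:pull} and the baric decomposition of~\cite{Halpinstab}. A second point to handle is that $A_b$ is only coherent and not perfect. Here $\mathcal{V}_M$ is perfect and $\cS$ is quasi-smooth, so $\cHom(\cO_{\cS},F)$ is still coherent with bounded weights, and the argument goes through; the weights of $A_b|_{\cS}$ were already bounded in \eqref{wt:A} by the limit-category condition.
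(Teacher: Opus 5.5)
Your proposal is correct and is essentially the paper's argument. You filter $\cHom(\cO_{\cS^{(n)}},A_b\otimes\mathcal{V}_M)$ by the pieces \eqref{filt:negative} and get strictly negative $\lambda$-weights from Lemma~\ref{lem:nweight} together with positivity of the weights of $\mathbb{L}_{\cS/\cH_T(w)^{\mathrm{ss}}}$. You then kill negative-weight pieces and pass to the colimit by continuity. The only difference is in the killing step: the paper goes through the good moduli spaces (pushforward along $\cS\to\mathcal{Z}$ does not raise the maximal weight, and the good moduli space of $\mathcal{Z}$ extracts the weight-zero part), whereas you use the baric decomposition on $\cS$ and continuity of $\Gamma$ on the QCA stack directly.

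One phrase should be corrected: the pushforward along $\cS\to\mathcal{Z}$ of a negative-weight object does not vanish, it only stays in negative weights. What annihilates it is $\Gamma(\mathcal{Z},-)$, or equivalently $\Hom(\cO_{\cS},-)$ via the baric semiorthogonal decomposition.
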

\begin{proof}
There is a sequence of closed immersions 
\begin{align*}
   \cS=\cS^{(1)} \hookrightarrow \cS^{(2)} \hookrightarrow \cS^{(3)} \hookrightarrow 
    \cdots \hookrightarrow \cH_T(w)^{\mathrm{ss}}
\end{align*}
where each $\cS^{(n)}$ is the $n$-th infinitesimal thickening of $\cS$ in $\cH_T(w)^{\mathrm{ss}}$, 
whose structure sheaves fit into distinguished triangles 
\begin{align*}
    \mathrm{Sym}^{n}(\mathbb{L}_{\cS/\cH_T(w)^{\mathrm{ss}}}[-1]) \to \mathcal{O}_{\cS^{(n+1)}} \to \mathcal{O}_{\cS^{(n)}}.
\end{align*}
Then $\cHom_{\cH_T(w)^{\mathrm{ss}}}(\mathcal{O}_{\cS^{(n)}}, A_b \otimes \mathcal{V}_M)$ is filtered by 
\begin{align}\label{filt:negative}
(A_b \otimes \mathcal{V}_M)|_{\cS} \otimes \det \mathbb{L}_{\cS/\cH_T(w)^{\mathrm{ss}}}[-d]\otimes\mathrm{Sym}^i(\mathbb{L}_{\cS/\cH_T(w)^{\mathrm{ss}}}[-1])^{\vee}  
\end{align}
for $0\leq i\leq n-1$. The first two factors have negative 
$\lambda$-weights by Lemma~\ref{lem:nweight}. Also since $\cS \subset \cH_T(w)^{\mathrm{ss}}$ is a $\Theta$-stratum, the object 
$\mathbb{L}_{\cS/\cH_T(w)^{\mathrm{ss}}}$ has positive $\lambda$-weights, 
see~\cite[Lemma~1.3.2]{HalpK32}. Therefore the last factor of (\ref{filt:negative}) has also negative $\lambda$-weights, and 
we conclude that (\ref{filt:negative}) has negative 
$\lambda$-weights. 

Now for the good moduli space map 
$r_T \colon \cH_T(w)^{\mathrm{ss}} \to \mathrm{H}_T(w)^{\mathrm{ss}}$, we have 
the commutative diagram 
\begin{align*}
    \xymatrix{
    \cH_T(w)^{\mathrm{ss}} \ar[rr]^-{r_T} & &\mathrm{H}_T(w)^{\mathrm{ss}} \\
    \cS \uinclusion \ar[r]^-{g_T} & \mathcal{Z} \ar[r]^-{g_T'} & Z. \ar[u]
    }
\end{align*}
Here the top and the right bottom horizontal arrows are good moduli space morphisms.  
Since (\ref{filt:negative}) has negative $\lambda$-weights, 
its push-forward $g_{T*}$ has also negative $\lambda$-weights; this follows from~\cite[Lemma~1.5.6]{HalpK32} which says that $g_{T*}$ does not increase the maximum $\lambda$-weights. 
Since the push-forward $g_{T*}'$ to the good moduli space $Z$ extracts the weight-zero
part along the stabilizer $\mathbb G_m$ on $\mathcal{Z}$, an object 
$E \in \QCoh(\mathcal{Z})$ with strictly negative $\lambda$-weights satisfies $g_{T*}'E=0$. 
Therefore from the above commutative diagram, we have 
\begin{align}\label{vanish:rt}
    r_{T*}\cHom_{\cH_T(w)^{\mathrm{ss}}}(\mathcal{O}_{\cS^{(n)}}, A_b \otimes \mathcal{V}_M)=0. 
\end{align}

Note that we have the isomorphisms in $\QCoh(\cH_T(w)^{\mathrm{ss}})$
\begin{align*}
\Gamma_{\cS}(A_b\otimes \mathcal{V}_M)&\cong 
\operatorname*{colim}_{\cS \subset \cS'}
    \cHom_{\cH_T(w)^{\mathrm{ss}}}(\mathcal{O}_{\cS'}, A_b \otimes \mathcal{V}_M) \\
    &\cong \operatorname*{colim}_{n\geq 0}\cHom_{\cH_T(w)^{\mathrm{ss}}}(\mathcal{O}_{\cS^{(n)}}, A_b \otimes \mathcal{V}_M).
    \end{align*}
    Since $r_{T*}$ is continuous on $\QCoh$, by (\ref{vanish:rt}) we obtain the vanishing 
    \begin{align*}
        r_{T*}\Gamma_{\cS}(A_b \otimes \mathcal{V}_M)=0.
    \end{align*}
Therefore by taking the global section over $\mathrm{H}_T(w)^{\mathrm{ss}}$, we obtain the vanishing (\ref{vanish:Gamma}). 
\end{proof}

Recall the $\Theta$-stratification (\ref{Theta:eps}). 
We consider the following closed substack 
\begin{align*}
    \cS_{\geq k}:=\cS_k \sqcup \cdots \sqcup \cS_N \subset \cH_T(w)^{\mathrm{ss}}.
\end{align*}
Similarly to Proposition~\ref{prop:vanishing}, we have the following 
proposition: 
\begin{prop}\label{prop:vanish2}
We have the vanishing 
    \begin{align}\label{vanish:Gamma2.5}
\Gamma(\cH_T(w)^{\mathrm{ss}}, 
     \Gamma_{\cS_{\geq k}}(A_b\otimes \mathcal{V}_M))=0.
\end{align}
\end{prop}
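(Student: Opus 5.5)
We argue by descending induction on $k$, starting from $k=N$, where $\cS_{\geq N}=\cS_N=\cS$ and the claim is exactly Proposition~\ref{prop:vanishing}. For the inductive step, we use the decomposition $\cS_{\geq k}=\cS_k\sqcup\cS_{\geq k+1}$, in which $\cS_{\geq k+1}\subset\cS_{\geq k}$ is closed and $\cS_k$ is closed in the open complement
\[
\mathcal{U}_{k+1}:=\cH_T(w)^{\mathrm{ss}}\setminus \cS_{\geq k+1}.
\]
For any $F\in\QCoh(\cH_T(w)^{\mathrm{ss}})$ there is the standard exact triangle of local cohomology functors
\begin{align*}
\Gamma_{\cS_{\geq k+1}}(F)\to \Gamma_{\cS_{\geq k}}(F)\to u_{*}\Gamma_{\cS_k}(F|_{\mathcal{U}_{k+1}}),
\end{align*}
where $u\colon \mathcal{U}_{k+1}\hookrightarrow\cH_T(w)^{\mathrm{ss}}$ is the open immersion. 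Apply this with $F=A_b\otimes\mathcal{V}_M$ and take global sections. The first term vanishes by the induction hypothesis, so it remains to prove $\Gamma(\mathcal{U}_{k+1},\Gamma_{\cS_k}(F|_{\mathcal{U}_{k+1}}))=0$.

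This vanishing is the argument of Proposition~\ref{prop:vanishing} run with $\cS_k\subset\mathcal{U}_{k+1}$ in place of $\cS_N\subset\cH_T(w)^{\mathrm{ss}}$. Since $\cS_k$ is a $\Theta$-stratum in $\mathcal{U}_{k+1}$ with center $\mathcal{Z}_k$, and the inclusion is quasi-smooth by Lemma~\ref{lem:SZqsm}, we write $\Gamma_{\cS_k}(F)$ as a colimit of $\cHom(\cO_{\cS_k^{(n)}},F)$ over infinitesimal thickenings. Each of these is filtered by
\[
F|_{\cS_k}\otimes\det\mathbb{L}_{\cS_k/\mathcal{U}_{k+1}}[-d_k]\otimes \mathrm{Sym}^i(\mathbb{L}_{\cS_k/\mathcal{U}_{k+1}}[-1])^{\vee}.
\]
The last factor has negative weights because $\mathbb{L}_{\cS_k/\mathcal{U}_{k+1}}$ has positive $\lambda$-weights (\cite[Lemma~1.3.2]{HalpK32}). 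For the first two factors we need the analogue of Lemma~\ref{lem:nweight} for a map $\lambda\colon\bgm\to\mathcal{Z}_k$ over $0\in T$. The proof of Lemma~\ref{lem:nweight} never used that the stratum was the closed one: it only used the shape (\ref{map:lambda}) of $\lambda$ with the slope conditions (\ref{slope:cond}), the bound (\ref{wt:A}) from $A\in\LL(\cH(w)^{\mathrm{ss}})_{-\chi'}$, the weight of $\det\mathbb{L}_{\cS_k/\cH}$, which is $c_1(\lambda^*\mathbb{L}^{<0})$ as in (\ref{wt:2}) (restricting to the open $\mathcal{U}_{k+1}$ does not change cotangent complexes), and the semistability of the $M_i$. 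All of these hold verbatim for every HN stratum of the $h_T^{\varepsilon}$-stratification, so the first two factors also have strictly negative $\lambda$-weights.

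Negative weights are then killed under pushforward to the good moduli space. The map $\mathcal{U}_{k+1}\to\mathrm{H}_T(w)^{\mathrm{ss}}$ need not be a good moduli space, so we push forward along $\mathcal{U}_{k+1}\hookrightarrow\cH_T(w)^{\mathrm{ss}}\to\mathrm{H}_T(w)^{\mathrm{ss}}$. Since the objects involved are supported on $\cS_k$, their pushforwards to $\cH_T(w)^{\mathrm{ss}}$ are supported on $\overline{\cS_k}\subset\cS_{\geq k}$. To make the weight argument run, we use the factorization $\cS_k\to\mathcal{Z}_k\to Z_k$, where $Z_k$ is the good moduli space of $\mathcal{Z}_k$; this exists since $\mathcal{Z}_k$ is a product of semistable loci. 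We then have
\[
\Gamma(\mathcal{U}_{k+1},G)=\Gamma(Z_k,g'_{k*}g_{k*}G)
\]
for $G$ supported on $\cS_k$. Here $g_{k*}$ does not increase the maximal $\lambda$-weight (\cite[Lemma~1.5.6]{HalpK32}), and $g'_{k*}$ kills all nonzero weights. This yields the vanishing term by term in $n$, and continuity of global sections of the colimit finishes the inductive step.

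The main obstacle is this last point: justifying the global-section vanishing on the non-closed stratum $\cS_k\subset\mathcal{U}_{k+1}$, where Proposition~\ref{prop:vanishing} had a good moduli space commutative square. If the direct factorization through $\mathcal{Z}_k$ turns out to be problematic (for instance if $\cS_k\to\mathcal{Z}_k$ is not cohomologically well behaved), the first fallback is to reorder the induction so that we always remove an open stratum. Failing that, the second fallback is to use that $\cS_k\to\mathcal{Z}_k$ is an affine-space-type fibration with positive weights, and to compute $\Gamma(\cS_k,-)$ as the weight-$\geq 0$ part on $\mathcal{Z}_k$, which vanishes for negative weights.
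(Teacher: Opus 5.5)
Your proposal is correct and matches the paper's proof. The paper also argues by descending induction on $k$ via the local-cohomology triangle for $\cS_{\geq k+1}\subset\cS_{\geq k}$, and gets the vanishing on the open piece $\cH_T(w)^{\mathrm{ss}}\setminus\cS_{\geq k+1}$ by rerunning the argument of Proposition~\ref{prop:vanishing} for the $\Theta$-stratum $\cS_k$. The obstacle you flag is not a real one: computing global sections through $\cS_k\to\mathcal{Z}_k\to Z_k$ is exactly what the paper's commutative diagram does, so neither fallback is needed.
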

\begin{proof}
We replace the argument of Proposition~\ref{prop:vanishing} to 
the $\Theta$-stratum 
\begin{align*}
    \cS_k \subset \cH_T(w)^{\mathrm{ss}}_{\leq k}:=\cH_T(w)^{\mathrm{ss}}\setminus \cS_{\geq k+1}
\end{align*}
to conclude that 
\begin{align}\label{vanish:Gamma2}
  \Gamma(\cH_T(w)^{\mathrm{ss}}_{\leq k}, \Gamma_{\cS_k}((A_b \otimes \mathcal{V}_M)|_{\cH_T(w)_{\leq k}^{\mathrm{ss}}}))=0. 
\end{align}
We have the distinguished triangle in $\QCoh(\cH_T(w)^{\mathrm{ss}})$
\begin{align*}
    \Gamma_{\cS_{\geq k+1}}(A_b\otimes \mathcal{V}_M) \to  \Gamma_{\cS_{\geq k}}(A_b\otimes \mathcal{V}_M)
    \to (j_{\leq k})_{*}\Gamma_{\cS_k}((A_b \otimes \mathcal{V}_M)|_{\cH_T(w)^{\mathrm{ss}}_{>k}}).
\end{align*}
Here $j_{\leq k} \colon \cH_T(w)^{\mathrm{ss}}_{\leq k} \hookrightarrow \cH_T(w)^{\mathrm{ss}}$ is the open immersion. It follows that 
we have the isomorphism 
\begin{align*}
    \Gamma(\cH_T(w)^{\mathrm{ss}},  \Gamma_{\cS_{\geq k+1}}(A_b\otimes \mathcal{V}_M)) \stackrel{\cong}{\to} 
    \Gamma(\cH_T(w)^{\mathrm{ss}}, \Gamma_{\cS_{\geq k}}(A_b\otimes \mathcal{V}_M)).
\end{align*}
Therefore by (\ref{vanish:Gamma}) and the induction on $k$, we conclude (\ref{vanish:Gamma2.5}). 
\end{proof}

\subsection{The case of the structure sheaf}\label{subsec:stO}
We apply the above argument to the case of 
\begin{align*}
    A=\mathcal{O}_{\cH(w)^{\mathrm{ss}}} \in \LL(\cH(w)^{\mathrm{ss}})_0.
\end{align*}
Here we generalize the construction in Subsection~\ref{subsec:setting}, and further perturb the 
$\mathbb{Q}$-ample divisor $h_T^{\varepsilon}$. 

\subsubsection*{Further perturbation of $h_T^{\varepsilon}$}
Let $\nu$ be a map 
\begin{align}\label{map:v2}
    \nu \colon \bgm \to \cH(\chi_0)
\end{align}
over $b\in \cB$, 
as in (\ref{nu:F12}), 
corresponding to the direct sum 
\begin{align*}M=M_1\oplus M_2 \oplus \cdots \oplus M_k
\end{align*}
where each $M_i$ is semistable satisfying (\ref{Ci}), 
with the same notation $(C_i, r_i, \chi_i)$ as in (\ref{Ci}), 
but with a weaker condition than (\ref{slope:rchi})
\begin{align*}
    \frac{\chi_1}{r_1} \geq \cdots \geq \frac{\chi_k}{r_k}, \ (r_1, \chi_1)+\cdots+(r_k, \chi_k)=(r, \chi_0).
\end{align*}

We take an \'{e}tale neighborhood $(T, 0) \to (\cB, b)$ as in Lemma~\ref{lem:etale} 
and an ample $\mathbb{Q}$-divisor $h_T^{\varepsilon}$ on $\cC_T$
as in (\ref{div:hT}). 
We choose 
\begin{align}\label{choice:alpha}
    (\alpha^{(a)})_{1\leq a\leq m} \in \mathbb{Q}^m, \quad 
    \sum_{a=1}^m r^{(a)} \cdot \alpha^{(a)}=0.
\end{align}
We take a further perturbation $h_T^{\varepsilon \dag}$ of $h_T^{\varepsilon}$
satisfying
\begin{align}\label{hTeps}
    h_T^{\varepsilon \dag}\cdot \cC_b^{(a)}=r^{(a)}\left(1+\left(\frac{\chi_i}{r_i}-\frac{\chi}{r}  \right)\varepsilon +\alpha^{(a)}\varepsilon^2   \right).
\end{align}
for any irreducible component $\cC_b^{(a)} \subset \cC_b$
with $\cC_b^{(a)} \subset C_i$. 
Similarly to (\ref{ineq:muep}), the slope function 
\begin{align*}
    \mu^{\varepsilon \dag}(E)=\frac{\deg \pi_{*}E}{h_T^{\varepsilon \dag} \cdot [E]} \in \mathbb{Q} \cup \{\infty\}
\end{align*}
defines the $h_{T}^{\varepsilon \dag}$-stability on $\Coh^{\heartsuit}(\cC_t)$ for $t\in T$. 

Since the $h_T^{\varepsilon \dag}$-stability refines the $h_T^{\varepsilon}$-stability, 
we have the open immersions 
\begin{align*}
    \cH_T(w)^{\varepsilon \dag \text{-ss}} \subset \cH_T(w)^{\varepsilon \text{-ss}} \subset \cH_T(w)^{\mathrm{ss}}. 
\end{align*}
For each $1\leq a\leq m$, we denote by $i(a)$ the unique $1\leq i(a) \leq k$
such that $\cC_b^{(a)} \subset C_{i(a)}$. 
We have the following lemma: 
\begin{lemma}\label{lem:alphagen}
Suppose that for any non-empty proper subset $K\subset \{1, \ldots, m\}$, 
we have 
\begin{align}\label{cond:K}
    \sum_{a\in K}r^{(a)}\left( \frac{\chi_{i(a)}}{r_{i(a)}}-\frac{\chi}{r} \right)=0 \quad \Rightarrow \quad \sum_{a\in K} r^{(a)} \alpha^{(a)} \neq 0.
\end{align}
Then there are no strictly $h_T^{\varepsilon\dag}$-semistable sheaves, i.e.\ 
\begin{align}\label{ss=st}
    \cH_T(w)^{\varepsilon \dag \text{-ss}}=\cH_T(w)^{\varepsilon \dag \text{-st}}.
\end{align}
\end{lemma}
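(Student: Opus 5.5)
We argue by contradiction: suppose $E\in\Coh^{\heartsuit}(\cC_t)$, $t\in T$, is strictly $h_T^{\varepsilon\dag}$-semistable. It then has a saturated subsheaf $0\neq E'\subsetneq E$ with torsion-free quotient $E''=E/E'$ and $\mu^{\varepsilon\dag}(E')=\mu^{\varepsilon\dag}(E)$. Since $E$ is rank one on the reduced curve $\cC_t$, the sheaf $E''$ is supported on a union $D$ of irreducible components, and $E'$ on the complementary union. The plan is to turn this slope equality into a linear identity in the classes of the components and then use (\ref{cond:K}) to reach a contradiction.

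\textbf{Reduction to $t=0$.} First we shrink $T$ around $0$. Every component of $\cC_t$ specializes to an effective sum of the components $\cC_b^{(a)}$, and intersection numbers with the Cartier divisors $h_T^{\varepsilon\dag}$ and $\pi_T^*h$ are constant in flat families. So for $D\subset\cC_t$ there is a subset $K\subset\{1,\ldots,m\}$, which is non-empty and proper, such that $[D]$ specializes to $\sum_{a\in K}\cC_b^{(a)}$. This holds after shrinking $T$; we use that $\cC_b$ is reduced, so each component appears with multiplicity one. We then get
\[
h_T^{\varepsilon\dag}\cdot[D]=\sum_{a\in K}r^{(a)}\Bigl(1+\bigl(\tfrac{\chi_{i(a)}}{r_{i(a)}}-\tfrac{\chi}{r}\bigr)\varepsilon+\alpha^{(a)}\varepsilon^2\Bigr),
\]
and similarly $\operatorname{rank}\pi_*E''=r_K:=\sum_{a\in K}r^{(a)}$.

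\textbf{The slope equality.} Write $d''=\deg\pi_*E''\in\mathbb{Z}$. The equality $\mu^{\varepsilon\dag}(E'')=\mu^{\varepsilon\dag}(E)$ reads
\[
d''\,h_T^{\varepsilon\dag}\cdot[\cC_t]=w\,h_T^{\varepsilon\dag}\cdot[D].
\]
By (\ref{choice:alpha}) and (\ref{slope:rchi}), we have $h_T^{\varepsilon\dag}\cdot[\cC_t]=r$, since the $\varepsilon$-coefficient is $\sum_i r_i(\chi_i/r_i-\chi/r)=0$. The right-hand side is $w\bigl(r_K+\varepsilon x_K+\varepsilon^2y_K\bigr)$, where
\[
x_K=\sum_{a\in K}r^{(a)}\Bigl(\frac{\chi_{i(a)}}{r_{i(a)}}-\frac{\chi}{r}\Bigr),\qquad y_K=\sum_{a\in K}r^{(a)}\alpha^{(a)}.
\]
Treat $\varepsilon$ as small and generic, i.e.\ avoiding the finitely many rational values where a nonzero polynomial in $\varepsilon$ of this form could vanish; here $K$ ranges over a finite set and $d''$ over a bounded set, because semistability bounds $d''$. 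Comparing coefficients then gives $d''r=wr_K$, $x_K=0$ and $y_K=0$. Using $w>0$, this contradicts (\ref{cond:K}). Hence no strictly semistable objects exist, which proves (\ref{ss=st}).

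\textbf{Main obstacle.} The delicate step is justifying the coefficient comparison. Concretely, we must pin down the specialization of components uniformly over a neighbourhood of $0$, and we must choose $\varepsilon$ outside a finite exceptional set uniformly in $t$. We would first try boundedness: the family of possible $(K,d'')$ is finite because $\cH_T(w)^{\mathrm{ss}}$ is of finite type. Then we restrict to $\varepsilon$ avoiding the finitely many roots of the corresponding nonzero polynomials.
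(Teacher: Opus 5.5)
Your proof is correct and follows the same route as the paper's. A strictly semistable sheaf gives a torsion-free quotient supported on a subcurve corresponding to a non-empty proper subset $K$, and the slope equality becomes a polynomial identity in $\varepsilon$ whose coefficients force $x_K=0$ and $y_K=0$, contradicting (\ref{cond:K}). You also make explicit two points the paper leaves implicit when it writes ``for $0<\varepsilon\ll 1$'' and lets $\varepsilon\to 0$: the specialization of subcurves of nearby fibers (after shrinking $T$), and the uniformity in $\varepsilon$ from finitely many pairs $(K,d'')$.
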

\begin{proof}
Suppose that there is a strictly $h_T^{\varepsilon\dag}$-semistable sheaf. 
Then there is a non-empty proper subset $K\subset \{1, \ldots, m\}$ and 
$w'\in \mathbb{Z}$ with an equality 
\begin{align}\label{equal:ra}
    \frac{w'}{\sum_{a\in K}r^{(a)}\left(1+\left(\frac{\chi_i}{r_i}-\frac{\chi}{r}  \right)\varepsilon +\alpha^{(a)}\varepsilon^2\right)}
    =\frac{w}{r}
\end{align}
for $0<\varepsilon \ll 1$. By setting $\varepsilon \to 0$, we obtain 
\begin{align*}
    \frac{w'}{\sum_{a\in K}r^{(a)}}
    =\frac{w}{r}.
\end{align*}
By substituting into (\ref{equal:ra}), and using the $w>0$ assumption, 
we obtain 
\begin{align*}
     \sum_{a\in K}r^{(a)}\left( \frac{\chi_{i(a)}}{r_{i(a)}}-\frac{\chi}{r} \right)+\varepsilon\cdot \sum_{a\in K} r^{(a)} \alpha^{(a)} =0
\end{align*}
for $0<\varepsilon \ll 1$. This does not happen precisely when (\ref{cond:K}) 
is satisfied. 
\end{proof}

Note that the condition (\ref{cond:K}) is satisfied for a generic choice of 
  $(\alpha^{(a)})_{1\leq a\leq m}$ in (\ref{choice:alpha}). 
In this case, the identity (\ref{ss=st}) holds. 
In particular, the stack $\cH_T(w)^{\varepsilon \dag \text{-ss}}$
is a $\mathbb{G}_m$-gerbe 
\begin{align}\label{Gmgerb}
    \cH_T(w)^{\varepsilon \dag \text{-ss}} \to H_T(w)^{\varepsilon \dag \text{-ss}}
\end{align}
over a scheme $H_T(w)^{\varepsilon \dag \text{-ss}}$, which is projective over $T$, 
\begin{align*}
    H_T(w)^{\varepsilon \dag \text{-ss}}\to T.
\end{align*}
Indeed the above map is a $T$-relative fine compactified Jacobian of 
$\cC_T\to T$ with generic polarization $h_T^{\vd}$ as in Subsection~\ref{subsec:FMJ}. 

We denote by 
\begin{align*}
    \cH_T(w)^{\mathrm{ss}}=
    \cH_T(w)^{\varepsilon\dag\text{-ss}} \sqcup 
    \cS_1^{\dag} \sqcup \cdots \sqcup \cS_{N'}^{\dag} 
\end{align*}
the Harder--Narasimhan stratification with respect to the $h_T^{\varepsilon \dag}$-stability, 
with center $\mathcal{S}_i^{\dag} \to \mathcal{Z}_i^{\dag}$. We have the following analogue of Proposition~\ref{prop:vanish2}: 
\begin{lemma}\label{lem:vanish2dag}
We have the vanishing 
\begin{align}\label{vanish:gammaO}
\Gamma(\cH_T(w)^{\mathrm{ss}}, \Gamma_{\cS_{\geq k}^{\dag}}(\mathcal{V}_M))=0.
\end{align}
\end{lemma}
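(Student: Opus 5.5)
The plan is to run the argument of Proposition~\ref{prop:vanishing} and Proposition~\ref{prop:vanish2} again, with $A_b$ replaced by $\mathcal{O}$ and the $h_T^{\varepsilon}$-stratification replaced by the $h_T^{\varepsilon\dag}$-stratification. There is one subtlety: $A_b$ was a sheaf on the fiber $\cH_b$, whereas here $\mathcal{V}_M$ is the vector bundle on $\cH_T(w)^{\mathrm{ss}}$ (or its restriction to the fiber over $0$, pushed forward). In either case, the argument is local on each stratum and is controlled by $\lambda$-weights at points of the centers over $0\in T$.

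The vanishing is local over the good moduli space. Strata whose centers do not meet the fiber over $0$ can be removed by shrinking $T$, or they contribute nothing when $\mathcal{V}_M$ is supported on the fiber.

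\textbf{Step 1 (weight estimate, analogue of Lemma~\ref{lem:nweight}).} Let $\lambda\colon \bgm\to\mathcal{Z}_l^{\dag}$ be the canonical map at a point over $0$, given by $E_1\oplus\cdots\oplus E_a$ with $\mu^{\varepsilon\dag}(E_1)>\cdots>\mu^{\varepsilon\dag}(E_a)$ and $\lambda_1>\cdots>\lambda_a$. We will show that
\[
\mathcal{V}_M|_{\cS^{\dag}_l}\otimes\det\mathbb{L}_{\cS^{\dag}_l/\cH_T(w)^{\mathrm{ss}}_{\le l}}
\]
has strictly negative $\lambda$-weights. Since $A=\mathcal{O}$ has weight $0$, the term (\ref{wt:1}) is replaced by $0$, and we use $\chi=\chi_0$. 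The weight computation in Step~1 of Lemma~\ref{lem:nweight} goes through verbatim: the terms (\ref{wt:2})--(\ref{wt:5}) and the identity (\ref{sum:l:lambda}) are unchanged. Note, however, that the contributions of (\ref{wt:1}), namely $-\tfrac{\chi}{r}\sum R_j\lambda_j$ and the $(g-1)$ terms, now come from $c_1(\lambda^*\delta_{-\chi'})$ with $\chi'=0$. Since $\chi_0=-r(r-1)(g-1)$, the extra terms cancel exactly. I will check this carefully by recomputing $\beta_m$. The semistability of the $M_i$ gives (\ref{wt:lJ}) again, and the $(g-1)$-cancellation yields the analogue of (\ref{ineq:beta}):
\[
\beta_m\le -\frac{\chi_0}{r}R_{J_m}+\sum_i\frac{\chi_i}{r_i}r_{iJ_m}.
\]

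\textbf{Step 2 (strictness, the main obstacle).} In Lemma~\ref{lem:nweight}, strictness came from (\ref{slope:cond2}), which relied on the strict HN inequalities. Here only $\chi_1/r_1\ge\cdots\ge\chi_k/r_k$ holds, and the $E_j$ may all have the same $\mu^{\varepsilon}$. I will expand
\[
\mu^{\varepsilon\dag}(E_j)=\frac{w}{r}\cdot\frac{R_j}{R_j+\varepsilon\sum_i r_{ij}(\chi_i/r_i-\chi_0/r)+\varepsilon^2\sum_{a\subset D_j}r^{(a)}\alpha^{(a)}}.
\]
Using $w>0$, the ordering $\mu^{\varepsilon\dag}(E_1)>\cdots$ translates into a lexicographic ordering of the pairs $(q_j,\tilde q_j)$, where $q_j$ is as in (\ref{ineq:q}) and $\tilde q_j=\tfrac{1}{R_j}\sum_{a\subset D_j} r^{(a)}\alpha^{(a)}$; both sequences are nondecreasing in this lexicographic sense. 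From this, $\sum_{j\le m}R_jq_j\le R_{J_m}\chi_0/r$, so $\beta_m\le0$.

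If equality $\beta_m=0$ occurs, then $q_1=\cdots=q_a$. In that case every subset $K$ of components inside $D_{J_m}$ satisfies the hypothesis of (\ref{cond:K}), and the $\alpha$-ordering together with $\sum_a r^{(a)}\alpha^{(a)}=0$ forces $\sum_{j\le m}R_j\tilde q_j<0$. This gives no weight contribution directly, however, so I will rather argue as follows. Equality forces all inequalities in (\ref{wt:lJ}) to be equalities; this is the analogue of Lemma~\ref{lem:lower}. The first-order slope equality then identifies $E$-data, and the genericity condition (\ref{cond:K}) on $\alpha$ rules out exact equality of the second-order slopes for a proper union of components. Hence the stratum could not be nontrivial, which is a contradiction. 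If this route fails, I would alternatively choose the $\alpha^{(a)}$ so small relative to $\varepsilon$ that only strata with strictly increasing $q_j$ appear nontrivially over $0$, and treat the remaining strata through the gerbe structure.

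\textbf{Step 3.} With negativity established, copy the proof of Proposition~\ref{prop:vanishing}. The thickenings $\cS^{\dag(n)}_l$ filter $\Gamma_{\cS^\dag_l}$ with negative-weight graded pieces, using quasi-smoothness as in Lemma~\ref{lem:SZqsm} and positivity of $\mathbb{L}_{\cS/\cH}$. Pushing forward to the good moduli space kills these pieces by \cite[Lemma~1.5.6]{HalpK32}. Finally, induct on the strata via the triangles in Proposition~\ref{prop:vanish2} to get (\ref{vanish:gammaO}).
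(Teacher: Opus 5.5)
Your proposal has a genuine gap in the strictness step, and it originates in Step 1. The expression \eqref{wt:1} is the upper end of the window for $A_b$: it is $\frac12 c_1(\lambda^*\mathbb{L}^{>0})=(g-1)\sum_{j<j'}R_jR_{j'}(\lambda_j-\lambda_{j'})$ plus the $\delta_{-\chi'}$-part. For $A=\mathcal{O}$ the $\lambda$-weight is exactly $0$, so all of \eqref{wt:1} disappears, not only the $\delta$-part (which does vanish, as you say, since $\chi'=0$). Removing the positive window term means \eqref{wt:2} is no longer half-compensated. Redoing the $(g-1)$-bookkeeping then gives
\[
\beta_m\le -\frac{\chi_0}{r}R_{J_m}+\sum_{i}\frac{\chi_i}{r_i}r_{iJ_m}-(g-1)R_{J_m}R_{J_m^{\circ}} .
\]
The bound you state is just \eqref{ineq:beta} at $\chi=\chi_0$, which only uses that $\mathcal{O}$ lies in the window. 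It is therefore weaker by $(g-1)R_{J_m}R_{J_m^{\circ}}$. With the sharper bound, the weak inequality $q_1\le\cdots\le q_a$ suffices; it comes from $\mu^{\varepsilon}(E_1)\ge\cdots\ge\mu^{\varepsilon}(E_a)$, cf.\ \eqref{ineq:weak}, \eqref{ineq:q2}. This yields $\beta_m\le-(g-1)R_{J_m}R_{J_m^{\circ}}<0$ because $g\ge 2$. That is the paper's argument, and the genericity of $\alpha$ plays no role in the weight estimate.

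Without this saving your Step 1 only yields $\beta_m\le 0$, and Step 2 cannot supply strictness. The $\alpha^{(a)}$ only decide which $h_T^{\varepsilon\dag}$-HN strata occur. As you note yourself, they never enter the $\lambda$-weights of $\mathcal{V}_M\otimes\det\mathbb{L}_{\cS^{\dag}/\cH_T(w)^{\mathrm{ss}}}$.

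Strata with $q_1=\cdots=q_a$ genuinely exist. For semistable $M$, the case used in Step~1 of Theorem~\ref{thm:PhiO}, the $\varepsilon$-coefficient vanishes and every $h_T^{\varepsilon\dag}$-HN stratum has all $q_j=\chi_0/r$. These strata are nonempty whenever strictly semistable sheaves exist, which is exactly why the perturbation was introduced. So your claim that equality forces the stratum to be trivial is false. If moreover all $M_i$ are line bundles, the right-hand side of your bound is exactly $0$ and gives no negativity. Your fallback of taking $\alpha$ small relative to $\varepsilon$ fails for the same reason: these strata exist for every nonzero generic $\alpha$. Step 3 is fine once strict negativity is established.
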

\begin{proof}
We write $\cS^{\dag}=\cS_{N'}^{\dag}$ and $\mathcal{Z}^{\dag}=\mathcal{Z}_{N'}^{\dag}$. 
   Let $\lambda \colon \bgm \to \mathcal{Z}^{\dag}$ be a map corresponding to the HN filtration with respect to the $h_T^{\vd}$-stability; it is 
   given by 
   \begin{align*}
       \lambda \colon \mathrm{pt} \mapsto E_1 \oplus \cdots \oplus E_a
   \end{align*}
   with $\mathbb{G}_m$-weights $(\lambda_1, \ldots, \lambda_a)$
   for $\lambda_1>\cdots>\lambda_a$, 
    each $E_i$ is $h_T^{\varepsilon\dag}$-semistable such that 
   \begin{align*}
       \mu^{\varepsilon \dag}(E_1)>\cdots>\mu^{\varepsilon \dag}(E_a), \ \mu^0(E_1)=\cdots=\mu^0(E_a)
   \end{align*}
   for $0<\varepsilon \ll 1$. 
   Then we have 
   \begin{align}\label{ineq:weak}
       \mu^{\varepsilon}(E_1) \geq \cdots \geq \mu^{\varepsilon}(E_a).
   \end{align}

   We now apply the argument of the proof of Lemma~\ref{lem:nweight}, by replacing 
   the inequality (\ref{slope:cond}) with the weaker one (\ref{ineq:weak}). 
   Using the notation in Lemma~\ref{lem:nweight}, we have 
   the inequality slightly weaker than (\ref{ineq:q})
   \begin{align}\label{ineq:q2}
   q_1 \leq \cdots \leq q_a. 
   \end{align}
   
   On the other hand, for $A_b=\mathcal{O}_{\cH_b(w)^{\mathrm{ss}}}$, 
   its restriction to $\cS^{\dag}$ has $\lambda$-weight zero. 
   Therefore we obtain the upper bound of $\beta_m$ in (\ref{ineq:beta})
   without the contributions from (\ref{wt:1}). 
   Since we have 
   \begin{align*}
       \sum_{j<j'}R_j R_{j'}(\lambda_j-\lambda_{j'})=\sum_{m=1}^{a-1}R_{J_m}R_{J_m^{\circ}}(\lambda_m-\lambda_{m+1})
   \end{align*}
   it follows that we have the inequality slightly better than (\ref{ineq:beta})
   \begin{align*}
   \beta_m &\leq 
    -\frac{\chi}{r}R_{J_m}+\sum_{i=1}^k \frac{\chi_i}{r_i}r_{iJ_m}
    -(g-1)R_{J_m}R_{J_m^{\circ}} \\&\leq -(g-1)R_{J_m}R_{J_m^{\circ}}.
    \end{align*}
    Here the second inequality follows from (\ref{ineq:q2}). 
    
    Since $g\geq 2$, we have $\beta_m\leq 0$ for all $1\leq m \leq a-1$, 
    and $\beta_m<0$ for some $1\leq m\leq a-1$.
    Therefore the $\lambda$-weights of $\mathcal{V}_M \otimes \det \mathbb{L}_{\cS^{\dag}/\cH_T(w)^{\mathrm{ss}}}$ are negative 
    and we obtain the vanishing (\ref{vanish:gammaO}) for $k=1$ as in the proof of Proposition~\ref{prop:vanishing}. Then the lemma follows as in 
    the argument of Proposition~\ref{prop:vanish2}. 
\end{proof}
Recall the functor $\Phi$ from (\ref{induce:Phi})
\begin{align*}
    \Phi \colon \Coh(\cH(w)^{\mathrm{ss}})_0 \to \Coh(\cH(\chi_0))_{w'}.
\end{align*}
By using the above cohomology vanishing result, we prove the following proposition: 
\begin{thm}\label{thm:PhiO}
We have 
\begin{align*}
\Phi(\mathcal{O}_{\cH(w)^{\mathrm{ss}}})\in j_{!}\LL(\cH(\chi_0)^{\mathrm{ss}})_{w'}.
\end{align*}   
Here $j_{!}$ is the fully-faithful left adjoint of the pull-back $j^*$
for the open immersion $j \colon \cH(\chi_0)^{\mathrm{ss}} \hookrightarrow \cH(\chi_0)$
\begin{align*}j_{!} \colon \LL(\cH(\chi_0)^{\mathrm{ss}})_{w'} \hookrightarrow \LL(\cH(\chi_0))_{w'}.
\end{align*}
\end{thm}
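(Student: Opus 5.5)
We show that $\Phi(\mathcal{O}_{\cH(w)^{\mathrm{ss}}})$ is perfect and satisfies the weight condition \eqref{wt:cond2} for every $\nu\colon\bgm\to\cH(\chi_0)$, with the left inequality strict whenever $\nu$ is the center of a Harder--Narasimhan stratum \eqref{map:nupt}. By Remark~\ref{rmk:jshrink}, Remark~\ref{rmk:T} and Remark~\ref{rmk:Ltilde2}, this places the object in the image of $j_!$.

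Perfectness and $\cB$-linearity come from flatness of $\cP$ and cohomological properness. The weight condition is étale local on $\cB$, so we may fix $b$ and $\nu$ as in \eqref{map:v2}, pass to the étale neighborhood $T$ of Lemma~\ref{lem:etale}, and work with $\Phi_T$ and $\nu\colon\bgm\to\cH_T(\chi_0)$. Taking $\alpha$ generic so that \eqref{cond:K} holds, Lemma~\ref{lem:alphagen} makes $\cH_T(w)^{\vdss}$ a $\mathbb{G}_m$-gerbe over the projective fine compactified Jacobian $H_T(w)^{\vdss}$.

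\textbf{Step 1 (reducing to the good locus).} We prove that $\nu^*\Phi_T(\mathcal{O})\to \nu^*\Phi_T(\mathcal{O}_{\cH_T(w)^{\vdss}})$ is an isomorphism. Its fiber is computed by the local cohomology $\Gamma(\cH_T(w)^{\mathrm{ss}},\Gamma_{\cS^\dag_{\geq 1}}(\cP|_{-\times M}))$. Here we use the base change $\nu^*p_{2*}=p_{2*}$ and view the $\bgm$-equivariant structure as a $\mathbb{G}_m$-grading. The sheaf $\cP_M$ on the spectral side, with $M$ as the second argument, is resolved by copies of $\mathcal{V}_M$ as in Proposition~\ref{prop:resol}; this uses the symmetry of $\cP$ together with the fact that $M$ is torsion free of rank one on a type $A$ curve. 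Lemma~\ref{lem:vanish2dag}, applied to each term and passed to the colimit (local cohomology and global sections commute with filtered colimits), kills this fiber.

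\textbf{Step 2 (pointwise estimate on the gerbe).} On $\cH_T(w)^{\vdss}$, the object $\Phi_T(\mathcal{O})$ is the pushforward over the proper scheme $H_T(w)^{\vdss}$ of the family $\{\cP_E\}$, twisted to weight $0$ along the gerbe. Then $\nu^*$ of it is an iterated extension, built using a filtration by points or a Čech/Koszul resolution of $\mathcal{O}$ over the scheme, of objects $\nu^*\cP_E$ with $E$ semistable. By Proposition~\ref{prop:VE} and Corollary~\ref{cor:PE}, each such term has weights in the closed interval. The allowed window is closed under extensions and under the filtered colimits of Lemma~\ref{lem:perfect2}, so \eqref{wt:cond2} holds.

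\textbf{Step 3 (strictness, the main obstacle).} For strictness at HN centers we need the lower bound never to be attained. By Lemma~\ref{lem:lower}, attaining it for $\cP_E$ forces equality in \eqref{ineq:rII} for every $I_i$. For $E$ that is $h_T^{\vd}$-stable, with the perturbation \eqref{hTeps} built from the HN type of $M$, we expect such equality to contradict the strict slope inequalities $\chi_1/r_1>\dots$ in \eqref{slope:rchi}. The mechanism is that equality would make a quotient $E|_{C_{I_i}}^{\mathrm{free}}$ destabilizing to first order in $\varepsilon$, since its $\mu^\varepsilon$-slope shifts by a term proportional to $\sum_{i'\in I_i}r_{i'}(\chi_{i'}/r_{i'}-\chi/r)>0$. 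We expect this sign bookkeeping to be the delicate point, and we would carry it out by repeating the computation in the proof of Lemma~\ref{lem:alphagen} with the $\varepsilon$-linear terms kept.

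Once the conditions hold over each such $T$, the HN-stratum-by-stratum description in Remark~\ref{rmk:HN!} gives membership in $j_!\LL(\cH(\chi_0)^{\mathrm{ss}})_{w'}$.
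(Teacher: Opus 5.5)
Your overall route is the paper's. It uses étale base change to $T$, the perturbation $h_T^{\vd}$, and local-cohomology vanishing from Proposition~\ref{prop:resol} and Lemma~\ref{lem:vanish2dag} to replace $\Phi_T(\mathcal{O})$ by $\Phi_T^{\vd}(\mathcal{O}_{\cH_T(w)^{\vdss}})$. It takes Corollary~\ref{cor:PE} as the pointwise input, and gets strictness from Lemma~\ref{lem:lower} together with the $\varepsilon$-linear term of $h_T^{\vd}$; your sign check in Step~3 is right, given $w>0$.

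\textbf{The gap is the globalization in Step~2.} Let $H=H_T(w)^{\vdss}$, which is positive-dimensional. The sheaf $\mathcal{O}_H$ cannot be produced from skyscrapers $\mathcal{O}_y$ by extensions, shifts and filtered colimits, because everything so produced is torsion along closed points. So there is no ``filtration by points''. A \v{C}ech or Koszul resolution runs the other way: it builds $\mathcal{O}_y$ from $\mathcal{O}$, not $\mathcal{O}$ from the $\mathcal{O}_y$. Consequently $\nu^*\Phi_T^{\vd}(\mathcal{O})$ is never exhibited as built from the objects $\nu^*\cP_{E_y}$, and the pointwise bounds do not transfer as you describe.

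The paper instead globalizes at the level of the kernel. Since $\LL(\cH_T(\chi_0)^{\mathrm{ss}})_{w'}$ is a semiorthogonal summand of $\Coh(\cH_T(\chi_0)^{\mathrm{ss}})_{w'}$, the kernel $i_{T*}\overline{\cP}_T^{\vd}$ splits accordingly in $\Coh(H)\otimes\Coh(\cH_T(\chi_0)^{\mathrm{ss}})_{w'}$. The component outside $\Coh(H)\otimes\LL(\cH_T(\chi_0)^{\mathrm{ss}})_{w'}$ restricts to zero on every $y\times\cH_T(\chi_0)^{\mathrm{ss}}$ by (\ref{Phi(Oy)}), so it vanishes. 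Hence $\Phi_T^{\vd}$ sends all of $\Coh(H)$ into the limit category. The same argument with $(j_{T\mu})_!\LL(\cH_T(\chi_0)_{\prec\mu})_{w'}$ in place of $\LL$ gives the strict version on an HN stratum.

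If you want to stay pointwise in $\nu$, use Nakayama instead. Let $H_0$ be the fibre of $H$ over $0\in T$, and write $(\id\times\nu)^*\overline{\cP}_T^{\vd}=\bigoplus_m G_m\otimes\mathcal{O}(m)$ on $H_0\times\bgm$. Each $G_m$ is bounded above with coherent cohomology, and its derived fibre at $y$ is the weight-$m$ part of $\nu^*\cP_{E_y}$. Looking at the top cohomology sheaf of $G_m$ forces $G_m=0$ for $m$ outside the window. Since $\Gamma(H_0,-)$ preserves the grading, the bound passes to $\nu^*\Phi_T^{\vd}(\mathcal{O})$.

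\textbf{A second, smaller omission: general test maps.} Condition (\ref{wt:cond2}) must hold for every $\nu\colon\bgm\to\cH(\chi_0)$. You only treat $\nu$ of the form (\ref{map:v2}), with semistable summands of non-increasing slopes, which is the only setting where Lemma~\ref{lem:vanish2dag} applies. A general test map can have unstable summands. To cover it, note that the fibre $K$ of $\Phi_T(\mathcal{O})\to\Phi_T^{\vd}(\mathcal{O}_{\cH_T(w)^{\vdss}})$ is coherent. Its vanishing at a point $M_0$ of the form (\ref{map:v2}) — a polystable point, or an HN centre — therefore holds on an open neighbourhood of $M_0$. That neighbourhood contains every point specializing to $M_0$, in particular every point whose associated graded object is $M_0$. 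On that open substack the kernel argument above controls all test maps at once. This is what the paper's ``after shrinking $T$'' in (\ref{PhiTdag1}) and (\ref{nat:isom}) achieves. It is also why the paper concludes membership in $\LL$ on open substacks and inducts over HN strata, rather than verifying weights one $\nu$ at a time.
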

\begin{proof}
We divide the proof into 2 steps.
\begin{ssstep}
The result on the semistable locus. 
\end{ssstep}
   We first show that 
   \begin{align}\label{Phi:rest}
\Phi(\mathcal{O}_{\cH(w)^{\mathrm{ss}}})|_{\cH(\chi_0)^{\mathrm{ss}}} \in \LL(\cH(\chi_0)^{\mathrm{ss}})_{w'}.
   \end{align}
Let $\nu$ be a map 
\begin{align*}
    \nu \colon \bgm \to \cH_T(\chi_0)^{\mathrm{ss}}, \ 
    \mathrm{pt} \mapsto M=M_1 \oplus \cdots \oplus M_k
\end{align*}
over $b\in \cB$ as in (\ref{map:v2}); since $M$ is semistable, 
each $M_i\in \Coh^{\heartsuit}(\cC_b)$ is semistable with 
\begin{align}\label{chir}
    \frac{\chi_1}{r_1}=\cdots=\frac{\chi_k}{r_k}, 
\end{align}
where $r_i=\operatorname{rank}(\pi_{*}M_i)$ and $\chi_i=\deg \pi_{*}M_i$. 
We take an \'{e}tale neighborhood $(T, 0)\to (\cB, b)$ and an ample $\mathbb{Q}$-divisor 
$h_T^{\varepsilon\dag}$ as in (\ref{hTeps}). 
Note that by the condition (\ref{chir}), the coefficient of $\varepsilon$ vanishes in (\ref{hTeps}), and 
$h_T^{\varepsilon\dag}$ can be taken to be independent of $\nu$ over $b$; namely 
for a choice of $(\alpha^{(a)})_{1\leq a\leq m}$ as in (\ref{choice:alpha})
satisfying the condition in Lemma~\ref{lem:alphagen}, 
we take $h_T^{\varepsilon\dag}$ such that 
\begin{align}\label{hT:indep}
    h_T^{\varepsilon\dag} \cdot \cC_b^{(a)}=r^{(a)}(1+\alpha^{(a)}\varepsilon^2).
\end{align}

Let $\Phi_T^{\varepsilon\dag}$ be the functor 
\begin{align*}
  \Phi_T^{\varepsilon\dag}\colon \Coh(\cH_T(w)^{\varepsilon\dag\text{-ss}})_0 \to 
  \Coh(\cH_T(\chi_0)^{\mathrm{ss}})_{w'}
\end{align*}
defined by the kernel object 
\begin{align}\label{PTvd}
\cP_T^{\vd}:=
    \mathcal{P}_T|_{\cH_T(w)^{\varepsilon\dag\text{-ss}}\times_T \cH_T(\chi_0)^{\text{ss}}} \in \Coh(\cH_T(w)^{\varepsilon\dag\text{-ss}}\times_T \cH_T(\chi_0)^{\text{ss}}).
\end{align}
Here $\cP_T$ is the pull-back of the Arinkin sheaf (\ref{P:pullT}). 
From the distinguished triangle 
\begin{align*}
\Gamma_{\cS^{\dag}_{\leq N'}}
\bigl(\mathcal{P}_T|_{\cH_T(w)^{\text{ss}}\times_T \cH_T(\chi_0)^{\text{ss}}}\bigr)
\to {}& \mathcal{P}_T|_{\cH_T(w)^{\text{ss}}\times_T \cH_T(\chi_0)^{\text{ss}}} \\
\to {}& \mathcal{P}_T|_{\cH_T(w)^{\varepsilon\dag\text{-ss}}\times_T
\cH_T(\chi_0)^{\text{ss}}},
\end{align*}
after pushing forward to $\cH_T(\chi_0)^{\mathrm{ss}}$, we obtain, in
\[
\Coh(\cH_T(\chi_0)^{\mathrm{ss}}),
\]
a distinguished triangle of the form
\begin{align*}
K\to {}&
\Phi_T(\mathcal{O}_{\cH_T(w)^{\mathrm{ss}}})|_{\cH_T(\chi_0)^{\text{ss}}} \\
\to {}&
\Phi_T^{\varepsilon\dag}(\mathcal{O}_{\cH_T(w)^{\varepsilon\dag\text{-ss}}}). 
\end{align*}
We have 
\begin{align*}
    \nu^* K=\Gamma(\cH_T(w)^{\mathrm{ss}}, \Gamma_{\cS_{\leq N'}^{\dag}}(\cP_M))
\end{align*}
which vanishes by Proposition~\ref{prop:resol} and Lemma~\ref{lem:vanish2dag}. 
Indeed by Proposition~3.2, $\cP_M$ is represented as a filtered colimit of a left
resolution whose terms are finite direct sums of $\mathcal{V}_M$. Since
the local cohomology functor and the global section functor are continuous, Lemma~\ref{lem:vanish2dag} implies the same vanishing for
$\cP_M$. 

Therefore by shrinking $T$ if necessary, we have the isomorphism 
\begin{align}\label{PhiTdag1}
\Phi_T(\mathcal{O}_{\cH_T(w)^{\mathrm{ss}}})|_{\cH_T(\chi_0)^{\text{ss}}} \stackrel{\cong}{\to} 
     \Phi_T^{\varepsilon\dag}(\mathcal{O}_{\cH_T(w)^{\varepsilon\dag\text{-ss}}}).
\end{align}
It is enough to show that 
\begin{align}\label{show:dag}
  \Phi_T^{\varepsilon\dag}(\mathcal{O}_{\cH_T(w)^{\varepsilon\dag\text{-ss}}}) \in \LL(\cH_T(\chi_0)^{\mathrm{ss}})_{w'}.   
\end{align}

By our choice of $h_T^{\varepsilon\dag}$, the stack 
$\cH_T(w)^{\varepsilon\dag \text{-ss}}$ is a $\mathbb{G}_m$-gerbe over 
a quasi-projective scheme $H_T(w)^{\varepsilon\dag \text{-ss}}$, see (\ref{Gmgerb}). 
We have the equivalence 
\begin{align}\label{equiv:wt0}
\Coh(H_T(w)^{\varepsilon\dag \text{-ss}}) \stackrel{\sim}{\to}
    \Coh(\cH_T(w)^{\varepsilon\dag \text{-ss}})_0. 
\end{align}
We identify $\Phi_T^{\varepsilon \dag}$ as a functor through the above equivalence
\begin{align*}
    \Phi_T^{\varepsilon \dag} \colon 
    \Coh(H_T(w)^{\varepsilon\dag\text{-ss}}) \to 
  \Coh(\cH_T(\chi_0)^{\mathrm{ss}})_{w'}
\end{align*}
whose kernel object 
\begin{align*}
    \overline{\cP}_T^{\vd} \in \Coh(H_T(w)^{\varepsilon\dag\text{-ss}}\times_T \cH_T(\chi_0)^{\text{ss}})
\end{align*}
is identified with the weight zero-part 
of the object (\ref{PTvd}) under the equivalence (\ref{equiv:wt0}).  

For any $y\in H_T(w)^{\varepsilon\dag\text{-ss}}$ and the 
corresponding sheaf 
$E_y \in \Coh^{\heartsuit}(\cC_t)$, 
we have 
\begin{align}\label{Phi(Oy)}
    \Phi_T^{\varepsilon\dag}(\mathcal{O}_y)=\mathcal{P}_{E_y}|_{\cH_T(\chi_0)^{\mathrm{ss}}} \in \LL(\cH_T(\chi_0)^{\mathrm{ss}})_{w'}
\end{align}
by Corollary~\ref{cor:PE}. 
Let $i_T$ be the closed immersion 
\begin{align*}
    i_T \colon H_T(w)^{\mathrm{ss}}\times_T \cH_T(\chi_0) \hookrightarrow 
H_T(w)^{\mathrm{ss}}\times \cH_T(\chi_0).
\end{align*}
Since $H_T(w)^{\vdss}$ is a quasi-projective scheme and 
the category $\LL(\cH_T(\chi_0)^{\mathrm{ss}})$ is a semiorthogonal summand of 
$\Coh(\cH_T(\chi_0)^{\mathrm{ss}})$, 
 we have 
\begin{align}\label{PTed}
  i_{T*}  \overline{\cP}_T^{\vd} \in \Coh(H_T(w)^{\vdss}) \otimes \LL(\cH_T(\chi_0)^{\mathrm{ss}})_{w'}. 
\end{align}
Indeed let 
\begin{align*}\cP' \in 
\Coh(H_T(w)^{\vd\text{-ss}})\otimes \Coh(\cH_T(\chi_0)^{\mathrm{ss}})_{w'}
\end{align*}
be a semiorthogonal factor of $i_{T*}\overline{\cP}_T^{\vd}$, 
not contained in the right-hand side in (\ref{PTed}). 
Then the condition \eqref{Phi(Oy)} implies that 
\[
\cP'|_{y\times \cH_T(\chi_0)^{\mathrm{ss}}}=0
\]
for any $y\in H_T(w)^{\vd\text{-ss}}$. Hence $\cP'=0$. 
Therefore (\ref{PTed}) holds
(we also refer to the argument
of~\cite[Lemma~6.2]{PThiggs}). 
Therefore 
the functor $\Phi_T^{\varepsilon\dag}$ 
restricts to the functor 
\begin{align}\label{rest:L}
\Phi_T^{\varepsilon\dag} \colon 
    \Coh(H_T(w)^{\varepsilon\dag\text{-ss}}) \to 
  \LL(\cH_T(\chi_0)^{\mathrm{ss}})_{w'}. 
\end{align}
In particular the condition (\ref{show:dag}) holds. 
\begin{ssstep}
    The result on Harder--Narasimhan strata.
\end{ssstep}
We next consider Harder--Narasimhan stratification of $\cH(\chi_0)$ with respect to the usual stability condition (\ref{ineq:deg}). 
Let 
\begin{align*}
    \cH(\chi_0)_{\preceq \mu} \subset \cH(\chi_0)
\end{align*}
be an open substack 
consisting of a finite number of Harder--Narasimhan strata 
such that 
\begin{align*}
    \mathcal{S}_{\mu}=\cH(\chi_0)_{\preceq \mu}\setminus \cH(\chi_0)_{\prec \mu}
\end{align*}
is a Harder--Narasimhan stratum with type $(\mu_1, \ldots, \mu_k)$, see Remark~\ref{rmk:jshrink}. 
We have the following open immersions 
\begin{align*}
j_{\preceq \mu}\colon 
    \cH(\chi_0)^{\mathrm{ss}} \stackrel{j_{\prec \mu}}{\hookrightarrow}
\cH(\chi_0)_{\prec\mu} \stackrel{j_{\mu}}{\hookrightarrow}
\cH(\chi_0)_{\preceq \mu}.
\end{align*}
We consider the functor 
\begin{align}\label{PhiTmu}
    \Phi_{\preceq \mu}:=\Phi|_{\cH(\chi_0)_{\preceq \mu}} \colon 
    \Coh(\cH(w)^{\mathrm{ss}})_0 \to \Coh(\cH(\chi_0)_{\preceq \mu})_{w'}.
\end{align}
Below by induction on $\mu$, we show that 
\begin{align}\label{induct:mu}
\Phi_{\preceq \mu}(\mathcal{O}_{\cH(w)^{\mathrm{ss}}})
\in (j_{\preceq \mu})_{!}\LL(\cH(\chi_0)^{\mathrm{ss}})_{w'}.
\end{align}
The base case of the induction is (\ref{Phi:rest}). 

Suppose by the induction hypothesis that 
\begin{align}\label{induct:mu2}
    \Phi_{\prec \mu}(\mathcal{O}_{\cH(w)^{\mathrm{ss}}})
\in (j_{\prec \mu})_{!}\LL(\cH(\chi_0)^{\mathrm{ss}})_{w'}.
\end{align}
Let $\nu$ be a map 
\begin{align*}
    \nu \colon \bgm \to \cH(\chi_0)_{\preceq \mu}, \ \mathrm{pt} \mapsto 
    M=M_1 \oplus \cdots \oplus M_k
\end{align*}
such that each $M_i$ is semistable
satisfying 
\begin{align}\label{ineq:chir}
r_i=\operatorname{rank}\pi_{*}M_i, \ 
\chi_i=\deg \pi_{*}M_i, \ 
\frac{\chi_i}{r_i}=\mu_i, 
\end{align}
i.e. it is of Harder--Narasimhan type $\mu$. 
We take an \'{e}tale neighborhood $(T, 0)\to (\cB, b)$ and an ample $\mathbb{Q}$-divisor $h_T^{\varepsilon\dag}$ as in (\ref{hTeps});
namely it satisfies 
\begin{align}\label{def:hTedag}
    h_T^{\varepsilon \dag}\cdot \cC_b^{(a)}=r^{(a)}\left(1+\left(\frac{\chi_i}{r_i}-\frac{\chi}{r}  \right)\varepsilon +\alpha^{(a)}\varepsilon^2   \right)
\end{align}
for $(\alpha^{(a)})_{1\leq a\leq m}$ as in (\ref{choice:alpha})
satisfying the condition in Lemma~\ref{lem:alphagen}, which only depends on 
the Harder--Narasimhan type (\ref{ineq:chir}). 
The base change of the functor (\ref{PhiTmu}) gives 
\begin{align*}
    \Phi_{T\preceq \mu} \colon \Coh(\cH_T(w)^{\mathrm{ss}})_0 \to 
    \Coh(\cH_T(\chi_0)_{\preceq \mu})_{w'}.
\end{align*}
Let $\Phi_{T\preceq \mu}^{\varepsilon\dag}$ be the functor 
\begin{align}\label{funct:edag2}
   \Phi_{T\preceq \mu}^{\varepsilon\dag} \colon \Coh(\cH_T(w)^{\varepsilon\dag\text{-ss}})_0 \to 
   \Coh(\cH_T(\chi_0)_{\preceq \mu})_{w'}
\end{align}
defined by the kernel object 
\begin{align}\label{PTprecmu}
\cP_{T\preceq \mu}^{\vd}:=
    \mathcal{P}_T|_{\cH_T(w)^{\varepsilon\dag\text{-ss}}\times_T \cH_T(\chi_0)_{\preceq \mu}}\in \Coh(\cH_T(w)^{\varepsilon\dag\text{-ss}}\times_T \cH_T(\chi_0)_{\preceq \mu}).
\end{align}
Then as in the argument proving the isomorphism (\ref{PhiTdag1}), 
by Proposition~\ref{prop:resol} and Lemma~\ref{lem:vanish2dag}, 
the natural map 
\begin{align}\label{nat:isom}
    \Phi_{T \preceq \mu}(\mathcal{O}_{\cH_T(w)^{\mathrm{ss}}})\to \Phi_{T\preceq \mu}^{\varepsilon\dag}(\mathcal{O}_{\cH_T(w)^{\varepsilon\dag\text{-ss}}})
\end{align}
is an isomorphism (after shrinking $T$ if necessary).

Below we show that 
\begin{align}\label{ets:mu}
     \Phi_{T\preceq \mu}^{\varepsilon\dag}(\mathcal{O}_{\cH_T(w)^{\varepsilon\dag\text{-ss}}})\in 
     (j_{T\mu})_{!}\LL(\cH_T(\chi_0)_{\prec \mu})_{w'}
     \subset \LL(\cH_T(\chi_0)_{\preceq \mu})_{w'}.
\end{align}
In fact as (\ref{nat:isom}) is an isomorphism, the condition (\ref{ets:mu}) together with 
the induction hypothesis (\ref{induct:mu2}) implies 
(\ref{induct:mu}), see Remark~\ref{rmk:jshrink}. 

As in (\ref{Gmgerb}), the stack $\cH_T(w)^{\varepsilon\dag\text{-ss}}$ is a $\mathbb{G}_m$-gerbe over 
$H_T(w)^{\varepsilon\dag\text{-ss}}$, and the functor (\ref{funct:edag2}) is identified with 
\begin{align}\label{funct:edag3}
   \Phi_{T\preceq \mu}^{\varepsilon\dag} \colon \Coh(H_T(w)^{\varepsilon\dag\text{-ss}}) \to 
   \Coh(\cH_T(\chi_0)_{\preceq \mu})_{w'}
\end{align}
with kernel object 
\begin{align*}
    \overline{\cP}_{T\preceq \mu}^{\vd} \in \Coh(H_T(w)^{\vd\text{-ss}}\times_T \cH_T(\chi_0)_{\preceq \mu})
\end{align*}
the weight zero part of (\ref{PTprecmu}) for the factor 
$\cH_T(w)^{\vd\text{-ss}}$.
For $y\in H_T(w)^{\varepsilon\dag\text{-ss}}$, let $E_y\in \Coh^{\heartsuit}(\cC_t)$ be the corresponding sheaf. 
Then we have 
\begin{align*}
     \Phi_{T\preceq \mu}^{\varepsilon\dag}(\mathcal{O}_y)=\mathcal{P}_{E_y}|_{\cH_T(\chi_0)_{\preceq \mu}} \in \LL(\cH_T(\chi_0)_{\preceq \mu})_{w'}
\end{align*}
by Corollary~\ref{cor:PE}.

Moreover, suppose that $(\nu_1, \ldots, \nu_k)$ 
are the $\mathbb{G}_m$-weights of $M$, and suppose that they satisfy $\nu_1>\cdots>\nu_k$. Then by Lemma~\ref{lem:lower},  
the lower bound of the condition 
\begin{align*}
    \wt(\iota_{*}\nu_{\circ}^{\mathrm{reg}*}\mathcal{P}_{E_y}) \subset\left[\frac{1}{2}c_1(\nu^* \mathbb{L}_{\cH_T}^{<0}), \frac{1}{2}c_1(\nu^* \mathbb{L}_{\cH_T}^{>0})  \right]+c_1(\nu^* \delta_{w'})
\end{align*}
can be achieved only if there is an exact sequence 
\begin{align*}
    0\to E_y'' \to E_y \to E_y' \to 0
\end{align*}
such that $E_y''$ is supported on $C_{I_i^{\circ}}$ and $E_y'$ is supported on $C_{I_i}$, and 
\begin{align*}
    \frac{\deg (\pi_{*}E_{y}'')}{\rank (\pi_{*}E_y'')}= \frac{\deg (\pi_{*}E_{y}')}{\rank (\pi_{*}E_y')}.
\end{align*}
Here $C_i=\mathrm{Supp}(M_i)$ and $I_i=\{1, \ldots, i\}$. 
This implies that 
\begin{align*}
     \frac{\deg (\pi_{*}E_{y}'')}{h_T^{\varepsilon\dag}(C_{I_i^{\circ}})}> \frac{\deg (\pi_{*}E_{y}')}{h_T^{\varepsilon\dag}(C_{I_i})}
\end{align*}
by the inequalities $\mu_1>\cdots>\mu_k$ and the definition of $h_T^{\vd}$ in (\ref{def:hTedag}). 
The above inequality contradicts that $E_y$ is $h_T^{\varepsilon\dag}$-semistable,
therefore we have 
\begin{align*}
    \wt(\iota_{*}\nu_{\circ}^{\mathrm{reg}*}\mathcal{P}_{E_y}) \subset\left(\frac{1}{2}c_1(\nu^* \mathbb{L}_{\cH_T}^{<0}), \frac{1}{2}c_1(\nu^* \mathbb{L}_{\cH_T}^{>0})  \right]+c_1(\nu^* \delta_{w'})
\end{align*}
when $\nu_1>\cdots>\nu_k$. 

This implies that (see Remarks~\ref{rmk:HN!} and~\ref{rmk:T})
\begin{align*}
      \Phi_{T\preceq \mu}^{\varepsilon\dag}(\mathcal{O}_y)=\mathcal{P}_{E_y}
      \in  (j_{T\mu})_{!}\LL(\cH_T(\chi_0)_{\prec \mu})_{w'}.
\end{align*}
Then by the same argument proving (\ref{PTed}), we have 
\begin{align*}
    (i_{T\preceq \mu})_{*}\overline{\cP}_{T\preceq \mu}^{\vd} \in 
    \Coh(H_T(w)^{\vd\text{-ss}})\otimes (j_{T_{\mu}})_{!}\LL(\cH_T(\chi_0)_{\prec \mu})_{w'}.
\end{align*}
Here $i_{T\preceq \mu}$ is the closed immersion 
\begin{align*}
  i_{T\preceq \mu} \colon     H_T(w)^{\vd\text{-ss}}\times_T \cH_T(\chi_0)_{\preceq \mu} \hookrightarrow 
H_T(w)^{\vd\text{-ss}}\times \cH_T(\chi_0)_{\preceq \mu}.
\end{align*}
Therefore as in (\ref{rest:L}), 
the functor (\ref{funct:edag3})
restricts to the functor 
\begin{align*}
       \Phi_{T\preceq \mu}^{\varepsilon\dag} \colon \Coh(H_T(w)^{\varepsilon\dag\text{-ss}}) \to 
  (j_{T\mu})_{!} \LL(\cH_T(\chi_0)_{\prec \mu})_{w'}.
   \end{align*}
   In particular 
the condition (\ref{ets:mu}) holds. 
\end{proof}

\subsection{Whittaker normalization}
Recall the Hitchin section from Subsection~\ref{subsec:whit}
\begin{align*}
    s \colon \cB \to \cH(\chi_0). 
\end{align*}
Using the results in the previous subsections, we prove Conjecture~\ref{conj:Whit}
over $\cB=\cB^A$: 
\begin{thm}\label{thm:whit}
For $\cB=\cB^A$, there 
is an isomorphism in $\LL_{\mathcal{N}}(\cH(\chi_0))_{w'}$
\begin{align*}
    \Phi(\mathcal{O}_{\cH(w)^{\mathrm{ss}}})
    \cong s_{!}\mathcal{O}_{\cB}.
\end{align*}
\end{thm}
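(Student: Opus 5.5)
By Theorem~\ref{thm:PhiO}, $\Phi(\cO_{\cH(w)^{\mathrm{ss}}})$ lies in the image of the fully faithful functor $j_!$. By Lemma~\ref{lem:factori}, $s_!\cO_{\cB}=j_!\overline{s}_!\cO_{\cB}$ with $\overline{s}_!\cong(\overline{s}_*)_{w'}[-p_a]$. Since $j^*j_!\cong\id$, it suffices to construct an isomorphism
\begin{align*}
\Phi(\cO_{\cH(w)^{\mathrm{ss}}})|_{\cH(\chi_0)^{\mathrm{ss}}}\cong (\overline{s}_{*}\cO_{\cB})_{w'}[-p_a]
\end{align*}
in $\LL(\cH(\chi_0)^{\mathrm{ss}})_{w'}$. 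Applying $j_!$ to this isomorphism then gives the theorem. By Remark~\ref{rmk:nilp}, the nilpotent singular support condition is automatic here.

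To prove the isomorphism on the semistable locus, I will work étale locally on $\cB$. I first construct an isomorphism over each étale neighbourhood $(T,0)\to(\cB,b)$ as in Lemma~\ref{lem:etale}. The $\varepsilon$-term in (\ref{hTeps}) vanishes, so I use the polarization $h_T^{\vd}$ of (\ref{hT:indep}), which is independent of $\nu$. The isomorphism (\ref{PhiTdag1}) identifies the left-hand side with $\Phi_T^{\vd}(\cO_{\cH_T(w)^{\vdss}})$. Here $\cH_T(w)^{\vdss}$ is a $\mathbb{G}_m$-gerbe over the fine compactified Jacobian $H_T(w)^{\vdss}=\overline{J}_{\cC_T/T}(w')^{h^{\vd}}$.

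The target $\cH_T(\chi_0)^{\mathrm{ss}}$ is only the semistable stack, not a fine compactified Jacobian. However, the Hitchin section lands in the stable locus by Lemma~\ref{lem:OCstable}. I therefore choose a second generic polarization, for instance again $h_T^{\vd}$ in degree $\chi_0$, whose stable locus $\overline{\mathcal{J}}(\chi')^{\vd}$ is an open substack of $\cH_T(\chi_0)^{\mathrm{ss}}$ containing $s(T)$. Restricted to this locus, $\Phi_T^{\vd}$ is the Fourier--Mukai functor $\Phi_{\mathcal{J}/T}$ of Theorem~\ref{thm:MRV}. By (\ref{kernel:inv}) and Remark~\ref{rmk:dualizing}, its inverse applied to $(\overline{s}_*\cO_T)_{w'}[-p_a]$ is
\begin{align*}
\mathbb{D}\big((s\times\id)^*\cP\big)[p_a-p_a]\cong \cO_{\overline{J}(w')},
\end{align*}
using Lemma~\ref{lem:ids}, the Cohen--Macaulay property of $\cP$, and the local triviality of $\omega_{\overline{J}/T}$ after shrinking $T$. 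This gives the isomorphism over the $h^{\vd}$-stable part of the target.

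It remains to see that both sides vanish on the complementary strata of $\cH_T(\chi_0)^{\mathrm{ss}}$, so that they agree as objects on all of $\cH_T(\chi_0)^{\mathrm{ss}}$. For $\overline{s}_!\cO$ this is immediate because $\overline{s}$ factors through the stable locus. For $\Phi(\cO)$ I would reuse the weight argument of Lemma~\ref{lem:vanish2dag} and Theorem~\ref{thm:PhiO}. Both sides lie in $\LL$, and along the HN strata for $h^{\vd}$ inside $\cH_T(\chi_0)^{\mathrm{ss}}$ the weight bound is strict, as in Lemma~\ref{lem:lower}. Hence both lie in the image of a $!$-extension from the $\vd$-stable locus. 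A morphism between them is therefore determined by its restriction to that locus, and the isomorphism extends. I expect this comparison to be the delicate point.

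Finally, the local isomorphisms must descend from $T$ to $\cB$. I will produce a canonical morphism globally, using the adjunction $(s_!,s^*)$ applied to the map $\cO_{\cB}\to s^*\Phi(\cO)$. Lemma~\ref{lem:ids} gives $s^*\Phi(\cO)\cong\Gamma$ of $\cO_{\cH(w)^{\mathrm{ss}}}$ relative to $\cB$, and this object receives the unit map from $\cO_{\cB}$. Once the canonical morphism exists, being an isomorphism is an étale local statement, so the local computations above suffice and no separate gluing of the local isomorphisms is needed.
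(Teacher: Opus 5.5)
Your outline is correct, and it follows the paper up to the last step. The shared part is: the reduction via Theorem~\ref{thm:PhiO}, an \'etale chart carrying the $\nu$-independent $h_T^{\vd}$ of (\ref{hT:indep}), the isomorphism (\ref{PhiTdag1}), and the comparison on $\cH_T(\chi_0)^{\vdss}$ via Theorem~\ref{thm:MRV} and Lemma~\ref{lem:ids}. The difference is how the complement of the stable locus is controlled.

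The paper takes the union $\widetilde{\cH}_T(\chi_0)^{\mathrm{ss}}$ of the stable loci over \emph{all} generic perturbations. It then proves that restricting $\LL$ to this union is conservative (Lemma~\ref{lem:consv}, which rests on Lemmas~\ref{lemma:nonempty} and~\ref{lem:line}).

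You instead fix one $h_T^{\vd}$ and put both objects in the essential image of the fully faithful $(j^{\vd})_!$, for $j^{\vd}\colon \cH_T(\chi_0)^{\vdss}\hookrightarrow \cH_T(\chi_0)^{\mathrm{ss}}$. This functor comes from the same semiorthogonal decompositions the paper uses inside Lemma~\ref{lem:consv}. For $\overline{s}_!\cO_T$ the membership is immediate: $(\overline{s}_T)_!\cong (j^{\vd})_!(\overline{s}_T^{\vd})_!$ by uniqueness of adjoints.

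Your route bypasses Lemmas~\ref{lem:consv} and~\ref{lemma:nonempty}. It also gives the sharper fact that $\Phi(\cO)|_{\cH_T(\chi_0)^{\mathrm{ss}}}$ is a $!$-extension from every generic stable locus. The cost is one extra weight estimate for $\Phi$, which the paper avoids because conservativity holds for all objects of $\LL$.

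That estimate is the step you pass over with ``as in Lemma~\ref{lem:lower}'', and you should write it out. It works only because the \emph{same} $h_T^{\vd}$ is used on both sides and $w>0>\chi_0$.

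Put $\alpha^{(K)}:=\sum_{a\in K}r^{(a)}\alpha^{(a)}$ and $r^{(K)}:=\sum_{a\in K}r^{(a)}$. A rank-one sheaf supported on the components in $K$ with $\mu^0$-slope $\sigma$ has $\mu^{\vd}=\sigma(1+\varepsilon^2\alpha^{(K)}/r^{(K)})^{-1}$.

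At the center $M_1\oplus\cdots\oplus M_k$ of an $h_T^{\vd}$-HN stratum of $\cH_T(\chi_0)^{\mathrm{ss}}$ one has $\sigma=\chi_0/r<0$. So $\mu^{\vd}(M_1)>\cdots>\mu^{\vd}(M_k)$ means $\alpha^{(C_1)}/r_1>\cdots>\alpha^{(C_k)}/r_k$, writing $\alpha^{(C)}$ for the sum over the components of $C$. Since $\sum_a r^{(a)}\alpha^{(a)}=0$, this forces $\alpha^{(C_{I_i})}>0$ for $1\leq i<k$.

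By Lemma~\ref{lem:lower}, attaining the lower bound for $\cP_{E_y}$ would require a quotient $E_y\twoheadrightarrow E_y'$ supported on $C_{I_i}$ with $\mu^0$-slope $w/r>0$. Then $\mu^{\vd}(E_y')<w/r=\mu^{\vd}(E_y)$, contradicting $h_T^{\vd}$-stability of $E_y$. So the left inequality is strict at every such center. The kernel argument of Step~2 in the proof of Theorem~\ref{thm:PhiO} then places $\Phi_T^{\vd}(\cO_{\cH_T(w)^{\vdss}})$ in the image of $(j^{\vd})_!$. An unrelated ``second polarization'' on the automorphic side could break this.

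Two minor points. First, ``both sides vanish on the complementary strata'' is not what you establish for $\Phi(\cO)$: $(j^{\vd})_!$ is not extension by zero, and membership in its image is what you actually need. Second, as in the paper, the local step must show that the \emph{canonical} morphism, not just some isomorphism, is invertible on $\cH_T(\chi_0)^{\vdss}$. This holds because both generate the free rank-one $\Gamma(\cO_T)$-module of morphisms, using $\Gamma(\cO_{H_T(w)^{\vdss}})=\Gamma(\cO_T)$.
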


\begin{proof}
Recall the factorization of $s$ in Lemma~\ref{lem:factori}
\begin{align*}
s \colon \cB
\stackrel{\overline{s}}{\to}
\cH(\chi_0)^{\mathrm{ss}}
\stackrel{j}{\hookrightarrow}
\cH(\chi_0).
\end{align*}
Then by Lemma~\ref{lem:ids}, we have the isomorphism 
\begin{align*}
    \overline{s}^*(\Phi(\cO_{\cH(w)^{\mathrm{ss}}})|_{\cH(\chi_0)^{\mathrm{ss}}})\cong h_{*}\cO_{\cH(w)^{\mathrm{ss}}}
\end{align*}
where $h \colon \cH(w)^{\mathrm{ss}} \to \cB$ is the Hitchin map. 
By the adjunction of the natural map $\cO_{\cB} \to h_{*}\cO_{\cH(w)^{\mathrm{ss}}}$, 
there is a natural morphism
\begin{align}\label{map:sdag}
    \overline{s}_{!}\mathcal{O}_{\cB}
    \to
    \Phi(\mathcal{O}_{\cH(w)^{\mathrm{ss}}})
    |_{\cH(\chi_0)^{\mathrm{ss}}}.
\end{align}
Here $\overline{s}_!$ is given by the formula (\ref{formula:sbar}).  
By Theorem~\ref{thm:PhiO}, it is enough to show that the above map is an isomorphism.

Let $(T, 0) \to (\cB, b)$ be an \'{e}tale map as in Step~1 of Theorem~\ref{thm:PhiO}, 
and take a perturbation 
$h_T^{\vd}$ of $h_T$ as in Subsection~\ref{subsec:stO};
namely it satisfies 
\begin{align}\label{h:generic}
    h_T^{\varepsilon\dag} \cdot \cC_b^{(a)}=r^{(a)}(1+\alpha^{(a)}\varepsilon^2)
\end{align}
for $(\alpha^{(a)})_{1\leq a\leq m}$ as in (\ref{choice:alpha}) satisfying 
the condition in Lemma~\ref{lem:alphagen}.

Let $\Phi_T^{\vd}$ be the functor
\begin{align}\label{PhiTe}
\Phi_T^{\vd} \colon
\Coh(\cH_T(w)^{\vdss})_0
\to
\Coh(\cH_T(\chi_0)^{\vdss})_{w'}
\end{align}
defined by the kernel object
\begin{align*}
    \cP_T^{\vd\dag}
    :=
    \mathcal{P}_T
    |_{\cH_T(w)^{\vdss}
    \times_T
    \cH_T(\chi_0)^{\vdss}}
    \in
    \Coh(
    \cH_T(w)^{\vdss}
    \times_T
    \cH_T(\chi_0)^{\vdss}
    ).
\end{align*}
By the genericity of $h_T^{\vd}$ (i.e.\ semistability is equivalent to stability as in (\ref{ss=st})), 
by Theorem~\ref{thm:MRV} 
the functor \eqref{PhiTe} is an equivalence 
with inverse given by (after shrinking $T$ if necessary, see Remark~\ref{rmk:dualizing})
\begin{align}\label{kernel:inv2}
(\cP_T^{\vd\dag})^{\vee}[p_a]
\in
\Coh(
\cH_T(w)^{\vdss}
\times_T
\cH_T(\chi_0)^{\vdss}
).
\end{align}
Here $p_a$ is the arithmetic genus of the spectral curve (\ref{def:pa}). 

We denote by
$\overline{s}_T \colon T \to \cH_T(\chi_0)^{\mathrm{ss}}$
the base change of $\overline{s}$.
It factors through the map $\overline{s}_T^{\vd}$:
\begin{align*}
    \overline{s}_T \colon
    T
    \stackrel{\overline{s}_T^{\vd}}{\to}
    \cH_T(\chi_0)^{\vdss}
    \hookrightarrow
    \cH_T(\chi_0)^{\mathrm{ss}}.
\end{align*}
The composition 
\begin{align}\label{compose:vd}
T\stackrel{\overline{s}_T^{\vd}}{\to} \cH_T(\chi_0)^{\vd\text{-ss}} \to H_T(\chi_0)^{\vd\text{-ss}} 
\end{align}
is a closed immersion since it is a section of the projective morphism 
$H_T(\chi_0)^{\vd\text{-ss}}  \to T$, given by the universality of the good moduli space morphism. Since the second map in (\ref{compose:vd}) is a $\mathbb{G}_m$-gerbe by the genericity of $h_T^{\vd}$, similarly to Lemma~\ref{lem:factori}
we have the isomorphism
\begin{align}\label{isom:vd}
    (\overline{s}_{T}^{\vd})_{!}\cong ((\overline{s}_{T}^{\vd})_{*})_{w'}[-p_a]
\end{align}
as functors 
\begin{align*}
    \Coh(T) \to \LL(\cH_T(\chi_0)^{\vd\text{-ss}})_{w'} =
    \Coh(\cH_T(\chi_0)^{\vd\text{-ss}})_{w'}.
\end{align*}
On the other hand, by Lemma~\ref{lem:ids} we have 
\begin{align}\label{isom:vd2}
    (\id \times \overline{s}_T^{\vd})^* \cP_T^{\vd\dag}\cong 
    \cO_{\cH_T(w)^{\vd\text{-ss}}}.
\end{align}
Therefore (\ref{isom:vd}), (\ref{isom:vd2}) together with the description of the kernel object (\ref{kernel:inv2})
of $(\Phi_T^{\vd})^{-1}$
show the isomorphism in $\Coh(\cH_T(w)^{\vd\text{-ss}})_0$
\begin{align*}
 (\Phi_T^{\vd})^{-1}
 \bigl(
 (\overline{s}_T^{\vd})_{!}\mathcal{O}_T
 \bigr)
 \cong
 \mathcal{O}_{\cH_T(w)^{\vdss}}.
\end{align*}
As $\Phi_T^{\vd}$ is an equivalence, we conclude the isomorphism 
in $\Coh(\cH_T(\chi_0)^{\vd\text{-ss}})_{w'}$
\begin{align*}
    \Phi_T^{\vd}
    (\mathcal{O}_{\cH_T(w)^{\vdss}})
    \cong
    (\overline{s}_T^{\vd})_{!}\mathcal{O}_T.
\end{align*}

An argument proving the isomorphism \eqref{PhiTdag1} also shows an isomorphism
\begin{align*}
\Phi_T(\mathcal{O}_{\cH_T(w)^{\mathrm{ss}}})
|_{\cH_T(\chi_0)^{\vdss}}
\stackrel{\cong}{\to}
\Phi_T^{\vd}
(\mathcal{O}_{\cH_T(w)^{\vdss}}).
\end{align*}
It follows that the natural map
\begin{align}\label{sT:natural}
    (\overline{s}_T)_{!}\mathcal{O}_T
    \to
    \Phi_T(\mathcal{O}_{\cH_T(w)^{\mathrm{ss}}})
\end{align}
is an isomorphism on the open substack
\begin{align*}
   \widetilde{\cH}_T(\chi_0)^{\mathrm{ss}} :=
    \bigcup_{h_T^{\vd}}
    \cH_T(\chi_0)^{\vdss}
    \subset
    \cH_T(\chi_0)^{\mathrm{ss}}.
\end{align*}
Here the union is taken over the $h_T^{\vd}$-semistable loci
for all perturbations $h_T^{\vd}$ as in (\ref{h:generic}), satisfying the condition of Lemma~\ref{lem:alphagen}.
Its complement $\mathcal{Z}$ is a closed substack which is disjoint
from the closed substack $\overline{s}_T(T)$. Therefore, we have the 
distinguished triangle
\begin{align*}
R \to  (\overline{s}_T)_{!}\mathcal{O}_T \to 
    \Phi_T(\mathcal{O}_{\cH_T(w)^{\mathrm{ss}}})
\end{align*}
where $R$ is supported on the complement of $\widetilde{\cH}_T(\chi_0)^{\mathrm{ss}}$. 
By Lemma~\ref{lem:consv} below, we have $R=0$, hence
\eqref{sT:natural} is an isomorphism.
Since this holds in an \'{e}tale neighborhood of any $b \in \cB$,
the map \eqref{map:sdag} is an isomorphism.
\end{proof}
\begin{lemma}\label{lem:consv}
    The restriction functor
    \begin{align*}
        \LL(\cH_T(\chi_0)^{\mathrm{ss}})_{w'}
        \to
        \LL(\widetilde{\cH}_T(\chi_0)^{\mathrm{ss}})_{w'}
    \end{align*}
    is conservative.
\end{lemma}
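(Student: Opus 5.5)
The plan is to show that an object $A \in \LL(\cH_T(\chi_0)^{\mathrm{ss}})_{w'}$ with $A|_{\widetilde{\cH}_T(\chi_0)^{\mathrm{ss}}}=0$ must vanish. We argue by contradiction with the weight condition of Definition~\ref{def:Lcat}, in the spirit of magic windows~\cite{hls}.

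\textbf{Step 1 (reduction to a single $\Theta$-stratum).} Fix one perturbation $h_T^{\vd}$ as in (\ref{h:generic}) satisfying Lemma~\ref{lem:alphagen}. Its Harder--Narasimhan stratification gives a $\Theta$-stratification
\[
\cH_T(\chi_0)^{\mathrm{ss}}=\cH_T(\chi_0)^{\vdss}\sqcup \cS^{\dag}_1\sqcup\cdots\sqcup \cS^{\dag}_{N'},
\]
exactly as on the spectral side, and the complement $\mathcal{Z}$ of $\widetilde{\cH}_T(\chi_0)^{\mathrm{ss}}$ lies in $\cS^{\dag}_{\geq 1}$. Hence $A$ has set-theoretic support in $\cS^{\dag}_{\geq 1}$. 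If $A\neq 0$, choose the largest index $i$ with $A|_{\cS_i^{\dag}}\neq 0$ (this is an open condition on the closed union $\cS^{\dag}_{\geq i}$). Then, on the open substack $\cH_T(\chi_0)^{\mathrm{ss}}\setminus \cS^{\dag}_{>i}$, the restriction of $A$ is a nonzero object set-theoretically supported on the closed $\Theta$-stratum $\cS:=\cS_i^{\dag}$, with center $\mathcal{Z}^{\dag}$ and canonical map $\lambda\colon \bgm\to \mathcal{Z}^{\dag}$. Since the weight conditions are local, this restriction still lies in the corresponding limit category. So it suffices to treat an object of $\LL$ supported on a single closed $\Theta$-stratum.

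\textbf{Step 2 (weights of objects supported on $\cS$).} Write $A=\operatorname*{colim}_n A_n$, where each $A_n$ is built from the infinitesimal thickenings $\cS^{(n)}$; this is the same filtration used in the proof of Proposition~\ref{prop:vanishing}, with graded pieces
\[
G\otimes \mathrm{Sym}^i(\mathbb{L}_{\cS/\cH}[-1]),\qquad G \in \Coh(\cS).
\]
We use two facts:
\begin{itemize}
\item the stratum inclusion is quasi-smooth (Lemma~\ref{lem:SZqsm});
\item $\mathbb{L}_{\cS/\cH}$ has strictly positive $\lambda$-weights~\cite[Lemma~1.3.2]{HalpK32}.
\end{itemize}
Together these show that $\lambda^*$ of the regularized pull-back of $A$ contains a weight equal to the maximal weight of $G|_{\mathcal{Z}^{\dag}}$ shifted by $c_1(\lambda^*\mathbb{L}_{\cS/\cH}^{\vee})$. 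Dually, using the adjunction $i^!$ as in (\ref{show:zero}), it also contains a minimal weight shifted by $\det\mathbb{L}_{\cS/\cH}$.

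The width of the set of weights forced this way is at least $c_1(\lambda^*\mathbb{L}^{>0})-c_1(\lambda^*\mathbb{L}^{<0})$. This already equals the full width of the interval in (\ref{wt:nu}). Therefore $A$ can survive only if its weights sit exactly at both endpoints simultaneously.

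\textbf{Step 3 (excluding the boundary, the main obstacle).} The expected difficulty is precisely this boundary case. Since the window is closed on both sides, Step 2 does not yet give a contradiction. To break the tie I will use the freedom of varying $(\alpha^{(a)})$, since $\mathcal{Z}$ is the intersection over all generic $\alpha$ of the $h_T^{\vd}$-unstable loci.

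First I would compute the endpoint weight explicitly, as in the proof of Lemma~\ref{lem:nweight}, using Lemma~\ref{lem:Gmwt:L} together with
\[
c_1(\lambda^*\delta_{w'})=\frac{w'}{r}\sum_j R_j\lambda_j.
\]
The goal is to show that attaining the endpoint forces an equality of the form $\sum_{a\in K} r^{(a)}\alpha^{(a)}=0$ for some proper subset $K$, in the same way that Lemma~\ref{lem:lower} turns equality into a slope identity. Such an equality is excluded by (\ref{cond:K}) for a suitably chosen second generic $\alpha$. If this fails, the fallback is to use the strict-left window description of $j_!$ from Remark~\ref{rmk:HN!}, applied to the $h_T^{\vd}$-stratification, to rule out the left endpoint directly.

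Once both endpoints are excluded, Step 2 gives $A|_{\cS}=0$. This contradicts the choice of $i$, so $A=0$ and the restriction functor is conservative.
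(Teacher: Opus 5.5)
\textbf{There is a genuine gap, in Step 3.} It cannot be closed the way you propose. The window in Definition~\ref{def:Lcat} along $\lambda\colon \bgm\to\mathcal{Z}_i^{\dag}$ depends only on $\mathbb{L}$ and $\delta_{w'}$, so it does not see $(\alpha^{(a)})$. More importantly, the boundary case you want to exclude actually occurs.

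By the argument of \cite[Proposition~8.23]{PTlim} there is a semiorthogonal decomposition
$\LL(\cH_T(\chi_0)^{\mathrm{ss}}_{\preceq i})=\langle \LL(\mathcal{Z}_i^{\dag})_{\delta_{w'}}, (j_i)_!\LL(\cH_T(\chi_0)^{\mathrm{ss}}_{\prec i})_{w'}\rangle$.
Its first piece is embedded by $\tau_{i*}f_i^*$ for $\mathcal{Z}_i^{\dag}\stackrel{f_i}{\leftarrow}\cS_i^{\dag}\stackrel{\tau_i}{\hookrightarrow}\cH_T(\chi_0)^{\mathrm{ss}}_{\preceq i}$. So every $\tau_{i*}f_i^*B$ with $B\in \LL(\mathcal{Z}_i^{\dag})_{\delta_{w'}}$ is an object of the limit category supported on the stratum. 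By your own Step~2, such an object sits at both endpoints.

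Consequently, no weight estimate can force a relation $\sum_{a\in K}r^{(a)}\alpha^{(a)}=0$. The analogy with Lemma~\ref{lem:lower} breaks down: there the object is the specific sheaf $\cP_{E_y}$, whose weights encode slopes of $E_y$, whereas here $A$ is arbitrary. Your fallback via Remark~\ref{rmk:HN!} is circular. Nothing puts $A$ in the image of $j_!$, and if it were, $A\cong j_!j^*A=0$ would follow at once.

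Step~1 also needs correcting. You must choose the \emph{smallest} $i$ with $A|_{\cH_T(\chi_0)^{\mathrm{ss}}_{\preceq i}}\neq 0$. With the largest index, the restriction of $A$ to $\cH_T(\chi_0)^{\mathrm{ss}}\setminus\cS^{\dag}_{>i}$ can still be supported on $\cS^{\dag}_1,\dots,\cS^{\dag}_{i-1}$.

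\textbf{What is missing is a structural step followed by a geometric one.} The structural step: with $i$ minimal, so $A|_{\prec i}=0$, the decomposition gives $A|_{\preceq i}\cong\tau_{i*}f_i^*B_i$ with $B_i\neq 0$. Hence the support of $A$ on $\cS_i^{\dag}$ is a union of entire fibres of $f_i$.

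The geometric step is Lemma~\ref{lemma:nonempty}: every fibre of $\cS_i^{\dag}\to\mathcal{Z}_i^{\dag}$ contains a point that is stable for some other generic perturbation $h_T^{\varepsilon'\dag}$. This is where varying $\alpha$ enters, through the geometry of these fibres rather than through weights. The argument runs as follows.
\begin{enumerate}
\item Glue the HN factors $A_1,\dots,A_\ell$ of $z$ into an extension $E$ that is locally free at the points of $D_\alpha\cap D_\beta$. This uses Lemma~\ref{lem:line} and the type $A$ hypothesis, and gives $n'_p>0$ at those points.
\item These positive $n'_p$ make the polytope $\nabla$ of admissible $(\widetilde{\chi}^{(a)})$ of maximal dimension $m-1$.
\item A generic $\alpha'$ then moves the point of $E$ into the interior of $\nabla$, so $E$ is $h_T^{\varepsilon'\dag}$-stable.
\end{enumerate}
Taking $z$ in the support of $B_i$ gives $A|_{\cH_T(\chi_0)^{\varepsilon'\dag\text{-ss}}}\neq 0$. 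This contradicts $A|_{\widetilde{\cH}_T(\chi_0)^{\mathrm{ss}}}=0$.
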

\begin{proof}
   Suppose that there is a non-zero object
\[
A \in \LL(\cH_T(\chi_0)^{\mathrm{ss}})_{w'}
\]
satisfying $A|_{\widetilde{\cH}_T(\chi_0)^{\mathrm{ss}}}=0$.
Let $h_{T}^{\vd}$ be taken as in \eqref{h:generic} for
$(\alpha^{(a)})_{1\leq a\leq m}$ satisfying the condition in
Lemma~\ref{lem:alphagen}. Let
    \begin{align*}
        \cH_T(\chi_0)^{\mathrm{ss}}=\cH_T(\chi_0)^{\vdss}\sqcup \cS_1^{\dag} \sqcup \cdots \sqcup \cS_{N}^{\dag} 
    \end{align*}
    the Harder--Narasimhan stratification with respect to $h_T^{\vd}$, such that 
\begin{align*}
       \cH_T(\chi_0)^{\mathrm{ss}}_{\preceq i}= \cH_T(\chi_0)^{\vdss}\sqcup \cS_1^{\dag} \sqcup \cdots \sqcup \cS_{i}^{\dag} 
\end{align*}
 is open in $\cH_T(\chi_0)^{\mathrm{ss}}$.    
 Then as $A|_{\cH_T(\chi_0)^{\vdss}}=0$, there is $i$ such that 
 \begin{align}\label{A:vanish}
     A|_{\cH_T(\chi_0)_{\prec i}^{\mathrm{ss}}}=0, \ A|_{\cH_T(\chi_0)_{\preceq i}^{\mathrm{ss}}}\neq 0.
 \end{align}

 Since $\cS_i^{\dag}$ is a $\Theta$-stratum of $\cH_T(\chi_0)_{\preceq i}$, 
 the same argument of~\cite[Proposition~8.23]{PTlim} shows that we have the 
 semiorthogonal decomposition 
 \begin{align*}
     \LL(\cH_T(\chi_0)_{\preceq i}^{\mathrm{ss}})=\langle \LL(\mathcal{Z}_i^{\dag})_{\delta_{w'}}, (j_i)_{!}\LL(\cH_T(\chi_0)^{\mathrm{ss}}_{\prec i})_{w'} \rangle.
 \end{align*}
 Here $j_i$ is the open immersion 
 \begin{align*}
     j_i \colon \cH_T(\chi_0)^{\mathrm{ss}}_{\prec i} \hookrightarrow \cH_T(\chi_0)^{\mathrm{ss}}_{\preceq i}
 \end{align*}
 and $\cS_i^{\dag} \to \mathcal{Z}_i^{\dag}$ is the center of the $\Theta$-stratum $\cS_i^{\dag}$. 
 The first summand is given by 
$\tau_{i*}f_i^*$ for the diagram 
\begin{align*}
        \mathcal{Z}_i^{\dag} \stackrel{f_i}{\leftarrow} \cS_i^{\dag} \stackrel{\tau_i}{\hookrightarrow} \cH_T(\chi_0)_{\preceq i}^{\mathrm{ss}}.
    \end{align*}
 Then from (\ref{A:vanish}), we have 
 \begin{align*}
     A|_{\cH_T(\chi_0)_{\preceq i}^{\mathrm{ss}}}\cong 
     \tau_{i*}f_i^* B_i
 \end{align*}
 for some non-zero $B_i \in \LL(\mathcal{Z}_i^{\dag})_{\delta_{w'}}$. 
 
    By Lemma~\ref{lemma:nonempty} below, for each $z\in \mathcal{Z}_i^{\dag}$, there is another $h_T^{\varepsilon'\dag}$ determined by generic $(\alpha^{'(a)})_{1\leq a\leq m}$
   and $z' \in \cS_i^{\dag} \cap \cH_T(\chi_0)^{\varepsilon'\dag\text{-ss}}$ which maps to $z$
   under the natural map $\cS_i^{\dag} \to \mathcal{Z}_i^{\dag}$. 
   Therefore by taking $z$ to be a point in the support of $B_i$, we conclude 
   $A|_{\cH_T(\chi_0)^{\varepsilon'\dag\text{-ss}}}\neq 0$, 
   which contradicts the assumption that $A|_{\widetilde{\cH}_T(\chi_0)^{\mathrm{ss}}}=0$. 
\end{proof}
\begin{lemma}\label{lemma:nonempty}
For each $z\in \mathcal{Z}_i^{\dag}$, there is another
$h_T^{\varepsilon'\dag}$ determined by generic $(\alpha^{'(a)})_{1\leq a\leq m}$ as in Lemma~\ref{lem:alphagen}, 
and $z' \in \cS_i^{\dag}$
which corresponds to an $h_T^{\varepsilon'\dag}$-stable sheaf on $\cC_b$ 
and maps to $z$
under the natural map $\cS_i^{\dag} \to \mathcal{Z}_i^{\dag}$. 
\end{lemma}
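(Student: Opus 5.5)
Let $z\in \mathcal{Z}_i^{\dag}$ correspond to a graded object $F_1\oplus\cdots\oplus F_a$ on $\cC_b$ (the associated graded of an $h_T^{\vd}$-HN filtration inside $\cH_T(\chi_0)^{\mathrm{ss}}$). Each $F_j$ is $h_T^{\vd}$-semistable, and we have $\mu^0(F_1)=\cdots=\mu^0(F_a)$ and $\mu^{\vd}(F_1)>\cdots>\mu^{\vd}(F_a)$. The supports $D_j=\mathrm{Supp}(F_j)$ are unions of irreducible components with no common component. The plan is to realise a point $z'\in\cS_i^{\dag}$ over $z$ as a non-split iterated extension
\[
0\to F_a\to G_{a-1}\to \cdots, \qquad G/G_{j}\cong\ldots,
\]
with $G$ a rank-one torsion-free sheaf on $\cC_b=D_1\cup\cdots\cup D_a$. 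The gluings are chosen at the intersection points $D_j\cap D_{j'}$, and the filtration is required to be exactly the given HN filtration. We then choose a new generic $(\alpha^{'(a)})$ that reverses the inequalities between the $D_j$, so that $G$ becomes $h_T^{\varepsilon'\dag}$-stable.

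\textbf{Step 1: construct the extension.} I would take a maximally non-split gluing, namely an extension whose class in $\Ext^1(F_{j'},F_j)$, $j<j'$, is nonzero at every point of $D_j\cap D_{j'}$. Since the $D_j$ meet ($\cC_b$ is connected and $(2g-2)R_jR_{j'}>0$), such extensions exist. Locally at an intersection point $p$ of type $A_{2n-1}$ (two branches), the rank-one torsion-free sheaves gluing the two branch modules are the modules $R\subset M\subset R'\oplus R''$ of Lemma~\ref{lem:Qp}. I choose the gluing so that $G$ is torsion-free of rank one and the quotient map to each $F_j|$ stays surjective. The exactness of the sequences is as in the proof of Lemma~\ref{lem:torsion}. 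The composite $\mathbb{G}_m$-degeneration of $G$ is then $z$, which gives $z'\in\cS_i^{\dag}$ mapping to $z$, after checking that $G$ is semistable for $\varepsilon=0$. This holds because all $\mu^0(F_j)$ are equal and extensions of semistable sheaves of equal slope are semistable.

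\textbf{Step 2: choose $\alpha'$.} Any subsheaf $G'\subset G$ with torsion-free quotient is determined by a subset $K$ of components together with gluing data. Because the gluing is maximal, the saturated subsheaves supported on $D_J:=\bigcup_{j\in J}D_j$ have degree strictly smaller than the sum of the degrees of the pieces, by the intersection number contributions $C_I\cdot C_{I^\circ}$ as in Proposition~\ref{prop:natural}. The exception is $J$ a final segment $\{j,\ldots,a\}$, where the subsheaf is $G_{j}$ and equality holds. So only the subsheaves coming from the filtration can destabilise at order $\varepsilon^2$, and among subsets of irreducible components inside a single $D_j$ the stability is inherited from $F_j$ up to order $\varepsilon^2$. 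I then choose $\alpha'$ with $\sum_{a\in D_J}r^{(a)}\alpha^{'(a)}>0$ for final segments $J$. This makes $\mu^{\varepsilon'\dag}(G_j)<\mu^{\varepsilon'\dag}(G)$, i.e.\ it reverses the order used for $\alpha$. I also perturb $\alpha'$ generically inside this open cone so that Lemma~\ref{lem:alphagen} applies and strict semistability is excluded. Within each $D_j$ the $\alpha'$ is kept close to $\alpha$, so that $F_j$'s internal stability is unchanged.

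\textbf{Main obstacle.} The delicate point is Step 2: showing that the only subsheaves of $G$ whose $\mu^0$-slope equals $\mu^0(G)$ are the $G_j$ (together with sub-pieces within components). Mixed subsheaves that cut across several $D_j$ must be strictly destabilised already at order $\varepsilon^0$. I would attempt this by the degree computation of Proposition~\ref{prop:VE}, Step 2. A non-final subset forces a saturated subsheaf to lose the full gluing length $\frac{m_p}{2}$ at some intersection point where the class is nonzero, which gives a strict drop in degree. If this fails for non-maximal gluing, I would enlarge the extension classes at every point, which is possible since the local $\Ext$ groups at distinct points are independent.
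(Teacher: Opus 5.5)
The gap is in Step~2, at the point you flag as the main obstacle. You claim that every saturated subsheaf of $G$ cutting across several $D_j$ has strictly smaller $\mu^0$-slope. That is false, so your list of $\mu^0$-critical subsheaves (the only ones that matter at order $\varepsilon^2$) is incomplete.

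First fix the ordering. The $h_T^{\varepsilon\dag}$-HN filtration has $F_1$, the factor of largest $\mu^{\varepsilon\dag}$, at the bottom, so the $G_j$ are supported on the \emph{initial} segments $D_1\cup\cdots\cup D_j$. With $F_a$ at the bottom and nontrivial gluing, the HN graded object of $G$ is not $z$.

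Now take $a=2$ and suppose $F_2$ has a torsion-free quotient $F_2\twoheadrightarrow B$ supported on a proper subcurve of $D_2$ with $\mu^0(B)=\mu^0(F_2)$. Nothing excludes this: $F_2$ is only $h_T^{\varepsilon\dag}$-semistable, and $\chi_0/r=(1-r)(g-1)$ is an integer. Then $G':=\ker(G\to F_2\to B)$ is supported on $D_1$ together with part of $D_2$, yet $\mu^0(G')=\mu^0(G)$. The quotient $G|^{\mathrm{free}}_{\mathrm{Supp}(B)}=B$ factors through $F_2$ and never sees the gluing along $D_1\cap D_2$, so no gluing length is lost, however maximal the extension class.

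In general, assume nonzero gluing at every point of $D_j\cap D_{j'}$ and recall that any two components meet. Then the $\mu^0$-critical saturated subsheaves of $G$ are exactly the preimages in $G_j$ of $\mu^0$-critical subsheaves of $F_j$. Their supports are $D_{<j}\cup S$ with $D_{<j}:=D_1\cup\cdots\cup D_{j-1}$ and $S\subseteq D_j$. You only account for $S\in\{\emptyset,D_j\}$ and for $j=1$, so $h_T^{\varepsilon'\dag}$-stability of $G$ is not established.

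Your recipe for $\alpha'$ also needs repair before this family can be handled. For a set $T$ of components write $r_T:=\sum_{a\in T}r^{(a)}$ and $s'(T):=\sum_{a\in T}r^{(a)}\alpha'^{(a)}$, with $s$ defined likewise for $\alpha$. Keeping $\alpha'$ close to $\alpha$ on each $D_j$ keeps $s'(D_j)\approx s(D_j)$, which cannot reverse the filtration inequalities.

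What works is $\alpha'^{(a)}=\alpha^{(a)}+c_j$ on the components of $D_j$. This preserves the inequalities $s(S)/r_S\le s(D_j)/R_j$ that encode the semistability of $F_j$. Choose the $c_j$ so that $s'(D_1\cup\cdots\cup D_j)<0$ for $j<a$. Since $\chi_0<0$, $s'(T)<0$ is what makes a subsheaf of equal $\mu^0$-slope non-destabilizing. This is equivalent to your condition on final segments, but the sheaves supported on final segments are the quotients $G/G_{j-1}$, not subsheaves.

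With $t=r_S/R_j\in(0,1)$ one then gets $s'(D_{<j}\cup S)\le(1-t)\,s'(D_{<j})+t\,s'(D_{\le j})<0$, which covers the missing family. A small generic perturbation finishes.

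The paper avoids this classification entirely. After making $E$ locally free along $D_\alpha\cap D_\beta$ with Lemma~\ref{lem:line} (your Step~1), it uses only that $\sum_{p\in\cC_b^{(K)}\cap\cC_b^{(K^\circ)}}n'_p>0$ for every proper $K$. This makes the polytope $\nabla$ of $\mu^0$-semistable normalized multidegrees $(\widetilde{\chi}^{(a)})$ full-dimensional, with an explicit interior point. Perturbing the polarization translates $(\widetilde{\chi}^{(a)})$ in every direction of the hyperplane, so a suitable $\alpha'$ moves it into the interior, i.e.\ makes $E$ stable.
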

\begin{proof}
For a rank-one torsion-free sheaf $E \in \Coh^{\heartsuit}(\cC_b)$, recall from 
Lemma~\ref{lem:Qp} that there is an exact sequence 
\begin{align*}
    0\to \mathcal{L} \to E \to 
    \bigoplus_{p \in \mathrm{Sing}(\cC_b)}\mathcal{O}_{W_p} \to 0
\end{align*}
where $\mathcal{L}$ is a line bundle and $W_p \hookrightarrow Z_p$ is a possibly empty closed subscheme
with length $0\leq n_p \leq m_p/2$. We set $n_p':=m_p'-n_p$,
where $m_p'$ is the round-down of $m_p/2$. Note that $n_p'=0$ if and only if 
$E|_{\widehat{\cC}_{b, p}}$ is obtained as a push-forward from its normalization. 

A point $z\in \mathcal{Z}_i^{\dag}$ corresponds to a direct sum
\begin{align*}
    A_1 \oplus \cdots \oplus A_{\ell} \in \Coh^{\heartsuit}(\cC_b)
\end{align*}
for $\ell\geq 2$ such that each $A_{\alpha}$ is
$h_T^{\varepsilon\dag}$-semistable with 
\begin{align*}
    \mu^{\varepsilon \dag}(A_1)>\cdots>\mu^{\varepsilon \dag}(A_{\ell}), \ 
    \mu^0(A_1)=\cdots=\mu^0(A_{\ell}).
\end{align*}
Let $D_{\alpha}:=\mathrm{Supp}(A_{\alpha})$. 
Note that each $A_{\alpha}$ is a line bundle on $D_{\alpha}$ at 
$D_{\alpha} \cap D_{\beta}$ for $\beta\neq \alpha$, as $D_{\alpha}$ is smooth at 
$D_{\alpha} \cap D_{\beta}$. 
We then take $E\in \Coh^{\heartsuit}(\cC_b)$ with $h_T^{\varepsilon\dag}$-HN filtration, using Lemma~\ref{lem:line} below,
\begin{align*}
    E_1 \subset E_2 \subset \cdots \subset E_{\ell}=E
\end{align*}
such that $E_{\alpha}/E_{\alpha-1}\cong A_{\alpha}$ and $E$ is a line bundle at any 
$p \in D_{\alpha} \cap D_{\beta}$ 
for $1\leq \alpha<\beta \leq \ell$; this is possible since each $A_{\alpha}$ is a line bundle at any 
$p\in D_{\alpha} \cap D_{\beta}$ 
for $\beta\neq \alpha$ by the definition of type $A$ singularities of $\cC_b$. 
Note that $E$ corresponds to a $k$-valued point of $\cS_i^{\dag}$ and 
satisfies $\deg \pi_{*}E=\chi_0$. 

Below we show that $E$ is stable with respect to 
$h_T^{\varepsilon'\dag}$ determined by generic $(\alpha^{'(a)})_{1\leq a\leq m}$ as in Lemma~\ref{lem:alphagen}. 
Let 
\begin{align*}
    \cC_b=\cC_b^{(1)} \cup \cdots \cup \cC_b^{(m)}, \quad 
    r^{(a)}=\operatorname{rank}(\pi_{*}\cO_{\cC_b^{(a)}})
\end{align*}
be the decomposition into irreducible components. 
For a subset $K\subset \{1, \ldots, m\}$, we write 
\[
\cC_b^{(K)}=\bigcup_{a\in K} \cC_b^{(a)}, 
\qquad 
K^{\circ}:=\{1,\ldots,m\}\setminus K.
\]
By our choice of the extensions, \(E\) is locally free at
the intersection points between components belonging to different HN factors;
equivalently the gluing length \(n'_p\) is positive at such points. 
Since \(\ell\geq 2\), any non-empty proper subset \(K\subset \{1, \ldots, m\}\) separates two irreducible
components belonging to different HN factors. As any two distinct irreducible
components of \(\cC_b\) meet for \(g\geq 2\), the intersection
\(\cC_b^{(K)}\cap \cC_b^{(K^{\circ})}\) contains a point at which \(E\) is locally free by
construction. Thus
\begin{align}\label{sum:positive}
\sum_{p\in \cC_b^{(K)}\cap \cC_b^{(K^{\circ})}} n'_p>0.
\end{align}

We then set 
\begin{align*}
    \chi^{(a)}:=\deg \pi_{*}(E|_{\cC_b^{(a)}}^{\mathrm{free}}) \in \mathbb{Z}. 
\end{align*}
Here $E|_{\cC_b^{(a)}}^{\mathrm{free}}$ is the torsion-free quotient of $E|_{\cC_b^{(a)}}$. 
Let $r^{(K)}:=\sum_{a\in K}r^{(a)}$. 
Then we have 
\begin{align*}
    \deg \pi_{*}(E|_{\cC_b^{(K)}}^{\mathrm{free}})
    =
    \sum_{a\in K} \chi^{(a)} 
    -\sum_{\substack{a,a' \in K\\ a<a'}}
    \sum_{p\in \cC_b^{(a)} \cap \cC_b^{(a')}}n_p'.
\end{align*}
By the exact sequence 
\begin{align*}
    0\to E' \to E \to E|_{\cC_b^{(K)}}^{\mathrm{free}} \to 0
\end{align*}
we have, by the semistability of $E$,
\begin{align}\label{ineq:chi}
    \frac{\chi_0}{r} \leq 
    \frac{1}{r^{(K)}}\left(
    \sum_{a\in K} \chi^{(a)} 
    -\sum_{\substack{a,a' \in K\\ a<a'}}
    \sum_{p\in \cC_b^{(a)} \cap \cC_b^{(a')}}n_p'  
    \right).
\end{align}
By the exact sequence 
\begin{align*}
    0\to E'' \to E \to (E|_{\cC_b^{(K^\circ)}})^{\mathrm{free}} \to 0
\end{align*}
and noting that 
\begin{align}\notag
    \deg \pi_{*}E''
    =
    \deg \pi_{*}(E|_{\cC_b^{(K)}}^{\mathrm{free}})
    -\sum_{p\in \cC_b^{(K)} \cap \cC_b^{(K^{\circ})}}n_p',
\end{align}
we obtain the inequality 
\begin{align}\label{ineq:chi2}
   \frac{1}{r^{(K)}}\left(
   \sum_{a\in K} \chi^{(a)} 
   -\sum_{\substack{a,a' \in K\\ a<a'}}
   \sum_{p\in \cC_b^{(a)} \cap \cC_b^{(a')}}n_p'  
   \right)
   -\frac{1}{r^{(K)}}\sum_{p\in \cC_b^{(K)} \cap \cC_b^{(K^{\circ})}}n_p'
   \leq \frac{\chi_0}{r}.
\end{align}

Moreover, by $\deg \pi_{*}E=\chi_0$, we have 
\begin{align}\label{equal:chi}
    \sum_{a=1}^m \chi^{(a)}
    -\sum_{a<a'}\sum_{p\in \cC_b^{(a)} \cap \cC_b^{(a')}}n_p'
    =\chi_0.
\end{align}
By setting 
\begin{align*}
    \widetilde{\chi}^{(a)}
    =
    \chi^{(a)}-\frac{r^{(a)}}{r}\cdot \chi_0,
\end{align*}
the inequalities (\ref{ineq:chi}), (\ref{ineq:chi2}) imply 
\begin{align}\label{ineq:chi4}
    0
    \leq 
    \sum_{a\in K}\widetilde{\chi}^{(a)}
    -\sum_{\substack{a,a' \in K\\ a<a'}}
    \sum_{p\in \cC_b^{(a)}\cap \cC_b^{(a')}}n_p'
    \leq 
    \sum_{p\in \cC_b^{(K)} \cap \cC_b^{(K^{\circ})}}n_p', 
\end{align}
and the equality (\ref{equal:chi}) implies 
\begin{align}\label{cond:chitilde}
    \sum_{a=1}^m \widetilde{\chi}^{(a)}
    =
    \sum_{a<a'}\sum_{p\in \cC_b^{(a)} \cap \cC_b^{(a')}}n_p'.
\end{align}

Let $\nabla \subset \mathbb{R}^m$ be the subset satisfying the above conditions for
$(\widetilde{\chi}^{(1)}, \ldots, \widetilde{\chi}^{(m)})$. Then it is an $(m-1)$-dimensional 
polytope by the conditions (\ref{sum:positive}), (\ref{cond:chitilde}). Indeed it contains 
the interior point $(t^{(1)}, \ldots, t^{(m)}) \in \nabla$ by setting 
\begin{align*}
    t^{(a)}=\frac{1}{2}\sum_{a'\neq a}
    \sum_{p\in \cC_b^{(a)} \cap \cC_b^{(a')}} n_p',
\end{align*}
where the inequalities in (\ref{ineq:chi4}) are strict by replacing $\widetilde{\chi}^{(a)}$ with $t^{(a)}$. 

Suppose that $(\widetilde{\chi}^{(1)}, \ldots, \widetilde{\chi}^{(m)})$ lies in the boundary of $\nabla$. Then by a small perturbation 
\begin{align*}
r^{(a)} \mapsto r^{(a)}(1+\varepsilon^{(a)}), \quad
\lvert \varepsilon^{(a)} \rvert \ll 1, \quad
\sum_{a=1}^m r^{(a)}\varepsilon^{(a)}=0,
\end{align*}
the point $(\widetilde{\chi}^{(1)}, \ldots, \widetilde{\chi}^{(m)})$ changes as  
\begin{align}\label{change:chi}
 (\widetilde{\chi}^{(1)}, \ldots, \widetilde{\chi}^{(m)}) \mapsto   
 \left(\widetilde{\chi}^{(1)}-\frac{r^{(1)} \varepsilon^{(1)}}{r}\chi_0, \ldots,
 \widetilde{\chi}^{(m)}-\frac{r^{(m)} \varepsilon^{(m)}}{r}\chi_0\right).
\end{align}
A choice of $(\varepsilon^{(1)}, \ldots, \varepsilon^{(m)})$ is $(m-1)$-dimensional.
Therefore for a generic choice of $(\alpha^{'(a)})_{1\leq a\leq m}$ 
satisfying the condition of Lemma~\ref{lem:alphagen}, by setting
$\varepsilon^{(a)}=\alpha^{'(a)} \cdot \varepsilon^2$ for $\lvert \varepsilon \rvert \ll 1$, 
the right-hand side of (\ref{change:chi}) 
lies in the interior of $\nabla$.
This means that for a perturbation $h_T^{\varepsilon'\dag}$ of $h_T$ 
with $h_T^{\varepsilon'\dag} \cdot \cC_b^{(a)}=r^{(a)}(1+\alpha^{'(a)}\varepsilon^2)$, the sheaf $E$ is $h_T^{\varepsilon'\dag}$-stable.
\end{proof}

We have used the following lemma: 
\begin{lemma}\label{lem:line}
Let $\cC$ be a reduced planar curve, and $\cC=C_1 \cup C_2$
where $C_i$ are unions of irreducible components. 
Let $E_i \in \Coh^{\heartsuit}(C_i)$ be rank-one torsion-free 
sheaves which are line bundles on $C_i$ at $C_1 \cap C_2$. Then 
there is an exact sequence in $\Coh^{\heartsuit}(\cC)$
\begin{align*}
    0\to E_1 \to E \to E_2 \to 0
\end{align*}
such that $E$ is a line bundle on $\cC$ at $C_1 \cap C_2$. 
\end{lemma}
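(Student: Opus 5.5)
The plan is to reduce the statement to a local gluing problem at the finitely many points of $C_1\cap C_2$, and then glue via the conductor square. Since $\cC$ is reduced with no common components between $C_1$ and $C_2$, the normalization-type map $\cO_{\cC}\hookrightarrow \cO_{C_1}\oplus\cO_{C_2}$ has cokernel $Q$ supported on the finite set $C_1\cap C_2$. There is also the standard exact sequence
\[
0\to \cO_{C_2}(-C_1)\to \cO_{\cC}\to \cO_{C_1}\to 0,
\]
where $C_1$ is a Cartier divisor on the planar surface germ. Extensions of $E_2$ by $E_1$ in $\Coh^{\heartsuit}(\cC)$ are classified by $\Ext^1_{\cC}(E_2,E_1)$. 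Away from $C_1\cap C_2$ any extension is just $E_1\oplus E_2$ locally, so the only question is whether we can prescribe the local extension classes at each $p\in C_1\cap C_2$ so that the middle term becomes invertible there.

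First we construct the local extensions. Near $p\in C_1\cap C_2$ both $E_i$ are invertible, so after trivializing we may take $E_1=\cO_{C_1}$ and $E_2=\cO_{C_2}$ on a neighbourhood of $p$. The local sequence
\[
0\to \cO_{C_1}(-C_2)\to \cO_{\cC}\to \cO_{C_2}\to 0
\]
has invertible middle term. Because $\cO_{C_1}(-C_2)\cong \cO_{C_1}$ locally at $p$ (as $C_2$ is locally principal in the surface), this is an extension of $\cO_{C_2}$ by $\cO_{C_1}$ with locally free middle term. We will call its class $e_p\in \Ext^1_{\widehat{\cO}_{\cC,p}}(E_2,E_1)$.

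The second step is to globalize using the local-to-global spectral sequence
\[
H^1(\cC,\cHom(E_2,E_1))\to \Ext^1_{\cC}(E_2,E_1)\to H^0(\cC,\mathcal{E}xt^1(E_2,E_1))\to H^2(\cC,\cHom(E_2,E_1)).
\]
The sheaf $\mathcal{E}xt^1(E_2,E_1)$ is supported on the finite set $C_1\cap C_2$, and $H^2$ vanishes on a curve. Hence the global-sections map is surjective, and we may choose a global class $e$ whose local components are exactly the $e_p$. Since local freeness of the middle term at $p$ depends only on the completed local extension class, which is $e_p$, the resulting $E$ is a line bundle at every $p\in C_1\cap C_2$. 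This completes the argument.

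The main point requiring care is the local step. One must check that a prescribed completed class in $\mathcal{E}xt^1(E_2,E_1)_p$ determines the isomorphism type of the completed middle module, and that the trivializations of $E_i$ near $p$ are compatible with $\cO_{C_1}(-C_2)\cong\cO_{C_1}$. Both are standard for modules over the local ring, so I expect no real obstruction.
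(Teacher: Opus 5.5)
Your proposal is correct and follows the paper's proof essentially verbatim. After trivializing $E_1,E_2$, you take the standard local extension $0\to I_{C_2/\cC}\cong\cO_{C_1}\to\cO_{\cC}\to\cO_{C_2}\to 0$ at each point of $C_1\cap C_2$, and then lift the resulting section of the finitely supported sheaf $\mathcal{E}xt^1_{\cC}(E_2,E_1)$ to a global class via the surjection $\Ext^1_{\cC}(E_2,E_1)\twoheadrightarrow H^0(\cC,\mathcal{E}xt^1_{\cC}(E_2,E_1))$ on a curve; the opening remarks about the conductor and the sequence $0\to\cO_{C_2}(-C_1)\to\cO_{\cC}\to\cO_{C_1}\to 0$ are never used and can be dropped.
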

\begin{proof}
Put $Z:=C_1\cap C_2$.  Since $E_i$ is a line bundle along $Z$, it is
enough to construct an extension which is locally, at every point of
$Z$, the standard extension
\[
0\to I_{C_2/\cC}\to \mathcal O_{\cC}\to \mathcal O_{C_2}\to 0 .
\]
Indeed, as $\cC$ is reduced and planar, 
we have an isomorphism $I_{C_2/C, p} \cong \mathcal{O}_{C_1, p}$. 
Thus the above standard sequence at $p$ is an extension of
$\mathcal O_{C_2,p}$ by $\mathcal O_{C_1,p}$ whose middle term is
$\mathcal O_{\cC,p}$.

Using local trivializations of $E_i$ along $Z$, these standard local
extensions give a section of
$
\mathcal Ext^1_{\cC}(E_2,E_1)$, 
which is supported on the finite scheme $Z$.  Since $\cC$ is a curve,
the local-to-global Ext sequence gives a surjection
\[
\Ext^1_{\cC}(E_2,E_1)
\twoheadrightarrow
H^0\bigl(\cC,\mathcal Ext^1_{\cC}(E_2,E_1)\bigr).
\]
Hence the above local extension classes lift to a global extension
\[
0\to E_1\to E\to E_2\to 0 .
\]
By construction, after localizing at any $p\in Z$, this extension is
isomorphic to the standard one.  Therefore
$E_p\cong \mathcal O_{\cC,p}$ for all $p\in Z$, so $E$ is a line bundle
along $C_1\cap C_2$.
\end{proof}
By Theorem~\ref{thm:WhitGL} (together with Remarks~\ref{rmk:thm1},~\ref{rmk:thm2}, and~\ref{rmk:thm3}) and Theorem~\ref{thm:Whit:intro}, we obtain the following: 
\begin{thm}\label{thm:GL}
    For $G=\GL_r$, the DL conjecture holds over $\rB_{\GL_r}^A \subset \rB_{\GL_r}$.
\end{thm}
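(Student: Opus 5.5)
The plan is to assemble Theorem~\ref{thm:GL} from the reduction results already recorded, so almost no new argument is needed. By Remark~\ref{rmk:thm3}, the DL conjecture over the derived open $\rB_{\GL_r}^A=\cB^A\times k[-1]$ follows from the DL conjecture over the classical open $\cB=\cB^A=\rB_{\GL_r}^{A,\mathrm{cl}}$ by tensoring with $\QCoh(k[-1])$. This step also uses Lemma~\ref{lem:equiv:k} to compare the limit categories with and without the $k[-1]$-factor. So I will reduce at once to proving the equivalence \eqref{equiv:H} over $\cB^A$ for all $(\chi,w)$.

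Over $\cB^A$ I will invoke Theorem~\ref{thm:WhitGL}, which has two hypotheses. Hypothesis (i), Conjecture~\ref{conj:Whit}, is exactly Theorem~\ref{thm:whit}. That theorem was proved under the normalization $w>0$, so I will first use the periodicity in $w$ by multiples of $r$ (\cite[Remark~2.14]{TodaGL2}) to pass to $w>0$. This shift changes neither side of the equivalence, up to the equivalences of Remark~\ref{rmk:thm1}. Hypothesis (ii), curvilinearity of the conductor subscheme, holds on $\cB^A$ by Remark~\ref{rmk:thm2}; concretely, the conductor is $Z_p\cong\Spec k[t]/t^{m_p'}$ as in \eqref{def:Zp}. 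Theorem~\ref{thm:WhitGL} then gives the equivalence \eqref{equiv:WhitL} with the shifted weights $(\chi',w')$.

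Finally, I will remove the shifts $(\chi,w)\mapsto(\chi',w')$ using the equivalences of Remark~\ref{rmk:thm1}, obtaining \eqref{equiv:H}. This equivalence is $\cB$-linear because $\Phi$ is a relative Fourier--Mukai functor over $\cB$.

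The main obstacle is Theorem~\ref{thm:whit} itself, which is already established. At the level of this theorem, the only point needing care is bookkeeping: the weight normalization $w>0$ assumed throughout Section~4, and the fact that the Whittaker normalization was proved only for $\chi'=0$. Both are absorbed into the hypotheses of \cite[Theorem~4.29]{TodaGL2}, which handles general $\chi$ through Hecke operators together with the compact generation proved in \cite[Corollary~4.28]{TodaGL2}.
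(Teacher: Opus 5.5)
Your proposal is correct and is essentially the paper's own argument. The paper also deduces Theorem~\ref{thm:GL} directly from Theorem~\ref{thm:WhitGL}: hypothesis (i) comes from Theorem~\ref{thm:whit}, with the $w>0$ normalization handled by periodicity at the start of Section~4, and hypothesis (ii) comes from Remark~\ref{rmk:thm2}; Remarks~\ref{rmk:thm1} and~\ref{rmk:thm3} then remove the weight shifts and pass from $\cB^A$ to the derived open $\rB_{\GL_r}^A$.
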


\section[The Dolbeault geometric Langlands conjecture for SLr/PGLr]{The Dolbeault geometric Langlands conjecture for \texorpdfstring{$\mathrm{SL}_r/\mathrm{PGL}_r$}{SLr/PGLr}}
In this section, we prove Conjecture~\ref{conj:intro} for the dual pairs
\[
({}^{L}G,G)=(\mathrm{SL}_r,\mathrm{PGL}_r),\qquad
(\mathrm{PGL}_r,\mathrm{SL}_r),
\]
over $\rB_{G}^{A}\subset \rB_{G}$, corresponding to spectral curves with at worst
type $A$ singularities.
Here $\rB_{G}$ is the Hitchin base for $\SL_r$ or $\PGL_r$.

\subsection[Moduli stacks of SLr/PGLr-Higgs bundles]{Moduli stacks of \texorpdfstring{$\mathrm{SL}_r/\mathrm{PGL}_r$}{SLr/PGLr}-Higgs bundles}
Here we construct moduli stacks of $\SL_r/\PGL_r$-Higgs bundles from the moduli 
stack of $\GL_r$-Higgs bundles. 
For $G\in \{\SL_r, \PGL_r\}$, the Hitchin base $\rB_G$ is given by 
\begin{align*}
    \rB_{G}=\bigoplus_{i=2}^r \Gamma(\Omega_C^i) \hookrightarrow 
    \rB_{\GL_r}=\bigoplus_{i=1}^r \Gamma(\Omega_C^i)
\end{align*}
such that the $H^0(\Omega_C)$-action (\ref{act:gamma}) gives the isomorphism 
\begin{align}\label{act:isom}
    \rB_{G}\times H^0(\Omega_C) \stackrel{\cong}{\to} \rB_{\GL_r}^{\mathrm{cl}}.
\end{align}
The moduli stack of Higgs bundles $(F, \theta)$ with $\tr\theta=0$
is given by the Cartesian square 
\begin{align*}
    \xymatrix{
\Hig_{\GL_r}^{\tr =0} \inclusion \ar[d]\diasquare  & \Hig_{\GL_r} \ar[d] \\
\rB_{G} \inclusion & \rB_{\GL_r}. 
    }
\end{align*}
Let $\mathrm{Pic}(C)=\Bun_{\mathbb{G}_m}$ and 
consider the map 
\begin{align*}
\det \colon 
    \Hig_{\GL_r}^{\tr=0} \to \Pic(C), \ (F, \theta)\mapsto \det F.
\end{align*}
The moduli stack of $\SL_r$-Higgs bundles is given by the Cartesian square 
\begin{align*}
    \xymatrix{
\Hig_{\SL_r} \ar[r] \ar[d] \diasquare & \Hig_{\GL_r}^{\tr=0} \ar[d] \\
\mathrm{pt} \ar[r]& \mathrm{Pic}(C). 
    }   
\end{align*}
Here the bottom map corresponds to $\mathcal{O}_C$. 

The stack $\Pic(C)$ acts on $\Hig_{\GL_r}^{\tr=0}$ by 
$L \cdot (F, \theta)=(F\otimes L, \theta\otimes \id)$. 
The moduli stack of $\PGL_r$-Higgs bundles is given by the quotient 
\begin{align*}
    \Hig_{\PGL_r}=\Hig_{\GL_r}^{\tr=0}/\Pic(C).
\end{align*}

Since we have 
\begin{align*}
    \pi_1(\SL_r)=Z_{\PGL_r}^{\vee}=1, \ 
    \pi_1(\PGL_r)=Z_{\SL_r}^{\vee}=\mathbb{Z}/r\mathbb{Z}, 
\end{align*}
the Conjecture~\ref{conj:intro} for $\overline{\chi} \in \mathbb{Z}/r\mathbb{Z}$
is a $\rB_{G}$-linear equivalence 
\begin{align*}
    \IndCoh_{\mathcal{N}}(\Hig_{\SL_r}^{\mathrm{ss}})_{-\overline{\chi}}
    \stackrel{\sim}{\to} \IndL_{\mathcal{N}}(\Hig_{\PGL_r}(\overline{\chi})).
\end{align*}
Here we omitted the notation $(0)$ for the degree on the left-hand side, 
and $_{0}$ for the weight on the right-hand side. 

Below we fix an open subset 
\begin{align*}
    \cB_{\circ} \subset \rB_{G}, \ \cB:=\cB_{\circ} \times H^0(\Omega_C) \hookrightarrow \rB_{\GL_r}^{\mathrm{cl}},
\end{align*}
where the latter is given by the isomorphism (\ref{act:isom}). 
We consider the following stacks 
\begin{align*}
    &\cH:=\Hig_{\GL_r}\times_{\rB_{\GL_r}}\cB, \ 
    \cH_{\circ}:=\Hig_{\GL_r}^{\tr=0}\times_{\rB_{G}}\cB_{\circ}, \\
    &\cH_{\rS}:=\Hig_{\SL_r}\times_{\rB_{G}}\cB_{\circ}, \ 
    \cH_{\mathrm{P}}:=\Hig_{\PGL_r}\times_{\rB_{G}}\cB_{\circ}.
\end{align*}
\begin{remark}\label{rmk:Acirc}
Later we will consider $\cB=\cB^A$. It is of the form 
\begin{align}\label{BAcirc}
\cB_{\circ}^A \times H^0(\Omega_C) \stackrel{\cong}{\to} \cB^A
\end{align}
for an open subset $\cB_{\circ}^A \subset \cB_{\circ}$ under the isomorphism (\ref{act:isom}), 
    since the type $A$-locus $\cB^A$ is unchanged by the action of $H^0(\Omega_C)$.
\end{remark}

Note that we have the isomorphism 
\begin{align}\label{isom:tr}
    \cH_{\circ}\times H^0(\Omega_C) \stackrel{\cong}{\to} \cH, \ 
    ((F, \theta), \gamma) \mapsto (F, \theta+\gamma \cdot \id_F).
\end{align}

Let $w_0:=(r-r^2)(g-1)=\deg \pi_{*}\cO_{\cC_b}$. We also consider the norm map 
\begin{align}\label{def:norm}
    \mathrm{Nm} \colon \cH_{\circ}(w_0)\to \Pic^0(C)
\end{align}
given by 
\begin{align*}
    E \mapsto \det \pi_{*}E \otimes (\det \pi_{*}\mathcal{O}_{\cC_b})^{-1}
\end{align*}
where $E\in \Coh^{\heartsuit}(\cC_b)$. 
We define $\cH_{\rS}(w_0)$ by the Cartesian square 
\begin{align*}
    \xymatrix{
\cH_{\rS}(w_0) \ar[r]^-{i} \ar[d] \diasquare & \cH_{\circ}(w_0) \ar[d]^-{\mathrm{Nm}} \\
\mathrm{pt} \ar[r] & \Pic^0(C).
    }
\end{align*}
Here the bottom map corresponds to $\mathcal{O}_C$. 
Note that there is an isomorphism 
\begin{align}\label{isom:norm}
\cH_{\rS} \stackrel{\cong}{\to}\cH_{\rS}(w_0), \ E \mapsto E\otimes \pi^{*}\Omega_C^{-\frac{1}{2}(r-1)}
\end{align}
for $E\in \Coh^{\heartsuit}(\cC_b)$ and a fixed choice of 
$\Omega_C^{-\frac{1}{2}(r-1)}\in \Pic(C)$. Here 
we note that $\Omega_C^{-\frac{1}{2}(r-1)}$ gives a $r$-th root of 
$\det \pi_{*}\cO_{\cC_b}$. 
\begin{remark}\label{rmk:perfSP}
If $\cB_{\circ} \subset \rB_{G}^{\mathrm{red}}:=\rB_{\GL_r}^{\mathrm{red}} \cap \rB_{G}$, 
then similarly to Remark~\ref{rmk:nilp} we have 
\begin{align*}
    \Coh_{\mathcal{N}}(\cH_{\rS}(w_0))=\mathrm{Perf}(\cH_{\rS}(w_0)), \ 
    \Coh_{\mathcal{N}}(\cH_{\rP})=\mathrm{Perf}(\cH_{\rP}).
\end{align*}
   Indeed the automorphism groups of $\cH_{\rS}(w_0)$, $\cH_{\rP}$ are 
   subgroups (or quotients) of tori, hence abelian. Therefore the nilpotent cone is the 
   zero section, as in Remark~\ref{rmk:nilp}. 
\end{remark}
\subsection[Arinkin sheaf for SLr/PGLr-Higgs stacks]{Arinkin sheaf for \texorpdfstring{$\SL_r/\PGL_r$}{SLr/PGLr}-Higgs stacks}
In this subsection, we descend the Arinkin sheaf from $\GL_r$ to 
$\SL_r/\PGL_r$-Higgs moduli stacks, and construct the relevant functor. 

Let $\cB \subset \rB_{\GL_r}^{\mathrm{red, cl}}$ be an open subset. 
In this case, recall that we have the Arinkin sheaf (\ref{Arsheaf0})
\begin{align*}
    \cP \in \Coh^{\heartsuit}(\cH\times_{\cB}\cH).
\end{align*}
By pulling it back by the base change 
$\cB_{\circ} \to \cB$, we obtain the Cohen--Macaulay sheaf 
\begin{align*}
    \cP_{\circ} \in \Coh^{\heartsuit}(\cH_{\circ}\times_{\cB_{\circ}}\cH_{\circ}). 
\end{align*}

We consider the following Cartesian square 
\begin{align}\label{dia:alpha}
    \xymatrix{(\cH_{\circ}(w_0)\times_{\cB_{\circ}}\cH_{\circ}) \times \Pic(C) \ar[r]^-{\alpha} \ar[d]_-{p_{12}} \diasquare 
    & \cH_{\circ}(w_0)\times_{\cB_{\circ}}\cH_{\circ} \ar[d]_-{\id\times p} \\
    \cH_{\circ}(w_0)\times_{\cB_{\circ}}\cH_{\circ} \ar[r]^-{\id \times p} & \cH_{\circ}(w_0)\times_{\cB_{\circ}}\cH_{\rP}.
    }
\end{align}
Here $\alpha$ is the action map on the second factor of $\cH_{\circ}(w_0)\times_{\cB_{\circ}}\cH_{\circ}$,  and $p_{ij}$ is the projection from $(\cH_{\circ}(w_0)\times_{\cB_{\circ}}\cH_{\circ}) \times \Pic(C)$
    onto the corresponding factor.
    
Let $\cL$ be the line bundle 
    \begin{align*}
        \cL=(\mathrm{Nm}\times\id)^* \cP_C \to \cH_{\circ}(w_0)\times \Pic(C).
    \end{align*}
    Here $\mathrm{Nm}$ is the norm map (\ref{def:norm}) and $\cP_C \to \Pic(C) \times \Pic(C)$ is the line bundle whose fiber 
    at $(A, B)$ is given by the Deligne pairing 
    \begin{align*}
        \det \chi(A\otimes B) \otimes \det \chi(A)^{-1}\otimes \det \chi(B)^{-1}\otimes \det \chi(\mathcal{O}_{C}).
    \end{align*}
\begin{lemma}\label{lbundle:pull}
    There is a natural isomorphism 
    \begin{align*}
        \alpha^* \cP_{\circ} \cong p_{12}^* \cP_{\circ} \otimes p_{13}^* \cL.
    \end{align*}
\end{lemma}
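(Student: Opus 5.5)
Both sides of the claimed isomorphism are maximal Cohen--Macaulay sheaves, so I will compare them on the locus where $\cP_\circ$ is a genuine line bundle given by the Deligne pairing and then extend by Lemma~\ref{lem:MCM-extension}. Since $\alpha$ is an action map (a composition of an isomorphism with a projection), it is flat. Hence $\alpha^*\cP_\circ$ is maximal Cohen--Macaulay on the source $\mathcal{X}:=(\cH_{\circ}(w_0)\times_{\cB_{\circ}}\cH_{\circ})\times\Pic(C)$. The same holds for $p_{12}^*\cP_\circ\otimes p_{13}^*\cL$, because $p_{12}$ is smooth and $\cL$ is a line bundle.

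The first step is to choose the open locus $\mathcal{U}\subset\mathcal{X}$ where one of the two $\cH_\circ$-points is regular. Its preimage under $p_{12}$ and under $\alpha$ is the same, because tensoring by $\pi^*B$ preserves regularity. The complement of $(\cH_\circ\times_{\cB_\circ}\cH_\circ)^{\sharp}$ has codimension at least two by the argument of Lemma~\ref{lem:codimension}. I expect this codimension estimate to need care over the traceless base: I would deduce it from the $\GL_r$ case via the isomorphism \eqref{isom:tr}, or from the codimension-two statement underlying Theorem~\ref{thm:Pflat}. Since $p_{12}$ is smooth, the complement of $\mathcal{U}$ in $\mathcal{X}$ still has codimension at least two. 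By Lemma~\ref{lem:MCM-extension}, it then suffices to build a natural isomorphism of line bundles on $\mathcal{U}$; it extends uniquely.

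On $\mathcal{U}$ I compute fibers at a point $(A_1,A_2,B)$ with $A_1\in\cH_\circ(w_0)$ over $b$, $A_2\in\cH_\circ$, and $B\in\Pic(C)$. By \eqref{def:Psharp}, the fiber of $\alpha^*\cP_\circ$ is the Deligne pairing of $A_1$ and $A_2\otimes\pi^*B$, while the fiber of $p_{12}^*\cP_\circ$ is that of $A_1$ and $A_2$. Their ratio is
\[
\det\chi(A_1\otimes A_2\otimes\pi^*B)\otimes\det\chi(A_1\otimes A_2)^{-1}\otimes\det\chi(A_2\otimes\pi^*B)^{-1}\otimes\det\chi(A_2).
\]
I will then apply multiplicativity of Deligne pairings. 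For a sheaf $F$ on $\cC_b$ and a line bundle $B$ on $C$, projection formula gives $\det\chi(F\otimes\pi^*B)\otimes\det\chi(F)^{-1}\cong\det\chi(C,\pi_*F\otimes B)\otimes\det\chi(C,\pi_*F)^{-1}$. This equals $\langle\det\pi_*F,B\rangle\otimes\langle\det\pi_*F\text{-rank part}\rangle$, which depends only on $\det\pi_*F$ and the rank, via the standard Deligne--Riemann--Roch identity $\det R\Gamma(V\otimes B)\otimes\det R\Gamma(V)^{-1}\cong\langle\det V,B\rangle\otimes(\det R\Gamma(B)\otimes\det R\Gamma(\cO))^{\operatorname{rk}V}$. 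Applying this with $F=A_1\otimes A_2$ and $F=A_2$, the rank-dependent factors cancel since both ranks are $r$. The ratio becomes $\langle\det\pi_*(A_1\otimes A_2)\otimes(\det\pi_*A_2)^{-1},B\rangle$.

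Finally, I identify $\det\pi_*(A_1\otimes A_2)\otimes(\det\pi_*A_2)^{-1}\cong\det\pi_*A_1\otimes(\det\pi_*\cO_{\cC_b})^{-1}=\mathrm{Nm}(A_1)$, using that $A_2$ (or $A_1$) is a line bundle on $\cC_b$ in the regular locus. This is the norm-map additivity $\mathrm{Nm}(L\otimes F)=\mathrm{Nm}(L)\otimes\det\pi_*F\otimes\det\pi_*\cO^{-1}$ for $L$ a line bundle. The ratio is then exactly the fiber of $\cP_C$ at $(\mathrm{Nm}(A_1),B)$, i.e.\ of $p_{13}^*\cL$, up to the $\det\chi(\cO_C)$ normalizations, which match. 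All of these identifications are canonical Deligne-pairing isomorphisms, so they globalize in families over $\mathcal{U}$. The main obstacle I expect is this additivity of the norm when $A_1$ is only torsion free; I would handle it by working on the part of $\mathcal{U}$ where $A_1$ is regular, which is also of complementary codimension at least two, and then extending as above.
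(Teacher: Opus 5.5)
Your proposal is correct and follows the paper's proof: reduce via Lemma~\ref{lem:MCM-extension} to the open locus where one factor is regular (complement of codimension at least two), then identify fibers by Deligne-pairing manipulations pushed down to $C$. The only differences are cosmetic. The paper first splits $\cP_\circ|_{(A,B\otimes\pi^*M)}\cong\cP_\circ|_{(A,B)}\otimes\cP_\circ|_{(A,\pi^*M)}$ by multiplicativity on $\cC_b$ and then pushes forward, whereas you push the whole ratio forward and invoke $\det\pi_*(L\otimes F)\cong \mathrm{Nm}(L)\otimes\det\pi_*F$. Two minor points. The rank term in your Riemann--Roch identity should be $(\det R\Gamma(B)\otimes\det R\Gamma(\cO)^{-1})^{\operatorname{rk}V}$, which is harmless since it cancels. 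The torsion-free ``obstacle'' is not real, since that norm identity holds for any rank-one torsion-free $F$; your fallback to the locus where $A_1$ is regular would in any case still need it for torsion-free $A_2$, unless you restrict to where both factors are regular, a locus whose complement also has codimension at least two.
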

\begin{proof}
   Since both sides are maximal Cohen--Macaulay sheaves, it is enough 
   to show the isomorphism over the open substack 
   \begin{align*}((\cH_{\circ}(w_0)^{\mathrm{reg}}\times_{\cB_{\circ}}\cH_{\circ})
   \cup (\cH_{\circ}(w_0)\times_{\cB_{\circ}}\cH_{\circ}^{\mathrm{reg}}))\times \Pic(C)
   \end{align*}
   since its complement has codimension at least two. 
We take a $k$-valued point $(A, B, M)$ of the above substack, namely $A, B \in \Coh^{\heartsuit}(\cC_b)$ such that either one of them is a line bundle, $M$ is a line bundle on $C$.
   By the multiplicative property of Deligne pairing~\cite[Section~6.2]{Deligne1987Determinant}, we have 
   \begin{align*}
       \alpha^* \cP_{\circ}|_{(A, B, M)} \cong 
       \cP_{\circ}|_{(A, B\otimes \pi^* M)} 
       \cong \cP_{\circ}|_{(A, B)}\otimes \cP_{\circ}|_{(A, \pi^*M)}.
   \end{align*}
   We have the isomorphisms
   \begin{align*}
    &\cP_{\circ}|_{(A, \pi^*M)} \\
    &\cong   
    \det \chi(\pi_{*}A\otimes M)\otimes \det \chi(\pi_{*}A)^{-1}\otimes \det \chi(M\otimes \pi_{*}\mathcal{O}_{\cC_b})^{-1}\otimes 
    \det \chi(\pi_{*}\mathcal{O}_{\cC_b}) \\
    &\cong \cP_C|_{(\det \pi_{*}A, M)}\otimes \cP_{C}|_{(\det \pi_{*}\mathcal{O}_{\cC_b}, M)}^{-1} \\
    &\cong \cP_{C}|_{(\mathrm{Nm}(A), M)}.
   \end{align*}
\end{proof}

By pulling the sheaf $\cP_{\circ}$ back via
$i \colon\cH_{\rS}(w_0) \to \cH_{\circ}$, we obtain 
\begin{align*}
   (i\times \id)^* \cP_{\circ}\in \Coh^{\heartsuit}(\cH_{\rS}(w_0)\times_{\cB_{\circ}}\cH_{\circ}).
\end{align*}
By Lemma~\ref{lbundle:pull},
there is a natural isomorphism 
\begin{align}\label{isom:Palpha}
    \alpha^* (i\times \id)^* \cP_{\circ} \cong p_{12}^* (i\times \id)^* \cP_{\circ}.
\end{align}
Here the notation is as in the diagram (\ref{dia:alpha})
\begin{align*}
    \xymatrix{(\cH_{\rS}(w_0)\times_{\cB_{\circ}}\cH_{\circ}) \times \Pic(C) \ar[r]^-{\alpha} \ar[d]_-{p_{12}} \diasquare
    & \cH_{\rS}(w_0)\times_{\cB_{\circ}}\cH_{\circ} \ar[d]_-{\id\times p} \\
    \cH_{\rS}(w_0)\times_{\cB_{\circ}}\cH_{\circ} \ar[r]^-{\id \times p} & \cH_{\rS}(w_0)\times_{\cB_{\circ}}\cH_{\rP}.
    }
\end{align*}

The isomorphism (\ref{isom:Palpha}) also satisfies the 
cocycle condition by the associativity of the multiplicative property of the Deligne pairing.
It 
gives the $\Pic(C)$-equivariant structure on 
$(i\times \id)^* \cP_{\circ}$, therefore it descends to 
a Cohen--Macaulay sheaf 
\begin{align}\label{AR:SP}
\cP_{\rS/\rP} \in \Coh(\cH_{\rS}(w_0)\times_{\cB_{\circ}}\cH_{\rP})
\end{align}
with an isomorphism (cf.~\cite[Proposition~4.4]{GS})
\begin{align}\label{isom:PSP}
    (i\times \id)^* \cP_{\circ}\cong (\id\times p)^* \cP_{\rS/\rP}.
\end{align}
Here we used the notation in the following diagram 
\begin{align}\label{dia:SP}
    \xymatrix{
    \cH_{\rS}(w_0)\times_{\cB_{\circ}}\cH_{\circ} \ar[r]^-{i\times \id} 
    \ar[d]_-{\id \times p} \diasquare & \cH_{\circ}(w_0)\times_{\cB_{\circ}}\cH_{\circ} \ar[d]_-{\id \times p}  \\
    \cH_{\rS}(w_0)\times_{\cB_{\circ}}\cH_{\rP} \ar[r]^-{i\times \id}  & \cH_{\circ}(w_0)\times_{\cB_{\circ}}\cH_{\rP}.
    }
\end{align}
For $\chi \in \mathbb{Z}$, let $\overline{\chi}\in \mathbb{Z}/r\mathbb{Z}$ be its class. 
We have the induced Fourier--Mukai functor with kernel object $\cP_{\mathrm{S}/\mathrm{P}}$
\begin{align}\label{funct:PhiSP}
    \Phi_{\rS/\rP} \colon \Coh(\cH_{\rS}(w_0)^{\mathrm{ss}})_{-\overline{\chi}} \to 
    \Coh(\cH_{\rP}(\overline{\chi}))_0.
\end{align}

\subsection{Compatibilities of the functors}
In this subsection, we show that the functor (\ref{funct:PhiSP}) 
is compatible with other functors which compare with the moduli 
stacks of $\GL_r$-Higgs bundles. 

The isomorphism (\ref{isom:PSP}) implies that the following diagram commutes 
\begin{align}\label{commute:Phi}
    \xymatrix{
\Coh(\cH_{\rS}(w_0)^{\mathrm{ss}})_{-\overline{\chi}} \ar[r]^-{\Phi_{\rS/\rP}} \ar[d]_-{i_{*}} & \Coh(\cH_{\rP}(\overline{\chi}))_0 \ar[d]_-{p^*} \\
\Coh(\cH_{\circ}(w_0)^{\mathrm{ss}})_{-\chi'} \ar[r]^-{\Phi} & \Coh(\cH_{\circ}(\chi))_0.
    }
\end{align}
Here by abuse of notation, we denoted by $i_{*}$ the weight $-\chi$-component of $i_{*}$, 
which lives in $\Coh(\cH_{\circ}(w_0)^{\mathrm{ss}})_{-\chi'}$. 
Indeed we have the factorization of $i$ by the diagram

\begin{align}\label{factor:i}
    \xymatrix{
\cH_{\rS}(w_0)^{\mathrm{ss}} \ar[r]_-{\widetilde{i}} \ar@/^15pt/[rr]^-{i} \ar[d] & \widetilde{\cH}_{\circ}(w_0)^{\mathrm{ss}} \ar[d] \inclusion_-{0} & \cH_{\circ}(w_0)^{\mathrm{ss}} \ar[d]_-{\mathrm{Nm}} \\
\mathrm{pt} \ar[r]& \bgm \inclusion^-{0}  & \mathrm{Pic}^0(C).
    }
\end{align}
Here
$0 \colon \bgm \hookrightarrow \Pic^0(C)$ is the closed immersion corresponding to $\cO_C$. 
For each Higgs bundle $E$ corresponding to a $k$-valued point of $\cH_{\circ}(w_0)$, the induced map 
\begin{align*}
    \mathbb{G}_m \subset \mathrm{Aut}(E) \stackrel{\mathrm{Nm}}{\to}
   \mathbb{G}_m
\end{align*}
is given by $t\mapsto t^r$, whose kernel is $\mu_r$. 
Therefore the $\mathbb{G}_m$-rigidification of $\widetilde{\cH}_{\circ}(w_0)$
and the $\mu_r$-rigidification of $\widetilde{\cH}_{\circ}(w_0)$ are identified. 
We have the decompositions 
\begin{align*}
&\Coh(\cH_{\rS}(w_0)^{\mathrm{ss}} )=\bigoplus_{n\in \mathbb{Z}/r\mathbb{Z}}
\Coh(\cH_{\rS}(w_0)^{\mathrm{ss}} )_{n}, \\ 
&\Coh(\widetilde{\cH}_{\circ}(w_0)^{\mathrm{ss}} )=\bigoplus_{m\in \mathbb{Z}}\Coh(\widetilde{\cH}_{\circ}(w_0)^{\mathrm{ss}})_m,
\end{align*}
and the map $\widetilde{i}$ induces the equivalence 
\begin{align*}
    \widetilde{i}^* \colon \Coh(\widetilde{\cH}_{\circ}(w_0)^{\mathrm{ss}})_m
    \stackrel{\sim}{\to}\Coh(\cH_{\rS}(w_0)^{\mathrm{ss}} )_{n}
\end{align*}
if $\overline{m}=n$ in $\mathbb{Z}/r\mathbb{Z}$. 
Then the functor $i_{*}$ is given by the composition 
\begin{align}\label{compose:0i}
    i_{*} \colon \Coh(\cH_{\rS}(w_0)^{\mathrm{ss}})_{-\overline{\chi}} \stackrel{\sim}{\to} \Coh(\widetilde{\cH}_{\circ}(w_0)^{\mathrm{ss}})_{-\chi'}
    \stackrel{0_{*}}{\to} \Coh(\cH_{\circ}(w_0)^{\mathrm{ss}})_{-\chi'}.
\end{align}
Also  
similarly by $p^*$ the degree $\chi$-component of $p^*$ 
\begin{align*}
    p^* \colon \Coh(\cH_{\mathrm{P}}(\overline{\chi}))_{0} \hookrightarrow 
    \Coh(\cH_{\mathrm{P}})_{0} \stackrel{p^*}{\to}
    \Coh(\cH_{\circ})_0 \twoheadrightarrow 
    \Coh(\cH_{\circ}(\chi))_0.
\end{align*}
\begin{remark}
    In what follows, several functors such as $p^*,p_*,p_!,p^!, i_{*}, i^*$ will appear. They will always denote the functors restricted to the indicated connected and weight components.
\end{remark}

The commutative diagram (\ref{commute:Phi})
induces the 
natural transformation 
\begin{align}\label{nat:trans}
     p_{!}\circ \Phi \Rightarrow \Phi_{\rS/\rP} \circ i^*
\end{align}
in the following diagram (which does not necessarily commute at this stage)
\begin{align}\label{dia:!}
    \xymatrix{
\Coh(\cH_{\rS}(w_0)^{\mathrm{ss}})_{-\overline{\chi}} \ar[r]^-{\Phi_{\rS/\rP}} & \Coh(\cH_{\rP}(\overline{\chi}))_0 \\
\Coh(\cH_{\circ}(w_0)^{\mathrm{ss}})_{-\chi'} \ar[r]^-{\Phi} \ar[u]_-{i^{*}} 
& \Coh(\cH_{\circ}(\chi))_0 \ar[u]_-{p_{!}}.
    }
\end{align}
Here $p_{!}$ is the left adjoint of $p^*$ in the diagram (\ref{commute:Phi}). 
Since the map $p$ (on the component $\cH_{\circ}(\chi)$) is a smooth fibration 
with fiber $\Pic^0(C)$ (which is a trivial $\mathbb{G}_m$-gerbe over the Jacobian of $C$), we have $p_{!}\cong p_{*}[g]$. 
The natural transform (\ref{nat:trans}) is defined 
by the adjunction and the commutative diagram (\ref{commute:Phi})
\begin{align*}
    \Phi \Rightarrow \Phi \circ i_{*} \circ i^* \cong p^* \circ \Phi_{\rS/\rP} \circ i^*. 
\end{align*}
The natural transform (\ref{nat:trans}) for each $\chi$ 
corresponds to the morphism 
\begin{align}\label{gamma}
\gamma \colon
    (\id \times p)_{!}\cP_{\circ} \to (i\times \id)_{*}\cP_{\rS/\rP}
\end{align}
in $\QCoh(\cH_{\circ}(w_0)^{\mathrm{ss}}\times_{\cB_{\circ}}\cH_{\mathrm{P}})$. 
\begin{prop}\label{prop:gisom}
The morphism $\gamma$ in (\ref{gamma}) is an isomorphism. 
\end{prop}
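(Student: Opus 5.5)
The plan is to compute both sides of $\gamma$ explicitly using the descent data, and to check that $\gamma$ is the tautological map. We work in $\QCoh(\cH_{\circ}(w_0)^{\mathrm{ss}}\times_{\cB_{\circ}}\cH_{\rP})$ and use the Cartesian square (\ref{dia:SP}), in which $\id\times p$ is a $\Pic(C)$-torsor. By base change along (\ref{dia:SP}), the target is $(i\times\id)_{*}\cP_{\rS/\rP}$. Its pull-back along $\id\times p$ is $(i\times \id)_{*}(\id\times p)^*\cP_{\rS/\rP}\cong (i\times\id)_*(i\times\id)^*\cP_{\circ}$ by (\ref{isom:PSP}).

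For the source, first factor the projection. The map $p$ on the relevant component factors as
\[
\cH_{\circ}\to \cH_{\circ}/\Pic^0(C)\to \cH_{\rP},
\]
where the second map is an isomorphism onto $\cH_{\rP}(\overline{\chi})$ after restricting to degree $\chi$. So $p_!\cong p_*[g]$ reduces to pushing forward along the $\Pic^0(C)$-action.

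Next, compute $(\id\times p)^*(\id\times p)_!\cP_{\circ}$ by base change along the square (\ref{dia:alpha}). This gives $(p_{12})_!\alpha^*\cP_{\circ}$. By Lemma~\ref{lbundle:pull}, this equals
\[
\cP_{\circ}\otimes (p_{1})^*\,(\mathrm{pr}_1)_!\,(\cL|_{\cH_{\circ}(w_0)^{\mathrm{ss}}\times\Pic^0(C)}).
\]
Here $\cL=(\mathrm{Nm}\times\id)^*\cP_C$, so this factor is the Fourier--Mukai transform on $\Pic^0(C)$ with the Poincaré bundle, applied to the skyscraper-type object coming from $\mathrm{Nm}$. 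Relative to $\cH_{\circ}(w_0)^{\mathrm{ss}}$, the classical Mukai result for the self-dual abelian variety $\mathrm{Jac}(C)$ (with the trivial $\mathbb{G}_m$-gerbe accounted for by weights) gives
\[
(\mathrm{pr}_1)_!\,\cL|_{\text{weight component}}\cong \mathrm{Nm}^{*}\bigl(\cO_{0}\bigr)\cong 0_{*}\cO_{\widetilde{\cH}_{\circ}(w_0)^{\mathrm{ss}}},
\]
with the shift $[g]$ from $p_!\cong p_*[g]$ absorbed by the Mukai shift. Here $0$ is the closed immersion in (\ref{factor:i}), and the weight-$(-\chi')$ component identifies it with $i_*\cO$ via (\ref{compose:0i}). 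Hence $(\id\times p)^*(\id\times p)_!\cP_{\circ}\cong \cP_{\circ}\otimes (i\times \id)_*\cO\cong (i\times\id)_*(i\times\id)^*\cP_{\circ}$, which matches the target computed above.

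Finally, check that under these identifications $(\id\times p)^*\gamma$ is the identity. The map $\gamma$ is adjoint to the unit $\cP_{\circ}\to (i\times\id)_*(i\times\id)^*\cP_{\circ}$, and the Mukai isomorphism is the adjunction counit restricted to the fiber over $\cO_C$. Tracing the adjunctions through base change reduces this to the statement that the Mukai transform of $\cO_{\Pic^0}$'s dual kernel is the skyscraper via its canonical map. Since $\id\times p$ is faithfully flat, $(\id\times p)^*$ is conservative, so $\gamma$ is then an isomorphism.

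The main obstacle is the middle step. One must justify base change and the $!$-pushforward for the non-representable, non-proper stacky map $\Pic^0(C)\to \mathrm{pt}$ (a $\mathbb{G}_m$-gerbe over $\mathrm{Jac}(C)$). One must also track weights carefully: only the correct weight of the gerbe survives, and it must match $-\chi'$ via the $t\mapsto t^r$ relation between the norm and the scalar automorphisms. I would handle this by rigidifying, that is, passing to $\mathrm{Jac}(C)$ and twisted sheaves, and applying Mukai's theorem $\Phi_{\mathcal{P}}(\cO)\cong \cO_0[-g]$ there.
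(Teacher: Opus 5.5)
Your proposal is correct and follows the paper's proof essentially step for step. Both arguments pull $\gamma$ back along the smooth surjection $\id\times p$. The target is identified with $(i\times\id)_*(i\times\id)^*\cP_{\circ}$ via base change in (\ref{dia:SP}) and (\ref{isom:PSP}). The source is identified with the same object via base change in (\ref{dia:alpha}), Lemma~\ref{lbundle:pull}, and the Mukai computation on the Picard stack. The pulled-back map is then the identity, and conservativity of $(\id\times p)^*$ finishes the proof.

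The only difference is cosmetic. You do the Mukai computation on a single degree component $\Pic^0(C)$. The paper's Lemma~\ref{lem:Pc} instead treats all of $\Pic(C)$ at once and sums the weight pieces $(\cO_0)_b$ to obtain $i_*\cO_{\mathrm{pt}}$.
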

\begin{proof}
  We consider the map 
\begin{align}\label{idp*}
(\id \times p)^* \gamma \colon 
(\id \times p)^* (\id \times p)_{!} \cP_{\circ}
\to 
(\id \times p)^* (i\times \id)_{*}\cP_{\rS/\rP}.
\end{align}
We have the isomorphisms (see (\ref{dia:alpha}) for the notation of the morphisms)
\begin{align*}
(\id \times p)^* (\id\times p)_{!}\cP_{\circ}
&\cong (p_{12})_{!}\alpha^* \cP_{\circ} \\
&\cong (p_{12})_{!}(p_{12}^* \cP_{\circ}\otimes p_{13}^* \mathcal{L}) \\
&\cong \cP_{\circ}\otimes (p_{12})_{!}p_{13}^* \mathcal{L}.
\end{align*}
By the following Cartesian diagrams
\begin{align*}
\xymatrix{
(\cH_{\circ}(w_0)^{\mathrm{ss}}\times_{\cB_{\circ}}\cH_{\circ})\times \Pic(C)
\ar[r]^-{p_{13}} \ar[d]_-{p_{12}} \diasquare
&
\cH_{\circ}(w_0)^{\mathrm{ss}}\times \Pic(C)
\ar[r]^-{\mathrm{Nm}\times \id} \ar[d]_-{p_1} \diasquare
&
\Pic(C) \times \Pic(C) \ar[d]_-{q}
\\
\cH_{\circ}(w_0)^{\mathrm{ss}}\times_{\cB_{\circ}}\cH_{\circ}
\ar[r]^-{r}
&
\cH_{\circ}(w_0)^{\mathrm{ss}}
\ar[r]^-{\mathrm{Nm}}
&
\Pic(C)
}
\end{align*}
where $q$ and $r$ are the first projections, 
we have the isomorphisms
\begin{align*}
(p_{12})_{!}p_{13}^* \mathcal{L}
&\cong r^* (p_1)_{!}\mathcal{L} \\
&\cong r^* (p_1)_{!}(\mathrm{Nm}\times \id)^* \cP_C \\
&\cong r^* \mathrm{Nm}^* q_{!}\cP_C.
\end{align*}
By Lemma~\ref{lem:Pc} below, we have $q_{!}\cP_C \cong i_{*}\mathcal{O}_{\mathrm{pt}}$, and 
\begin{align*}
    (p_{12})_{!}p_{13}^* \mathcal{L} \cong r^* \mathrm{Nm}^* i_{*}\mathcal{O}_{\mathrm{pt}} 
    \cong (i\times \id)_{*}\mathcal{O}_{\cH_{\rS}(w_0)^{\mathrm{ss}}\times_{\cB_{\circ}}\cH_{\circ}}.
\end{align*}
It follows that 
\begin{align*}
    (\id \times p)^* (\id \times p)_{!}\cP_{\circ} &\cong \cP_{\circ}\otimes 
    (i\times \id)_{*}\mathcal{O}_{\cH_{\rS}(w_0)^{\mathrm{ss}}\times_{\cB_{\circ}}\cH_{\circ}} 
    \\
    &\cong (i\times \id)_{*}(i\times \id)^* \cP_{\circ}.
\end{align*}

On the other hand, by the Cartesian square (\ref{dia:SP}) and the isomorphism (\ref{isom:PSP}), we have 
\begin{align*}
    (\id \times p)^* (i\times \id)_{*}\cP_{\rS/\rP} &\cong (i\times \id)_{*}(\id \times p)^* \cP_{\rS/\rP} \\
    &\cong (i\times \id)_{*}(i\times \id)^* \cP_{\circ}.
\end{align*}
Therefore both sides in (\ref{idp*}) are canonically identified with 
$(i\times \id)_{*}(i\times \id)^*\cP_{\circ}$, and the map (\ref{idp*}) 
corresponds to the identity map under the above identifications. 
In particular the map (\ref{idp*}) is an isomorphism. 

Since $\id\times p$ is a smooth surjective morphism, the functor 
\begin{align*}
    (\id \times p)^* \colon \Coh(\cH_{\circ}(w_0)^{\mathrm{ss}}\times_{\cB_{\circ}}\cH_{\rP}) \to \Coh(\cH_{\circ}(w_0)^{\mathrm{ss}}\times_{\cB_{\circ}}\cH_{\circ}) 
\end{align*}
is conservative. Therefore, 
we conclude that the map $\gamma$ is an isomorphism.  
\end{proof}

We have used the following lemma: 
\begin{lemma}\label{lem:Pc}
    There is an isomorphism in $\QCoh(\Pic^0(C))$
    \begin{align*}
        q_{!}\cP_C \cong i_{*}\mathcal{O}_{\mathrm{pt}}.
    \end{align*}
    Here by abuse of notation $i$ is the composition 
    \begin{align*}
i \colon \mathrm{pt}
\stackrel{\eta}{\to}
\bgm
\stackrel{0}{\hookrightarrow}
\Pic^0(C),
\end{align*}
and $0$ is the closed immersion corresponds to $\mathcal{O}_C$. 
\end{lemma}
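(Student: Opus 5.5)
The plan is to compute $q_!\cP_C$ fiberwise and then upgrade the fiberwise answer to a global one using the group structure and weights. Here $q\colon \Pic(C)\times\Pic(C)\to\Pic(C)$ is the first projection; we restrict to the relevant component $\Pic^0(C)$ in the first factor, since that is where $\mathrm{Nm}$ lands.

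\textbf{Rigidification.} First write $\Pic^0(C)\simeq J\times \bgm$, where $J$ is the Jacobian. Do the same for the second factor, component by component, $\Pic^d(C)\simeq J^d\times\bgm$.

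\textbf{Weights of $\cP_C$.} The Deligne pairing line bundle $\cP_C$ has two $\mathbb{G}_m$-weights:
\begin{itemize}
\item on the second factor's gerbe, weight $\deg A=0$, since $A\in\Pic^0$;
\item on the first factor's gerbe, weight $\deg B$.
\end{itemize}
Because of the first point, the $!$-pushforward along the second factor decomposes over the components $\Pic^d(C)$.

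\textbf{Computing $q_!$ on each component.} The projection $\Pic^d(C)\to\mathrm{pt}$ factors as $J^d\times\bgm\to J^d\to\mathrm{pt}$. On the $\bgm$ factor, $!$-pushforward of a weight-zero object is the derived invariants, with a shift of $-1$ relative to $*$, coming from the relative dimension $-1$ of $\bgm\to\mathrm{pt}$. So $q_!\cP_C$ restricted to each $d$-component becomes $(\id_{\Pic^0}\times \pi_{J^d})_!$ of the classical normalized Poincaré bundle on $J\times J^d$, twisted by weight $d$ on the first $\bgm$.

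By Mukai's theorem, the Fourier--Mukai transform of $\cO$ is supported at the origin:
\[
R\pi_{J^d *}\cP|_{J\times J^d}\cong k_0[-g].
\]
The $!$-version contributes $\omega_{J^d}[g]$, which is trivial, so the answer is the skyscraper at $0\in J$ in degree $0$. Combining the $g$-shifts with the $\bgm$ contribution should give exactly $i_*\cO_{\mathrm{pt}}$ on the weight-$d$ piece.

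\textbf{Summing over $d$.} Summing over all $d$ recovers all weights on $\bgm$, so the result is
\[
0_*\bigl(\oplus_d \cO_{\bgm}\langle d\rangle\bigr)=0_*\eta_*\cO_{\mathrm{pt}}=i_*\cO_{\mathrm{pt}},
\]
using $\eta_*\cO_{\mathrm{pt}}=\oplus_{d\in\mathbb{Z}}\cO_{\bgm}\langle d\rangle$. This step is where $\QCoh$ rather than $\Coh$ is needed: the sum over $d$ is infinite.

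\textbf{Main obstacle.} The delicate step is bookkeeping the shifts and conventions consistently: the definition of $q_!$ for the non-proper, non-representable stack map with fiber $\Pic(C)$, and the $\bgm$ factors. I would handle this by defining $q_!$ via the smooth factorization above, with $q_!=q_*\otimes\det\mathbb{L}_q[\dim q]$ on each component. I would then check on the point $B=\cO_C$, $A=\cO_C$ that the degree is $0$, using the fact that $\mathbb{L}_q$ is trivial of rank $g-1$ on each component.
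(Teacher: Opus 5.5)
Your route is essentially the paper's: restrict to $\Pic^0(C)\times\Pic^d(C)$, note that $\cP_C$ has weight $d$ along the first gerbe and weight $0$ along the second, compute the pushforward on each component by Mukai's theorem, and sum over $d$ using $\eta_*\cO_{\mathrm{pt}}=\bigoplus_d\cO_{\bgm}(d)$. The paper phrases the componentwise step as inverting the Fourier--Mukai equivalence $\Phi_C$, which sends $(\cO_0)_{-b}$ to $\cO_{\Pic^b(C)}$; this is the same computation. You should also say how you rigidify, e.g.\ at a base point of $C$, so that $\cP_C|_{J\times J^d}$ becomes the pullback of the normalized Poincar\'e bundle along an isomorphism $J^d\cong\widehat{J}$; this is a seesaw check.

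The step that fails as written is the shift normalization, which is the point you yourself flagged. In this lemma $q_!$ must be the left adjoint of $q^*$, since that is how it enters the proof of Proposition~\ref{prop:gisom}, through the adjunction defining $\gamma$ and base change. Along $\pi\colon\bgm\to\mathrm{pt}$, the left adjoint of $\pi^*$ is the invariants functor with \emph{no} shift. This is because $\mathbb{G}_m$ is linearly reductive: $\Hom_{\bgm}(V,\pi^*W)=\Hom(V^{\mathbb{G}_m},W)$.

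The formula $f_!=f_*(-\otimes\det\mathbb{L}_f[\dim f])$ is only available for proper quasi-smooth $f$, and $\bgm\to\mathrm{pt}$ is not proper. Applying it there, with $\dim f=-1$, produces a spurious $[-1]$. Hence $q_!=q_*[g]$, as the paper states; the $[g]$ and the trivial canonical bundle come only from the Jacobian directions. Your definition via $\mathbb{L}_q$ of rank $g-1$ would instead give $q_*[g-1]$.

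With your convention, Mukai's $R\pi_{J^d*}\cP\cong k_0[-g]$ yields $i_*\cO_{\mathrm{pt}}[-1]$. The degree check at $A=B=\cO_C$ that you propose would expose this rather than confirm degree $0$. Your claim that the shifts combine to give exactly $i_*\cO_{\mathrm{pt}}$ also contradicts the $-1$ you assigned to the $\bgm$ factor. With $q_!=q_*[g]$, the $[g]$ cancels Mukai's $[-g]$ exactly, each $d$-component is $0_*\cO_{\bgm}(d)$ in degree $0$, and the rest of your argument goes through.
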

\begin{proof}
    Note that $q_{!}=q_{*}[g]$.
We have the decomposition into connected components
\begin{align*}
\Pic(C)=\bigsqcup_{a\in \mathbb{Z}}\Pic^a(C),
\end{align*}
where $\Pic^a(C)$ corresponds to degree $a$ line bundles on $C$.
Moreover, $\Pic^a(C)\times \Pic^b(C)$ is a $\mathbb{G}_m^2$-gerbe over the product of the Jacobian of $C$. The line bundle
\begin{align*}
\cP_C|_{\Pic^a(C)\times \Pic^b(C)}
\to
\Pic^a(C) \times \Pic^b(C)
\end{align*}
has $\mathbb{G}_m^2$-weight $(b,a)$; therefore
\begin{align*}
q_{*}\left(\cP_C|_{\Pic^a(C)\times \Pic^b(C)}\right)=0
\end{align*}
unless $a=0$. For $a=0$, we have
\begin{align*}
(q_{*}\cP_C)|_{\Pic^0(C)}
=
\bigoplus_{b\in \mathbb{Z}}
q_{*}\left(\cP_C|_{\Pic^0(C)\times \Pic^b(C)}\right).
\end{align*}
The line bundle $\cP_C^{\vee}|_{\Pic^0(C)\times \Pic^b(C)}$ induces the Fourier--Mukai equivalence (as a special case of Theorem~\ref{thm:MRV})
\begin{align*}
\Phi_C \colon 
\Coh(\Pic^0(C))_{-b}
\stackrel{\sim}{\to}
\Coh(\Pic^b(C))_0
\end{align*}
such that
\begin{align*}
\Phi_C((\mathcal{O}_0)_{-b})
\cong
\mathcal{O}_{\Pic^b(C)}.
\end{align*}
Here, setting 
$
0 \colon \bgm \hookrightarrow \Pic^0(C)
$
to be the closed immersion corresponding to $\mathcal{O}_C$, the sheaf $(\mathcal{O}_0)_b$ is defined by
\begin{align*}
(\mathcal{O}_0)_b
=
0_{*}\mathcal{O}_{\bgm}(b)
\in
\Coh(\Pic^0(C))_b.
\end{align*}
It follows that
\begin{align*}
\Phi_C^{-1}(\mathcal{O}_{\Pic^b(C)})
\cong
(\mathcal{O}_0)_{-b}.
\end{align*}
The kernel object of $\Phi_C^{-1}$ is given by $\cP_C[g]$; therefore
\begin{align*}
(q_{!}\cP_C)|_{\Pic^0(C)}
\cong
\bigoplus_{b\in \mathbb{Z}}(\mathcal{O}_0)_b
=
i_{*}\mathcal{O}_{\mathrm{pt}}.
\end{align*}
The last isomorphism follows from
\begin{align*}
\eta_{*}\mathcal{O}_{\mathrm{pt}}
=
k[t^{\pm 1}]
=
\bigoplus_{b\in \mathbb{Z}}\mathcal{O}_{\bgm}(b).
\end{align*}
Therefore we conclude that
$
q_! \cP_C \cong i_{*}\mathcal{O}_{\mathrm{pt}}.
$
\end{proof}

As a corollary of Proposition~\ref{prop:gisom}, we obtain the following: 
\begin{cor}\label{cor:dialeft}
    The diagram (\ref{dia:!}) is commutative. 
\end{cor}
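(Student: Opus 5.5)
The natural transformation \eqref{nat:trans} evaluated on an object $A\in \Coh(\cH_{\circ}(w_0)^{\mathrm{ss}})_{-\chi'}$ is the map
\begin{align*}
p_{!}\Phi(A)\to \Phi_{\rS/\rP}(i^*A).
\end{align*}
The plan is to compute both sides as integral transforms with kernels on $\cH_{\circ}(w_0)^{\mathrm{ss}}\times_{\cB_{\circ}}\cH_{\rP}$, and to check that under these identifications the transformation is induced by the morphism $\gamma$ of \eqref{gamma}. Proposition~\ref{prop:gisom} then says $\gamma$ is an isomorphism, so the natural transformation is an isomorphism of functors, which is exactly the commutativity of \eqref{dia:!}.

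Concretely, write $\Phi(A)=p_{2*}(p_1^*A\otimes \cP_{\circ})$. Since $p$ is smooth with $p_{!}\cong p_*[g]$, base change along the projection to $\cH_{\circ}$ gives
\begin{align*}
p_{!}\Phi(A)\cong \bar p_{2*}\bigl(\bar p_1^*A\otimes (\id\times p)_{!}\cP_{\circ}\bigr),
\end{align*}
where $\bar p_i$ are the projections from $\cH_{\circ}(w_0)^{\mathrm{ss}}\times_{\cB_{\circ}}\cH_{\rP}$. This uses the projection formula for $(\id\times p)_!=(\id\times p)_*[g]$, and pullback $\bar p_1^*A$ commutes with $\id\times p$. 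On the other side, the projection formula for the closed immersion $i\times\id$ gives
\begin{align*}
\Phi_{\rS/\rP}(i^*A)\cong \bar p_{2*}\bigl(\bar p_1^*A\otimes (i\times\id)_*\cP_{\rS/\rP}\bigr).
\end{align*}
Next, unwind \eqref{nat:trans}. It comes from the unit $A\to i_*i^*A$, followed by \eqref{commute:Phi} and the counit of $(p_!,p^*)$. Tracing through the adjunctions, this is $\bar p_{2*}(\bar p_1^*A\otimes -)$ applied to the adjoint of the kernel map
\begin{align*}
\cP_{\circ}\to (i\times\id)_*(i\times\id)^*\cP_{\circ}\cong (i\times\id)_*(\id\times p)^*\cP_{\rS/\rP}\cong(\id\times p)^*(i\times\id)_*\cP_{\rS/\rP},
\end{align*}
where the first isomorphism is \eqref{isom:PSP} and the second is base change in \eqref{dia:SP}. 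That adjoint is precisely $\gamma$.

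The main obstacle is this bookkeeping: one must verify that the functor-level transformation really is induced by $\gamma$, compatibly with the weight and component projections ($i_*$ on a weight summand, $p^*$ restricted to degree $\chi$). I would handle it with the standard fact that Fourier--Mukai functors with kernels and their composites with $p_!$, $i^*$ are compatible with adjunction units and counits via the projection formula and base change. The components are open and closed and the weight decompositions are orthogonal, so restricting to them commutes with everything. Boundedness or coherence issues should not arise, since $\cH_{\circ}(w_0)^{\mathrm{ss}}$ is cohomologically proper over $\cB_{\circ}$ and all kernels are coherent.
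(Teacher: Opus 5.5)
Your proposal is correct and is essentially the paper's argument. The paper also identifies the natural transformation \eqref{nat:trans} with the kernel morphism $\gamma$ of \eqref{gamma} and concludes directly from Proposition~\ref{prop:gisom}; your projection-formula and base-change bookkeeping only spells out that identification, which the paper asserts without detail. Two small imprecisions do not affect the argument: $i\times\id$ is affine (a $\mathbb{G}_m$-torsor followed by a closed immersion) rather than a closed immersion, and the kernels $(\id\times p)_!\cP_{\circ}$ and $(i\times\id)_*\cP_{\rS/\rP}$ are only quasi-coherent (infinite sums over weights), so the projection formula should be applied in $\QCoh$.
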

\begin{proof}
By Proposition~\ref{prop:gisom}, the natural transform (\ref{nat:trans}) is an isomorphism, hence the corollary holds. 
\end{proof}

Now we take $\cB=\cB^A$ and $\cB_{\circ}=\cB_{\circ}^A$, see Remark~\ref{rmk:Acirc}. Then by Theorem~\ref{thm:GL}, the functor $\Phi$ 
induces the equivalence 
\begin{align}\label{equiv:HGL}
    \Phi \colon \IndCoh_{\mathcal{N}}(\cH(w_0)^{\mathrm{ss}})_{-\chi'} \stackrel{\sim}{\to}
    \IndL_{\mathcal{N}}(\cH(\chi))_{0}.
\end{align}
By the $\cB$-linearity of the equivalence (\ref{equiv:HGL}), 
by applying 
\begin{align*}
    (-)\otimes_{\QCoh(\cB)}\QCoh(\cB_{\circ})=(-)\otimes_{\QCoh(H^0(\Omega_C))}\otimes \QCoh(\mathrm{pt})
\end{align*}
where $\mathrm{pt} \hookrightarrow H^0(\Omega_C)$ corresponds to the origin, 
and using Lemma~\ref{lem:resty}, we obtain the equivalence 
\begin{align*}
    \Phi \colon \IndCoh_{\mathcal{N}}(\cH_{\circ}(w_0)^{\mathrm{ss}})_{-\chi'} \stackrel{\sim}{\to}
    \IndL_{\mathcal{N}}(\cH_{\circ}(\chi))_{0}.
\end{align*}
By taking the subcategory of compact objects, it restricts
to the equivalence 
\begin{align}\label{PhiL}
      \Phi \colon \Coh_{\mathcal{N}}(\cH_{\circ}(w_0)^{\mathrm{ss}})_{-\chi'} \stackrel{\sim}{\to}
    \LL_{\mathcal{N}}(\cH_{\circ}(\chi))_{0}.
\end{align}
Here see Subsection~\ref{subsec:variant} for the limit categories for $\cH_{\circ}(\chi)$. We have the following lemma: 
\begin{lemma}\label{lem:pL}
An object $A \in \Coh(\cH_{\mathrm{P}}(\overline{\chi}))_0$ lies in $\LL_{\mathcal{N}}(\cH_{\mathrm{P}}(\overline{\chi}))_0$
if and only if $p^* A \in \LL_{\mathcal{N}}(\cH_{\circ}(\chi))_0$.
\end{lemma}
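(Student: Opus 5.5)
The claim is that membership in the limit category can be tested after pulling back along $p\colon \cH_{\circ}(\chi)\to \cH_{\rP}(\overline{\chi})$. The plan is to compare the weight conditions on both sides map by map. Since $p$ is a smooth surjective $\Pic^0(C)$-fibration, the relevant objects are perfect (Remark~\ref{rmk:perfSP} and Remark~\ref{rmk:nilp}), so we may use the criterion (\ref{wt:cond2}) of Lemma~\ref{lem:perfect}. Membership in $\LL_{\mathcal{N}}$ versus $\widetilde{\LL}_{\mathcal{N}}$ is controlled by Harder--Narasimhan strata (Remark~\ref{rmk:Ltilde2}). These strata are defined by the same slope inequalities on both sides, because tensoring by $\pi^*M$ for $M\in \Pic(C)$ preserves HN filtrations. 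Hence $p^{-1}$ of each QCA open $\mathcal{U}\subset \cH_{\rP}$ is QCA up to the $\Pic^0$ factor, and $p^{-1}$ of each HN stratum is an HN stratum. This reduces the lemma to comparing (\ref{wt:cond2}), together with its strict-on-the-left version for HN centers, for corresponding maps $\nu\colon \bgm\to \cH_{\circ}(\chi)$ and $\overline{\nu}=p\circ\nu$.

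The first step is to relate maps from $\bgm$. Any $\overline{\nu}\colon \bgm\to \cH_{\rP}$ lifts, up to replacing $t$ by $t^e$, to a map $\nu\colon \bgm\to \cH_{\circ}$. Remark~\ref{rmk:cond:r} says the conditions are insensitive to this replacement. The reason is that the stabilizer of a point of $\cH_{\rP}$ is the stabilizer on $\cH_{\circ}$ modulo the central $\mathbb{G}_m$ of scalars, extended by a finite group coming from $\pi^*M\cong\cO$-twists with $M$ torsion. Conversely, every $\nu$ on $\cH_{\circ}$ descends to some $\overline{\nu}$. Two lifts differ by composing with a cocharacter of the central scaling $\mathbb{G}_m$.

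The second step is to compare the two sides of (\ref{wt:cond2}) under this correspondence. Since $p$ is smooth, $\mathbb{L}_{\cH_{\circ}}$ is an extension of $p^*\mathbb{L}_{\cH_{\rP}}$ by $\mathbb{L}_p$. The relative cotangent $\mathbb{L}_p$ is the trivial $\mathfrak{g}$-direction $H^1(\cO_C)$ together with the scalar Lie algebra. Both pieces have weight zero, so $c_1(\nu^*\mathbb{L}^{\gtrless 0})$ agrees on the two sides. We also have $\wt(\nu^*p^*A)=\wt(\overline{\nu}^*A)$. The weight $\delta$ is trivial on both sides (weight $0$), so the conditions for $\overline{\nu}$ and $\nu$ coincide. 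The remaining point is that changing the lift by the central cocharacter is harmless. Objects of $\Coh(\cH_{\circ}(\chi))_0$ have central weight $0$, so $\wt(\nu^*p^*A)$ is unchanged. The interval is unchanged as well, because $\mathbb{L}$ has central weight $0$ by (\ref{compute:wg}). This is consistent with $\delta_0$ being trivial.

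Combining the two steps gives both directions of the equivalence, including the strict versions on HN centers. The nilpotent singular support condition is vacuous on both sides by Remark~\ref{rmk:perfSP}, so it imposes nothing extra. The expected main obstacle is the first step: checking carefully that every map from $\bgm$ into $\cH_{\rP}$, including those through the finite $\mu_r$-type stabilizer parts, lifts after a power to $\cH_{\circ}$. The intended approach is to use that $\cH_{\circ}\to\cH_{\rP}$ is a torsor under the group stack $\Pic(C)$, whose connected stabilizers are tori, and to lift the cocharacter through the extension of tori.
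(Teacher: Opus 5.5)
Your proposal is correct and follows the paper's own route. The paper uses the equality of $c_1(\nu^*\mathbb{L}^{>0})$ and $c_1(\nu^*\mathbb{L}^{<0})$ along $p$ (since $\mathbb{L}_p$ has trivial weights), detection of the nilpotent singular support condition by the smooth surjection $p$, and compatibility of $p$ with Harder--Narasimhan strata to pass from $\widetilde{\LL}_{\mathcal{N}}$ to $\LL_{\mathcal{N}}$. Your lifting of test maps $\bgm\to\cH_{\rP}(\overline{\chi})$ to $\cH_{\circ}(\chi)$ after a power, justified by Remark~\ref{rmk:cond:r}, fills in the converse direction that the paper leaves implicit. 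One small correction: the singular support condition is not ``vacuous''; it is equivalent to perfectness, which is what descends along $p$.
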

\begin{proof}
Let $\widetilde{\nu} \colon \bgm \to \cH_{\circ}(\chi)$ be a map and consider the composition
\begin{align*}
\nu \colon \bgm \xrightarrow{\widetilde{\nu}} \cH_{\circ}(\chi)
\xrightarrow{p} \cH_{\mathrm{P}}(\overline{\chi}).
\end{align*}
Then, since $p$ is a smooth map with fiber $\Pic(C)$, we have
\begin{align}\label{equal:tilde}
c_1\left(\widetilde{\nu}^* \mathbb{L}_{\cH_{\circ}(\chi)}^{<0}\right)
=
c_1\left(\nu^* \mathbb{L}_{\cH_{\mathrm{P}}(\overline{\chi})}^{<0}\right), \ 
c_1\left(\widetilde{\nu}^* \mathbb{L}_{\cH_{\circ}(\chi)}^{>0}\right)
=
c_1\left(\nu^* \mathbb{L}_{\cH_{\mathrm{P}}(\overline{\chi})}^{>0}\right).
\end{align}
Moreover the nilpotent singular support condition is preserved and detected by the smooth surjective morphism $p$.
It follows that
\begin{align}\label{tilde:iff}
A \in \widetilde{\LL}_{\mathcal{N}}(\cH_{\mathrm{P}}(\overline{\chi}))_0
\quad \text{if and only if} \quad
p^*A \in \widetilde{\LL}_{\mathcal{N}}(\cH_{\circ}(\chi))_0.
\end{align}
Moreover, the map $p$ is compatible with the Harder--Narasimhan stratification; namely, if
\begin{align*}
\cH_{\mathrm{P}}(\overline{\chi})
=
\sqcup_{\mu}\cH_{\mathrm{P}}(\overline{\chi})_{\mu}
\end{align*}
is the HN stratification, then the stratification
\begin{align*}
\cH_{\circ}(\chi)
=
\sqcup_{\mu}p^{-1}\bigl(\cH_{\mathrm{P}}(\overline{\chi})_{\mu}\bigr)
\end{align*}
is the HN stratification for $\cH(\chi)$ restricted to $\cH_{\circ}(\chi)$.
Therefore we can replace $\widetilde{\LL}$ with $\LL$ in (\ref{tilde:iff}), 
by the description of compact objects~\cite[(8.65)]{PTlim}, see Remark~\ref{rmk:Ltilde2}. 
Therefore the lemma holds. 
\end{proof}

By (\ref{commute:Phi}), (\ref{PhiL}) and Lemma~\ref{lem:pL}, 
we have the commutative diagram 
\begin{align}\label{commute:PhiL}
    \xymatrix{
\Coh_{\mathcal{N}}(\cH_{\rS}(w_0)^{\mathrm{ss}})_{-\overline{\chi}} \ar[r]^-{\Phi_{\rS/\rP}} \ar[d]_-{i_{*}} & \LL_{\mathcal{N}}(\cH_{\rP}(\overline{\chi}))_0 \ar[d]_-{p^*} \\
\Coh_{\mathcal{N}}(\cH_{\circ}(w_0)^{\mathrm{ss}})_{-\chi'} \ar[r]^-{\Phi} & \LL_{\mathcal{N}}(\cH_{\circ}(\chi))_0.
    }
\end{align}
Here $\overline{\chi}=\overline{\chi'}$ in $\mathbb{Z}/r\mathbb{Z}$, $i_{*}$ sends $\Coh_{\mathcal{N}}(-)$ to $\Coh_{\mathcal{N}}(-)$
since $0 \colon \bgm \hookrightarrow \Pic(C)$ is a proper quasi-smooth map. 
It extends to the commutative diagram of continuous functors 
\begin{align}\label{commute:PhiL2}
    \xymatrix{
\IndCoh_{\mathcal{N}}(\cH_{\rS}(w_0)^{\mathrm{ss}})_{-\overline{\chi}} \ar[r]^-{\Phi_{\rS/\rP}} \ar[d]_-{i_{*}} & \IndL_{\mathcal{N}}(\cH_{\rP}(\overline{\chi}))_0 \ar[d]_-{p^*} \\
\IndCoh_{\mathcal{N}}(\cH_{\circ}(w_0)^{\mathrm{ss}})_{-\chi'} \ar[r]^-{\Phi} & \IndL_{\mathcal{N}}(\cH_{\circ}(\chi))_0.
    }
\end{align}

In the next lemma, we show that the functor $p_{!}$ in (\ref{dia:!}) preserves the limit categories: 
\begin{lemma}\label{lem:p!L}
For $A\in \LL_{\mathcal{N}}(\cH_{\circ}(\chi))_0$, we have 
$p_{!}A \in \LL_{\mathcal{N}}(\cH_{\mathrm{P}}(\overline{\chi}))_0$.
\end{lemma}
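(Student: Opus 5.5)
By Lemma~\ref{lem:pL}, it suffices to show $p^*p_!A \in \LL_{\mathcal{N}}(\cH_{\circ}(\chi))_0$. So the plan is to compute $p^*p_!$ explicitly, using the $\Pic(C)$-action exactly as in the proof of Proposition~\ref{prop:gisom}.

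\textbf{Base change.} Consider the Cartesian square formed by the action map $a\colon \cH_{\circ}(\chi)\times\Pic^0(C)\to\cH_{\circ}(\chi)$ and the projection $\mathrm{pr}_1$, both lying over $p$. It is Cartesian because $p$ is the quotient by $\Pic(C)$; restricting to the degree-$\chi$ component leaves only the degree-zero part $\Pic^0(C)$, up to the $\mathbb{Z}/r$ ambiguity, which I treat by restricting to components. Smooth base change then gives
\[
p^*p_!A\cong (\mathrm{pr}_1)_!\,a^*A=(\mathrm{pr}_1)_*\,a^*A\,[g].
\]
Here $\Pic^0(C)$ is a trivial $\mathbb{G}_m$-gerbe over the Jacobian $J$, and $A$ has weight $0$ for the central $\mathbb{G}_m\subset\Pic^0(C)$, since that $\mathbb{G}_m$ acts through the scalar automorphisms, where $A$ has weight $0$. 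Hence $(\mathrm{pr}_1)_*$ amounts to pushing forward along the proper smooth projection $\cH_{\circ}(\chi)\times J\to\cH_{\circ}(\chi)$ on weight-zero objects.

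\textbf{Weight conditions.} I then check the limit-category conditions~(\ref{cond:regcirc}) together with the nilpotent singular support condition.
\begin{itemize}
\item First, $a^*A$ satisfies the weight condition on $\cH_{\circ}(\chi)\times J$. Any $\nu\colon\bgm\to\cH_{\circ}(\chi)\times J$ has trivial component into $J$, because $J$ is a scheme. Composing $\nu$ with the action by a fixed line bundle is an automorphism $t_L$ of $\cH_{\circ}(\chi)$ preserving $\mathbb{L}$, $\delta_0$ and the Harder--Narasimhan strata. So for each $L$ the pullback $t_L^*A$ lies in $\LL_{\mathcal{N}}$, and the family version follows fibrewise.
\item Second, pushforward along $\cH\times J\to\cH$ preserves the weight bounds. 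For $\nu\colon\bgm\to\cH_{\circ}(\chi)$, the regularized pullback of $(\mathrm{pr}_1)_*a^*A$ equals the derived global sections over $J$ of the regularized pullbacks of $a^*A$ along $\nu\times J$, by base change. Taking global sections over a scheme does not create new $\mathbb{G}_m$-weights, and the cotangent weights are the same because $J$ contributes only weight $0$.
\item Third, the same argument preserves strictness on the left at Harder--Narasimhan centers, which is what Remark~\ref{rmk:Ltilde2} requires for compactness.
\item Fourth, singular support is preserved under smooth pullback and proper pushforward along the trivial factor $J$.
\end{itemize}

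\textbf{Main obstacle.} I expect the hardest step to be making the fibrewise weight argument rigorous in families. The weight description in Definition~\ref{def:Lcat} is pointwise on $\bgm$-maps, while $a^*A$ is not literally a product. I would try first to use the equivariance of $A$ under $\Pic^0(C)$ to trivialize $a$. Over an étale cover $U\to J$ admitting a universal line bundle, the map $a$ becomes isomorphic to $\mathrm{pr}_1$ composed with an automorphism of $\cH_{\circ}(\chi)\times U$ over $U$. This automorphism preserves $\LL_{\mathcal{N}}(\cH_{\circ}(\chi)\times U)_0$, and that category is identified with $\LL_{\mathcal{N}}(\cH_{\circ}(\chi))_0\otimes\Coh(U)$ as in Lemma~\ref{lem:resty}. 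Pushforward along the proper map $J$, via descent, is then $\LL$-linear and lands in $\LL_{\mathcal{N}}(\cH_{\circ}(\chi))_0$.

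Finally, compactness holds because $p_!$ is left adjoint to the continuous functor $p^*$, so it preserves compact objects. Combining this with the weight conditions above and with Lemma~\ref{lem:pL} concludes the proof.
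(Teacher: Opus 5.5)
Your argument is correct, but it is organized differently from the paper's.

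\textbf{The paper's route.} The paper checks the weight condition directly on $\cH_{\mathrm{P}}(\overline{\chi})$. For a test map $\nu\colon\bgm\to\cH_{\mathrm{P}}(\overline{\chi})$, it identifies $\cH_{\circ}(\chi)\times_{\cH_{\mathrm{P}}(\overline{\chi})}\bgm\cong\Pic^0(C)\times\bgm$; the $\mathbb{G}_m$-action on the fibre is trivial because an abelian variety contains no rational curves. By base change, $\nu^{*}p_{!}A$ is then a pushforward along the fibre of the pullback of $A$ along the lifts $\widetilde{\nu}$, so its weights stay in the interval that controls $A$. This gives $p_{!}A\in\widetilde{\LL}_{\mathcal{N}}$, and compatibility of $p$ with the Harder--Narasimhan stratification upgrades it to $\LL_{\mathcal{N}}$.

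\textbf{Your route.} You instead pull everything back to $\cH_{\circ}(\chi)$. The torsor base change $p^{*}p_{!}\cong(\mathrm{pr}_1)_{!}a^{*}$ (the same square as in Proposition~\ref{prop:gisom}), combined with Lemma~\ref{lem:pL} in the ``if'' direction, moves all test maps to $\cH_{\circ}(\chi)$. The extra input you need is that the translations $t_L$, for $L\in\Pic^0(C)$, preserve the cotangent weights, $\delta_0$ and the Harder--Narasimhan types. Then the bounds for $t_L^{*}A$, including left-strictness at centres of type $\succ\mu_0$, hold uniformly in $L$.

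\textbf{Comparison.} Both proofs rest on the same point: pushing forward along the abelian-variety direction produces no weights outside the union of the fibrewise ones. The paper's version is shorter. Yours never has to analyse test maps into $\cH_{\mathrm{P}}$ itself, since that is absorbed into Lemma~\ref{lem:pL}.

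\textbf{Parts you can drop.} Two parts of your write-up are unnecessary.
\begin{itemize}
\item The ``main obstacle'' does not arise. Since $a^{*}A$ is perfect and the conditions are pointwise in the test map, your first two bullets already form a complete argument. You base change along $\nu$ (or $\nu^{\mathrm{reg}}_{\circ}$) to a coherent complex on $J\times\bgm$, and a weight component of such a complex vanishes if and only if all its derived fibres do. The identification $\LL_{\mathcal{N}}(\cH_{\circ}(\chi)\times U)_0\simeq\LL_{\mathcal{N}}(\cH_{\circ}(\chi))_0\otimes\Coh(U)$ that you propose is not established in the paper (Lemma~\ref{lem:resty} is a different statement), so it is fortunate that you do not need it.
\item The closing claim, that $p_{!}$ preserves compact objects because it is left adjoint to a continuous functor, is not justified as stated. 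You have not shown that $p^{*}$ on $\IndL_{\mathcal{N}}$ has a left adjoint extending $p_{!}$. It is also superfluous. The object $p_{!}A$ lies in $\Coh(\cH_{\mathrm{P}}(\overline{\chi}))_0$, because $p$ is a $\Pic^0(C)$-torsor and $\Pic^0(C)$ is a trivial $\mathbb{G}_m$-gerbe over $J$. Lemma~\ref{lem:pL} then gives membership in $\LL_{\mathcal{N}}$, once your weight and strictness checks for $p^{*}p_{!}A$ are in place.
\end{itemize}
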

\begin{proof}
For a map $\nu \colon \bgm \to \cH_{\mathrm{P}}(\overline{\chi})$, we consider the fiber 
product 
\begin{align*}
    \xymatrix{
\cH_{\circ}(\chi)\times_{\cH_{\mathrm{P}}(\overline{\chi})}\bgm \ar[r]^-{\widetilde{\nu}} \ar[d] \diasquare & 
\cH_{\circ}(\chi) \ar[d]_-{p} \\
\bgm \ar[r]^-{\nu} & \cH_{\mathrm{P}}(\overline{\chi}).
    }
\end{align*}
Let $x=\nu(\mathrm{pt})$. 
The fiber of $p$ at $x$ is $\Pic^0(C)$, on which $\mathbb{G}_m$ acts 
through $\nu$. The above $\mathbb{G}_m$-action is trivial since 
an abelian variety does not contain a rational curve; therefore 
\begin{align}\label{dia:nupull}
    \cH_{\circ}(\chi)\times_{\cH_{\mathrm{P}}(\overline{\chi})}\bgm\cong \Pic^0(C)\times \bgm
\end{align}
and the left vertical arrow in (\ref{dia:nupull}) is identified with the projection. 
Therefore for $A \in \Coh_{\mathcal{N}}(\cH_{\circ}(\chi))$, if 
we have 
\begin{align*}
    \widetilde{\nu}^* A \in \bigoplus_{w\in I}\Coh(\Pic^0(C)\times \bgm)_{w}
\end{align*}
for an interval $I\subset \mathbb{R}$, then by base change we have 
$\wt(\nu^*p_{!}A) \subset I$. 
It follows if $A \in \LL_{\mathcal{N}}(\cH_{\circ}(\chi))_0$, then 
$p_{!}A \in \widetilde{\LL}_{\mathcal{N}}(\cH_{\rP}(\overline{\chi}))_0$. 
Since the map $p$ is compatible with Harder--Narasimhan stratification as in the proof of Lemma~\ref{lem:pL}, the lemma holds. 
\end{proof}

By Lemma~\ref{lem:p!L} and Corollary~\ref{cor:dialeft}, we have the commutative diagram
\begin{align}\label{dia:pL}
\xymatrix{
\Coh_{\mathcal{N}}(\cH_{\rS}(w_0)^{\mathrm{ss}})_{-\overline{\chi}}
\ar[r]^-{\Phi_{\rS/\rP}}
&
\LL_{\mathcal{N}}(\cH_{\mathrm{P}}(\overline{\chi}))_0
\\
\Coh_{\mathcal{N}}(\cH_{\circ}(w_0)^{\mathrm{ss}})_{-\chi'}
\ar[r]^-{\Phi}
\ar[u]^-{i^*}
&
\LL_{\mathcal{N}}(\cH_{\circ}(\chi))_0
\ar[u]_-{p_{!}} .
}
\end{align}
It extends to the commutative diagram of cocomplete dg-categories with continuous functors preserving compact objects
\begin{align}\label{dia:pL2}
\xymatrix{
\IndCoh_{\mathcal{N}}(\cH_{\rS}(w_0)^{\mathrm{ss}})_{-\overline{\chi}}
\ar[r]^-{\Phi_{\rS/\rP}}
&
\IndL_{\mathcal{N}}(\cH_{\mathrm{P}}(\overline{\chi}))_0
\\
\IndCoh_{\mathcal{N}}(\cH_{\circ}(w_0)^{\mathrm{ss}})_{-\chi'}
\ar[r]^-{\Phi}
\ar[u]^-{i^*}
&
\IndL_{\mathcal{N}}(\cH_{\circ}(\chi))_0
\ar[u]_-{p_{!}} .
}
\end{align}
\subsection[Proof of Theorem 1.2 for SLr/PGLr]{Proof of Theorem~\ref{intro:mainthm} for \texorpdfstring{$\SL_r/\PGL_r$}{SLr/PGLr}}
Now we have the following result: 
\begin{prop}\label{thm:SP}
The functor 
\begin{align}\label{funct:SP}
    \Phi_{\rS/\rP} \colon \IndCoh_{\mathcal{N}}(\cH_{\rS}(w_0)^{\mathrm{ss}})_{-\overline{\chi}}
    \to \IndL_{\mathcal{N}}(\cH_{\mathrm{P}}(\overline{\chi}))_0
\end{align} 
is an equivalence. 
\end{prop}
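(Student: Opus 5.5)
The plan is to deduce the equivalence from the $\GL_r$-equivalence $\Phi$ in (\ref{PhiL}) using the two commutative squares (\ref{commute:PhiL2}) and (\ref{dia:pL2}). Both sides of (\ref{funct:SP}) are compactly generated, and $\Phi_{\rS/\rP}$ is continuous and preserves compact objects. So it suffices to show that the restriction
\[
\Phi_{\rS/\rP}\colon \Coh_{\mathcal{N}}(\cH_{\rS}(w_0)^{\mathrm{ss}})_{-\overline{\chi}} \to \LL_{\mathcal{N}}(\cH_{\rP}(\overline{\chi}))_0
\]
is fully faithful and essentially surjective. For essential surjectivity it is enough that the image generates, since the source is idempotent complete.

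For essential surjectivity (generation), I would use (\ref{dia:pL}). Given $B \in \LL_{\mathcal{N}}(\cH_{\rP}(\overline{\chi}))_0$, Lemma~\ref{lem:pL} gives $p^*B \in \LL_{\mathcal{N}}(\cH_{\circ}(\chi))_0$. By (\ref{PhiL}) we can write $p^*B = \Phi(A)$ for some $A$, and then
\[
p_!p^*B \cong \Phi_{\rS/\rP}(i^*A).
\]
Since $p$ is a $\Pic^0(C)$-fibration (a trivial gerbe over the Jacobian) and $p_! \cong p_*[g]$, I expect $B$ to be a direct summand of $p_!p^*B$. Concretely, $p_!p^*B \cong B \otimes p_!\cO$, and $p_!\cO = p_*\cO[g]$ should contain $\cO$ as a summand in the weight-zero part, via the Künneth decomposition $H^*(\mathrm{Jac},\cO)=\wedge^\bullet k^g$, provided the fibration is cohomologically split. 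If that splitting is hard to establish, I would instead use the fact that $B$ lies in the thick closure of $p_!p^*B$ through the Koszul/exterior-algebra filtration; this suffices for generation.

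For full faithfulness, I would use (\ref{commute:PhiL2}), which gives
\[
p^*\Phi_{\rS/\rP}(-) \cong \Phi(i_*(-)),
\]
together with the adjunction $(p_!,p^*)$. Then
\[
\Hom\bigl(\Phi_{\rS/\rP}X, \Phi_{\rS/\rP}Y\bigr) \to \Hom\bigl(p_!p^*\Phi_{\rS/\rP}X, \Phi_{\rS/\rP}Y\bigr) = \Hom\bigl(p^*\Phi_{\rS/\rP}X, p^*\Phi_{\rS/\rP}Y\bigr) = \Hom\bigl(\Phi(i_*X),\Phi(i_*Y)\bigr) = \Hom(i_*X,i_*Y) = \Hom(i^*i_*X,Y).
\]
On the spectral side, $i^*i_*X \cong X\otimes \mathrm{Sym}(\mathbb{L}_i[-1])$ by the factorization (\ref{factor:i}), and $i$ is the fiber of $\mathrm{Nm}$ over a point of $\Pic^0(C)$. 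On the automorphic side, $p^*p_!$ is tensoring with the cohomology of the fiber, $\wedge^\bullet H^1(\cO_C)$, up to shift. The key compatibility is that $\Phi$ intertwines these two monads: $\Phi\circ i_*i^* \cong p^*p_!\circ \Phi$. I would deduce this from Corollary~\ref{cor:dialeft} combined with (\ref{commute:Phi}). Given this compatibility, the maps $X \to i^*i_*X$ and $p_!p^*B \to B$ correspond under the equivalence, and comparing the unit/counit shows that $\Hom(X,Y) \to \Hom(\Phi_{\rS/\rP}X,\Phi_{\rS/\rP}Y)$ is an isomorphism.

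The main obstacle is making this monadic comparison precise: one must check that the identification $\Phi\, i_*i^* \cong p^*p_!\,\Phi$ respects the counit structures, so that Barr–Beck-type descent applies. A cleaner route I would try first: $p^*$ on $\IndL$ and $i_*$ on $\IndCoh$ are both conservative onto their images. Moreover, $\IndL_{\mathcal{N}}(\cH_{\rP})$ should be identified with comodules/invariants for $\Pic(C)$-equivariance on $\IndL_{\mathcal{N}}(\cH_{\circ})$, and $\IndCoh(\cH_{\rS})$ with modules over $\QCoh(\Pic^0(C))$ supported at $\cO_C$. The equivalence $\Phi$ should exchange these two structures, since $\Phi$ is Fourier-dual on the $\Pic$ factor by Lemma~\ref{lbundle:pull} and Lemma~\ref{lem:Pc}, and $\Phi_{\rS/\rP}$ is the induced functor on invariants.
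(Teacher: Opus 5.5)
\paragraph{Gap: full faithfulness is not established.} Your reduction to compact objects is fine, and so is your derivation of the intertwining $\Phi\circ i_*i^*\cong p^*p_!\circ\Phi$ from \eqref{commute:PhiL2} and \eqref{dia:pL2}. The problem is the full-faithfulness step. The chain you display gives a map $\Hom(\Phi_{\rS/\rP}X,\Phi_{\rS/\rP}Y)\to\Hom(i^*i_*X,Y)$. Its first arrow is precomposition with the counit $p_!p^*\Phi_{\rS/\rP}X\to\Phi_{\rS/\rP}X$, which is not an isomorphism. Knowing how this map compares with precomposition by the counit $i^*i_*X\to X$ tells you nothing about whether $\Hom(X,Y)\to\Hom(\Phi_{\rS/\rP}X,\Phi_{\rS/\rP}Y)$ is bijective. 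Closing this would need extra input that you do not supply, for example:
\begin{itemize}
\item a real Barr--Beck comparison: show the intertwining is an isomorphism of monads, and identify the induced functor on algebras with $\Phi_{\rS/\rP}$; or
\item a retract argument: use split counits together with a compatibility of the two squares.
\end{itemize}
You flag this yourself as the main obstacle, and it is left open.

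\paragraph{Gap: essential surjectivity is not established either.} It rests on an unproved splitting of $p_!\cO$. The fallback via the exterior-algebra filtration runs the wrong way: that filtration builds $p_!p^*B$ out of shifts of $B$, not $B$ out of $p_!p^*B$. The $\Pic(C)$-invariants route is a substantial descent statement for limit categories that is not available here.

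\paragraph{The missing idea: use the right adjoint.} The paper's argument avoids all of this by working with the right adjoint $\Phi_{\rS/\rP}^R$. It exists and is continuous, since $\Phi_{\rS/\rP}$ is a continuous functor between compactly generated categories. Passing to right adjoints in \eqref{dia:pL2} gives $i_*\circ\Phi_{\rS/\rP}^R\cong\Phi^R\circ p^*$.
\begin{itemize}
\item Apply $i_*$ to the unit $\id\to\Phi_{\rS/\rP}^R\Phi_{\rS/\rP}$. This gives $i_*\to i_*\Phi_{\rS/\rP}^R\Phi_{\rS/\rP}\cong\Phi^Rp^*\Phi_{\rS/\rP}\cong\Phi^R\Phi i_*\cong i_*$, which is the identity.
\item Apply $p^*$ to the counit $\Phi_{\rS/\rP}\Phi_{\rS/\rP}^R\to\id$. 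This gives $p^*\cong\Phi\Phi^Rp^*\cong\Phi i_*\Phi_{\rS/\rP}^R\cong p^*\Phi_{\rS/\rP}\Phi_{\rS/\rP}^R\to p^*$, again the identity.
\end{itemize}
Now $i_*$ is conservative, because it factors as an equivalence followed by pushforward along the closed immersion $0$ in \eqref{factor:i}. Also $p^*$ is conservative, because $p$ is smooth surjective. Hence unit and counit are isomorphisms, which gives full faithfulness and essential surjectivity at once.

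The only compatibility used is that the two squares are mates of each other, which holds by the construction of \eqref{nat:trans}. You already had both squares and the conservativity of $i_*$ and $p^*$ in hand. What was missing was to combine them with $\Phi_{\rS/\rP}^R$ rather than comparing Hom-spaces directly.
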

\begin{proof}
Let
\begin{align*}
\Phi_{\rS/\rP}^R \colon
\IndL_{\mathcal{N}}(\cH_{\mathrm{P}}(\overline{\chi}))_0
\to
\IndCoh_{\mathcal{N}}(\cH_{\rS}(w_0)^{\mathrm{ss}})_{-\overline{\chi}}
\end{align*}
be the right adjoint of the functor $\Phi_{\rS/\rP}$ in (\ref{funct:SP}), which exists and is continuous by the adjoint functor theorem. By taking right adjoints in \eqref{dia:pL2}, we have the commutative diagram
\begin{align}\label{dia:pR}
\xymatrix{
\IndCoh_{\mathcal{N}}(\cH_{\rS}(w_0)^{\mathrm{ss}})_{-\overline{\chi}} \ar[d]_-{i_{*}}
&
\IndL_{\mathcal{N}}(\cH_{\mathrm{P}}(\overline{\chi}))_0
\ar[l]_-{\Phi_{\rS/\rP}^R} \ar[d]_-{p^*}
\\
\IndCoh_{\mathcal{N}}(\cH_{\circ}(w_0)^{\mathrm{ss}})_{-\chi'}
&
\IndL_{\mathcal{N}}(\cH_{\circ}(\chi))_0
\ar[l]_-{\Phi^R}.
}
\end{align}
Consider the following natural transform 
\begin{align}\label{nat:R}
    \id \Rightarrow \Phi_{\rS/\rP}^R \circ \Phi_{\rS/\rP}.
\end{align}
By applying $i_{*}$, we obtain 
\begin{align}\label{apply:i}
i_{*} \Rightarrow i_{*}\circ\Phi_{\rS/\rP}^R \circ \Phi_{\rS/\rP}.
\end{align}
By the commutative diagrams (\ref{commute:PhiL2}) and (\ref{dia:pR}), 
we have 
\begin{align*}
    i_{*}\circ \Phi_{\rS/\rP}^R \circ \Phi_{\rS/\rP} &\cong \Phi^R \circ p^* \circ \Phi_{\rS/\rP} \\
    &\cong \Phi^R \circ \Phi \circ i_{*} \\
    &\cong i_{*}
\end{align*}
where the last isomorphism follows from the fact that $\Phi$ is an equivalence (\ref{equiv:HGL}). 
Under the above isomorphisms, the natural transform (\ref{apply:i}) is 
$\id \colon i_{*} \Rightarrow i_{*}$, therefore 
(\ref{apply:i}) is an isomorphism. Then (\ref{nat:R}) is an isomorphism 
since $i_{*}$ is conservative; the conservativity of $i_{*}$ follows 
from the factorization (\ref{compose:0i}) and the map $0$ in (\ref{factor:i}) 
is a closed immersion. 

Similarly we have the natural transform 
\begin{align}\label{nat:L}
    \Phi_{\rS/\rP} \circ \Phi_{\rS/\rP}^R \Rightarrow \id.
\end{align}
By applying $p^*$ and using the commutative diagrams (\ref{commute:PhiL2}), (\ref{dia:pR}), 
we obtain 
\begin{align*}
  p^*\cong  p^*  \circ \Phi_{\rS/\rP} \circ \Phi_{\rS/\rP}^R \Rightarrow p^*.
\end{align*}
Since $p$ is a smooth morphism with fiber $\Pic^0(C)$, the functor $p^*$ is 
conservative. Therefore (\ref{nat:L}) is an isomorphism, and 
the functor (\ref{funct:SP}) is an equivalence. 
\end{proof}

By the above proposition, we conclude Theorem~\ref{intro:mainthm} for 
$(^{L}G, G)=(\SL_r, \PGL_r)$:
\begin{thm}\label{thm:SL}
The DL conjecture holds for $(^{L}G, G)=(\SL_r, \PGL_r)$ 
over $\rB_{G}^A:=\rB_{\GL_r}^A \cap \rB_{G}$. 
\end{thm}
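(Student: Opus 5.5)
The plan is to deduce Theorem~\ref{thm:SL} directly from Proposition~\ref{thm:SP}, after matching the categories there with those in Conjecture~\ref{conj:intro} for $({}^{L}G,G)=(\SL_r,\PGL_r)$. We take $\cB=\cB^A$ and $\cB_{\circ}=\cB_{\circ}^A$ as in Remark~\ref{rmk:Acirc}; then $\cB_{\circ}^A$ is the classical truncation of $\rB_G^A=\rB_{\GL_r}^A\cap \rB_G$. Here $\pi_1(\SL_r)$ is trivial and $Z_{\SL_r}^{\vee}=\mathbb{Z}/r\mathbb{Z}$, so for each $\overline{\chi}\in \mathbb{Z}/r\mathbb{Z}$ the required statement is a $\rB_G^A$-linear equivalence
\[
\IndCoh_{\mathcal{N}}(\Hig_{\SL_r}^{\mathrm{ss}}\times_{\rB_G}\rB_G^A)_{-\overline{\chi}}\simeq \IndL_{\mathcal{N}}(\Hig_{\PGL_r}(\overline{\chi})\times_{\rB_G}\rB_G^A).
\]

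First, we identify the left-hand side with the source of (\ref{funct:SP}). The isomorphism (\ref{isom:norm}), $E\mapsto E\otimes \pi^*\Omega_C^{-\frac{1}{2}(r-1)}$, identifies $\cH_{\rS}$ with $\cH_{\rS}(w_0)$. Twisting by a line bundle pulled back from $C$ preserves semistability, since it shifts all slopes by the same amount. It is also compatible with the scalar automorphisms, so it matches the $\mu_r$-weight decompositions, possibly after a fixed relabeling of $\overline{\chi}$ that we will absorb by periodicity as in Remark~\ref{rmk:thm1}. The right-hand side of (\ref{funct:SP}) is already $\IndL_{\mathcal{N}}(\cH_{\rP}(\overline{\chi}))_0$.

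Second, $\Phi_{\rS/\rP}$ is $\cB_{\circ}$-linear, because its kernel $\cP_{\rS/\rP}$ lives on the fiber product over $\cB_{\circ}$. Proposition~\ref{thm:SP} then gives the equivalence over the classical base $\cB_{\circ}^A$.

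Third, we pass from $\cB_{\circ}^A$ to the derived open $\rB_G^A$, which carries an extra $k[-1]$-factor coming from $H^1(\Omega_C)$. We do this by tensoring with $\QCoh(k[-1])$ exactly as in Remark~\ref{rmk:thm3}, using Lemma~\ref{lem:equiv:k} to compare the limit categories with and without that factor. The derived $\SL_r$ and $\PGL_r$ Higgs stacks are the traceless, rigidified versions of the $\GL_r$ ones, so the same splitting-off of the $k[-1]$-factor applies to them.

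I expect this last base change to be the only delicate step, namely checking that the $k[-1]$-factor splits off compatibly for $\SL_r$ and $\PGL_r$. If it is not immediate, I would first verify it on $\cH_{\circ}$ via (\ref{isom:tr}) and then descend along $i$ and $p$, as in the proof of Proposition~\ref{thm:SP}. Everything else follows formally from results already established.
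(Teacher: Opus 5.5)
Your first two steps are the paper's proof. Theorem~\ref{thm:SL} follows from Proposition~\ref{thm:SP} with $\cB_{\circ}=\cB_{\circ}^A$, together with the isomorphism (\ref{isom:norm}). That isomorphism commutes with the scalar automorphisms, so the $\mu_r$-weights match exactly and no relabeling of $\overline{\chi}$ is needed.

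Your third step, however, rests on a false premise and should simply be deleted. For $G=\SL_r$ or $\PGL_r$ the Hitchin base $\rB_G=\bigoplus_{i=2}^r\Gamma(\Omega_C^i)$ is already classical, since $H^1(C,\Omega_C^i)\cong H^0(C,\Omega_C^{1-i})^{\vee}=0$ for $i\geq 2$. The $k[-1]$-factor of $\rB_{\GL_r}$ comes only from the trace summand $i=1$, which is exactly the summand that has been removed.

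The same holds for the stacks. $\Hig_{\GL_r}^{\tr=0}=\Hig_{\GL_r}\times_{\rB_{\GL_r}}\rB_G$ is the base change of the flat morphism $\Hig_{\GL_r}^{\mathrm{cl}}\to\rB_{\GL_r}^{\mathrm{cl}}$ along $\rB_G\hookrightarrow\rB_{\GL_r}^{\mathrm{cl}}$ (Theorem~\ref{thm:higgs:basic}), hence classical. So $\cH_{\circ}$, $\cH_{\rS}$, $\cH_{\rP}$ carry no $k[-1]$-factor, $\cB_{\circ}^A=\rB_G^A$, and Proposition~\ref{thm:SP} is already literally the required equivalence.

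Carrying out your step, i.e.\ tensoring with $\QCoh(k[-1])$ as in Remark~\ref{rmk:thm3}, would give an equivalence for $\cH_{\rS}\times k[-1]$ and $\cH_{\rP}\times k[-1]$ over $\rB_G^A\times k[-1]$. Those are not the $\SL_r/\PGL_r$ Higgs stacks. The only bookkeeping of this kind that is needed happens before Proposition~\ref{thm:SP}. There, the $\GL_r$ input (\ref{equiv:HGL}) is taken over the classical base $\cB^A$, and the $H^0(\Omega_C)$-direction is removed via (\ref{isom:tr}) and Lemma~\ref{lem:resty}.
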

\begin{proof}
    The result follows from Proposition~\ref{thm:SP} and an isomorphism (\ref{isom:norm}). 
\end{proof}

\subsection[Arinkin sheaf for PGLr/SLr-Higgs bundles]{Arinkin sheaf for \texorpdfstring{$\PGL_r/\SL_r$}{PGLr/SLr}-Higgs bundles}
We next prove Theorem~\ref{intro:mainthm} for $(^{L}G, G)=(\PGL_r, \SL_r)$. 
Recall the Cohen--Macaulay sheaf $\cP_{\rS/\rP}$ in \eqref{AR:SP}.
By switching the factors, we obtain
\begin{align*}
\cP_{\rP/\rS} \in \Coh(\cH_{\mathrm{P}} \times_{\cB_{\circ}} \cH_{\rS}(\chi_0))
\end{align*}
for $\chi_0=(r-r^2)(g-1)$,
and an isomorphism
\begin{align}\label{isom:descend2}
(p \times \id)^* \cP_{\rP/\rS}
\cong
(\id \times i)^* \cP_{\circ}.
\end{align}
Here the notation is as in the diagram
\begin{align*}
\xymatrix{
\cH_{\circ} \times_{\cB_{\circ}} \cH_{\rS}(\chi_0)
\ar[r]^-{\id \times i}
\ar[d]_-{p \times \id} \diasquare
&
\cH_{\circ} \times_{\cB_{\circ}} \cH_{\circ}(\chi_0)
\ar[d]^-{p \times \id}
\\
\cH_{\mathrm{P}} \times_{\cB_{\circ}} \cH_{\rS}(\chi_0)
\ar[r]_-{\id \times i}
&
\cH_{\mathrm{P}} \times_{\cB_{\circ}} \cH_{\circ}(\chi_0).
}
\end{align*}
For $w \in \mathbb{Z}$ with class
$\overline{w} \in \mathbb{Z}/r\mathbb{Z}$, the sheaf
$\cP_{\rP/\rS}$ defines the Fourier--Mukai functor
\begin{align}\label{Phi:PS}
\Phi_{\rP/\rS} \colon
\Coh(\cH_{\mathrm{P}}(\overline{w})^{\mathrm{ss}})_0
\to
\Coh(\cH_{\rS}(\chi_0))_{\overline{w}}.
\end{align}
By the isomorphism \eqref{isom:descend2}, the following diagram commutes:
\begin{align}\label{comm:PS}
\xymatrix{
\Coh(\cH_{\mathrm{P}}(\overline{w})^{\mathrm{ss}})_0
\ar[r]^-{\Phi_{\rP/\rS}}
&
\Coh(\cH_{\rS}(\chi_0))_{\overline{w}}
\\
\Coh(\cH_{\circ}(w)^{\mathrm{ss}})_0
\ar[r]^-{\Phi}
\ar[u]^-{p_*}
&
\Coh(\cH_{\circ}(\chi_0))_{w'}
\ar[u]_-{i^*}.
}
\end{align}

On the other hand, the isomorphism (\ref{gamma}) implies that the 
following diagram commutes: 
\begin{align}\label{comm:PS2}
\xymatrix{
\Coh(\cH_{\mathrm{P}}(\overline{w})^{\mathrm{ss}})_0
\ar[r]^-{\Phi_{\rP/\rS}} \ar[d]_-{p^!}
&
\Coh(\cH_{\rS}(\chi_0))_{\overline{w}} \ar[d]_-{i_{*}}
\\
\Coh(\cH_{\circ}(w)^{\mathrm{ss}})_0
\ar[r]^-{\Phi}
&
\Coh(\cH_{\circ}(\chi_0))_{w'}.
}
\end{align}
Here $p^!=p^{*}[g]$ is the right adjoint of $p_{*}$ in the diagram (\ref{comm:PS}). 

The following lemmas are analogues of
Lemma~\ref{lem:pL}, Lemma~\ref{lem:p!L}. 
\begin{lemma}\label{lem:i!}
An object $A \in \Coh_{\mathcal{N}}(\cH_{\rS}(\chi_0))_{\overline{w}}$ lies in 
$\LL_{\mathcal{N}}(\cH_{\rS}(\chi_0))_{\overline{w}}$ if and only if 
$i_{*}A \in \LL_{\mathcal{N}}(\cH_{\circ}(\chi_0))_{w'}$. 
\end{lemma}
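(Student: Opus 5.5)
The plan is to mirror Lemma~\ref{lem:pL}, with the smooth surjection $p$ replaced by the closed immersion $i\colon \cH_{\rS}(\chi_0)\to \cH_{\circ}(\chi_0)$. As in (\ref{factor:i}), $i$ factors as an equivalence on weight components $\cH_{\rS}(\chi_0)\to \widetilde{\cH}_{\circ}(\chi_0)$, i.e.\ a $\mu_r$-gerbe identification, followed by the base change $0$ of $\bgm\hookrightarrow \Pic^0(C)$ along $\mathrm{Nm}$. The proof therefore reduces to two comparisons: one for maps from $\bgm$ and one for Harder--Narasimhan strata.

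\textbf{Step 1 (maps from $\bgm$).} Every map $\nu\colon\bgm\to\cH_{\circ}(\chi_0)$ whose image lies in $\cH_{\rS}(\chi_0)$ lifts, after composing with $t\mapsto t^r$ (harmless by Remark~\ref{rmk:cond:r}), to $\nu_{\rS}\colon\bgm\to\cH_{\rS}(\chi_0)$. Conversely, every map into $\cH_{\rS}(\chi_0)$ composes to one into $\cH_{\circ}(\chi_0)$.

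\textbf{Step 2 (weight comparison).} I will compare the weights of $\nu^*i_*A$ and $\nu_{\rS}^*A$.

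\emph{Normal directions.} Since $\mathrm{Nm}$ is smooth with target $\Pic^0(C)$, the normal bundle of $i$ is $\mathrm{Nm}^*T_{\Pic^0(C)}$. The $\mathbb{G}_m$ acts trivially on this, since an abelian variety has no $\mathbb{G}_m$-action induced this way, as argued in Lemma~\ref{lem:p!L}. Consequently $\nu^*i_*A$ is built from $\nu_{\rS}^*A$ tensored with exterior powers of a weight-zero bundle, and the two objects have the same weight set.

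\emph{Cotangent complexes.} By the same trivial-weight argument, the positive and negative parts of $\nu^*\mathbb{L}$ agree on $\cH_{\rS}(\chi_0)$ and on $\cH_{\circ}(\chi_0)$. The derived structures also match, because $\mathbb{L}_i$ has weight zero.

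\emph{Weight shift.} The line bundle $\delta_{w'}$ restricts to the corresponding $\delta_{\overline{w}}$. The weight shift $w'$ versus $\overline{w}$ is absorbed by the periodicity in Remark~\ref{rmk:thm1}, and $\det\pi_*$ is fixed on $\cH_{\rS}$ in any case.

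With these three facts, condition (\ref{wt:nu}) on the regularizations of the two sides agrees; for perfect objects one uses (\ref{wt:cond2}). This gives the equivalence for $\widetilde{\LL}$. The nilpotent singular support condition is handled by Remark~\ref{rmk:perfSP}: both sides are perfect complexes, and $i_*$ preserves and detects perfectness because $i$ is a regular closed immersion.

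\textbf{Step 3 (passing to compact objects).} The HN stratification of $\cH_{\circ}(\chi_0)$ restricts along $i$ to that of $\cH_{\rS}(\chi_0)$, since stability ignores the determinant. Remark~\ref{rmk:Ltilde2}, i.e.\ the strict left inequality at the centres of HN strata for large $\mu$, can then be checked on both sides simultaneously, exactly as in Lemma~\ref{lem:pL}. This upgrades the statement from $\widetilde{\LL}$ to $\LL$.

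The main obstacle is Step 2's bookkeeping of the $\mu_r$ versus $\mathbb{G}_m$ weights under the rigidification, and the verification that the Koszul factors of $i^*i_*$ carry weight zero. I would handle both through the factorization (\ref{compose:0i}) together with the trivial $\mathbb{G}_m$-action on $\Pic^0(C)$.
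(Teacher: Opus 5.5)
Your overall plan agrees with the paper's: factor $i=0\circ\widetilde{i}$, use that $\mathbb{L}_{\widetilde{i}}$ and $H^1(\cO_C)$ carry no nontrivial weights, and Koszul-resolve $0^*0_*$. But Step~1 is false as stated, and it is exactly where the direction $A\in\LL_{\mathcal{N}}(\cH_{\rS}(\chi_0))_{\overline{w}}\Rightarrow i_*A\in\LL_{\mathcal{N}}(\cH_{\circ}(\chi_0))_{w'}$ has content. Take a test map $\nu\colon\bgm\to\cH_{\circ}(\chi_0)$ landing at a point $E$ in the support of $i_*A$. It factors through $\widetilde{\cH}_{\circ}(\chi_0)$. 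It lifts along the $\mathbb{G}_m$-torsor $\widetilde{i}$ only if the composite $\mathbb{G}_m\to\Aut(E)\xrightarrow{\mathrm{Nm}}\mathbb{G}_m$ is trivial. If this composite has weight $m\neq 0$, then $\nu(t^r)$ has norm weight $rm\neq 0$ and still does not lift, so precomposing with $t\mapsto t^r$ achieves nothing toward a lift. Such $\nu$ are common: an HN centre with weights $\nu_1>\cdots>\nu_k$ has $m=\sum_i r_i\nu_i$. These maps cannot be skipped. Condition (\ref{wt:cond2}) for $i_*A$ must be checked along them, including the strictness at HN centres that your Step~3 needs.

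The missing idea is the central twist used in the paper. Replace $\nu$ by $\nu'(t):=\nu(t^r)\cdot t^{-m}$, where $t^{-m}$ is the scalar automorphism of $E$, of norm weight $-rm$. Then $\nu'$ has trivial norm weight and factors through $\cH_{\rS}(\chi_0)$. You must then check that the weight condition is invariant under this twist.
\begin{itemize}
\item The scalar $\mathbb{G}_m$ acts trivially on $\mathbb{L}$, so $c_1((\nu')^{*}\mathbb{L}^{>0})=r\,c_1(\nu^{*}\mathbb{L}^{>0})$, and likewise for $\mathbb{L}^{<0}$.
\item The weights of the pulled-back object and $c_1(\nu^{*}\delta_{w'})$ are both multiplied by $r$ and shifted by the same amount $-mw'$. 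This is because $i_*A$ and $\delta_{w'}=\det(\cF_c)^{w'/r}$ both have central weight $w'$.
\item In particular $c_1((\nu')^{*}\delta_{w'})=0$, consistent with $\det$ being trivialized on $\cH_{\rS}$.
\end{itemize}
Combined with Remark~\ref{rmk:cond:r}, the conditions along $\nu$ and $\nu'$ coincide.

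This twist is what reconciles $w'$ with $\overline{w}$. The periodicity of Remark~\ref{rmk:thm1} is an unrelated autoequivalence and plays no role. Your claim that the two pullbacks ``have the same weight set'' holds only up to this rescaling and shift.

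A smaller point: $i$ is not a closed immersion. Here $\widetilde{i}$ is a $\mathbb{G}_m$-torsor, and only $0$ is a quasi-smooth closed immersion. Smoothness of $\mathrm{Nm}$ is not needed, since the Koszul factors of $0^*0_*$ are $\bigwedge^j H^1(\cO_C)^{\vee}[j]$, pulled back from $\bgm\hookrightarrow\Pic^0(C)$.
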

\begin{proof}
Recall from (\ref{factor:i}) that $i$ factors as 
\begin{align}\label{factor:i2}
   \cH_{\rS}(\chi_0) \stackrel{\widetilde{i}}{\to} \widetilde{\cH}_{\circ}(\chi_0) \stackrel{0}{\to} \cH_{\circ}(\chi_0), 
\end{align}
and from 
(\ref{compose:0i}) that $i_{*}$ factors as 
\begin{align}\label{factor:i*}
\Coh(\cH_{\rS}(\chi_0))_{\overline{w}} 
\stackrel{\sim}{\to} 
\Coh(\widetilde{\cH}_{\circ}(\chi_0))_{w'} 
\stackrel{0_{*}}{\to}
\Coh(\cH_{\circ}(\chi_0))_{w'}.
\end{align}
The above functors preserve perfect complexes, as 
the first map $\widetilde{i}$ in~\eqref{factor:i2} is smooth, and the second 
map $0$ in~\eqref{factor:i2} is a quasi-smooth closed immersion. 
By Remark~\ref{rmk:perfSP}, they preserve nilpotent singular supports. 

We first show that the first equivalence restricts to an equivalence
\begin{align}\label{equiv:rest}
\LL_{\mathcal{N}}(\cH_{\rS}(\chi_0))_{\overline{w}} 
\stackrel{\sim}{\to} 
\LL_{\mathcal{N}}(\widetilde{\cH}_{\circ}(\chi_0))_{w'}.
\end{align}
Let $\nu \colon \bgm \to \cH_{\rS}(\chi_0)$ be a test map, and consider the composition 
\begin{align*}
\widetilde{\nu} \colon 
\bgm \stackrel{\nu}{\to} \cH_{\rS}(\chi_0) 
\stackrel{\widetilde{i}}{\to} 
\widetilde{\cH}_{\circ}(\chi_0).
\end{align*}
Then we have 
\begin{align*}
c_1\bigl(
\widetilde{\nu}^* 
\mathbb{L}^{>0}_{\widetilde{\cH}_{\circ}(\chi_0)}
\bigr)
=
c_1\bigl(
\nu^* 
\mathbb{L}_{\cH_{\rS}(\chi_0)}^{>0}
\bigr), \ c_1\bigl(
\widetilde{\nu}^* 
\mathbb{L}^{<0}_{\widetilde{\cH}_{\circ}(\chi_0)}
\bigr)
=
c_1\bigl(
\nu^* 
\mathbb{L}_{\cH_{\rS}(\chi_0)}^{<0}
\bigr).
\end{align*}
Indeed, we have a distinguished triangle
\begin{align*}
\widetilde{i}^* \mathbb{L}_{\widetilde{\cH}_{\circ}(\chi_0)}
\to 
\mathbb{L}_{\cH_{\rS}(\chi_0)}
\to 
\mathbb{L}_{\widetilde{i}}.
\end{align*}
Moreover, $\mathbb{L}_{\widetilde{i}}$ has no non-trivial $\nu$-weights.

Conversely, let 
$\widetilde{\nu} \colon \bgm \to \widetilde{\cH}_{\circ}(\chi_0)$ 
be a test map whose image is a point $x$. Let $m\in \mathbb{Z}$ be the weight 
of the composition
\begin{align*}
\mathbb{G}_m \to \Aut(x) 
\stackrel{\mathrm{Nm}}{\to} 
\mathbb{G}_m
\end{align*}
where the first map is induced by $\widetilde{\nu}$. 
We modify $\widetilde{\nu}$ by setting 
\begin{align*}
  \widetilde{\nu}' \colon \bgm \to \widetilde{\cH}_{\circ}(\chi_0), \ 
  \mathrm{pt} \mapsto x, \ \mathbb{G}_m \stackrel{\widetilde{\nu}(t^r)\cdot t^{-m}}{\to}\mathrm{Aut}(x).
\end{align*}
Then $\widetilde\nu(t^r)$ has norm weight $rm$, while the scalar automorphism
$t^{-m}$ has norm weight $-rm$. Hence
$\widetilde\nu'$ has trivial norm weight and factors through
$\cH_{\rS}(\chi_0)$:
\begin{align*}
\widetilde{\nu}' \colon 
  \bgm \stackrel{\nu'}{\to} \cH_{\rS}(\chi_0) 
  \stackrel{\widetilde{i}}{\to} 
  \widetilde{\cH}_{\circ}(\chi_0).
\end{align*}
Since 
$\mathbb{G}_m \subset \Aut(x)$ acts trivially on the cotangent complex at $x$, 
we have 
\begin{align*}
c_1\bigl(
(\widetilde{\nu}')^* 
\mathbb{L}^{>0}_{\widetilde{\cH}_{\circ}(\chi_0)}
\bigr) 
=
c_1\bigl(
(\nu')^* 
\mathbb{L}_{\cH_{\rS}(\chi_0)}^{>0}
\bigr)=r\cdot c_1\bigl(
\widetilde{\nu}^{*}
\mathbb{L}^{>0}_{\widetilde{\cH}_{\circ}(\chi_0)}
\bigr),
\end{align*}
and similarly for the negative weight part. 
Therefore, the test maps from $\bgm$ and their associated $c_1$-values are 
identified under the first equivalence in (\ref{factor:i*}), up to the modification of the test 
map by composing with $\mathbb{G}_m \to \mathbb{G}_m, t\mapsto t^r$, and multiplying $c_1$ with $r$. 
The latter modification does not affect the condition (\ref{wt:cond2}), see Remark~\ref{rmk:cond:r}. 
Hence this equivalence restricts 
to the equivalence~\eqref{equiv:rest}.

We next show that an object 
$A\in \Coh_{\mathcal{N}}(\widetilde{\cH}_{\circ}(\chi_0))_{w'}$ lies in 
$\LL_{\mathcal{N}}(\widetilde{\cH}_{\circ}(\chi_0))_{w'}$ if and only if 
$0_{*}A$ lies in $\LL_{\mathcal{N}}(\cH_{\circ}(\chi_0))_{w'}$. 
Indeed, this follows from the fact that $0^*0_{*}A$
is given by iterated extensions of 
\begin{align*}
    A \otimes \bigwedge^i H^1(\mathcal{O}_C)^{\vee}[i],
    \quad 0\leq i\leq g.
\end{align*}
Therefore, for any map 
$\nu \colon \bgm \to \widetilde{\cH}_{\circ}(\chi_0)$, 
the relevant weight condition for $A$ with respect to $\nu$ agrees with 
the weight condition for $0_{*}A$ with respect to 
$0\circ \nu \colon \bgm \to \cH_{\circ}(\chi_0)$.

\end{proof}

\begin{lemma}\label{lem:i!2}
For an object $A\in \LL_{\mathcal{N}}(\cH_{\circ}(\chi_0))_{w'}$, 
we have $i^* A \in \LL_{\mathcal{N}}(\cH_{\rS}(\chi_0))_{\overline{w}}$.
\end{lemma}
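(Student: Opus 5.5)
We factor $i$ as in (\ref{factor:i2}),
\[
\cH_{\rS}(\chi_0) \stackrel{\widetilde{i}}{\to} \widetilde{\cH}_{\circ}(\chi_0) \stackrel{0}{\to} \cH_{\circ}(\chi_0),
\]
so that $i^*$ becomes $0^*$ followed by the weight-$w'$ component of $\widetilde{i}^*$. The proof of Lemma~\ref{lem:i!} shows that $\widetilde{i}^*$ restricts to the equivalence (\ref{equiv:rest}). It therefore suffices to show that $0^*A \in \LL_{\mathcal{N}}(\widetilde{\cH}_{\circ}(\chi_0))_{w'}$ whenever $A\in \LL_{\mathcal{N}}(\cH_{\circ}(\chi_0))_{w'}$.

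\emph{Local (tilde) condition.} The map $0$ is a quasi-smooth closed immersion, the base change of $\bgm \hookrightarrow \Pic^0(C)$ along $\mathrm{Nm}$, so $0^*$ preserves $\Coh$. It also preserves perfectness and hence, by Remark~\ref{rmk:perfSP}, nilpotent singular support. Take a test map $\nu \colon \bgm \to \widetilde{\cH}_{\circ}(\chi_0)$. Since $\mathbb{L}_0$ is $H^1(\cO_C)^{\vee}[1]$ with trivial $\nu$-weight, the $c_1(\nu^*\mathbb{L}^{\gtrless 0})$ for $\widetilde{\cH}_{\circ}(\chi_0)$ and for $\cH_{\circ}(\chi_0)$ along $0\circ\nu$ agree, and $\delta_{w'}$ pulls back. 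For perfect $A$, the weights of $\nu^*0^*A=(0\circ\nu)^*A$ are literally those of $(0\circ\nu)^*A$, so condition (\ref{wt:cond2}) transfers by Lemma~\ref{lem:perfect}. Here $A$ is perfect because $\Coh_{\mathcal{N}}=\mathrm{Perf}$ on $\cH_{\circ}(\chi_0)$, by the analogue of Remark~\ref{rmk:nilp}. This gives $0^*A\in\widetilde{\LL}_{\mathcal{N}}(\widetilde{\cH}_{\circ}(\chi_0))_{w'}$, locally on each quasi-compact open.

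\emph{Compactness.} This is the step I expect to need the most care. We upgrade $\widetilde{\LL}$ to $\LL$ using the description in Remark~\ref{rmk:Ltilde2}: the left inequality must be strict at the centers of Harder--Narasimhan strata of sufficiently large type $\mu$. The Harder--Narasimhan stratification of $\cH_{\circ}(\chi_0)$ restricts along the closed immersion $0$ to that of $\widetilde{\cH}_{\circ}(\chi_0)$, and likewise along $\widetilde{i}$ to $\cH_{\rS}(\chi_0)$, since stability only depends on the underlying Higgs sheaf and tensoring by $\Pic^0$ preserves type. A HN center map $\nu$ on $\widetilde{\cH}_{\circ}(\chi_0)$ is then sent by $0$ to a HN center map of the same type, and by the weight identification above the strict-left inequality for $A$ at $0\circ\nu$ yields it for $0^*A$ at $\nu$. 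One point needs checking: the relevant HN center maps $\nu$ for $\cH_{\rS}$ differ from those on $\widetilde{\cH}_{\circ}$ by the $t\mapsto t^r$ twist and the scalar correction used in Lemma~\ref{lem:i!}. Remark~\ref{rmk:cond:r} shows that this twist does not affect strictness, and the scalar $\mathbb{G}_m$ acts trivially on the cotangent complexes. Combining these steps gives $i^*A\in\LL_{\mathcal{N}}(\cH_{\rS}(\chi_0))_{\overline{w}}$.
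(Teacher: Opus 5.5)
Your proposal is correct and is essentially the paper's argument. The paper composes a test map $\nu\colon \bgm\to\cH_{\rS}(\chi_0)$ with $i$ and notes that $c_1(\nu^*\mathbb{L}^{>0})$ and $c_1(\nu^*\mathbb{L}^{<0})$ are unchanged because the relative term $\mathbb{L}_{\mathrm{pt}/\Pic^0(C)}$ from \eqref{factor:i} has trivial weights; your factorization $i=0\circ\widetilde{i}$ merely splits this into the $\mathbb{L}_{\bgm/\Pic^0(C)}$ part and the equivalence \eqref{equiv:rest}, while your perfectness and strict Harder--Narasimhan checks spell out what the paper leaves implicit (the $t\mapsto t^r$ twist is unnecessary, since only pushing test maps forward along $i$ is used).
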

\begin{proof}
Let $\nu \colon \bgm \to \cH_{\rS}(\chi_0)$ be a map, and consider the composition 
\begin{align*}
    \nu' \colon 
    \bgm \stackrel{\nu}{\to} \cH_{\rS}(\chi_0) 
    \stackrel{i}{\to} \cH_{\circ}(\chi_0).
\end{align*}
Then we have 
\begin{align*}
    c_1\bigl((\nu')^* \mathbb{L}_{\cH_{\circ}(\chi_0)}^{>0}\bigr)
    =
    c_1\bigl(\nu^*\mathbb{L}_{\cH_{\rS}(\chi_0)}^{>0}\bigr).
\end{align*}
Indeed, the difference is given by the $\nu$-weights of the pullback of 
the relative cotangent complex $\mathbb{L}_{\mathrm{pt}/\mathrm{Pic}^0(C)}$ 
in the diagram~\eqref{factor:i}, which are trivial. Therefore, the lemma follows.
\end{proof}

Using the above lemmas, we have the following proposition: 
\begin{prop}\label{prop:restL}
The functor (\ref{Phi:PS})
   restricts to the functor 
   \begin{align}\label{funct:PhiPSL}
   \Phi_{\rP/\rS} \colon
\Coh_{\mathcal{N}}(\cH_{\mathrm{P}}(\overline{w})^{\mathrm{ss}})_0
\to
\LL_{\mathcal{N}}(\cH_{\rS}(\chi_0))_{\overline{w}}
\end{align}
such that 
we have commutative diagrams 
\begin{align}\label{comm:PSL}
\xymatrix{
\Coh_{\mathcal{N}}(\cH_{\mathrm{P}}(\overline{w})^{\mathrm{ss}})_0
\ar[r]^-{\Phi_{\rP/\rS}}
&
\LL_{\mathcal{N}}(\cH_{\rS}(\chi_0))_{\overline{w}}
\\
\Coh_{\mathcal{N}}(\cH_{\circ}(w)^{\mathrm{ss}})_0
\ar[r]^-{\Phi}_-{\sim}
\ar[u]^-{p_*}
&
\LL_{\mathcal{N}}(\cH_{\circ}(\chi_0))_{w'}
\ar[u]_-{i^*},
}
\end{align}
\begin{align}\label{comm:PS2L}
\xymatrix{
\Coh_{\mathcal{N}}(\cH_{\mathrm{P}}(\overline{w})^{\mathrm{ss}})_0
\ar[r]^-{\Phi_{\rP/\rS}} \ar[d]_-{p^!}
&
\LL_{\mathcal{N}}(\cH_{\rS}(\chi_0))_{\overline{w}} \ar[d]_-{i_{*}}
\\
\Coh_{\mathcal{N}}(\cH_{\circ}(w)^{\mathrm{ss}})_0
\ar[r]^-{\Phi}_-{\sim}
&
\LL_{\mathcal{N}}(\cH_{\circ}(\chi_0))_{w'}.
}
\end{align}
\end{prop}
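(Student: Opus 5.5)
The argument runs through the diagram \eqref{comm:PS2}. Its commutativity, which comes from the isomorphism $\gamma$ of Proposition~\ref{prop:gisom}, together with Lemma~\ref{lem:i!}, will show that the image of $\Phi_{\rP/\rS}$ lands in the limit category. The diagram \eqref{comm:PSL} is then obtained separately from \eqref{comm:PS} and Lemma~\ref{lem:i!2}.

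First we check that the vertical functors preserve the relevant subcategories.

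\begin{itemize}
\item $p^!=p^*[g]$ sends $\Coh_{\mathcal{N}}(\cH_{\rP}(\overline{w})^{\mathrm{ss}})_0$ into $\Coh_{\mathcal{N}}(\cH_{\circ}(w)^{\mathrm{ss}})_0$. This holds because $p$ is smooth with fiber $\Pic^0(C)$, so $p^*$ preserves perfect complexes. By Remark~\ref{rmk:perfSP} and Remark~\ref{rmk:nilp}, $\Coh_{\mathcal{N}}=\mathrm{Perf}$ on both sides.
\item $p_*$ sends $\Coh_{\mathcal{N}}(\cH_{\circ}(w)^{\mathrm{ss}})_0$ into $\Coh_{\mathcal{N}}(\cH_{\rP}(\overline{w})^{\mathrm{ss}})_0$. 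Here $p$ is proper and smooth up to the $\mathbb{G}_m$-gerbe structure of the $\Pic^0(C)$-fibers, and weight-zero objects are pushed forward to perfect complexes.
\item The equivalence \eqref{PhiL}, with the roles of $(\chi,w)$ swapped, gives $\Phi\colon\Coh_{\mathcal{N}}(\cH_{\circ}(w)^{\mathrm{ss}})_0\xrightarrow{\sim}\LL_{\mathcal{N}}(\cH_{\circ}(\chi_0))_{w'}$. This follows from Theorem~\ref{thm:GL} by base change along $\cB_{\circ}\hookrightarrow\cB$, as in the derivation of \eqref{PhiL}.
\end{itemize}

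Now let $A\in\Coh_{\mathcal{N}}(\cH_{\rP}(\overline{w})^{\mathrm{ss}})_0$. By \eqref{comm:PS2},
\[
i_*\Phi_{\rP/\rS}(A)\cong\Phi(p^!A)\in\LL_{\mathcal{N}}(\cH_{\circ}(\chi_0))_{w'}.
\]
Lemma~\ref{lem:i!} then gives $\Phi_{\rP/\rS}(A)\in\LL_{\mathcal{N}}(\cH_{\rS}(\chi_0))_{\overline{w}}$, provided we first know that $\Phi_{\rP/\rS}(A)$ lies in $\Coh_{\mathcal{N}}(\cH_{\rS}(\chi_0))_{\overline{w}}$, i.e.\ that it is perfect. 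For this we use that $0_*$ is conservative, being a closed immersion, and that $\widetilde{i}$ is smooth, so perfectness can be detected after applying $i_*$. Concretely, $0^*0_*B$ is an iterated extension of copies of $B\otimes\wedge^iH^1(\cO_C)^\vee[i]$, so $B$ is a direct summand of a perfect complex and is therefore perfect. This yields the restricted functor \eqref{funct:PhiPSL}, and \eqref{comm:PS2L} is just \eqref{comm:PS2} restricted to these subcategories.

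For \eqref{comm:PSL}, we restrict \eqref{comm:PS}. The only thing left to check is that $i^*$ maps $\LL_{\mathcal{N}}(\cH_{\circ}(\chi_0))_{w'}$ into $\LL_{\mathcal{N}}(\cH_{\rS}(\chi_0))_{\overline{w}}$, which is Lemma~\ref{lem:i!2}. Since $i$ is quasi-smooth, $i^*$ preserves perfect complexes, so no further singular support issue arises. Commutativity is then inherited from \eqref{comm:PS}, which was deduced from \eqref{isom:descend2}.

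The main obstacle is the perfectness and weight bookkeeping in the second paragraph, specifically checking that the weight component indexing ($w$ versus $w'$, and $\overline{w}$) matches exactly under $i_*$ and $p^!$. I would handle this by tracking the central $\mathbb{G}_m$-weight through the factorization \eqref{compose:0i}.
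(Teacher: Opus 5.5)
Your proposal is correct and is essentially the paper's proof, which combines the equivalence $\Phi$ from Theorem~\ref{thm:GL} with Lemma~\ref{lem:i!} and diagram \eqref{comm:PS2} (to show the image lands in the limit category and to get \eqref{comm:PS2L}), and with Lemma~\ref{lem:i!2} and diagram \eqref{comm:PS} (to get \eqref{comm:PSL}), exactly as you do. Your extra check that $\Phi_{\rP/\rS}(A)$ is perfect before invoking Lemma~\ref{lem:i!} is a detail the paper leaves implicit; for the direct-summand step, use the actual splitting $0^*0_*B\cong\bigoplus_i B\otimes\wedge^i H^1(\cO_C)^\vee[i]$, which holds because $0$ is base-changed from the inclusion of a point into $\Pic^0(C)$, rather than only the fact that $0^*0_*B$ is an iterated extension of these pieces.
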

\begin{proof}
Since the functor $\Phi$ gives an equivalence by Theorem~\ref{thm:GL}
\begin{align*}
    \Phi \colon \Coh_{\mathcal{N}}(\cH_{\circ}(w)^{\mathrm{ss}})_0 \stackrel{\sim}{\to} \LL_{\mathcal{N}}(\cH_{\circ}(\chi_0))_{w'},
\end{align*}
the proposition follows from Lemma~\ref{lem:i!}, Lemma~\ref{lem:i!2} and 
 the commutative diagrams (\ref{comm:PS}), (\ref{comm:PS2}), 
\end{proof}

\subsection[Proof of Theorem 1.2 for PGLr/SLr]{Proof of Theorem~\ref{intro:mainthm} for \texorpdfstring{$\PGL_r/\SL_r$}{PGLr/SLr}}
We show that the functor $\Phi_{\rP/\rS}$ is an equivalence: 
\begin{prop}\label{prop:PGL}
The functor
\begin{align}\label{PhiPS}
    \Phi_{\rP/\rS} \colon \IndCoh_{\mathcal{N}}(\cH_{\mathrm{P}}(\overline{w})^{\mathrm{ss}})_0 \to \IndL_{\mathcal{N}}(\cH_{\rS}(\chi_0))_{\overline{w}}
\end{align}
induced by (\ref{funct:PhiPSL})
is an equivalence. 
\end{prop}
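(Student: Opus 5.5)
We mimic the proof of Proposition~\ref{thm:SP}, with the roles of the two sides interchanged. The functor \eqref{PhiPS} is the ind-extension of \eqref{funct:PhiPSL}. It is continuous and preserves compact objects, so by the adjoint functor theorem it has a continuous right adjoint $\Phi_{\rP/\rS}^R$.

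The diagrams \eqref{comm:PSL} and \eqref{comm:PS2L} extend to commutative diagrams of cocomplete categories with continuous functors:
\begin{align*}
p_{*}\circ \Phi^{R}\circ i^{*R} \cong \Phi_{\rP/\rS}^R \ \text{(from \eqref{comm:PSL})}, \qquad i_{*}\circ\Phi_{\rP/\rS}\cong \Phi\circ p^{!} \ \text{(from \eqref{comm:PS2L})}.
\end{align*}
Here $\Phi$ is the $\GL_r$ equivalence \eqref{equiv:HGL}, restricted to $\cB_\circ$ exactly as in the $\SL_r/\PGL_r$ case. We work directly with \eqref{comm:PS2L}, together with the right adjoints of its vertical functors: $p^!$ has right adjoint $p_{*}$ (up to the shift convention $p^!=p^*[g]$ and $p_!\cong p_*[g]$), and $i_*$ has right adjoint $i^!$. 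Passing to right adjoints in \eqref{comm:PS2L} gives
\begin{align*}
\Phi_{\rP/\rS}^R\circ i^{!} \cong p_{\ast}'\circ \Phi^{R},
\end{align*}
where $p_*'$ denotes the right adjoint of $p^!$.

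\emph{Unit.} We show that $\id\Rightarrow\Phi_{\rP/\rS}^R\Phi_{\rP/\rS}$ is an isomorphism. Applying $p^!$, or equivalently $p^*$, which is conservative because $p$ is smooth and surjective, we compute
\begin{align*}
p^!\Phi_{\rP/\rS}^R\Phi_{\rP/\rS}\cong\Phi^R\, i_*\,\Phi_{\rP/\rS}\cong\Phi^R\Phi\, p^!\cong p^!.
\end{align*}
The first isomorphism needs $p^!\Phi_{\rP/\rS}^R\cong\Phi^R i_*$. We get it by taking right adjoints of \eqref{comm:PSL}, which gives $\Phi^R_{\rP/\rS}$ in terms of the right adjoints of $i^*$ and $p_*$, namely $i_*$ and $p^!$. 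This is precisely the statement $p^!\circ\Phi^R_{\rP/\rS}\cong\Phi^R\circ i_*$. We then check that the composite map is the identity, as in the proof of Proposition~\ref{thm:SP}.

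\emph{Counit.} We show that $\Phi_{\rP/\rS}\Phi_{\rP/\rS}^R\Rightarrow\id$ is an isomorphism. Applying $i_*$, which is conservative by the factorization \eqref{factor:i*}, gives
\begin{align*}
i_*\Phi_{\rP/\rS}\Phi^R_{\rP/\rS}\cong\Phi p^!\Phi^R_{\rP/\rS}\cong\Phi\Phi^R i_*\cong i_*.
\end{align*}

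The main obstacle is verifying that the extended diagrams genuinely commute on the $\IndL_{\mathcal N}$ level, with the correct adjoints. On the automorphic side, $i_*$ and $i^*$ must preserve the limit categories; Lemmas~\ref{lem:i!} and~\ref{lem:i!2} provide this on compact objects, and both functors are continuous, so the ind-extensions agree with the adjoints taken inside $\IndL_{\mathcal N}$. We must also check that the natural transformations match the identity under these identifications, which follows from the construction of \eqref{comm:PS2} via the isomorphism $\gamma$ of Proposition~\ref{prop:gisom}.
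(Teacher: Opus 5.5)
Your proposal follows the paper's proof: ind-extend \eqref{comm:PS2L} to get $i_*\Phi_{\rP/\rS}\cong\Phi\, p^!$, take right adjoints of the ind-extended \eqref{comm:PSL} to get $p^!\Phi^R_{\rP/\rS}\cong\Phi^R i_*$, then check the unit after applying the conservative $p^!$ and the counit after applying the conservative $i_*$. Drop your preliminary display $p_*\circ\Phi^R\circ i^{*R}\cong\Phi^R_{\rP/\rS}$, because it is not what right adjoints of \eqref{comm:PSL} give: it would require $p_*p^!\cong\id$, which fails for the $\Pic^0(C)$-fibration $p$. The $i^!$ identity should go too, since the argument never uses it.
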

\begin{proof}
    The commutative diagram (\ref{comm:PS2L}) extends to 
    the commutative diagram (with continuous functors)
  \begin{align}\label{comm:PS3L}
\xymatrix{
\IndCoh_{\mathcal{N}}(\cH_{\mathrm{P}}(\overline{w})^{\mathrm{ss}})_0
\ar[r]^-{\Phi_{\rP/\rS}} \ar[d]_-{p^!}
&
\IndL_{\mathcal{N}}(\cH_{\rS}(\chi_0))_{\overline{w}} \ar[d]_-{i_{*}}
\\
\IndCoh_{\mathcal{N}}(\cH_{\circ}(w)^{\mathrm{ss}})_0
\ar[r]^-{\Phi}_-{\sim}
&
\IndL_{\mathcal{N}}(\cH_{\circ}(\chi_0))_{w'}.
}
\end{align}
By taking the ind-completion of (\ref{comm:PSL}) and the right adjoints, 
we have the commutative diagram 
\begin{align}\label{comm:PS4L}
\xymatrix{
\IndCoh_{\mathcal{N}}(\cH_{\mathrm{P}}(\overline{w})^{\mathrm{ss}})_0
\ar[d]_-{p^!}
&
\IndL_{\mathcal{N}}(\cH_{\rS}(\chi_0))_{\overline{w}} \ar[d]_-{i_{*}} 
\ar[l]_-{\Phi_{\rP/\rS}^R} 
\\
\IndCoh_{\mathcal{N}}(\cH_{\circ}(w)^{\mathrm{ss}})_0
&
\IndL_{\mathcal{N}}(\cH_{\circ}(\chi_0))_{w'}. \ar[l]_-{\Phi^R}^-{\sim}
}
\end{align}
We have the 
the natural transform 
\begin{align}\label{nat:P}
\id \Rightarrow \Phi_{\rP/\rS}^R\circ \Phi_{\rP/\rS}.
\end{align}
By applying $p^!$, we obtain 
\begin{align}\label{p!:id}
    p^! \Rightarrow p^!\circ \Phi_{\rP/\rS}^R\circ \Phi_{\rP/\rS}.
\end{align}
By the diagrams (\ref{comm:PS3L}), (\ref{comm:PS4L}), we have the isomorphisms
\begin{align*}
     p^!\circ \Phi_{\rP/\rS}^R \circ \Phi_{\rP/\rS}&\cong \Phi^R \circ i_{*} \circ \Phi_{\rP/\rS} \\
     &\cong \Phi^R \circ \Phi \circ p^! 
\end{align*}
and it is naturally isomorphic to $p^!$ as $\Phi$ is an equivalence. Under the above isomorphisms, 
the natural transform (\ref{p!:id}) is $\id \colon p^! \Rightarrow p^!$, hence an isomorphism. 

Similarly the natural transform 
\begin{align}\label{nat:P2}
    \Phi_{\rP/\rS}\circ \Phi_{\rP/\rS}^R \Rightarrow \id
\end{align}
applied with $i_{*}$ is an isomorphism 
\begin{align*}
    i_{*}\circ \Phi_{\rP/\rS} \circ \Phi_{\rP/\rS}^R \cong i_{*} \Rightarrow i_{*}.
\end{align*}
Therefore (\ref{nat:P}), (\ref{nat:P2}) are isomorphisms since $p^!$, $i_{*}$ are conservative. 
\end{proof}

By combining the above results, we obtain the following: 
\begin{thm}\label{thm:PGL}
Conjecture~\ref{conj:intro} holds for $(^{L}G, G)=(\PGL_r, \SL_r)$ and 
over $\rB_{G}^A$. 
\end{thm}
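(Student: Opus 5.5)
The plan is to deduce Theorem~\ref{thm:PGL} from Proposition~\ref{prop:PGL}, exactly as Theorem~\ref{thm:SL} was deduced from Proposition~\ref{thm:SP}. There are three identifications to make.

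\emph{Step 1: the automorphic side.} For the pair $({}^{L}G,G)=(\PGL_r,\SL_r)$, Conjecture~\ref{conj:intro} concerns $\chi\in\pi_1(\SL_r)=1$ and $w$-classes in $Z_{\SL_r}^{\vee}=\mathbb{Z}/r\mathbb{Z}$. The target is therefore $\IndL_{\mathcal{N}}(\Hig_{\SL_r})_{\overline{w}}$. I will identify $\cH_{\rS}$ with $\cH_{\rS}(\chi_0)$ through the twisting isomorphism (\ref{isom:norm}), $E\mapsto E\otimes\pi^*\Omega_C^{-\frac12(r-1)}$, using the convention $\chi_0=w_0$. This isomorphism is compatible with the $\mu_r$-gerbe structure, so it preserves central weights. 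It also preserves the $\mathbb{G}_m$-weights of cotangent complexes and the data $\delta_w$ along test maps $\nu\colon\bgm\to\cH_{\rS}$. Consequently it induces an equivalence of limit categories
\begin{align*}
\IndL_{\mathcal{N}}(\cH_{\rS})_{\overline{w}}\simeq \IndL_{\mathcal{N}}(\cH_{\rS}(\chi_0))_{\overline{w}}.
\end{align*}

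\emph{Step 2: the spectral side.} By definition $\Hig_{\PGL_r}(\overline{w})^{\mathrm{ss}}$ is the base change of $\cH_{\rP}(\overline{w})^{\mathrm{ss}}$, and the central weight $-\chi$ is trivial because $\pi_1(\SL_r)=1$. So the source of (\ref{PhiPS}) is precisely the left-hand side of Conjecture~\ref{conj:intro}, after restriction over $\cB_\circ=\cB_\circ^A$. By Remark~\ref{rmk:Acirc}, $\cB_\circ^A$ is exactly $\rB_G^A$ (classical). The functor $\Phi_{\rP/\rS}$ is $\cB_\circ$-linear because it is a Fourier--Mukai functor relative to $\cB_\circ$. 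Proposition~\ref{prop:PGL} then gives a $\rB_G^{A,\mathrm{cl}}$-linear equivalence.

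\emph{Step 3: extension to the derived base.} To pass from $\rB_G^{A,\mathrm{cl}}$ to the derived open $\rB_G^A=\rB_G^{A,\mathrm{cl}}\times k[-1]$, I will tensor with $\QCoh(k[-1])$. The argument is that of Remark~\ref{rmk:thm3}, using Lemma~\ref{lem:equiv:k}, which compares limit categories with and without the $k[-1]$-factor.

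\emph{Main obstacle.} The only nonformal point is Step 1. One must check that the twist by $\pi^*\Omega_C^{-\frac12(r-1)}$ does not shift the central character $\delta_{\overline{w}}$ in a way that changes weight windows. I expect this to hold: the twist acts trivially on automorphism groups, and it tensors $\delta$ by a line bundle of $\nu$-weight zero, so the condition (\ref{wt:cond2}) is unchanged. The Harder--Narasimhan stratifications also correspond, so by Remark~\ref{rmk:Ltilde2} the compact objects match as well.
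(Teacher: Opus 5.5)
Your Steps 1 and 2 are the paper's argument. The paper deduces the theorem in one line from Proposition~\ref{prop:PGL} together with the twisting isomorphism (\ref{isom:norm}). Your checks are the routine details behind that line: that the twist preserves central weights, the $\nu$-weights of $\delta$ and of the cotangent complex, and the Harder--Narasimhan strata.

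Step 3, however, rests on a false premise and should be deleted. For $G=\SL_r,\PGL_r$ the Hitchin base $\rB_G=\bigoplus_{i=2}^r\Gamma(\Omega_C^i)$ is already classical, because $H^1(C,\Omega_C^i)=0$ for $i\geq 2$. The $k[-1]$-factor of $\rB_{\GL_r}$ comes only from $H^1(\Omega_C)$ in the trace summand $i=1$, which is removed here; indeed $\rB_G\times H^0(\Omega_C)\cong\rB_{\GL_r}^{\mathrm{cl}}$. Hence $\rB_G^A=\cB_\circ^A$, and the equivalence you obtain at the end of Step 2 is already the full statement. Tensoring with $\QCoh(k[-1])$ would instead give an equivalence for $\cH_{\rP}\times k[-1]$ and $\cH_{\rS}\times k[-1]$, which are not the stacks in Conjecture~\ref{conj:intro}.
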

\begin{proof}
    The theorem follows from Proposition~\ref{prop:PGL}
    and the isomorphism (\ref{isom:norm}). 
\end{proof}

\section{Some auxiliary results}
\subsection{Some lemmas on variants of limit categories}
We consider the setting of Subsection~\ref{subsec:variant}; 
namely $\mathfrak{M}$ is a quasi-smooth derived stack with self-dual 
cotangent complex, and 
\begin{align*}
    \mathfrak{M}=\mathfrak{M}_{\circ}\times Y\times k[-1]
\end{align*}
for a smooth affine scheme $Y$. For $\delta \in \mathrm{Pic}(\mathfrak{M}_{\circ})_{\mathbb{Q}}$, we use the same symbol $\delta$ for
its pull-back to $\mathfrak{M}$. 

Suppose that $Y=\mathrm{pt}$ and $\mathfrak{M}=\mathfrak{M}_{\circ}\times k[-1]$. In this case, giving a map $\nu\colon \bgm \to \mathfrak{M}_{\circ}$
is equivalent to giving $\nu \colon \bgm \to \mathfrak{M}$, which 
we denote the same symbol $\nu$. Consider the following natural 
maps 
\begin{align*}
    \mathfrak{M}_{\circ} \stackrel{i}{\hookrightarrow} 
    \mathfrak{M} \stackrel{\eta}{\to} \mathfrak{M}_{\circ}.
\end{align*}
\begin{lemma}\label{lem:eta}
For $\nu \colon \bgm \to \mathfrak{M}_{\circ}$ with image $x\in \mathfrak{M}_{\circ}(k)$, let $\nu_{\circ}^{\mathrm{reg}}$, 
$\nu^{\mathrm{reg}}$ be the regularization maps for $\mathfrak{M}_{\circ}$, 
$\mathfrak{M}$, respectively as in Subsection~\ref{subsec:variant}. Then 
for $\mathcal{E} \in \Coh(\mathfrak{M}_{\circ})$, we have 
\begin{align*}
    \wt(\iota_{*}\nu_{\circ}^{\mathrm{reg}*} \cE)=\wt(\iota_{*}\nu^{\mathrm{reg}*}i_{*}\cE)
    =\wt(\iota_{*}\nu^{\mathrm{reg}*}\eta^* \cE).
\end{align*}
In particular for $\delta \in \mathrm{Pic}(\mathfrak{M})_{\mathbb{R}}$
with its restriction $\delta\in \mathrm{Pic}(\mathfrak{M}_{\circ})_{\mathbb{R}}$, 
we have $\cE \in \widetilde{\LL}(\mathfrak{M}_{\circ})_{\delta}$ if and only 
if $i_{*}\cE \in \widetilde{\LL}(\mathfrak{M})_{\delta}$, if and only if 
$\eta^*\cE \in \widetilde{\LL}(\mathfrak{M})_{\delta}$.
\end{lemma}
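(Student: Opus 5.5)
We compare the three regularizations directly. Since $\mathfrak{M}=\mathfrak{M}_{\circ}\times k[-1]$ and $\mathbb{L}_{k[-1]}$ is the trivial one-dimensional $\mathbb{G}_m$-representation placed in degree $1$, we have $\mathfrak{g}_x=(\mathfrak{g}_x)_0\oplus k$ with $k$ of weight zero, and $T_x$ is the same for $\mathfrak{M}$ and $\mathfrak{M}_{\circ}$. The first step is to choose the regularization of $\nu$ for $\mathfrak{M}$ as the product
\[
\nu^{\mathrm{reg}}=\nu_{\circ}^{\mathrm{reg}}\times \id \colon (\mathfrak{g}_x)_0^{\vee}[-1]/\mathbb{G}_m\times k[-1]\to \mathfrak{M}_{\circ}\times k[-1].
\]
This is legitimate because it is quasi-smooth, it induces an isomorphism on $\mathcal{H}^{-1}$ of the cotangent complexes at $x$ (the extra $k$ matches), and regularizations are unique up to $2$-isomorphism. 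In these coordinates $\iota$ is the projection to $\bgm$ of the product.

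With this choice, $\eta\circ\nu^{\mathrm{reg}}=\nu_{\circ}^{\mathrm{reg}}\circ \mathrm{pr}_1$, so $\iota_*\nu^{\mathrm{reg}*}\eta^*\cE=\iota_{\circ*}\nu_{\circ}^{\mathrm{reg}*}\cE\otimes \Gamma(\cO_{k[-1]})$. Here $\Gamma(\cO_{k[-1]})=k\oplus k[1]$ carries weight zero, so the weights are unchanged. For $i_*\cE$, flat base change along the Cartesian square formed by $\nu^{\mathrm{reg}}$ and $i\colon \mathfrak{M}_{\circ}\times\{0\}\hookrightarrow \mathfrak{M}$ gives
\[
\nu^{\mathrm{reg}*}i_*\cE\cong (\id\times 0)_*\nu_{\circ}^{\mathrm{reg}*}\cE.
\]
Pushing forward to $\bgm$ again yields $\iota_{\circ*}\nu_{\circ}^{\mathrm{reg}*}\cE$ with the same weights, because $0\colon\mathrm{pt}\to k[-1]$ is $\mathbb{G}_m$-equivariant with trivial action.

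The second statement then follows termwise. The bounds in (\ref{wt:nu}) and (\ref{cond:regcirc}) agree: $T_x$ is the same, $c_1(\mathfrak{g}_x)=c_1((\mathfrak{g}_x)_0)$ since the extra summand has weight zero, and $c_1(\nu^*\delta)$ is computed through the same $\nu$. Membership in $\widetilde{\LL}$ is checked on QCA opens $\mathcal{U}\subset\mathfrak{M}_{\circ}$, and these correspond bijectively to the opens $\mathcal{U}\times k[-1]$ of $\mathfrak{M}$. Finally, every $\bgm\to\mathfrak{M}$ factors through $\mathfrak{M}_{\circ}$ on classical points, because $k[-1]$ has a single point.

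The main point needing care is the base change for $i_*$: the square involves derived (non-flat) maps. Base change for $*$-pushforward along a closed immersion holds for derived fiber products, so we need to verify that the derived fiber product of $i$ and $\nu^{\mathrm{reg}}$ is $(\mathfrak{g}_x)_0^{\vee}[-1]/\mathbb{G}_m$ itself. This follows from the product decomposition: $\mathrm{pt}\times_{k[-1]}k[-1]=\mathrm{pt}$.
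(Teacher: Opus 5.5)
Your proposal is correct and is essentially the paper's proof: take the product regularization $\nu^{\mathrm{reg}}=\nu_{\circ}^{\mathrm{reg}}\times\id$, then derived base change along $i$ gives $\iota_*\nu^{\mathrm{reg}*}i_*\cE\cong\iota_*\nu_{\circ}^{\mathrm{reg}*}\cE$, and the projection formula gives $\iota_*\nu^{\mathrm{reg}*}\eta^*\cE\cong\iota_*\nu_{\circ}^{\mathrm{reg}*}\cE\otimes(k\oplus k[1])$, from which both statements follow. There are two harmless slips: $\mathbb{L}_{k[-1]}$ sits in cohomological degree $-1$ (it is $\mathbb{T}_{k[-1]}$ that sits in degree $1$), and the base change you use is derived rather than flat base change, as you yourself note at the end.
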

\begin{proof}
    We can take regularization maps which are related by 
    \begin{align*}
        \nu^{\mathrm{reg}}=\nu_{\circ}^{\mathrm{reg}}\times \id \colon 
        (\mathfrak{g}_x)_{0}^{\vee}[-1]/\mathbb{G}_m \times k[-1] \to 
        \mathfrak{M}_{\circ}\times k[-1].
    \end{align*}
    Then we have 
    \begin{align*}
        \iota_{*}\nu^{\mathrm{reg}*}i_{*}\cE\cong\iota_{*}\nu_{\circ}^{\mathrm{reg}*} \cE, \ 
         \iota_{*}\nu^{\mathrm{reg}*}\eta^*\cE\cong\iota_{*}\nu_{\circ}^{\mathrm{reg}*} \cE
         \otimes (k\oplus k[1]).
    \end{align*}
    The lemma follows from the above isomorphisms. 
\end{proof}

\begin{lemma}\label{lem:eta2}
In the setting of Lemma~\ref{lem:eta}, for $\cE \in \Coh(\mathfrak{M})$ we have 
\begin{align*}
\wt(\iota_{*}\nu_{\circ}^{\mathrm{reg}*}\eta_{*}\cE)=\wt(\iota_{*}\nu^{\mathrm{reg}*}\cE).
\end{align*}
\end{lemma}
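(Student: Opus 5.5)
The approach mirrors the proof of Lemma~\ref{lem:eta}: I would choose compatible regularization maps and then apply base change along the $k[-1]$-factor. Let $\nu\colon \bgm\to\mathfrak{M}_{\circ}$ have image $x$. As in the proof of Lemma~\ref{lem:eta}, I take
\begin{align*}
\nu^{\mathrm{reg}}=\nu_{\circ}^{\mathrm{reg}}\times \id \colon (\mathfrak{g}_x)_0^{\vee}[-1]/\mathbb{G}_m\times k[-1]\to \mathfrak{M}_{\circ}\times k[-1],
\end{align*}
and write $\eta'\colon (\mathfrak{g}_x)_0^{\vee}[-1]/\mathbb{G}_m\times k[-1]\to (\mathfrak{g}_x)_0^{\vee}[-1]/\mathbb{G}_m$ for the projection. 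The square formed by $\nu^{\mathrm{reg}}$, $\nu_{\circ}^{\mathrm{reg}}$, $\eta$ and $\eta'$ is Cartesian, since both horizontal arrows are products with $k[-1]$. Base change for quasi-coherent sheaves along the affine morphism $\eta$ then gives
\begin{align*}
\nu_{\circ}^{\mathrm{reg}*}\eta_{*}\cE\cong \eta'_{*}\nu^{\mathrm{reg}*}\cE.
\end{align*}

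Next I compose with $\iota$. Since $\iota\circ\eta'$ is the structure map of the product to $\bgm$, we have $\iota_*\eta'_*=(\iota\circ \eta')_*$. On the other side, $\iota_*\nu^{\mathrm{reg}*}\cE$ is computed by the same pushforward, namely along $(\mathfrak{g}_x)_0^{\vee}[-1]/\mathbb{G}_m\times k[-1]\to\bgm$, which is the map called $\iota$ for $\mathfrak{M}$. It follows that $\iota_*\nu_{\circ}^{\mathrm{reg}*}\eta_*\cE\cong \iota_*\nu^{\mathrm{reg}*}\cE$ as objects of $\QCoh(\bgm)$. In particular the two sets of weights coincide.

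The main point to check is that pushforward along $\eta$, which forgets the $k[-1]$ direction, does not change the $\mathbb{G}_m$-weight decomposition. Here $\eta_*$ is affine and its source is $\Spec k[\epsilon]$ with $\deg\epsilon=-1$, on which $\mathbb{G}_m$ acts trivially through $\nu$, because the factor comes from $H^1(\Omega_C)$ or the trivial factor of $\mathfrak{g}_x$. Hence $\eta_*$ commutes with the weight decomposition. The only subtlety is that the notion of $\wt$ is applied to objects of $\QCoh$ rather than $\Coh$, but weight components still make sense, so this causes no problem.
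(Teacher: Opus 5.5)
Your proposal is correct and is essentially the paper's argument: the paper's proof consists of exactly the Cartesian square formed by $\nu^{\mathrm{reg}}$, $\nu_{\circ}^{\mathrm{reg}}$ and $\eta$ (using $(\mathfrak{g}_x)^{\vee}[-1]/\mathbb{G}_m=(\mathfrak{g}_x)_0^{\vee}[-1]/\mathbb{G}_m\times k[-1]$), base change, and the observation that $\iota$ for $\mathfrak{M}$ factors through $\iota$ for $\mathfrak{M}_{\circ}$. Your last paragraph is not needed, since the isomorphism $\iota_*\nu_{\circ}^{\mathrm{reg}*}\eta_*\cE\cong\iota_*\nu^{\mathrm{reg}*}\cE$ already gives the equality of weights; note also that it is the fiber of $\eta$, not its source, that is $\Spec k[\epsilon]$.
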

\begin{proof}
    The lemma follows by the following commutative diagram, where the right 
    square is Cartesian: 
    \begin{align*}
        \xymatrix{
& \ar[ld]_-{\iota} (\mathfrak{g}_x)^{\vee}[-1]/\mathbb{G}_m \ar[r]^-{\nu^{\mathrm{reg}}} \ar[d] \diasquare & \mathfrak{M} \ar[d]^-{\eta} \\
\bgm & \ar[l]_-{\iota} (\mathfrak{g}_x)_0^{\vee}[-1]/\mathbb{G}_m 
\ar[r]^-{\nu_{\circ}^{\mathrm{reg}}}  & \mathfrak{M}_{\circ}.
        }
    \end{align*}
\end{proof}

Later we will use the following lemma: 
\begin{lemma}\emph{(cf.~\cite[Lemma~6.2]{TodaGL2})}\label{lem:perfect2}
In the setting of Definition~\ref{def:Lcat}, suppose that
$\cE \in \Coh(\mathfrak{M}_{\circ})$ admits an expression as a colimit in
$\QCoh(\mathfrak{M}_{\circ})$
\begin{align*}
    \cE = \operatorname*{colim}_{i} A_i
\end{align*}
such that each $A_i$ is perfect. For a map $\nu \colon \bgm \to \mathfrak{M}_{\circ}$ with image $x\in \mathfrak{M}_{\circ}(k)$, 
suppose that there is a bounded interval $I \subset \mathbb{R}$ such that 
each $\wt(\nu^* A_i)$ is contained in $I$. Then 
$\wt(\iota_{*}\nu_\circ^{\mathrm{reg}*}\cE)$ is contained in $I+[c_1(\mathfrak{g}_x^{<0}), c_1(\mathfrak{g}_x^{>0})]$.

In particular if each $A_i$ satisfies the condition
\begin{align}\label{wt:cond2.5}
    \wt(\nu^* A_i) \subset \left[\frac{1}{2} c_1 (\nu^{\ast}\mathbb{L}_{\mathfrak{M}_{\circ}}^{<0}), 
    \frac{1}{2} c_1 (\nu^{\ast}\mathbb{L}_{\mathfrak{M}_{\circ}}^{>0})
    \right]+c_1(\nu^{\ast}\delta).
\end{align}
then $\cE$ satisfies the condition (\ref{cond:regcirc}). 
\end{lemma}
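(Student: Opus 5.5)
The argument first reduces from the colimit $\cE$ to its perfect terms $A_i$, then applies Lemma~\ref{lem:perfect} to each term.

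\emph{Step 1: commuting the colimit.} Fix $\nu\colon \bgm\to\mathfrak{M}_{\circ}$ with image $x$ and its regularization $\nu_\circ^{\mathrm{reg}}\colon (\mathfrak{g}_x)_0^{\vee}[-1]/\mathbb{G}_m\to\mathfrak{M}_{\circ}$. The functors $\nu_\circ^{\mathrm{reg}*}$ and $\iota_*$ are continuous on $\QCoh$: the first is a pullback, and $\iota$ is affine. Hence
\begin{align*}
\iota_*\nu_\circ^{\mathrm{reg}*}\cE\cong \operatorname*{colim}_i \iota_*\nu_\circ^{\mathrm{reg}*}A_i
\end{align*}
in $\QCoh(\bgm)$. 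Now $\QCoh(\bgm)$ splits orthogonally into weight summands, and each weight projection $(-)_w$ commutes with colimits. Therefore, if every $\iota_*\nu_\circ^{\mathrm{reg}*}A_i$ has weights in a fixed set $I'$, then so does the colimit. Note that the left-hand side is coherent, since $\cE$ is coherent and $\iota$ is proper on the classical truncation. So its weight set is finite, and the statement about weight sets makes sense.

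\emph{Step 2: the perfect case.} This is the only substantive input, and it is Lemma~\ref{lem:perfect} (equivalently \cite[Lemma~6.1]{TodaGL2}) in the $\mathfrak{M}_\circ$ setting. For perfect $A_i$ with $\wt(\nu^*A_i)\subset I$, that lemma gives
\begin{align*}
\wt(\iota_*\nu_\circ^{\mathrm{reg}*}A_i)\subset I+[c_1(\mathfrak{g}_x^{<0}),c_1(\mathfrak{g}_x^{>0})].
\end{align*}
The reason is as follows. Since $A_i$ is perfect, $\nu_\circ^{\mathrm{reg}*}A_i$ is the pullback of $\nu^*A_i$ along the projection $(\mathfrak{g}_x)_0^\vee[-1]/\mathbb{G}_m\to\bgm$, up to homotopy. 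Then $\iota_*$ tensors with the Koszul-type algebra $\mathrm{Sym}((\mathfrak{g}_x)_0[1])=\bigwedge^\bullet(\mathfrak{g}_x)_0$. The weights of that exterior algebra lie exactly in $[c_1(\mathfrak{g}_x^{<0}),c_1(\mathfrak{g}_x^{>0})]$, and the central $k$-summand of $\mathfrak{g}_x$ has weight zero, so it does not change the interval. For the $\mathfrak{M}_\circ$ variant I would either repeat this computation or reduce to $\mathfrak{M}$ via Lemma~\ref{lem:eta}, using $\eta^*A_i$, which is again perfect. The step needing most care is the identification $\nu_\circ^{\mathrm{reg}*}A_i\simeq \mathrm{pr}^*\nu^*A_i$. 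This requires that a perfect complex on the quasi-smooth affine-over-$\bgm$ stack $(\mathfrak{g}_x)_0^\vee[-1]/\mathbb{G}_m$ be determined by its restriction to the closed point up to weights. I take this from \cite[Lemma~6.1]{TodaGL2}: the derived scheme $(\mathfrak{g}_x)_0^\vee[-1]$ has underlying classical point, so it suffices to filter by the Koszul filtration.

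\emph{Step 3: the ``in particular''.} Combining Steps 1 and 2 with $I$ equal to the interval in (\ref{wt:cond2.5}) gives
\begin{align*}
\wt(\iota_*\nu_\circ^{\mathrm{reg}*}\cE)\subset \Big[\tfrac12 c_1(\nu^*\mathbb{L}^{<0}_{\mathfrak{M}_\circ}),\tfrac12 c_1(\nu^*\mathbb{L}^{>0}_{\mathfrak{M}_\circ})\Big]+[c_1(\mathfrak{g}_x^{<0}),c_1(\mathfrak{g}_x^{>0})]+c_1(\nu^*\delta).
\end{align*}
Next use self-duality: $\nu^*\mathbb{L}_{\mathfrak{M}_\circ}$ has class $T_x^\vee-\mathfrak{g}_x^\vee$ in $K(\bgm)$, up to the weight-zero summands. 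Hence $c_1(\nu^*\mathbb{L}^{>0})=c_1(T_x^{>0})-c_1(\mathfrak{g}_x^{>0})$, and similarly for the negative part. Adding the $\mathfrak{g}_x$ interval then gives
\begin{align*}
\Big[\tfrac12 c_1(T_x^{<0})+\tfrac12 c_1(\mathfrak{g}_x^{<0}),\ \tfrac12 c_1(T_x^{>0})+\tfrac12 c_1(\mathfrak{g}_x^{>0})\Big].
\end{align*}
This equals $[\tfrac12 c_1(T_x^{<0}),\tfrac12 c_1(T_x^{>0})]+\tfrac12 c_1(\mathfrak{g}_x)$, which is exactly (\ref{cond:regcirc}). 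Since this holds for every $\nu$, the condition is verified.
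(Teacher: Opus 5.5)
Your route is essentially the paper's, with the colimit step written out rather than cited. The paper applies the colimit version \cite[Lemma~6.2]{TodaGL2} on $\mathfrak{M}$ to $\eta^*\cE=\operatorname*{colim}_i\eta^*A_i$ and transfers back with Lemma~\ref{lem:eta}. You instead pass the colimit through $\iota_*\nu_\circ^{\mathrm{reg}*}$ and the weight projections yourself, so you only need the perfect case, Lemma~\ref{lem:perfect}, moved to $\mathfrak{M}_\circ$ by Lemma~\ref{lem:eta}. Steps~1 and~2 are fine.

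One side claim in Step~2 is false, though harmless because you rely on Lemma~\ref{lem:perfect}. A perfect complex on $(\mathfrak{g}_x)_0^\vee[-1]/\mathbb{G}_m$ is not in general pulled back from $\bgm$. For example, the cone of multiplication by $\epsilon$, $k[\epsilon]\to k[\epsilon][-1]$, has two-dimensional total cohomology, whereas the pullback of its fiber at the point has four. Such a complex only admits a finite filtration with subquotients of that form, and that is what controls the weights.

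The real problem is Step~3, whose computation is wrong as written. Up to weight-zero terms, $\nu^*\mathbb{L}_{\mathfrak{M}_\circ}$ has $\mathfrak{g}_x$ in degree $-1$, $T_x^\vee\cong T_x$ in degree $0$, and $\mathfrak{g}_x^\vee$ in degree $1$. Compare the proof of Lemma~\ref{lem:Gmwt:L}, where $\nu^*\mathbb{L}\simeq\Hom_S(M,M)[1]$ has $\Hom$ in degree $-1$ and $\Ext^2$ in degree $1$. So its class is $T_x-\mathfrak{g}_x-\mathfrak{g}_x^\vee$, not $T_x^\vee-\mathfrak{g}_x^\vee$. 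You dropped the degree $-1$ term, and you also mishandled the dual: $c_1((\mathfrak{g}_x^\vee)^{>0})=-c_1(\mathfrak{g}_x^{<0})$.

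Consequently your interval $[\tfrac12 c_1(T_x^{<0})+\tfrac12 c_1(\mathfrak{g}_x^{<0}),\ \tfrac12 c_1(T_x^{>0})+\tfrac12 c_1(\mathfrak{g}_x^{>0})]$ is \emph{not} equal to $[\tfrac12 c_1(T_x^{<0}),\tfrac12 c_1(T_x^{>0})]+\tfrac12 c_1(\mathfrak{g}_x)$. It sticks out by $\tfrac12 c_1(\mathfrak{g}_x^{>0})$ on the left and by $-\tfrac12 c_1(\mathfrak{g}_x^{<0})$ on the right, so your computation would not give \eqref{cond:regcirc}. The correct formulas are
\begin{align*}
c_1(\nu^*\mathbb{L}^{>0}_{\mathfrak{M}_\circ})&=c_1(T_x^{>0})-c_1(\mathfrak{g}_x^{>0})+c_1(\mathfrak{g}_x^{<0}),\\
c_1(\nu^*\mathbb{L}^{<0}_{\mathfrak{M}_\circ})&=c_1(T_x^{<0})-c_1(\mathfrak{g}_x^{<0})+c_1(\mathfrak{g}_x^{>0}).
\end{align*}
With these, $\tfrac12 c_1(\nu^*\mathbb{L}^{<0}_{\mathfrak{M}_\circ})+c_1(\mathfrak{g}_x^{<0})=\tfrac12 c_1(T_x^{<0})+\tfrac12 c_1(\mathfrak{g}_x)$, and likewise at the upper end, which yields \eqref{cond:regcirc} exactly. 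Alternatively, invoke the ``in particular'' part of Lemma~\ref{lem:perfect} through Lemma~\ref{lem:eta}; that packages this bookkeeping.
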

\begin{proof}
    The same statement is proved in~\cite[Lemma~6.2]{TodaGL2}
    for $\mathfrak{M}$, and the version for $\mathfrak{M}_{\circ}$
    is obtained by combining with Lemma~\ref{lem:eta}. 
    Indeed we have 
    \begin{align*}
   \eta^*\cE = \operatorname*{colim}_{i} \eta^*A_i
    \end{align*}
    in $\QCoh(\mathfrak{M})$ and $\eta^* A_i$ is perfect. Hence applying~\cite[Lemma~6.2]{TodaGL2}, 
    we see that $\wt(\iota_{*}\nu^{\mathrm{reg}*}\eta^* \cE)$
    is contained in $I+[c_1(\mathfrak{g}_x^{<0}), c_1(\mathfrak{g}_x^{>0})]$. 
    Since we have $\wt(\iota_{*}\nu^{\mathrm{reg}*}\eta^* \cE)=\wt(\iota_{*}\nu_\circ^{\mathrm{reg}*}\cE)$ by Lemma~\ref{lem:eta}, 
    the lemma follows. 
\end{proof}

We also have the following lemma (which was implicitly used in~\cite{TodaGL2}):

\begin{lemma}\label{lem:equiv:k}
There is an equivalence
\begin{align}\label{equiv:k-1}
    \IndL_{\mathcal{N}}(\mathfrak{M}_{\circ})_{\delta}
    \otimes \QCoh(k[-1])
    \stackrel{\sim}{\to}
    \IndL_{\mathcal{N}}(\mathfrak{M})_{\delta}.
\end{align}
In particular, we have the equivalence 
\begin{align*}
      \IndL_{\mathcal{N}}(\mathfrak{M}_{\circ})_{\delta}
    \stackrel{\sim}{\to}
    \IndL_{\mathcal{N}}(\mathfrak{M})_{\delta}\otimes_{\QCoh(k[-1])}\QCoh(\mathrm{pt}).
\end{align*}
\end{lemma}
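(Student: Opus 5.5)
The plan is to construct the functor \eqref{equiv:k-1} at the level of each QCA open piece and then pass to the limit. For a QCA open substack $\mathcal{U}_{\circ}\subset \mathfrak{M}_{\circ}$, write $\mathcal{U}:=\mathcal{U}_{\circ}\times k[-1]$. Every QCA open substack of $\mathfrak{M}$ is of this form, because $k[-1]$ has a single point and $\mathfrak{M}^{\mathrm{cl}}=\mathfrak{M}_{\circ}^{\mathrm{cl}}$. Since $\mathcal{U}_{\circ}$ is QCA, we have $\IndCoh(\mathcal{U}_{\circ})\otimes \QCoh(k[-1])\simeq \IndCoh(\mathcal{U})$, and this equivalence is compatible with singular supports. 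The nilpotent cone of $\mathfrak{M}$ is $\mathcal{N}_{\circ}\times\{0\}$, and the extra $\mathbb{A}^1$ direction in $\Omega_{\mathfrak{M}}[-1]$ is exactly the one dual to the $k[-1]$ factor. The functor is then $\cE\boxtimes V\mapsto \eta^*\cE\otimes \mathrm{pr}^*V$. On the generator $V=\cO_{k[-1]}$ it is $\eta^*$, and on $V=\cO_{\mathrm{pt}}$ it is $i_*$.

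The first real step is to show that this equivalence identifies $\Ind(\LL_{\mathcal{N}}(\mathcal{U}_{\circ})_{\delta})\otimes\QCoh(k[-1])$ with $\Ind(\LL_{\mathcal{N}}(\mathcal{U})_{\delta})$.

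\emph{Left side lands in the right side.} The compact generators $\eta^*\cE$ and $i_*\cE$ with $\cE\in\LL_{\mathcal{N}}(\mathcal{U}_{\circ})_{\delta}$ lie in $\LL(\mathcal{U})_{\delta}$ by Lemma~\ref{lem:eta}. This uses that the weight conditions \eqref{wt:nu} and \eqref{cond:regcirc} coincide, since $T_x$ is the same and $\mathfrak{g}_x=(\mathfrak{g}_x)_0\oplus k$ with $k$ of weight zero.

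\emph{Right side is in the image.} Take $\cF\in\LL_{\mathcal{N}}(\mathcal{U})_{\delta}$. Then $\eta_*\cF$ is coherent, because $\eta$ is finite. By Lemma~\ref{lem:eta2}, $\eta_*\cF$ satisfies the same weights as $\cF$, so $\eta_*\cF\in\LL(\mathcal{U}_{\circ})_{\delta}$. Next I will use the $\QCoh(k[-1])=\mathrm{Mod}\,k[\epsilon]$-module structure, with $\deg\epsilon=-1$. Any $\cF$ sits in the fiber sequence
\begin{align*}
\cF\otimes_{k[\epsilon]}\text{(Koszul-type resolution)} \to \cF,
\end{align*}
that is, $\cF$ is a colimit or retract built from objects of the form $\eta^*\eta_*\cF$ and $i_*(\,\cdot\,)$. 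Concretely, the counit $\eta^*\eta_*\cF\to\cF$ has cofiber $\eta^*\eta_*\cF[1]\otimes(\ldots)$, and iterating this realizes $\cF$ in the ind-completion generated by $\eta^*\LL(\mathcal{U}_{\circ})_{\delta}$. Equivalently, I can argue by adjunction: the right adjoint $\eta_*$ of the fully faithful functor is conservative on the target. Conservativity holds because $\eta$ is affine with $\eta_*\cO=\cO\oplus\cO[1]$.

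Once the identification holds for each $\mathcal{U}_{\circ}$, I take limits over QCA opens. The step where $-\otimes\QCoh(k[-1])$ must commute with the limit is the main subtlety. I expect it to follow because $\QCoh(k[-1])$ is dualizable, indeed compactly generated by $k[\epsilon]$, so tensoring with it commutes with limits in $\mathrm{DGCat}_{\mathrm{cont}}$. The pullbacks along open immersions are compatible with $\eta^*$ and $i_*$ by base change. This gives \eqref{equiv:k-1}.

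The second statement follows by applying $-\otimes_{\QCoh(k[-1])}\QCoh(\mathrm{pt})$ to \eqref{equiv:k-1}, using $\QCoh(k[-1])\otimes_{\QCoh(k[-1])}\QCoh(\mathrm{pt})\simeq\QCoh(\mathrm{pt})$.

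The main obstacle is the generation claim in the first step. I will verify it by checking that the right orthogonal to $\eta^*\LL_{\mathcal{N}}(\mathcal{U}_{\circ})_{\delta}$ inside $\Ind\LL_{\mathcal{N}}(\mathcal{U})_{\delta}$ vanishes. The argument is that $\Hom(\eta^*\cE,\cF)=\Hom(\cE,\eta_*\cF)$, and $\eta_*$ is conservative.
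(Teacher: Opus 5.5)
Your proposal is correct and follows the paper's proof. Lemma~\ref{lem:eta} puts the compact generators $\eta^*\cE$ in the limit category on $\mathcal{U}$, Lemma~\ref{lem:eta2} together with conservativity of $\eta_*$ gives essential surjectivity (the paper writes every object as the bar-resolution colimit of $\eta^*\eta_*A\otimes R^{\otimes n}$, and your right-orthogonal argument packages the same monadicity), and dualizability of $\QCoh(k[-1])$ handles the limit over QCA opens. One correction that does not affect the argument: $\cO_{\mathrm{pt}}$ is not compact in $\QCoh(k[-1])$ and the coherent sheaf $i_*\cE$ has singular support $\mathrm{SS}(\cE)\times\mathbb{A}^1$, so it is neither a compact generator of the source nor an object of $\LL_{\mathcal{N}}(\mathcal{U})_{\delta}$ (already for $\mathcal{U}_{\circ}=\mathrm{pt}$ the functor $\QCoh(k[-1])\to\IndCoh(k[-1])$ is not essentially surjective), hence the equivalence you quote holds only under the singular support condition $\mathcal{N}_{\circ}\times\{0\}$, and only the generators $\eta^*\cE$ are needed.
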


\begin{proof}
By~\cite[Corollary~4.2.3]{MR3037900}, we have an equivalence
\begin{align}\label{equiv:k0}
    \IndCoh(\mathfrak{M}_{\circ}) \otimes \QCoh(k[-1])
    \stackrel{\sim}{\to}
    \IndCoh(\mathfrak{M}).
\end{align}
This equivalence restricts to an equivalence
\begin{align}\label{equiv:k2}
    \IndCoh_{\mathcal{N}}(\mathfrak{M}_{\circ}) \otimes \QCoh(k[-1])
    \stackrel{\sim}{\to}
    \IndCoh_{\mathcal{N}}(\mathfrak{M}).
\end{align}
Indeed, this follows from the scheme case in~\cite[Lemma~4.6.4]{AG}, together
with descent for singular supports in~\cite[Section~8.3]{AG}.
We show that the equivalence \eqref{equiv:k2} restricts to an equivalence
\eqref{equiv:k-1}.

We first prove this when $\mathfrak{M}$ is QCA. By Lemma~\ref{lem:eta},
the equivalence restricts to a fully faithful functor
\begin{align}\label{equiv:kl}
    \IndL_{\mathcal{N}}(\mathfrak{M}_{\circ})_{\delta}
    \otimes \QCoh(k[-1])
    \hookrightarrow
    \IndL_{\mathcal{N}}(\mathfrak{M})_{\delta}.
\end{align}
On the other hand, for an object $M \in \QCoh(k[-1])$, there is a functorial
bar resolution for $R=\mathcal{O}_{k[-1]}$:
\begin{align*}
    \cdots \to M \otimes R^{\otimes n} \to \cdots
    \to M \otimes R \to M.
\end{align*}
Applying this resolution to the left-hand side of \eqref{equiv:k0}, and then
using the equivalence \eqref{equiv:k0}, we see that any object
$A \in \IndCoh_{\mathcal{N}}(\mathfrak{M})$ is expressed as a colimit
\begin{align*}
\operatorname*{colim}_{n\to \infty}
    \left(
    0 \to \eta^* \eta_* A \otimes R^{\otimes n}
    \to \cdots
    \to \eta^* \eta_* A \otimes R
    \to \eta^* \eta_* A
    \right)
    \stackrel{\sim}{\to} A.
\end{align*}
Here $\eta^*$ is defined for ind-coherent sheaves since $\eta$ is
quasi-smooth; see Subsection~\ref{subsec:notation0}.
By Lemma~\ref{lem:eta2}, if
$A \in \IndL_{\mathcal{N}}(\mathfrak{M})_{\delta}$, then
$\eta_* A \in \IndL_{\mathcal{N}}(\mathfrak{M}_{\circ})_{\delta}$.
Therefore \eqref{equiv:kl} is essentially surjective, and hence an equivalence.

In general, when $\mathfrak{M}$ is not QCA, the equivalence \eqref{equiv:k-1}
follows from the QCA case, since tensoring with $\QCoh(k[-1])$ commutes with
limits, as $\QCoh(k[-1])$ is dualizable; see~\cite[4.1.3(iii)]{MR3037900}.
\end{proof}

We next consider a general case, that is \begin{align*}\mathfrak{M}=\mathfrak{M}_{\circ}'\times k[-1], \ \mathfrak{M}_{\circ}'=\mathfrak{M}_{\circ}\times Y
\end{align*}
for a 
smooth scheme $Y$. 
Then by~\cite[Corollary~4.2.3]{MR3037900} and \cite[Lemma~4.6.4, Section~8.3]{AG}, 
we have the equivalence 
\begin{align}\label{equiv:Y1}
    \IndCoh_{\mathcal{N}}(\mathfrak{M}_0)\otimes \QCoh(Y) \stackrel{\sim}{\to} \IndCoh_{\mathcal{N}}(\mathfrak{M}_{\circ}'). 
\end{align}
We fix $y\in Y(k)$ and 
consider the symmetric monoidal functor 
\begin{align*}i_y^* \colon \QCoh(Y) \to \QCoh(\mathrm{pt})
\end{align*}
where $i_y \colon \mathrm{pt} \hookrightarrow Y$ is the closed 
immersion corresponding to $y$. By applying $(-)\otimes_{\QCoh(Y)}\QCoh(\mathrm{pt})$, we obtain 
the equivalence 
\begin{align}\label{equiv:Y2}
\IndCoh_{\mathcal{N}}(\mathfrak{M}_{\circ})\stackrel{\sim}{\to}
\IndCoh_{\mathcal{N}}(\mathfrak{M}_{\circ}')\otimes_{\QCoh(Y)}\QCoh(\mathrm{pt}).
\end{align}
Below we take $\delta \in \mathrm{Pic}(\mathfrak{M}_{\circ})_{\mathbb{R}}$, 
and use the same symbol $\delta$ for its pull-back to $\mathfrak{M}_{\circ}'$. 
\begin{lemma}\label{lem:resty}
The equivalence \eqref{equiv:Y2} restricts to an equivalence
\begin{align}\label{equiv:Y3}
\IndL_{\mathcal{N}}(\mathfrak{M}_{\circ})_{\delta}
\stackrel{\sim}{\to}
\IndL_{\mathcal{N}}(\mathfrak{M}_{\circ}')_{\delta}
\otimes_{\QCoh(Y)}
\QCoh(\mathrm{pt}).
\end{align}
\end{lemma}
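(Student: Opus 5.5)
We argue as in the proof of Lemma~\ref{lem:equiv:k}: first when $\mathfrak{M}_{\circ}'$ (equivalently $\mathfrak{M}_{\circ}$) is QCA, then pass to limits over QCA opens. Since $Y$ is affine and smooth, $\QCoh(Y)$ is rigid and dualizable, so $(-)\otimes_{\QCoh(Y)}\QCoh(\mathrm{pt})$ commutes with the limits defining $\IndL$. Here we use that QCA opens of $\mathfrak{M}_{\circ}\times Y$ of the form $\mathcal{U}\times Y$ are cofinal, at least after restricting to a neighbourhood of $y$.

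The first step is the analogue of Lemma~\ref{lem:eta} for the product with $Y$. For $\nu\colon \bgm\to \mathfrak{M}_{\circ}$ and a point $y'\in Y$, the map $(\nu,y')\colon \bgm\to\mathfrak{M}_{\circ}'$ has regularization $\nu_{\circ}^{\mathrm{reg}}\times\{y'\}$. Its $T_x$ and $\mathfrak{g}_x$ agree with those of $\mathfrak{M}_{\circ}$, as noted in Subsection~\ref{subsec:variant}. Every map $\bgm\to\mathfrak{M}_{\circ}'$ is of this form, since $\mathbb{G}_m$ acts trivially on the scheme $Y$. Hence an object $\cF\in\Coh(\mathfrak{M}_{\circ}')$ lies in $\LL(\mathfrak{M}_{\circ}')_\delta$ if and only if, for every $y'$, the object $\iota_*\nu_{\circ}^{\mathrm{reg}*}(\cF|^{L}_{\mathfrak{M}_{\circ}\times y'})$ satisfies (\ref{cond:regcirc}). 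This uses base change along the regular closed immersion $y'\hookrightarrow Y$, whose Koszul complex has trivial weights. In particular, $\cE\in\LL(\mathfrak{M}_\circ)_\delta$ if and only if $\cE\boxtimes\cO_Y\in\LL(\mathfrak{M}_\circ')_\delta$, if and only if $(\id\times i_y)_*\cE\in \LL(\mathfrak{M}_{\circ}')_\delta$. Conversely, for $\cF\in\LL(\mathfrak{M}_{\circ}')_\delta$ the derived restriction $(\id\times i_y)^*\cF$ lies in $\LL(\mathfrak{M}_{\circ})_\delta$.

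Next we identify the right-hand side of (\ref{equiv:Y3}). Because $i_y^*$ has a continuous, $\QCoh(Y)$-linear, conservative right adjoint $i_{y*}$, we may identify $\mathcal{C}\otimes_{\QCoh(Y)}\QCoh(\mathrm{pt})$, for a $\QCoh(Y)$-module $\mathcal{C}$, with modules over the monad $\mathcal{C}\otimes i_{y*}\cO$ in $\mathcal{C}$. Concretely, for $\mathcal{C}=\IndCoh_{\mathcal{N}}(\mathfrak{M}_\circ')$ this is objects set-theoretically supported on $\mathfrak{M}_\circ\times\{y\}$ with module structure. Under (\ref{equiv:Y2}) the functor to the right is $(\id\times i_y)_*$ and its left adjoint is $(\id\times i_y)^*$. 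Full faithfulness of the restriction of (\ref{equiv:Y2}) to $\IndL$ is then automatic, since $\IndL_{\mathcal{N}}(\mathfrak{M}_{\circ}')_\delta\otimes_{\QCoh(Y)}\QCoh(\mathrm{pt})$ is a full subcategory of $\IndCoh_{\mathcal{N}}(\mathfrak{M}_\circ')\otimes_{\QCoh(Y)}\QCoh(\mathrm{pt})$. This holds because $\IndL\subset\IndCoh$ is a $\QCoh(Y)$-linear colocalization: tensoring with $\cO_Y$-modules preserves the weight conditions, and $\QCoh(\mathrm{pt})$ is dualizable over $\QCoh(Y)$. By the first step, the image of $\IndL_{\mathcal{N}}(\mathfrak{M}_{\circ})_\delta$ lands in the subcategory.

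The main obstacle is essential surjectivity. We plan to imitate the bar resolution in Lemma~\ref{lem:equiv:k}. Write $\pi\colon\mathfrak{M}_\circ'\to\mathfrak{M}_\circ$ for the projection, which is affine since $Y$ is. Every object of the tensor product is generated under colimits by objects $A\otimes_{\cO_Y}k_y$ with $A\in\IndL_{\mathcal{N}}(\mathfrak{M}_{\circ}')_\delta$. Such an object corresponds to $(\id\times i_y)^*A$, which lies in $\IndL_{\mathcal{N}}(\mathfrak{M}_\circ)_\delta$ by the first step applied to compact objects, extended by colimits. Because the left adjoint $(\id\times i_y)^*$ preserves the $\IndL$ categories and is essentially surjective onto generators, the functor in (\ref{equiv:Y3}) is essentially surjective. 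For non-QCA $\mathfrak{M}_\circ$ we pass to the limit as indicated above.
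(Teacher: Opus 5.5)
Your proof is correct. Its key input is the same as the paper's: every map $\bgm\to\mathfrak{M}_\circ\times Y$ factors through a fiber $\mathfrak{M}_\circ\times\{y'\}$, with regularization $\nu^{\mathrm{reg}}_\circ\times\{y'\}$. Since the Koszul factors have weight zero, the weight conditions are preserved by pullback along the projection, by $(\id\times i_y)_*$, and by $(\id\times i_y)^*$.

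The difference lies in how you handle the relative tensor product.

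The paper writes $\IndCoh_{\mathcal{N}}(\mathfrak{M}_\circ')\otimes_{\QCoh(Y)}\QCoh(\mathrm{pt})$ as the geometric realization of the categorical bar construction. It builds an inverse termwise, by restricting to the fiber and applying $(i_y^*)^{\otimes n}$, and observes that these functors preserve $\IndL$. The forward functor lands in the $\IndL$-version because pullback along the projection preserves $\IndL$.

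You instead use $\mathcal{C}\otimes_{\QCoh(Y)}\QCoh(\mathrm{pt})\simeq\mathrm{Mod}_{k_y}(\mathcal{C})$. This buys two things.
\begin{itemize}
\item It makes transparent a point the paper uses tacitly: $\IndL_{\mathcal{N}}(\mathfrak{M}_\circ')_\delta\otimes_{\QCoh(Y)}\QCoh(\mathrm{pt})$ is a full subcategory of the $\IndCoh$ version. It consists of the modules whose underlying object lies in the $\QCoh(Y)$-stable, colimit-closed subcategory $\IndL$.
\item It replaces the construction of an inverse, with its implicit compatibility with the simplicial structure maps, by a generation argument. The free modules $A\otimes_{\cO_Y}k_y$ correspond to $(\id\times i_y)^*A$, and these lie in $\IndL_{\mathcal{N}}(\mathfrak{M}_\circ)_\delta$.
\end{itemize}

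You also spell out the reduction to the QCA case, which the paper leaves implicit in this lemma. Two small remarks on that step. First, the hedge ``after restricting to a neighbourhood of $y$'' is unnecessary. The image of a QCA open under the smooth (hence open) projection is a QCA open $\mathcal{U}$, and $\mathcal{U}\times Y$ is QCA because $Y$ is affine. Second, what you actually need for commuting with the limit is that $\QCoh(\mathrm{pt})$ is dualizable as a $\QCoh(Y)$-module, or directly that $\mathrm{Mod}_{k_y}(-)$ commutes with limits.

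The paper's version is shorter and parallels Lemma~\ref{lem:equiv:k}; yours is more self-contained.
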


\begin{proof}
It is clear that the pullback along the smooth projection
$\mathfrak{M}_{\circ}'\to \mathfrak{M}_{\circ}$
induces the functor 
\begin{align*}
    \IndL_{\mathcal{N}}(\mathfrak{M}_{\circ})_{\delta} \to 
    \IndL_{\mathcal{N}}(\mathfrak{M}_{\circ}')_{\delta}.
\end{align*}
Therefore, the functor in \eqref{equiv:Y2}
restricts to a fully faithful functor
\begin{align}\label{equiv:LNQ}
\IndL_{\mathcal{N}}(\mathfrak{M}_{\circ})_{\delta}
\hookrightarrow
\IndL_{\mathcal{N}}(\mathfrak{M}_{\circ}')_{\delta}
\otimes_{\QCoh(Y)}
\QCoh(\mathrm{pt}).
\end{align}

On the other hand, the right-hand side of \eqref{equiv:Y2} is computed
as the colimit of the bar construction:
\begin{align}\label{bar:Y:colim}
\operatorname*{colim}_{[n]\in \Delta^{\mathrm{op}}}
\left(
\IndCoh_{\mathcal{N}}(\mathfrak{M}_{\circ}')
\otimes
\QCoh(Y)^{\otimes n}
\otimes
\QCoh(\mathrm{pt})
\right).
\end{align}

An inverse functor to \eqref{equiv:Y2} is induced termwise on the bar
construction by the functors
\begin{align*}
\IndCoh_{\mathcal{N}}(\mathfrak{M}_{\circ}')
\otimes
\QCoh(Y)^{\otimes n}
\otimes
\QCoh(\mathrm{pt})
\longrightarrow
\IndCoh_{\mathcal{N}}(\mathfrak{M}_{\circ}),
\end{align*}
obtained by restricting to the fiber
$\mathfrak{M}_{\circ}'\times_Y \{y\}$ and applying $(i_y^*)^{\otimes n}$
to the $\QCoh(Y)$-factors, where $i_y\colon \mathrm{pt}\to Y$ is the
corresponding point. By the definition of limit categories, these functors
restrict to
\begin{align*}
\IndL_{\mathcal{N}}(\mathfrak{M}_{\circ}')_{\delta}
\otimes
\QCoh(Y)^{\otimes n}
\otimes
\QCoh(\mathrm{pt})
\longrightarrow
\IndL_{\mathcal{N}}(\mathfrak{M}_{\circ})_{\delta}.
\end{align*}
Taking the colimit, we obtain an inverse to
\eqref{equiv:LNQ}. Hence \eqref{equiv:LNQ} is an equivalence.
\end{proof}

\subsection[Proof of Lemma 3.17 for g=2]{Proof of Lemma~\ref{lem:nodeB} for \texorpdfstring{$g=2$}{g=2}}\label{subsec:g=2}
\begin{proof}
We prove Lemma~\ref{lem:nodeB} in the case $g=2$. The case $r=1$ is trivial, so we assume 
$r\geq 2$. 

We first treat the case $r\geq 4$. 
Let $z=(c,\lambda)\in S$, where $S=\mathrm{Tot}_C(\Omega_C)$, 
$c\in C$, and $\lambda\in \Omega_C|_c$. 
It is enough to show that the locus 
\begin{align}\label{nonnode:z}
    \rB_{\GL_r}^{\mathrm{non-node}, z} \subset \rB_{\GL_r}^{\mathrm{cl}}
\end{align}
having non-nodal singularity at $z$ is of codimension four; 
then the claim follows as a choice of $z$ is two-dimensional. 
By applying the $H^0(\Omega_C)$-action \eqref{act:gamma}, and using the fact 
that $\Omega_C$ is globally generated, 
and that this action does not change the singularity type of $\cC_b$, 
we may assume that $\lambda=0$. 

We choose trivializations
\[
\widehat{\cO}_{C, c} \cong k[[x]], \ 
    \widehat{\Omega}_{C,c}\cong k[[x]]y.
\]
For $b\in \rB_{\GL_r}^{\mathrm{cl}}$
the defining equation \eqref{eqn:Cb} of $\cC_b$ determines an element of 
$\widehat{\cO}_{S,z}$, hence an element
\[
    \widehat{F}_b \in J_z:=\widehat{\cO}_{S,z}/m_z^3\cong k[[x, y]]/(x, y)^3,
\]
where $m_z\subset \widehat{\cO}_{S,z}$ is the maximal ideal. 
Thus we obtain a map
\[
    j_z \colon \rB_{\GL_r}^{\mathrm{cl}} \to J_z, \ b\mapsto \widehat{F}_b.
\]
We show that this map is surjective when $r\geq 4$. Noting that $z=(c, 0)$, the map $j_z$ is 
described as
\begin{align}\label{surj:jz}
    j_z \colon \rB_{\GL_r}^{\mathrm{cl}}
    \to k\oplus kx\oplus ky\oplus kx^2\oplus kxy\oplus ky^2
\end{align}
and is given by
\[
    (b_i)_{1\leq i\leq r}
    \mapsto
    \bigl(
    \widehat{b}_r(0),
    \widehat{b}_{r-1}(0),
    \widehat{b}_r'(0),
    \widehat{b}_{r-2}(0),
    \widehat{b}_{r-1}'(0),
    \widehat{b}_{r}''(0)
    \bigr).
\]
Here $\widehat{b}_i$ denotes the image of $b_i$ under the map
\[
H^0(C,\Omega_C^i)\to \widehat{\Omega}_{C,c}^i\cong k[[x]].
\]
It is enough to show that
\begin{align}\label{map:Omega}
    H^0(C,\Omega_C^{r-j})
    \to
    \Omega_C^{r-j}\otimes \cO_C/m_c^{3-j}
\end{align}
is surjective for $0\leq j\leq 2$. This follows from
\begin{align}\label{vanish:Omega}
    H^1(C,\Omega_C^{r-j}(-(3-j)c))
    =
    H^0(C,\Omega_C^{j-r+1}((3-j)c))^{\vee}
    =
    0.
\end{align}
The last vanishing holds because
\begin{align}\label{ineq:r}
    \deg \Omega_C^{j-r+1}((3-j)c)=j+5-2r<0
\end{align}
for $0\leq j\leq 2$ and $r\geq 4$. 

We write an element of \(J_z^2\) as
\[
c_{00}+c_{10}x+c_{01}y+c_{20}x^2+c_{11}xy+c_{02}y^2 .
\]
The spectral curve $\cC_b$ is singular at \(z\) precisely when
\[
c_{00}=c_{10}=c_{01}=0.
\]
At such a point, the singularity is an ordinary node precisely when the quadratic part
\[
c_{20}x^2+c_{11}xy+c_{02}y^2
\]
is nondegenerate. Hence the condition that \(z\) is a non-nodal singular point is
\begin{align}\label{eqn:c}
c_{00}=c_{10}=c_{01}=0,
\qquad
c_{11}^2-4c_{20}c_{02}=0.
\end{align}
Thus this condition has codimension four in \(J_z\).
Since \eqref{surj:jz} is surjective, the locus 
(\ref{nonnode:z}) has codimension four. 

Next suppose that $r=3$. Then the inequality \eqref{ineq:r} fails for 
$j=1,2$, but the map \eqref{map:Omega} is still surjective for $j=2$, since 
$\Omega_C$ is globally generated. For $j=1$, the map \eqref{map:Omega} fails 
to be surjective precisely when
$
\Omega_C=\mathcal{O}_C(2c),
$
i.e. when $c\in C$ is a Weierstrass point. 
In this case, the image of the map \eqref{map:Omega} has codimension one, and 
the image of $j_z$ is
\[
    \operatorname{Im}(j_z)=W:=\{c_{11}=0\}\subset J_z .
\]
The subspace $W$ is five-dimensional, and the locus in $W$ satisfying 
\eqref{eqn:c} is
\[
c_{00}=c_{10}=c_{01}=0,
\qquad
c_{20}c_{02}=0.
\]
This locus has dimension one, hence codimension four in $W$. Therefore the 
locus (\ref{nonnode:z}) again has 
codimension four. 

It remains to treat the case \(r=2\). By applying the 
$H^0(\Omega_C)$-action \eqref{act:gamma}, it is enough to show that the locus 
of $b_2\in H^0(C,\Omega_C^2)$ such that the spectral curve
\begin{align}\label{curve:cb}
    \cC_b=\{\lambda^2=b_2\}
\end{align}
is non-nodal has codimension at least two. 
Since \(g=2\), the curve \(C\) is hyperelliptic. Let
$
g \colon C \to \mathbb{P}^1
$
be the hyperelliptic double cover. Then
$
\Omega_C=g^*\mathcal{O}_{\mathbb{P}^1}(1),
$
and we have an isomorphism
\[
H^0(\mathbb{P}^1,\mathcal{O}_{\mathbb{P}^1}(2))
\stackrel{\cong}{\longrightarrow}
H^0(C,\Omega_C^2).
\]
Thus we can write $b_2=g^*q$ for some 
$q\in H^0(\mathbb{P}^1,\mathcal{O}_{\mathbb{P}^1}(2))$. 
The curve \eqref{curve:cb} has a non-nodal singularity only if $q$ has a 
double zero at a branch point of $g\colon C\to \mathbb{P}^1$. 
This is a codimension two condition. 
This completes the proof.
    \end{proof}

\newcommand{\etalchar}[1]{$^{#1}$}
\providecommand{\bysame}{\leavevmode\hbox to3em{\hrulefill}\thinspace}
\providecommand{\MR}{\relax\ifhmode\unskip\space\fi MR }
\providecommand{\MRhref}[2]{%
  \href{http://www.ams.org/mathscinet-getitem?mr=#1}{#2}
}
\providecommand{\href}[2]{#2}


\vspace{5mm}

{\small\textsc{Yukinobu Toda: Kavli Institute for the Physics and Mathematics of the Universe (WPI), University of Tokyo, 5-1-5 Kashiwanoha, Kashiwa, 277-8583, Japan.}\par}
{\small\textsc{Inamori Research Institute for Science, 620 Suiginya-cho, Shimogyo-ku, Kyoto 600-8411, Japan.}\par}
{\small\textit{E-mail address:} \texttt{yukinobu.toda@ipmu.jp}\par}

\end{document}